\definecolor{mycolor}{HTML}{F7F8E0}
\definecolor{myorange}{RGB}{245,156,74}
\definecolor{cadetgrey}{rgb}{0.57, 0.64, 0.69}
\definecolor{calpolypomonagreen}{rgb}{0.12, 0.3, 0.17}
\newcommand\cyr{%
\renewcommand\rmdefault{wncyr}%
\renewcommand\sfdefault{wncyss}%
\renewcommand\encodingdefault{OT2}%
\normalfont
\selectfont}
\DeclareTextFontCommand{\textcyr}{\cyr}
\newtheorem{thm}{Theorem}[section]
\newtheorem{cor}[thm]{Corollary}
\newtheorem{lem}[thm]{Lemma}
\newtheorem{prop}[thm]{Proposition}
\newtheorem{assu}[thm]{Assumption}
\newtheorem{conj}[thm]{Conjecture}
\theoremstyle{definition}
\newtheorem{rem}[thm]{Remark}
\newtheorem{exam}[thm]{Example}
 \numberwithin{equation}{section}
\newcommand{\Mod}[1]{\ (\text{mod}\ #1)}
\begin{document}
\title{On the refined conjectures on Fitting ideals of Selmer groups of elliptic curves with supersingular reduction}
\author{Chan-Ho Kim}
\address{(Chan-Ho Kim) School of Mathematics, Korea Institute for Advanced Study (KIAS), 85 Hoegiro, Dongdaemun-gu, Seoul 02455, Republic of Korea}
\email{chanho.math@gmail.com}
\author{Masato Kurihara}
\address{(Masato Kurihara) Department of Mathematics, Faculty of Science and Technology, Keio University, 3-14-1 Hiyoshi, Kohoku-ku, Yokohama, 223-8522, Japan}
\email{kurihara@math.keio.ac.jp}
\date{\today}
\subjclass[2010]{11R23 (Primary); 11G05, 11G40, 11F67 (Secondary)}
\keywords{Iwasawa theory, elliptic curves, Mazur-Tate conjectures, refined Iwasawa theory, supersingular primes}

\maketitle
\begin{abstract}
In this paper, we study the Fitting ideals of Selmer groups over finite subextensions in the cyclotomic $\mathbb{Z}_p$-extension of $\mathbb{Q}$ of an elliptic curve over $\mathbb{Q}$. 
Especially, we present a proof of the ``weak main conjecture" \`{a} la Mazur and Tate for elliptic curves with good (supersingular) reduction at an odd prime $p$.
 We also prove the ``strong main conjecture" suggested by the second named author under the validity of the $\pm$-main conjecture and the vanishing of a certain error term. 
The key idea is the explicit comparison among ``finite layer objects", 
``$\pm$-objects", and ``fine objects" in Iwasawa theory.
The case of good ordinary reduction is also treated.
\end{abstract}

\setcounter{tocdepth}{1}
\tableofcontents
\section{Introduction}
\subsection{Overview} \label{subsec:overview}
The main aim of this paper is to understand Selmer groups of an elliptic curve with supersingular reduction at $p$ over finite subextensions in the cyclotomic $\mathbb{Z}_p$-extension of $\mathbb{Q}$ by using $\pm$-Iwasawa theory \`{a} la Kobayashi-Pollack.
Let $E$ be an elliptic curve over $\mathbb{Q}$ with good reduction at an odd prime $p$. We assume that $a_p(E) \not\equiv 1 \Mod{p}$ throughout this article.

The $\pm$-Iwasawa theory is developed to understand Iwasawa theory for elliptic curves at supersingular primes (with assumption $a_p(E) = 0$).
In the supersingular setting, the usual Selmer groups over $\mathbb{Z}_p$-extensions and $p$-adic $L$-functions do not behave well as in the classical framework of Iwasawa theory.
Introducing $\pm$-Selmer groups and $\pm$-$p$-adic $L$-functions, Kobayashi \cite{kobayashi-thesis} and Pollack \cite{pollack-thesis} could apply the standard techniques of Iwasawa theory of elliptic curves with \emph{ordinary} reduction to the \emph{supersingular} setting.

On the other hand, Mazur-Tate conjectures \cite{mazur-tate} and the \emph{refined} Iwasawa theory \`{a} la the second named author (\cite{kurihara-invent}, \cite{kurihara-fitting}, and \cite{kurihara-munster}) focus on understanding Iwasawa theory over \emph{finite} abelian extensions over $\mathbb{Q}$.

In general, the refined Iwasawa theory (at finite layers) is regarded as a more delicate subject than the usual Iwasawa theory (at the infinite layer) since neither we can directly apply the theory of Iwasawa modules to finite layer objects nor we can ignore ``finite errors". It is well known that the structure of group rings at finite layers is much more complicated than that of the Iwasawa algebra.

In this article, we consider the subextensions in the cyclotomic $\mathbb{Z}_p$-extension whose Galois group is cyclic of $p$-power order and only one prime ramifies. Thus, this case can be regarded as the simplest one, but the full understanding of the following conjectures is still deep. Their precise formulations are given in $\S$\ref{subsec:conjectures}.
\begin{conj}[Mazur-Tate's weak main conjecture, Conjecture \ref{conj:mazur-tate-weak-main-conj}]
Assume that $E$ has no rational $p$-torsion.
Then the Mazur-Tate element of $E$ at a finite layer is contained in the Fitting ideal of the dual Selmer group of $E$ over the finite extension.
\end{conj}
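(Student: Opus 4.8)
The plan is to transport the \emph{unconditional} divisibility in the $\pm$-main conjecture --- the one supplied by Kato's Euler system --- down to the finite layer $\Q_n$, where it combines with an explicit local computation at $p$ to put exactly the Mazur--Tate element into the Fitting ideal. Notation: $\Lambda$ is the Iwasawa algebra of $\Gamma=\Gal(\Q_\infty/\Q)$ and $G_n=\Gal(\Q_n/\Q)$, so that $\Zp[G_n]=\Lambda/\omega_n$ with $\omega_n=\gamma^{p^n}-1$ for a fixed topological generator $\gamma$; under the no-rational-$p$-torsion hypothesis the Mazur--Tate element $\theta_n$ is integral, $\theta_n\in\Zp[G_n]$. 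Write $X^{\pm}=\Sel^{\pm}(E/\Q_\infty)^{\vee}$ and $X_0=\Sel_0(E/\Q_\infty)^{\vee}$ (dual fine Selmer group), and let $\log^{\pm}_{p,n}$ be Pollack's level-$n$ ``half-logarithm'' elements, through which the Mazur--Tate element is the reduction of Pollack's identity, $\theta_n=\log^{+}_{p,n}\cdot L_p^{+}(E)+\log^{-}_{p,n}\cdot L_p^{-}(E)$ in $\Zp[G_n]$.

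First, the $\Lambda$-adic input. Kato's Euler system, in Kobayashi's supersingular reformulation, gives $L_p^{\pm}(E)\in\mathrm{char}_{\Lambda}(X^{\pm})$, and under $a_p(E)\not\equiv 1\pmod p$ and $E(\Q)[p]=0$ one checks (via the local behaviour at $p$ and global duality) that $X^{\pm}$ has no nonzero finite $\Lambda$-submodule, hence $\Fitt_{\Lambda}(X^{\pm})$ is principal and equal to $\mathrm{char}_{\Lambda}(X^{\pm})$, so $L_p^{\pm}(E)\in\Fitt_{\Lambda}(X^{\pm})$; the reverse divisibility, i.e.\ the full $\pm$-main conjecture, is \emph{not} used. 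Second, descent: Kobayashi's $\pm$-control theorem identifies $\Sel^{\pm}(E/\Q_n)^{\vee}$ with the quotient of $X^{\pm}$ by an explicit divisor of $\omega_n$, up to a finite error bounded independently of $n$, and since Fitting ideals are compatible with the base change $\Lambda\twoheadrightarrow\Zp[G_n]$, the image of $L_p^{\pm}(E)$ in $\Zp[G_n]$ lands in $\Fitt_{\Zp[G_n]}(\Sel^{\pm}(E/\Q_n)^{\vee})$ up to that error.

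Third, the explicit comparison, where the real work lies. Global (Poitou--Tate) duality over $\Q_n$, combined with the interrelations among the three local conditions $E^{+}(\Q_{n,p})$, $E^{-}(\Q_{n,p})$, $E(\Q_{n,p})$ at $p$, presents $\Sel(E/\Q_n)^{\vee}$ (up to finite, bounded error) as the cokernel of an explicit map between finite-rank free $\Zp[G_n]$-modules whose relevant entries are the $\pm$-divisors from the previous step together with the local factors $\log^{\pm}_{p,n}$ --- the key local computation being that $H^1\big(\Q_{n,p},E[p^\infty]\big)\big/\big(E^{\pm}(\Q_{n,p})\otimes\Qp/\Zp\big)$ has Fitting ideal generated by $\log^{\mp}_{p,n}$. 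Expanding the Fitting ideal of the cokernel along this presentation and feeding in $L_p^{\pm}(E)\in\Fitt(\Sel^{\pm}(E/\Q_n)^{\vee})$ shows that $\log^{+}_{p,n}L_p^{+}(E)+\log^{-}_{p,n}L_p^{-}(E)=\theta_n$ lies in $\Fitt_{\Zp[G_n]}(\Sel(E/\Q_n)^{\vee})$. The good ordinary case runs the same way and is simpler, Kato's Euler system being already $\Lambda$-adic with no half-logarithm decomposition intervening.

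The main obstacle is controlling the finite errors: the $\pm$-control theorem and the local duality comparison both introduce finite kernels and cokernels --- from $E(\Q_{n,p})[p^\infty]$, from Tate--Shafarevich-type groups, and from the failure of the $+$ and $-$ conditions to be exactly complementary at a finite layer --- and one must verify that these lie \emph{inside} the Fitting ideals at hand, so that they only enlarge them. This succeeds here precisely because we claim only a \emph{containment}: the errors, and any stray power of $p$ from the matrix computation (including the $\tfrac1p$ hidden in $\log^{\pm}_{p,n}$), can be absorbed into the fine Selmer contribution $X_0$, which sits inside everything. Removing them to obtain an \emph{equality} of ideals is exactly the ``error term'' whose vanishing is hypothesised for the strong main conjecture, which is why that statement remains conditional; beyond this what is left is notation-heavy but routine bookkeeping with modular symbols, the control theorem, and local Tate duality.
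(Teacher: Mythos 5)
Your outline follows the same architecture as the paper's argument: Kato's one--sided divisibility toward the $\pm$-main conjecture (Theorem \ref{thm:pm-main-conj}), B.D.~Kim's no-finite-submodule theorem (Theorem \ref{thm:no-finite}) so that characteristic and Fitting ideals agree (Lemma \ref{lem:char-to-fitt}), descent through the $\pm$-control theorem (Theorem \ref{thm:pm-control}), a finite-layer comparison of $\Sel$ with $\Sel^{\pm}$ governed by the local condition at $p$, and Pollack's congruence to recover $\theta_n(f)$. However, the two points where you place ``the real work'' are misstated. First, the key local computation is not about $\mathrm{H}^1(\Q_{n,p},E[p^\infty])/(E^{\pm}(\Q_{n,p})\otimes\Qp/\Zp)$: the Pontryagin dual of that module is $E^{\pm}(\Q_{n,p})\otimes\Zp\simeq\Lambda_n/(\omega^{\pm}_n)$, whose Fitting ideal is $(\omega^{\pm}_n)=(X\widetilde{\omega}^{\pm}_n)$, not $(\widetilde{\omega}^{\mp}_n)$. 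What is actually needed, and what the paper proves via Iovita--Pollack's formal-group exact sequence together with a duality computation involving the involution (this is where the subtlety sits, since Pontryagin duality reverses the $\Lambda_n$-action), is that $\bigl(\tfrac{E(\Q_{n,p})\otimes\Qp/\Zp}{E^{\pm}(\Q_{n,p})\otimes\Qp/\Zp}\bigr)^{\vee}$ has Fitting ideal exactly $\widetilde{\omega}^{\mp}_n\Lambda_n$; this is the module controlling the kernel of $\Sel(\Q_n,E[p^\infty])^{\vee}\twoheadrightarrow\Sel^{\pm}(\Q_n,E[p^\infty])^{\vee}$, and one then only multiplies Fitting ideals along that exact sequence (Lemmas \ref{lem:fitting_quotient} and \ref{lem:fitting_exact}) --- no matrix presentation of $\Sel(\Q_n,E[p^\infty])^{\vee}$ is needed for the weak conjecture; that device belongs to the lower-bound (strong) direction. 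Second, the finite-level analytic input is not a two-term identity $\theta_n=\log^{+}_{p,n}L^{+}_p+\log^{-}_{p,n}L^{-}_p$ in $\Lambda_n$ (the half-logarithms are not integral and no such identity is invoked); it is the parity-dependent congruence $\theta_n(f)\equiv\widetilde{\omega}^{\mp}_n\cdot L^{\mp}_p(\Q_\infty,f)\Mod{\omega_n}$ of Proposition \ref{prop:pollack_theta_pm}. Since your plan establishes membership of \emph{both} products $\widetilde{\omega}^{\pm}_n L^{\pm}_p$ in the Fitting ideal, your conclusion survives this correction, but the identity as you state it is not available.

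Your mechanism for disposing of the finite errors and of the ``stray powers of $p$ hidden in $\log^{\pm}_{p,n}$'' --- absorbing them into the fine Selmer group --- is not a valid move, and the fine Selmer group plays no role in the weak conjecture. The errors are harmless for two concrete reasons: the control map is injective on Selmer groups, so dually one has a surjection and Lemma \ref{lem:fitting_quotient} shows the finite cokernel can only enlarge the Fitting ideal at level $n$; and the descent from $\Lambda$ is taken modulo $\omega^{\pm}_n$ (the control theorem concerns the $\omega^{\pm}_n$-torsion, not the full group at level $n$), leaving an ambiguity by the ideal $(\omega^{\pm}_n)$ which is annihilated precisely by multiplying with $\widetilde{\omega}^{\mp}_n$, because $\widetilde{\omega}^{\mp}_n\omega^{\pm}_n=\omega_n\equiv 0$ in $\Lambda_n$. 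Working throughout with the integral elements $\widetilde{\omega}^{\pm}_n$ instead of half-logarithms is exactly what makes any ``absorption'' unnecessary; as written, your third step and your error discussion leave a genuine gap at the heart of the finite-layer comparison.
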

\begin{conj}[The strong main conjecture, Conjecture \ref{conj:kurihara}]
Assume that $E$ has no rational $p$-torsion and $p$ does not divide the Tamagawa number of $E$.
Then the Mazur-Tate element of $E$ at a finite layer and the traces of the Mazur-Tate elements of $E$ at all the lower layers generate the Fitting ideal of the dual Selmer group of $E$ over the finite extension.
\end{conj}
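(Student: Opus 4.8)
The plan is to deduce Conjecture~\ref{conj:kurihara} from the $\pm$-main conjecture by an Iwasawa descent that compares, layer by layer, three families of objects attached to $E$: the finite-layer dual Selmer groups $X_n := \Sel(E/\Q_n)^\vee$ with the Mazur--Tate elements $\theta_n$; the Kobayashi--Pollack $\pm$-Selmer groups $\Sel^\pm(E/\Q_\infty)$ with the $\pm$-$p$-adic $L$-functions $L_p^\pm$; and the fine Selmer group $\Sel_0(E/\Q_\infty)$. Write $\Lambda = \Zp[[\Gamma]]$ with $\Gamma = \Gal(\Q_\infty/\Q)$ and $R_n = \Zp[\Gal(\Q_n/\Q)] = \Lambda/(\omega_n)$, where $\omega_n = (1+T)^{p^n}-1 = T\cdot\widetilde\omega_n^{+}\cdot\widetilde\omega_n^{-}$ is the Kobayashi factorization into even and odd cyclotomic pieces. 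The target equality is $\Fitt_{R_n}(X_n) = (\theta_n) + \sum_{m=0}^{n-1}\nu_{m,n}(\theta_m)R_n$, where $\nu_{m,n}\colon R_m\to R_n$ is the trace (norm) map.

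First I would record the local comparison at $p$ furnished by Kobayashi's plus/minus norm subgroups $E^\pm(\Q_{n,p})\subseteq E(\Q_{n,p})$, which satisfy $E^+(\Q_{n,p}) + E^-(\Q_{n,p}) = E(\Q_{n,p})$ and whose intersection matches the fine local condition up to an explicit correction. Dualizing the resulting Mayer--Vietoris sequence of Selmer groups and inserting the $\pm$-control theorems $\Sel^\pm(E/\Q_\infty)^\vee/\widetilde\omega_n^\pm\cong\Sel^\pm(E/\Q_n)^\vee$ --- valid under $a_p(E)\not\equiv 1$, $E(\Q)[p]=0$, and $p\nmid c_v$ for all $v$ --- one relates $X_n$ to $\Sel^+(E/\Q_\infty)^\vee$, $\Sel^-(E/\Q_\infty)^\vee$, the fine Iwasawa module, and a local cokernel at $p$. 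Feeding in the base-change formula $\Fitt_{\Lambda/I}(M/IM)=\Fitt_\Lambda(M)\cdot\Lambda/I$ together with the $\pm$-main conjecture $\mathrm{char}_\Lambda(\Sel^\pm(E/\Q_\infty)^\vee)=(L_p^\pm)$ --- which, under the vanishing of the error term, I may upgrade to the statement that these Iwasawa modules have projective dimension $\le 1$ over $\Lambda$, so that their Fitting and characteristic ideals agree --- produces a closed-form description of $\Fitt_{R_n}(X_n)$ as an ideal of $R_n$ generated by the images of $L_p^{+}$, $L_p^{-}$ and the polynomials $\widetilde\omega_j^{\pm}$ with $j\le n$.

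On the analytic side, I would combine the interpolation property of the Mazur--Tate elements with Pollack's $\pm$-decomposition of the $p$-adic $L$-function to show that $\theta_n$ is, up to a unit of $R_n$, the image of an explicit $\Lambda$-combination of $L_p^{+}$ and $L_p^{-}$ with coefficients among the $\widetilde\omega_j^{\pm}$, and likewise that $\nu_{m,n}(\theta_m)$ is the image of the analogous combination at level $m$. A direct manipulation then identifies $(\theta_n) + \sum_{m<n}\nu_{m,n}(\theta_m)R_n$ with the algebraic ideal produced in the previous step: the sum over the lower layers is exactly what restores the intermediate cyclotomic factors $\widetilde\omega_j^{\pm}$ that a single $p$-adic $L$-function loses upon specialization to the non-regular group ring $R_n$. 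Since the weak main conjecture --- established unconditionally earlier in the paper --- applied at level $n$ and, via the finite-layer control theorems, at all lower layers already gives the inclusion $(\theta_n) + \sum_{m<n}\nu_{m,n}(\theta_m)R_n\subseteq\Fitt_{R_n}(X_n)$, only the reverse inclusion remains, and that is precisely what the matching supplies.

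The main obstacle will be carrying equalities --- not merely inclusions --- of Fitting ideals through the $\pm$-comparison: Fitting ideals are not exact on short exact sequences (one has only $\Fitt(A)\Fitt(C)\subseteq\Fitt(B)\subseteq\Fitt(C)$ for $0\to A\to B\to C\to 0$), so one needs every module entering the sequence to admit a length-one free resolution over $\Lambda$ (equivalently, no nonzero finite $\Lambda$-submodule together with a square presentation matrix), and guaranteeing this is exactly the role of the hypothesis that the error term vanishes: it rigidifies the $\pm$-Selmer and fine Iwasawa modules into the shape in which the $\pm$-main conjecture passes from characteristic to Fitting ideals and then descends cleanly to finite level. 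A secondary point is the bookkeeping of the local cokernel at $p$ and of the Tamagawa factors; under $p\nmid c_v$ for all bad $v$ one must check that these contribute trivially to $\Fitt_{R_n}(X_n)$, which is where that hypothesis is consumed. The remaining identifications --- tracking Pollack's $\pm$-logarithms relating $L_p^{\pm}$ to the classical $p$-adic $L$-function, and the precise shape of the local conditions at $p$ at finite level --- are routine and introduce no new conceptual difficulty.
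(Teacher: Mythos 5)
Your overall architecture (the weak main conjecture for the inclusion $(\theta_n,\nu_{n-1,n}(\theta_{n-1}))\subseteq\Fitt_{\Lambda_n}(\Sel(\Q_n,E[p^\infty])^\vee)$, plus a conditional mechanism for the reverse inclusion) matches the shape of the paper's Theorems \ref{thm:main_theorem_2} and \ref{thm:main_theorem_3}, and your use of Proposition \ref{prop:pollack_theta_pm} to rewrite the Mazur--Tate ideal as $(\widetilde{\omega}^+_n L^+_p,\widetilde{\omega}^-_n L^-_p)$ modulo $\omega_n$ is correct. The genuine gap is in the heart of your argument: the claimed ``closed-form'' computation of $\Fitt_{\Lambda_n}(\Sel(\Q_n,E[p^\infty])^\vee)$ by descending the $\pm$-main conjecture through a Mayer--Vietoris comparison of $\pm$-Selmer groups. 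The $\pm$-control theorem (Theorem \ref{thm:pm-control}) only identifies $\Sel^{\pm}(\Q_n,E[p^\infty])[\omega^{\pm}_n]$ with $\Sel^{\pm}(\Q_\infty,E[p^\infty])[\omega^{\pm}_n]$; it is not of the form $\Sel^{\pm}(\Q_\infty,E[p^\infty])^\vee/\widetilde{\omega}^{\pm}_n\simeq\Sel^{\pm}(\Q_n,E[p^\infty])^\vee$, and the passage from $\pm$-Selmer groups to the full Selmer group at level $n$ goes through the exact sequence with the map $\iota^{\pm}$ in \S\ref{sec:putting}, where Fitting ideals are only sub-multiplicative and $\mathrm{ker}(\iota^{\pm})$ is not controlled. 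That route therefore yields exactly the weak main conjecture and nothing more; it cannot be upgraded to the equality you need. Moreover, you misidentify the error term: $\mathrm{Err}_n$ is not a surrogate for ``projective dimension $\le 1$'' of the $\pm$-Selmer or fine Iwasawa modules (the absence of finite submodules in $\Sel^{\pm}(\Q_\infty,E[p^\infty])^\vee$ is already Theorem \ref{thm:no-finite}, unconditionally); it is the cokernel of $g_n:\mathbb{H}^1_{\mathrm{glob}}(T)_{\Gamma_n}\to\mathcal{Z}_n$, measuring the discrepancy between the images of the global Iwasawa cohomology and of $\mathrm{H}^1(\Q_\Sigma/\Q_n,T)$ inside $\mathrm{H}^1(\Q_{n,p},T)/(E(\Q_{n,p})\otimes\Zp)$, and its vanishing is itself a nontrivial theorem proved in \S\ref{sec:vanishing_coker_g_n} by induction on $n$ using the Mordell--Weil rank growth hypothesis.

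The paper obtains the reverse inclusion by a mechanism your proposal does not contain: the Poitou--Tate sequence relating $\Sel(\Q_n,E[p^\infty])^\vee$ to the fine Selmer group and the module $\mathcal{Y}_n$; an explicit $2\times 3$ presentation matrix of the auxiliary module $\mathcal{Y}'_n$ in terms of the $\pm$-Coleman images $(b_1,b_2)$ of a $\Lambda$-generator $b$ of $\mathbb{H}^1_{\mathrm{glob}}(T)$, giving $\Fitt_{\Lambda_n}(\mathcal{Y}'_n)=(\widetilde{\omega}^+_n b_1,\widetilde{\omega}^-_n b_2)$ (Proposition \ref{prop:fitting_difference}); Kato's main conjecture (equivalent to the $\pm$-main conjecture) to factor $\mathbf{z}_{\mathrm{Kato}}=c\cdot b$ with $(c)=\Fitt_{\Lambda}(\mathcal{S})$ for the fine Selmer quotient $\mathcal{S}$, so that the product of Fitting ideals recovers $(\widetilde{\omega}^+_n L^+_p,\widetilde{\omega}^-_n L^-_p)$; and finally the vanishing of $\mathrm{Err}_n$ to pass from $\mathcal{Y}'_n$ to $\mathcal{Y}_n$. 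The paper explicitly remarks in \S\ref{sec:non-CM} that a presentation of $\mathcal{Y}_n$ itself --- which is what your descent would in effect require --- is out of reach; without the detour through $\mathbb{H}^1_{\mathrm{glob}}(T)$, Kato's zeta element, and the analysis of $\mathrm{Err}_n$ (and without a correct formulation of what must vanish and why it does under the hypotheses of Theorem \ref{thm:main_theorem_3}), your plan does not produce the inclusion $\Fitt_{\Lambda_n}(\Sel(\Q_n,E[p^\infty])^\vee)\subseteq(\theta_n,\nu_{n-1,n}(\theta_{n-1}))$.
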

In the case of good ordinary reduction with non-anomalous prime $p$ (i.e. $a_p(E) \not\equiv 1 \Mod{p}$), both conjectures follow from several standard ingredients in Iwasawa theory, including the Iwasawa main conjecture, the non-existence of proper $\Lambda$-submodules of finite index in the Selmer groups over the Iwasawa algebra $\Lambda$, and the control theorem. 
Although this case is more or less well-known to experts, the argument is not explicitly written in the literature.
Thus, we give a proof for the case of good ordinary reduction in $\S$\ref{sec:ordinary}.
We note that in this case the Fitting ideal of the Selmer group is principal.

In the case of good supersingular reduction, the situation becomes much more complicated.
Actually, the Fitting ideal of the Selmer group is never principal in this case.
Very fortunately, we are able to strengthen the argument of the good ordinary reduction case by making an explicit comparison between Selmer groups and $\pm$-Selmer groups in finite layers. This approach allows us to obtain the weak main conjecture. The proof is given in $\S$\ref{sec:putting}.
We obtain Theorem \ref{thm:main_theorem} in this way.

Concerning the strong main conjecture, we prove it in Theorem \ref{thm:main_theorem_3} under certain assumptions including the validity of the $\pm$-main conjecture.
We also provide many examples which satisfy these assumptions in Example \ref{exam:strong_main_conjecture}, so we have many examples for which the strong main conjecture holds.
Especially, if the fine Selmer group over the $\mathbb{Z}_p$-extension is ``all Mordell-Weil" described in Example \ref{exam:strong_main_conjecture}, then we can prove the strong main conjecture.
More generally, even without the assumptions imposed in Theorem \ref{thm:main_theorem_3}, we are able to prove a slightly weaker version of the strong main conjecture in Theorem \ref{thm:main_theorem_2}. In the weaker version, the statement involves an error term.

In the proof of Theorem \ref{thm:main_theorem_2}, we make an explicit comparison between Selmer groups and fine Selmer groups in finite layers.
This comparison is related to the finite layer version of the construction of algebraic $p$-adic $L$-functions \`{a} la Perrin-Riou. 
See \cite[2.4.3 Proposition]{perrin-riou-book}, \cite[$\S$3.1]{perrin-riou-supersingular}, for example.
The error term in Theorem \ref{thm:main_theorem_2} occurs in this finite layer comparison. 
Indeed, the assumptions in Theorem \ref{thm:main_theorem_3} are strong enough to force the error term to vanish.
As a result, we deduce a ``\emph{lower bound}" of Selmer groups over finite extensions from the Iwasawa main conjecture and some Fitting ideal techniques described in Appendix \ref{sec:fitting}. 
The proof of Theorem \ref{thm:main_theorem_2} is given in $\S$\ref{sec:non-CM} and the proof of Theorem \ref{thm:main_theorem_3} is given in $\S$\ref{sec:vanishing_coker_g_n}.

It seems that our approach does not work directly if $p \mid a_p(E)$ but $a_p(E) \neq 0$ since the $\sharp/\flat$-Iwasawa theory \`{a} la Sprung \cite{sprung-ap-nonzero} does not behave well in finite layers. See \cite[Open Problem 7.22]{sprung-ap-nonzero} for detail.

In the rest of this section, we introduce various conjectures we are interested in and state our main results and their applications.
In $\S$\ref{sec:ordinary}, we review the case for elliptic curves with good ordinary reduction and give a proof of the weak and strong main conjectures for this case in Theorem \ref{thm:main_theorem}.
In $\S$\ref{sec:tools}, we review relevant $\pm$-Iwasawa theory for elliptic curves.
In $\S$\ref{sec:putting}, we prove the weak main conjecture for elliptic curves with supersingular reduction in Theorem \ref{thm:main_theorem}.
In $\S$\ref{sec:non-CM}, we prove the slightly weaker version of the strong main conjecture for elliptic curves in Theorem \ref{thm:main_theorem_2}.
In $\S$\ref{sec:vanishing_coker_g_n}, we prove the strong main conjecture for elliptic curves under certain assumptions in Theorem \ref{thm:main_theorem_3}.
In Appendix \ref{sec:fitting}, we study refined techniques on Fitting ideals.
\subsection{Conjectures} \label{subsec:conjectures}
We recall various conjectures on the arithmetic of elliptic curves.
\subsubsection{Birch and Swinnerton-Dyer conjecture}
One of the leading problems of modern number theory is the following conjecture.
\begin{conj}[Birch and Swinnerton-Dyer] \label{conj:bsd}
Let $E$ be an elliptic curve over $\mathbb{Q}$. Then 
$$\mathrm{rk}_{\mathbb{Z}} E(\mathbb{Q}) = \mathrm{ord}_{s=1} L(E,s).$$
\end{conj}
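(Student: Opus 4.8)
The plan, to be candid, is that there is no plan available: Conjecture \ref{conj:bsd} is one of the Clay Millennium Problems, and it is a theorem only in the range $\mathrm{ord}_{s=1} L(E,s) \le 1$, so what follows is not a sketch of a proof but an account of the partial results the literature actually provides. I would organize the question around the two inequalities
$$\mathrm{rk}_{\Z} E(\Q) \ \le\ \mathrm{ord}_{s=1} L(E,s) \qquad\text{and}\qquad \mathrm{rk}_{\Z} E(\Q) \ \ge\ \mathrm{ord}_{s=1} L(E,s),$$
neither of which is known in general. A preliminary point is that the right-hand side is now unconditionally meaningful: the modularity theorem of Wiles, Taylor--Wiles, and Breuil--Conrad--Diamond--Taylor supplies the analytic continuation of $L(E,s)$ to $s=1$, so $\mathrm{ord}_{s=1} L(E,s)$ is defined for every $E/\Q$.

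The one regime in which the conjecture is a theorem is that of small analytic rank, and there the proof I would cite is the classical one. If $\mathrm{ord}_{s=1} L(E,s) = 0$, then $E(\Q)$ is finite by Kolyvagin's method, combining the Euler system of Heegner points with the Gross--Zagier formula; if $\mathrm{ord}_{s=1} L(E,s) = 1$, then Gross--Zagier produces a point of infinite order and Kolyvagin's argument bounds the rank above by one (and, in both cases, shows that $\mathrm{Sha}(E/\Q)$ is finite). The main obstacle --- indeed the central obstruction of the whole subject --- is analytic rank $\ge 2$: there is at present no mechanism for producing $\Q$-rational points of infinite order from higher-order vanishing of $L(E,s)$, since Heegner points detect rank one only and no adequate substitute (a higher Euler system, or a $p$-adic device seeing the relevant leading term) is known. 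Even the ``easy'' inequality $\mathrm{rk}_{\Z} E(\Q) \le \mathrm{ord}_{s=1} L(E,s)$ is open in general, because it is equivalent (via the $p$-descent exact sequence) to finiteness of some $\mathrm{Sha}(E/\Q)[p^\infty]$, which is itself unknown.

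On the Iwasawa-theoretic side, which is the natural home of the present paper, one can say considerably more about the $p$-primary refinement of the picture, though it sharpens rather than proves the rank equality. Combining the cyclotomic main conjecture (Kato's divisibility, together with the reverse divisibility of Skinner--Urban under hypotheses) with control theorems and the Fitting-ideal techniques developed here, one controls $\mathrm{Sha}(E/\Q)[p^\infty]$ and the $p$-adic valuation of the Birch--Swinnerton-Dyer leading term in many cases. The honest conclusion, then, is that the excerpt quotes Conjecture \ref{conj:bsd} purely as motivation: no proof is given here, and none is known anywhere; the theorems of this article should be read as contributions to the $p$-adic and refined circle of conjectures that surround it rather than as a step toward \ref{conj:bsd} itself.
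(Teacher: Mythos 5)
You are right: Conjecture \ref{conj:bsd} is the Birch and Swinnerton-Dyer conjecture, which the paper states purely as motivation and does not (and could not) prove, so declining to offer a proof and instead surveying the known partial results is exactly the correct reading. Your account of the status --- modularity giving the analytic continuation, Gross--Zagier and Kolyvagin settling analytic rank at most one, and both inequalities open in general beyond that --- is accurate and consistent with the role the conjecture plays in this article.
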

We recall the formulation of the refinements and variants of Conjecture \ref{conj:bsd}.
\subsubsection{Setting the stage} \label{subsubsec:setting}
Let $p$ be an odd prime. Fix embeddings $\iota_p: \overline{\mathbb{Q}} \hookrightarrow \overline{\mathbb{Q}}_p$ and
$\iota_\infty: \overline{\mathbb{Q}} \hookrightarrow \mathbb{C}$.
Let $E$ be an elliptic curve over $\mathbb{Q}$ of conductor $N$ with $(N,p) = 1$.
Let
$$\overline{\rho} : \mathrm{Gal}(\overline{\mathbb{Q}}/\mathbb{Q}) \to \mathrm{Aut}_{\mathbb{F}_p}(E[p]) \simeq \mathrm{GL}_2(\mathbb{F}_p)$$
be the mod $p$ representation arising from the $p$-torsion points on $E$. 
For a prime $\ell$ dividing $N$, let $E_0(\mathbb{Q}_\ell)$ be the preimage of the nonsingular locus of $\widetilde{E}(\mathbb{F}_\ell)$. 
Then the \textbf{Tamagawa number of $E$} is defined by
 ${ \displaystyle \mathrm{Tam}(E) := \prod_{\ell \mid N} c_\ell }$  where $c_\ell = [E(\mathbb{Q}_\ell):E_0(\mathbb{Q}_\ell)]$.
 
Let $n \geq 1$ be an integer and $\mathbb{Q}_n$ the subextension of $\mathbb{Q}$ in $\mathbb{Q}(\mu_{p^{n+1}})$ with $\mathrm{Gal}(\mathbb{Q}_n/\mathbb{Q}) \simeq \mathbb{Z}/p^n\mathbb{Z}$. Let $\mathbb{Q}_\infty = \bigcup_{n\geq 1} \mathbb{Q}_n$ be the cyclotomic $\mathbb{Z}_p$-extension of $\mathbb{Q}$.
Let $\Gamma_n := \mathrm{Gal}(\mathbb{Q}_\infty/\mathbb{Q}_n)$ and $\Gamma := \mathrm{Gal}(\mathbb{Q}_\infty/\mathbb{Q})$.
Let $\Lambda_n := \mathbb{Z}_p[\mathrm{Gal}(\mathbb{Q}_n/\mathbb{Q})] = \mathbb{Z}_p[\Gamma/\Gamma_n]$ and $\Lambda := \varprojlim_n \Lambda_n = \mathbb{Z}_p \llbracket \mathrm{Gal}(\mathbb{Q}_\infty/\mathbb{Q}) \rrbracket = \mathbb{Z}_p \llbracket \Gamma \rrbracket $.
Let $\omega_n = \omega_n(X) := (1+X)^{p^n} - 1$.
Fix a generator $\gamma$ of $\mathrm{Gal}(\mathbb{Q}_\infty/\mathbb{Q})$ and take a generator $\gamma_n$ of $\mathrm{Gal}(\mathbb{Q}_n/\mathbb{Q})$ as the image of $\gamma$. Then we have isomorphisms
\[
\xymatrix{
\Lambda_n  \simeq \mathbb{Z}_p[X]/ \left( \omega_n(X) \right), &
\Lambda  \simeq \mathbb{Z}_p\llbracket X \rrbracket
}
\]
by sending the generators to $1+X$. Via the latter isomorphism, we also regard $\omega_n \in \Lambda$.

Let $\Phi_n(1+X) = \omega_n / \omega_{n-1}$ where $\Phi_n$ is the $p^n$-th cyclotomic polynomial.
Let $\omega^{\pm}_0(X)  := X$, $\widetilde{\omega}^{\pm}_0(X)  := 1$, and
\[
\xymatrix@R=0em{
{\displaystyle \omega^{+}_n = \omega^{+}_n(X) := X\cdot \prod_{2 \leq m \leq n, m: \textrm{ even}}\Phi_m(1+X) } , &
{\displaystyle \omega^{-}_n = \omega^{-}_n(X)  := X\cdot \prod_{1 \leq m \leq n, m: \textrm{ odd}}\Phi_m(1+X) } , \\
{\displaystyle \widetilde{\omega}^{+}_n = \widetilde{\omega}^{+}_n(X)  := \prod_{2 \leq m \leq n, m: \textrm{ even}}\Phi_m(1+X) } , &
{\displaystyle \widetilde{\omega}^{-}_n = \widetilde{\omega}^{-}_n(X)  := \prod_{1 \leq m \leq n, m: \textrm{ odd}}\Phi_m(1+X) } .
}
\]
Then we have $\omega_n(X) = \omega^{\pm}_n(X) \cdot \widetilde{\omega}^{\mp}_n(X)$, respectively. We also regard
$\omega^{\pm}_n$, $\widetilde{\omega}^{\pm}_n$ as elements in $\Lambda_n$ or $\Lambda$ via fixed isomorphisms.
Also, we identify $\Lambda_n = \mathbb{Z}_p[\mathrm{Gal}(\mathbb{Q}_n/\mathbb{Q})] \simeq \mathbb{Z}_p[\mathrm{Gal}(\mathbb{Q}_{n,p}/\mathbb{Q}_p)]$ if necessary. Here, $\mathbb{Q}_{n,p}$ is the completion of $\mathbb{Q}_{n}$ at the prime above $p$.

Let $f \in S_2(\Gamma_0(N))$ be the newform attached to $E$ by \cite[Theorem A]{bcdt}.
Let $G'_{n+1} := \mathrm{Gal}(\mathbb{Q}(\mu_{p^{n+1}})/\mathbb{Q})/\lbrace \pm 1 \rbrace \simeq \left(\mathbb{Z}/p^{n+1}\mathbb{Z}\right)^\times /\lbrace \pm 1 \rbrace $ and denote by $\sigma_a$ the element corresponding to $a \in \left(\mathbb{Z}/p^{n+1}\mathbb{Z}\right)^\times /\lbrace \pm 1 \rbrace$.
We define
 $$\theta'_{n+1}(f) := \sum_{ a \in \left(\mathbb{Z}/p^{n+1}\mathbb{Z}\right)^\times /\lbrace \pm 1 \rbrace } \left[\dfrac{a}{p^{n+1}}\right]^+ \cdot \sigma_a \in \mathbb{Z}_p[G'_{n+1}] .$$
Here, 
 $\left[\dfrac{a}{b}\right]^+$ is defined by
$$ 2 \pi \int^{\infty}_0 f \left( \frac{a}{b} + iy \right) dy = \left[\dfrac{a}{b}\right]^+ \cdot \Omega^+_E + 
 \left[\dfrac{a}{b}\right]^- \cdot \Omega^-_E$$
 where $\Omega^{\pm}_E$ are the N\'{e}ron periods of $E$. We write $\Omega_E = \Omega^+_E$.
The \textbf{Mazur-Tate element $\theta_n(f)$ of $f$ at $\mathbb{Q}_n$} is defined by the image of $\theta'_{n+1}(f)$ in $\Lambda_n$.
For simplicity, we assume that $\overline{\rho}$ is irreducible, and then we do not have to care about the integrality of Mazur-Tate elements and the Manin constant issue. See \cite[page 200--201]{kurihara-invent}.

We also define
$$\widetilde{\delta}_n := \sum_{a \in (\mathbb{Z}/n\mathbb{Z})^\times} \bigg( \overline{ \left[ \frac{a}{n} \right]^+} \cdot  \prod_{\ell \vert n} \overline{ \mathrm{log}_{\mathbb{F}_\ell} (a) }  \bigg) \in \mathbb{F}_p$$
where $n$ is the square-free product of Kolyvagin primes, $\overline{ \left[ \frac{a}{n} \right]^+}$ is the mod $p$ reduction of $\left[ \frac{a}{n} \right]^+$, and $\overline{ \mathrm{log}_{\mathbb{F}_\ell} (a) }$ is the mod $p$ reduction of the discrete logarithm of $a$ modulo $\ell$ (with a fixed primitive root modulo $\ell$, indeed).
Here, a prime $\ell$ is a Kolyvagin prime if $(\ell ,Np) =1$, $\ell \equiv 1 \Mod{p}$, and $a_\ell(E) \equiv \ell + 1 \Mod{p}$.
These $\widetilde{\delta}_n$'s were used to study the structure of Selmer groups in \cite{kurihara-iwasawa-2012}.
In addition, the non-vanishing of $\widetilde{\delta}_n$ for some $n$ implies the Iwasawa main conjecture for elliptic curves with any type of good reduction (\cite[Theorem 1.1]{kks}). 

Let $\Sigma$ be a finite set of places of $\mathbb{Q}$ including $p$, $\infty$, and the bad reduction primes of $E$, and $\mathbb{Q}_\Sigma$ be the maximal extension of $\mathbb{Q}$ unramified outside $\Sigma$.
We define the \textbf{Selmer group of $E$ over $\mathbb{Q}_n$} by
$$\mathrm{Sel}(\mathbb{Q}_n, E[p^\infty]) : =\mathrm{ker} \left( 
\mathrm{H}^1(\mathbb{Q}_\Sigma/\mathbb{Q}_n, E[p^\infty]) \to 
\prod_{v} \dfrac{\mathrm{H}^1(\mathbb{Q}_{n,v}, E[p^\infty])}{E(\mathbb{Q}_{n,v}) \otimes \mathbb{Q}_p/\mathbb{Z}_p}
\right) $$
where
$\mathrm{H}^1(\mathbb{Q}_\Sigma/\mathbb{Q}_n, E[p^\infty]) := \mathrm{H}^1(\mathrm{Gal}(\mathbb{Q}_\Sigma/\mathbb{Q}_n), E[p^\infty])$ is the Galois cohomology group,
 $v$ runs over all the (finite) places of $\mathbb{Q}_n$ dividing the places in $\Sigma$, and $\mathbb{Q}_{n,v}$ is the completion of $\mathbb{Q}_n$ at $v$.
We also define
the \textbf{Selmer group of $E$ over $\mathbb{Q}_\infty$} by
$$\mathrm{Sel}(\mathbb{Q}_\infty, E[p^\infty]) := \varinjlim_n \mathrm{Sel}(\mathbb{Q}_n, E[p^\infty]) .$$ 
It is well known that these Selmer groups are independent of the choice of $\Sigma$ (\cite[Corollary I.6.6]{milne-adt}).

We recall the notion of Fitting ideals for the convenience of readers.
For a ring $R$ and a finitely presented $R$-module $M$, take a presentation 
\[
\xymatrix{
R^s \ar[r]^-{h} & R^r \ar[r] & M \ar[r] & 0 
}
\]
where $h \in \mathrm{M}_{r \times s}(R)$.
Then the  
\textbf{Fitting ideal $\mathrm{Fitt}_R(M)$ of $M$ over $R$} is defined to be the ideal of $R$ generated by the determinants of the $r \times r$-minors of the matrix $h$.
It is well known that the Fitting ideal is independent of the choice of a presentation of $M$.

For a $\mathbb{Z}_p$-module $M$, let $M^\vee := \mathrm{Hom}_{\mathbb{Z}_p}(M, \mathbb{Q}_p/\mathbb{Z}_p)$.

\subsubsection{Mazur-Tate's refined conjecture}
In \cite{mazur-tate}, Mazur and Tate gave the following conjecture, which implies Conjecture \ref{conj:mazur-tate-weak-vanishing}. Conjecture \ref{conj:mazur-tate-weak-vanishing} is a refinement of the Birch and Swinnerton-Dyer conjecture (Conjecture \ref{conj:bsd}) in some sense. As we said in $\S$\ref{subsec:overview}, $a_p(E) \not\equiv 1 \Mod{p}$ is always assumed.
\begin{conj}[{\cite[Conjecture 3, ``weak main conjecture"]{mazur-tate}}] \label{conj:mazur-tate-weak-main-conj}
$$\theta_n(f) \in \mathrm{Fitt}_{\Lambda_n}\left( \mathrm{Sel}(\mathbb{Q}_n, E[p^\infty])^\vee \right) .$$
\end{conj}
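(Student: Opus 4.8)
The plan is to descend the $\pm$-Iwasawa main conjecture machinery over $\Lambda$ to the finite layer $\mathbb{Q}_n$, bypassing the non-principal and hard-to-access Fitting ideal of the ordinary dual Selmer group by systematically comparing it with the $\pm$-Selmer groups of Kobayashi, whose Iwasawa-theoretic behaviour runs parallel to the good ordinary case treated in \S\ref{sec:ordinary}. Write $X_n := \mathrm{Sel}(\mathbb{Q}_n, E[p^\infty])^\vee$, $X_n^{\pm} := \mathrm{Sel}^{\pm}(\mathbb{Q}_n, E[p^\infty])^\vee$, and $X_\infty^{\pm} := \mathrm{Sel}^{\pm}(\mathbb{Q}_\infty, E[p^\infty])^\vee$. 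The proof (which is Theorem~\ref{thm:main_theorem}, carried out in \S\ref{sec:putting}) combines three ingredients: an Iwasawa-theoretic divisibility over $\Lambda$, a descent step via control theorems and base change for Fitting ideals, and an explicit finite-layer comparison of $X_n$ with $X_n^{+}$ and $X_n^{-}$.

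First I would record the input over $\Lambda$. Let $\mathcal{L}_p^{\pm}(E) \in \Lambda$ denote Pollack's $\pm$-$p$-adic $L$-functions. Kato's Euler system, in the $\pm$-reformulation of Kobayashi, gives the divisibility $\mathcal{L}_p^{\pm}(E) \in \mathrm{char}_\Lambda\left( X_\infty^{\pm} \right)$; since this is only one half of the $\pm$-main conjecture and holds unconditionally, the weak main conjecture --- unlike the strong one --- will not require the full $\pm$-main conjecture. Because the standing hypotheses imply that $X_\infty^{\pm}$ has no nonzero finite $\Lambda$-submodule, it has projective dimension at most one over $\Lambda$, so $\mathrm{Fitt}_\Lambda\left( X_\infty^{\pm} \right) = \mathrm{char}_\Lambda\left( X_\infty^{\pm} \right)$, and therefore $\mathcal{L}_p^{\pm}(E) \in \mathrm{Fitt}_\Lambda\left( X_\infty^{\pm} \right)$. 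Next, Kobayashi's control theorem for $\pm$-Selmer groups (recalled in \S\ref{sec:tools}; here we are in the supersingular setting $a_p(E)=0$ on which the $\pm$-theory rests) yields $\mathrm{Sel}^{\pm}(\mathbb{Q}_n, E[p^\infty]) \cong \mathrm{Sel}^{\pm}(\mathbb{Q}_\infty, E[p^\infty])^{\Gamma_n}$, hence $X_n^{\pm} \cong X_\infty^{\pm}/\omega_n X_\infty^{\pm}$; by the base-change property of Fitting ideals, $\mathrm{Fitt}_{\Lambda_n}\left( X_n^{\pm} \right)$ is the image of $\mathrm{Fitt}_\Lambda\left( X_\infty^{\pm} \right)$ in $\Lambda_n$, so it contains the image $\overline{\mathcal{L}_p^{\pm}(E)}$ of $\mathcal{L}_p^{\pm}(E)$.

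The heart of the argument is the explicit finite-layer comparison of $X_n$ with $X_n^{+}$ and $X_n^{-}$, carried out in \S\ref{sec:putting}. Changing the local condition at $p$ and invoking Poitou--Tate duality, together with the facts that $E^{+}(\mathbb{Q}_{n,p})$ and $E^{-}(\mathbb{Q}_{n,p})$ are exact orthogonal complements under local Tate duality and that Kobayashi's computation describes $E^{\pm}(\mathbb{Q}_{n,p}) \otimes \mathbb{Q}_p/\mathbb{Z}_p$ explicitly as $\Lambda_n$-modules, one obtains an exact sequence of $\Lambda_n$-modules relating $X_n$, $X_n^{+}$, $X_n^{-}$ and the (non-free) local terms at $p$, the latter being (co)free over the explicit quotient rings $\Lambda_n/(\widetilde{\omega}_n^{\mp})$. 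Feeding this into the Fitting-ideal estimates of Appendix~\ref{sec:fitting}, I would extract an inclusion of the shape
\[
\left( \widetilde{\omega}_n^{+} \right) \cdot \mathrm{Fitt}_{\Lambda_n}\left( X_n^{+} \right) + \left( \widetilde{\omega}_n^{-} \right) \cdot \mathrm{Fitt}_{\Lambda_n}\left( X_n^{-} \right) \subseteq \mathrm{Fitt}_{\Lambda_n}\left( X_n \right)
\]
(with the exact matching of the $\widetilde{\omega}$-factors to the $\pm$-signs dictated by the local computation). Together with the previous paragraph, the left-hand side contains $\widetilde{\omega}_n^{+}\,\overline{\mathcal{L}_p^{+}(E)} + \widetilde{\omega}_n^{-}\,\overline{\mathcal{L}_p^{-}(E)}$, which by Pollack's decomposition of the Mazur--Tate element (also recalled in \S\ref{sec:tools}) equals $\theta_n(f)$ up to a unit of $\Lambda_n$. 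Hence $\theta_n(f) \in \mathrm{Fitt}_{\Lambda_n}\left( X_n \right) = \mathrm{Fitt}_{\Lambda_n}\left( \mathrm{Sel}(\mathbb{Q}_n, E[p^\infty])^\vee \right)$, which is Conjecture~\ref{conj:mazur-tate-weak-main-conj}.

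I expect the main obstacle to be the finite-layer comparison step: over $\Lambda_n$ the group ring is no longer regular, neither the dual Selmer groups nor the local modules $E^{\pm}(\mathbb{Q}_{n,p}) \otimes \mathbb{Q}_p/\mathbb{Z}_p$ need have finite projective dimension, and --- as stressed in \S\ref{subsec:overview} --- one cannot discard ``finite errors''; so the passage from the Poitou--Tate exact sequence to the displayed Fitting-ideal inclusion requires the delicate module-theoretic lemmas of Appendix~\ref{sec:fitting}, especially a careful treatment of the cokernel of the Poitou--Tate localization map and of the non-freeness of the local terms. A secondary point to verify is that the standing hypotheses ($a_p(E) \not\equiv 1 \Mod{p}$, irreducibility of $\overline{\rho}$, and the absence of rational $p$-torsion) indeed guarantee both the clean form of the control theorem at every layer $n$ and the no-finite-submodule property used in the first step.
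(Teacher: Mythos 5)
Your proposal follows the same architecture as the paper's proof in \S\ref{sec:putting}: the one-sided Kato--Kobayashi divisibility $L^{\mp}_p(\mathbb{Q}_\infty,f) \in \mathrm{char}_\Lambda\bigl(\mathrm{Sel}^{\pm}(\mathbb{Q}_\infty,E[p^\infty])^\vee\bigr)$ (so the full $\pm$-main conjecture is indeed not needed), the passage from characteristic to Fitting ideals via the no-finite-submodule property, descent through the $\pm$-control theorem, the change-of-local-condition comparison of $\mathrm{Sel}$ with $\mathrm{Sel}^{\pm}$ at the finite layer with the local quotient having Fitting ideal $\widetilde{\omega}^{\mp}_n\Lambda_n$, and finally Proposition \ref{prop:pollack_theta_pm}. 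So the strategy is the paper's own, and the overall conclusion is reachable along your route.

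One step, however, is stated more strongly than what the tools give, and it is exactly the point where the paper has to be careful. The $\pm$-control theorem (Theorem \ref{thm:pm-control}) compares only the $\omega^{\pm}_n$-torsion parts, i.e.\ it gives $\mathrm{Sel}^{\pm}(\mathbb{Q}_n,E[p^\infty])[\omega^{\pm}_n] \hookrightarrow \mathrm{Sel}^{\pm}(\mathbb{Q}_\infty,E[p^\infty])[\omega^{\pm}_n]$ (an isomorphism only under $p\nmid\mathrm{Tam}(E)$, which is not assumed for the weak conjecture; fortunately only the injectivity direction, dually a surjection, is needed). Consequently you may not conclude $X^{\pm}_n \cong X^{\pm}_\infty/\omega_n X^{\pm}_\infty$ nor that $\mathrm{Fitt}_{\Lambda_n}(X^{\pm}_n)$ contains the image of $L^{\mp}_p(\mathbb{Q}_\infty,f)$; what one gets, after Lemma \ref{lem:fitting_quotient} and Lemma \ref{lem:fitting-ideals-base-change}, is only the containment modulo $\omega^{\pm}_n$, that is, $\bigl(L^{\mp}_p(\mathbb{Q}_\infty,f) \bmod \omega_n\bigr) + (\omega^{\pm}_n) \subseteq \mathrm{Fitt}_{\Lambda_n}(X^{\pm}_n) + (\omega^{\pm}_n)$. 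This ambiguity is then absorbed precisely by multiplying with $\widetilde{\omega}^{\mp}_n$ (since $\widetilde{\omega}^{\mp}_n\omega^{\pm}_n = \omega_n$), which is Corollary \ref{cor:the-key-lemma}; since your final assembly multiplies by the $\widetilde{\omega}$-factors anyway, this repairs your descent step without changing the rest. Two smaller points: the sign matching is $\widetilde{\omega}^{\mp}_n \cdot \mathrm{Fitt}_{\Lambda_n}(X^{\pm}_n) \subseteq \mathrm{Fitt}_{\Lambda_n}(X_n)$ (opposite to the pairing you wrote, though you hedged this), and Proposition \ref{prop:pollack_theta_pm} says $\theta_n(f)$ equals the single parity-appropriate term $\widetilde{\omega}^{\mp}_n L^{\mp}_p(\mathbb{Q}_\infty,f)$ in $\Lambda_n$ (for $n$ even/odd respectively), not a unit multiple of the sum of both terms; membership of each term in $\mathrm{Fitt}_{\Lambda_n}(X_n)$ still yields the conjecture.
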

\begin{rem}
Note that the original statement covers general abelian extensions of $\mathbb{Q}$ as we mentioned in $\S$\ref{subsec:overview}.
There are other approaches towards Conjecture \ref{conj:mazur-tate-weak-main-conj} due to 
Bley-Macias Castillo \cite[Theorem 2.12]{bley-castillo} assuming the $p$-part of the relevant equivariant Tamagawa number conjecture, 
Emerton-Pollack-Weston \cite{epw2} using the $p$-adic local Langlands correspondence as well as Kato's zeta elements, and
C.~Popescu using the theory of 1-motives.
We are informed that T. Kataoka proved the weak main conjecture over more general abelian extensions under certain assumptions by developing equivariant $\pm$-Iwasawa theory for elliptic curves and by adapting our strategy in his Ph.D. thesis.
\end{rem}
\subsubsection{The (refined)$^2$ conjecture}
Comparing with Conjecture \ref{conj:mazur-tate-weak-main-conj}, the second named author proposed the following more refined conjecture, which we call the ``strong main conjecture". (cf. \cite[Remark after Conjecture 3]{mazur-tate}.)
This conjecture can be regarded as a refinement of the Iwasawa main conjecture.  As we said in $\S$\ref{subsec:overview}, $a_p(E) \not\equiv 1 \Mod{p}$ is always assumed.
\begin{conj}[{\cite[Conjecture 0.3, ``strong main conjecture"]{kurihara-invent}}] \label{conj:kurihara}
Let $E$ be an elliptic curve over $\mathbb{Q}$ with good reduction at an odd prime $p$.
If $E(\mathbb{Q})[p]$ is trivial and $p \nmid \mathrm{Tam}(E)$, then
$$\left( \theta_n(f), \nu_{n-1,n}\left(  \theta_{n-1}(f) \right) \right) = \mathrm{Fitt}_{\Lambda_n}\left( \mathrm{Sel}(\mathbb{Q}_n, E[p^\infty])^\vee \right) .$$
Here, $\nu_{n-1,n}$ is the trace map $\Lambda_{n-1} \to \Lambda_n$ defined by $\sigma \mapsto \sum_{\tau \mapsto \sigma} \tau$ for $\sigma \in \mathrm{Gal}(\mathbb{Q}_{n-1}/\mathbb{Q})$ where $\tau$ runs over all elements of $\mathrm{Gal}(\mathbb{Q}_{n}/\mathbb{Q})$ projecting to $\sigma$.
\end{conj}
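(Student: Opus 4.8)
In the good ordinary case with $a_p(E) \not\equiv 1 \pmod p$ the Fitting ideal $\Fitt_{\Lambda_n}(\Sel(\Q_n, E[p^\infty])^\vee)$ is principal and the conjecture follows from the classical package --- the Iwasawa main conjecture, the control theorem, the absence of proper $\Lambda$-submodules of finite index --- reviewed in $\S$\ref{sec:ordinary}; the substance is the supersingular case, which I would attack by descending the $\pm$-main conjecture over $\Lambda$ to $\Lambda_n$ and translating $\pm$-Selmer data into the arithmetic of $\Sel(\Q_n, E[p^\infty])$. The bridge between the analytic and algebraic sides is Pollack's factorization of the Mazur--Tate element: writing $L^{\pm}_p(E) \in \Lambda$ for the $\pm$-$p$-adic $L$-functions of Kobayashi--Pollack, there is a unit $u \in \Lambda_n^\times$ with
$$\theta_n(f) = u \cdot \big( \widetilde{\omega}^{-}_n \cdot L^{+}_p(E) + \widetilde{\omega}^{+}_n \cdot L^{-}_p(E) \big)$$
in $\Lambda_n$; and since $\nu_{n-1,n}$ is, after lifting from $\Lambda_{n-1}$ to $\Lambda$, multiplication by $\Phi_n(1+X) = \omega_n/\omega_{n-1}$ followed by reduction modulo $\omega_n$, the element $\nu_{n-1,n}(\theta_{n-1}(f))$ is the parallel combination of $L^{\pm}_p(E)$ weighted by $\Phi_n(1+X)\widetilde{\omega}^{\mp}_{n-1}$, reduced modulo $\omega_n$. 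Hence the ideal on the left of Conjecture \ref{conj:kurihara} is governed entirely by $L^{+}_p(E)$, $L^{-}_p(E)$ and the polynomials $\omega^{\pm}_n$, $\widetilde{\omega}^{\pm}_n$.

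On the algebra side, assume the $\pm$-main conjecture, $\mathrm{char}_\Lambda\big( \Sel^{\pm}(\Q_\infty, E[p^\infty])^\vee \big) = \big( L^{\pm}_p(E) \big)$. Because $\Sel^{\pm}(\Q_\infty, E[p^\infty])^\vee$ is $\Lambda$-torsion with no nonzero finite $\Lambda$-submodule, it has projective dimension one over $\Lambda$, so its $\Lambda$-Fitting ideal is principal and equals $\big( L^{\pm}_p(E) \big)$. Pushing this through Kobayashi's control theorem for $\pm$-Selmer groups --- whose kernel and cokernel are controlled once $p \nmid \mathrm{Tam}(E)$ and $E(\Q)[p] = 0$ --- I would deduce that $\Fitt_{\Lambda_n}\big( \Sel^{\pm}(\Q_n, E[p^\infty])^\vee \big)$ is again \emph{principal}, generated by the image of $L^{\pm}_p(E)$ corrected by the $\omega^{\pm}_n$- or $\widetilde{\omega}^{\pm}_n$-factor recording the finite-layer local condition at $p$. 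This good behaviour is special to the $\pm$-objects and fails for the genuine Selmer module in the supersingular case.

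I then compare the genuine Selmer group with the $\pm$-Selmer groups in layer $n$ via the exact sequence relating $\Sel(\Q_n, E[p^\infty])$ to the two $\pm$-Selmer groups through the local cohomology of $E[p^\infty]$ at the prime of $\Q_n$ above $p$ --- the finite-layer incarnation of Perrin-Riou's construction of an algebraic $p$-adic $L$-function. Feeding the principal ideals above into this sequence and applying the Fitting-ideal manipulations of Appendix \ref{sec:fitting}, one obtains that $\Fitt_{\Lambda_n}\big( \Sel(\Q_n, E[p^\infty])^\vee \big)$ is generated by two elements which, after Pollack's factorization, are exactly $\theta_n(f)$ and $\nu_{n-1,n}(\theta_{n-1}(f))$; this simultaneously explains why the Fitting ideal is genuinely non-principal here and why these are the correct generators. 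The inclusion $\big( \theta_n(f), \nu_{n-1,n}(\theta_{n-1}(f)) \big) \subseteq \Fitt_{\Lambda_n}\big( \Sel(\Q_n, E[p^\infty])^\vee \big)$ can be extracted from the weak main conjecture (Theorem \ref{thm:main_theorem}) applied at layers $n$ and $n-1$ together with the functoriality of Fitting ideals under $\nu_{n-1,n}$; the reverse inclusion is the new content, and it is here that the $\pm$-main conjecture supplies the needed ``lower bound'' on the Selmer group.

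The main obstacle --- and the reason the full equality is attained only under the extra hypotheses of Theorem \ref{thm:main_theorem_3} --- is that the comparison in the previous step is \emph{not} an isomorphism: it carries an error term equal to the cokernel of a natural map $g_n$ appearing in the Perrin-Riou-style construction (equivalently, in the comparison with the fine Selmer group), and without controlling $\mathrm{coker}(g_n)$ one only gets the weaker statement of Theorem \ref{thm:main_theorem_2}, in which the reverse inclusion holds up to this error. The plan is therefore to isolate $\mathrm{coker}(g_n)$, relate it to the structure of the fine Selmer group over the cyclotomic $\Z_p$-extension, and show that it vanishes when that fine Selmer group is ``all Mordell--Weil'' (as in Example \ref{exam:strong_main_conjecture}) or under the other hypotheses of Theorem \ref{thm:main_theorem_3}; this vanishing, carried out in $\S$\ref{sec:vanishing_coker_g_n}, closes both inclusions and yields Conjecture \ref{conj:kurihara}. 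A secondary technical point is that the control-theoretic and finite-index inputs must all be re-established in the supersingular setting, where $\Sel(\Q_\infty, E[p^\infty])^\vee$ itself is not $\Lambda$-torsion while its $\pm$- and fine counterparts are; this is exactly where the running hypotheses $a_p(E) \not\equiv 1 \pmod p$, $E(\Q)[p] = 0$, and $p \nmid \mathrm{Tam}(E)$ are used.
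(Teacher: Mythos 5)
Your plan is, in substance, the route the paper itself takes: the ordinary case from the classical package of \S\ref{sec:ordinary}; in the supersingular case an upper bound for $\mathrm{Fitt}_{\Lambda_n}(\mathrm{Sel}(\mathbb{Q}_n,E[p^\infty])^\vee)$ by descending the $\pm$-main conjecture and comparing local conditions with the $\pm$-Selmer groups (\S\ref{sec:putting}), a lower bound by the Perrin-Riou-style comparison with fine Selmer groups carrying the error term $\mathrm{coker}\,g_n=\mathrm{Err}_n$ (\S\ref{sec:non-CM}), and the vanishing of that error under (fineNF) and (\textrm{{\cyr SH}}) (\S\ref{sec:vanishing_coker_g_n}). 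Like the paper, this only establishes Conjecture \ref{conj:kurihara} under those additional hypotheses (plus surjectivity of $\overline{\rho}$, $a_p(E)=0$, $p\nmid\mathrm{Tam}(E)$ and the $\pm$-main conjecture), not unconditionally; your proposal acknowledges this, so as a strategy it is faithful to the paper.

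Two intermediate claims, however, are not correct as stated. First, the asserted identity $\theta_n(f)=u\cdot\bigl(\widetilde{\omega}^{-}_n L^{+}_p+\widetilde{\omega}^{+}_n L^{-}_p\bigr)$ with $u\in\Lambda_n^\times$ is not Proposition \ref{prop:pollack_theta_pm}: Pollack's result is the congruence $\theta_n(f)\equiv\widetilde{\omega}^{\mp}_n\cdot L^{\mp}_p(\mathbb{Q}_\infty,f)\Mod{\omega_n}$, a \emph{single} parity term depending on whether $n$ is even or odd; the generator of the opposite parity enters only through $\nu_{n-1,n}(\theta_{n-1}(f))\equiv\Phi_n(1+X)\,\widetilde{\omega}^{\pm}_{n-1}L^{\pm}_p\equiv\widetilde{\omega}^{\pm}_n L^{\pm}_p\Mod{\omega_n}$, which is exactly why both $\theta_n(f)$ and the trace of $\theta_{n-1}(f)$ are needed to generate the ideal. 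Your final ideal-level conclusion is the correct one, but the displayed factorization is not. Second, the claim that $\mathrm{Fitt}_{\Lambda_n}(\mathrm{Sel}^{\pm}(\mathbb{Q}_n,E[p^\infty])^\vee)$ is principal, generated by a corrected image of $L^{\pm}_p$, does not follow from the $\pm$-control theorem, which only identifies $\mathrm{Sel}^{\pm}(\mathbb{Q}_n,E[p^\infty])[\omega^{\pm}_n]$ with $\mathrm{Sel}^{\pm}(\mathbb{Q}_\infty,E[p^\infty])[\omega^{\pm}_n]$ and says nothing about the full module at level $n$. The paper avoids this: for the upper bound it only obtains the inclusion of Corollary \ref{cor:the-key-lemma} after multiplying by $\widetilde{\omega}^{\mp}_n$, which is then absorbed by the local quotient $E(\mathbb{Q}_{n,p})/E^{\pm}(\mathbb{Q}_{n,p})$ whose dual has Fitting ideal $\widetilde{\omega}^{\mp}_n\Lambda_n$; and for the lower bound it does not use finite-layer $\pm$-Selmer groups at all, but the explicit $2\times 3$ presentation of $\mathcal{Y}'_n$ via the $\pm$-Coleman maps together with Kato's main conjecture, giving $\mathrm{Fitt}_{\Lambda_n}(\mathcal{Y}'_n)=(\widetilde{\omega}^+_n b_1,\widetilde{\omega}^-_n b_2)$. (A minor point: with the paper's sign convention the $\pm$-main conjecture pairs $\mathrm{char}_\Lambda(\mathrm{Sel}^{\pm}(\mathbb{Q}_\infty,E[p^\infty])^\vee)$ with $L^{\mp}_p$, not $L^{\pm}_p$.) With these repairs your outline coincides with the argument of Theorems \ref{thm:main_theorem}, \ref{thm:main_theorem_2} and \ref{thm:main_theorem_3}.
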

\begin{rem}
This conjecture explains the growth of $\mathrm{Sel}(\mathbb{Q}_n, E[p^\infty])$ as $n$ goes to infinity.
\end{rem}
The second named author proved Conjecture \ref{conj:kurihara} for the ``most basic" case (cf. \cite[$\S$5]{iovita-pollack}) using Kato's zeta elements.
\begin{thm}[{\cite[Theorem 0.1.(4)]{kurihara-invent}}] \label{thm:kurihara-most-basic}
If we further assume
\begin{enumerate}
\item $E$ has good supersingular reduction at $p$,
\item $p$ does not divide $\dfrac{L(E,1)}{\Omega_E}$, and
\item $\overline{\rho}$ is surjective
\end{enumerate}
as well as the assumptions of Conjecture \ref{conj:kurihara}, then Conjecture \ref{conj:kurihara} is true.
\end{thm}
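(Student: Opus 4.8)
The plan is to carry over the strategy that settles the good ordinary non‑anomalous case --- the Iwasawa main conjecture, the absence of nonzero finite $\Lambda$-submodules in the relevant Selmer groups, and a control theorem --- but to run it for the Kobayashi--Pollack $\pm$-objects, and then transport the outcome to the finite layers by an explicit local computation at $p$. What makes everything sharp is that hypothesis (2) forces the $\pm$-$p$-adic $L$-functions $L^{\pm}_p(E)\in\Lambda$ to be units (this is the translation of ``minimality'' into $\pm$-Iwasawa theory: the interpolation formulas of Kobayashi and Pollack give that the value of $L^{\pm}_p(E)$ at the trivial character is $L(E,1)/\Omega_E$ up to a $p$-adic unit, where $a_p(E)\not\equiv 1\Mod{p}$ is used, and a power series over $\Zp$ with unit constant term is a unit). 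Kato's Euler system divisibility for the $\pm$-main conjecture then gives $\mathrm{char}_{\Lambda}\big(\Sel^{\pm}(\Q_\infty,E[p^\infty])^{\vee}\big)\supseteq(L^{\pm}_p(E))=\Lambda$, so these characteristic ideals are trivial; as $\Sel^{\pm}(\Q_\infty,E[p^\infty])^{\vee}$ has no nonzero finite $\Lambda$-submodule (Kobayashi), $\Sel^{\pm}(\Q_\infty,E[p^\infty])=0$. Kobayashi's $\pm$-control theorem, valid since $\overline{\rho}$ is surjective and $p\nmid\mathrm{Tam}(E)$, gives $\Sel^{\pm}(\Q_n,E[p^\infty])=0$ for all $n$; in particular $\Sel(\Q,E[p^\infty])=0$, and the fine Selmer group of $E$ over $\Q_\infty$, being contained in $\Sel^{+}(\Q_\infty,E[p^\infty])$, vanishes too.

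By Kato's theorems (using that $\overline{\rho}$ is surjective), the Iwasawa cohomology $H^1_{\mathrm{Iw}}:=\varprojlim_n H^1(\Q_\Sigma/\Q_n,T_pE)$ is free of rank one over $\Lambda$, and $H^2_{\mathrm{Iw}}=0$ (it is $\Lambda$-torsion by Kato, and its vanishing is equivalent to that of the fine Selmer group over $\Q_\infty$, just established); moreover Kato's explicit reciprocity law in its $\pm$-form (Kobayashi) identifies the two components of $\mathrm{loc}_p$ of Kato's zeta element $\mathbf{z}\in H^1_{\mathrm{Iw}}$ under the Coleman maps $\mathrm{Col}^{\pm}$ with $L^{+}_p(E)$ and $L^{-}_p(E)$ (on the nose, with the normalizations of $\S$\ref{subsubsec:setting}, since $\overline{\rho}$ irreducible disposes of the Manin constant). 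As $L^{\pm}_p(E)\in\Lambda^{\times}$ and $\Sel(\Q,E[p^\infty])=0$, Kato's divisibility is an equality and $\mathbf{z}$ is a $\Lambda$-basis of $H^1_{\mathrm{Iw}}$; consequently $H^1(\Q_\Sigma/\Q_n,T_pE)$ is $\Lambda_n$-free of rank one with basis $\mathbf{z}\bmod\omega_n$, and $H^2(\Q_\Sigma/\Q_n,T_pE)=0$.

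Now fix $n$. Since $E(\Q_n)$ has rank zero and no $p$-torsion, $\Sel(\Q_n,E[p^\infty])$ is finite and the integral Selmer group $\Sel(\Q_n,T_pE)$ vanishes; feeding this, the previous paragraph, and the vanishing of the local terms away from $p$ (forced by $p\nmid\mathrm{Tam}(E)$ and the irreducibility of $E[p]$) into the Poitou--Tate exact sequence over $\Q_n$, everything collapses to a short exact sequence
\[
0\longrightarrow\Lambda_n\xrightarrow{\ \mathrm{loc}_p(\mathbf{z})\ \bmod\ \omega_n\ }\mathcal H_n\longrightarrow\Sel(\Q_n,E[p^\infty])^{\vee}\longrightarrow 0,
\]
where $\mathcal H_n$ is the quotient of the local Iwasawa cohomology $H^1(\Q_{n,p},T_pE)$ by the image of $E(\Q_{n,p})$, equivalently the Pontryagin dual of $E(\Q_{n,p})\otimes\Qp/\Zp$. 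Kobayashi's analysis of the local cohomology at $p$ supplies an explicit finite presentation of $\mathcal H_n$ over $\Lambda_n$ whose matrix is built from $\omega^{\pm}_n$ and $\widetilde{\omega}^{\pm}_n$; splicing it with the displayed surjection gives a finite presentation of $\Sel(\Q_n,E[p^\infty])^{\vee}$, whose ideal of maximal minors is its Fitting ideal. Plugging in the identification $\mathrm{loc}_p(\mathbf{z})\leftrightarrow(L^{+}_p(E),L^{-}_p(E))$, Pollack's interpolation formula in its finite-layer form expressing $\theta_m(f)$ (for $m=n-1,n$) as an explicit $\Lambda_m$-combination of $L^{\pm}_p(E)$ with $\widetilde{\omega}^{\pm}_m$, and the fact that $\nu_{n-1,n}$ is multiplication by $\Phi_n(1+X)$, one evaluates those minors and finds that they generate exactly $\big(\theta_n(f),\nu_{n-1,n}(\theta_{n-1}(f))\big)$; the polynomial bookkeeping is of the kind collected in Appendix~\ref{sec:fitting}.

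The main obstacle is precisely this last computation. As $\Lambda_n=\Zp[X]/(\omega_n)$ is not regular one cannot simply diagonalize the presentation matrix, and one must reconcile three essentially independent descriptions --- Kobayashi's $\omega^{\pm}_n$-presentation of the local cohomology at $p$, Kato's explicit reciprocity law relating $\mathrm{loc}_p(\mathbf{z})$ to $L^{\pm}_p(E)$, and Pollack's finite-layer interpolation relating $\theta_n(f)$ to $L^{\pm}_p(E)$ --- so that the maximal minors come out to be \emph{exactly} the Mazur--Tate elements and their traces, not merely the right ideal up to a unit or up to a spurious correction at the bad primes (here $p\nmid\mathrm{Tam}(E)$ and $\overline{\rho}$ surjective are what remove the latter). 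By contrast, the earlier steps are formal once Kato's and Kobayashi's theorems are granted, so all the arithmetic content beyond those inputs is concentrated here.
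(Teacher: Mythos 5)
First, note that the paper does not prove this statement at all: Theorem \ref{thm:kurihara-most-basic} is quoted from \cite[Theorem 0.1.(4)]{kurihara-invent}, whose proof works directly with Kato's zeta elements and Galois cohomology at the finite layers (the $P_n$-pairing), and—as the remark immediately following the theorem stresses—does \emph{not} assume $a_p(E)=0$. This is the first genuine gap in your proposal: your entire argument runs through Kobayashi--Pollack $\pm$-Iwasawa theory ($\pm$-Selmer groups, $\mathrm{Col}^{\pm}$, $L^{\pm}_p$, the $\pm$-control theorem), which is only available when $a_p(E)=0$. The theorem as stated covers, e.g., $p=3$ with $a_3(E)=\pm 3$ (still supersingular and $a_p\not\equiv 1\Mod p$), and for such cases your strategy has no starting point; the paper even remarks that the $\sharp/\flat$-theory for $a_p\neq 0$ does not behave well at finite layers. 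So at best you are proving Pollack's-style variant of the statement under the extra hypothesis $a_p(E)=0$, not the quoted theorem.

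Second, even granting $a_p(E)=0$, the decisive step is missing rather than proved. Your reductions (hypothesis (2) forces $L^{\pm}_p\in\Lambda^\times$ via the interpolation at the trivial character, Kato's divisibility then kills $\mathrm{Sel}^{\pm}(\mathbb{Q}_\infty,E[p^\infty])$ using the no-finite-submodule theorem, $\mathbb{H}^1_{\mathrm{glob}}(T)$ is free of rank one with basis $\mathbf{z}_{\mathrm{Kato}}$, and descent collapses the Poitou--Tate sequence) are sound and closely parallel the machinery of $\S$\ref{sec:non-CM}--\ref{sec:vanishing_coker_g_n} of the paper, where the vanishing of the fine Selmer group makes the error term $\mathrm{Err}_n$ disappear. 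But the whole content of the equality $\left(\theta_n(f),\nu_{n-1,n}(\theta_{n-1}(f))\right)=\mathrm{Fitt}_{\Lambda_n}\left(\mathrm{Sel}(\mathbb{Q}_n,E[p^\infty])^\vee\right)$ lies in the explicit presentation of the local quotient $\mathrm{H}^1(\mathbb{Q}_{n,p},T)/\left(E(\mathbb{Q}_{n,p})\otimes\mathbb{Z}_p\right)$ in terms of $\omega^{\pm}_n,\widetilde{\omega}^{\pm}_n$, the splicing with $\mathrm{loc}\,\mathbf{z}_{\mathrm{Kato}}$, and the computation of the $2\times 2$ minors showing they generate exactly $\left(\widetilde{\omega}^+_nL^+_p,\widetilde{\omega}^-_nL^-_p\right)\equiv\left(\theta_n(f),\nu_{n-1,n}(\theta_{n-1}(f))\right)\Mod{\omega_n}$ (this is precisely what Proposition \ref{prop:coker_coleman}, the choice of basis $e_1,e_2$, the relation matrix $A$, and Proposition \ref{prop:fitting_difference} accomplish in the paper, together with Proposition \ref{prop:pollack_theta_pm}). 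You explicitly defer this as ``the main obstacle'' and ``polynomial bookkeeping'' without carrying it out, so the proposal describes a plausible route (essentially the one the paper itself takes for Theorems \ref{thm:main_theorem_2} and \ref{thm:main_theorem_3}, specialized to the ``most basic'' case) but does not constitute a proof, and in any case it cannot recover the theorem in the generality in which it is stated.
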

\begin{rem}
Note that $a_p(E) = 0$ is not assumed in Theorem \ref{thm:kurihara-most-basic}.
In \cite[Theorem 1.1.(3)]{pollack-algebraic}, R.\@ Pollack proved an algebraic analogue of Theorem \ref{thm:kurihara-most-basic} using a formal group argument assuming $a_p(E) = 0$. His work does not require the surjectivity of $\overline{\rho}$.
\end{rem}
\begin{rem}
For the case of $p=2$, Conjecture \ref{conj:kurihara} may not hold. See \cite[Remark 1.2]{pollack-algebraic} and \cite{kurihara-otsuki} for detail.
\end{rem}

Pollack reformulates Conjecture \ref{conj:kurihara} in terms of his signed $p$-adic $L$-functions under the assumption $a_p(E) = 0$.
See $\S$\ref{subsec:basic_objects} for the characterization of the $\pm$-$p$-adic $L$-functions $L^{\pm}_p(\mathbb{Q}_\infty,f)$.
We recall a proposition of R.\@ Pollack, which shows us the connection between Mazur-Tate elements and $\pm$-$p$-adic $L$-functions.
\begin{prop}[{\cite[Proposition 6.18]{pollack-thesis}}] \label{prop:pollack_theta_pm}
$$\theta_n(f) \equiv \widetilde{\omega}^{\mp}_n \cdot L^{\mp}_p(\mathbb{Q}_\infty,f) \Mod{\omega_n}$$
in $\Lambda_n$ if $n$ is even/odd, respectively.
\end{prop}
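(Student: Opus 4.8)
The plan is to prove the congruence by evaluating both sides at characters and invoking the distribution relation for Mazur--Tate elements. Since $\omega_n(X) = (1+X)^{p^n}-1$ is separable over $\mathbb{Q}_p$ and $\Lambda_n \simeq \mathbb{Z}_p[X]/\omega_n$ is $\mathbb{Z}_p$-torsion free, the product of all characters $\chi$ of $\mathrm{Gal}(\mathbb{Q}_n/\mathbb{Q})$ gives an embedding $\Lambda_n \hookrightarrow \prod_{\chi}\overline{\mathbb{Q}}_p$ after inverting $p$; hence it suffices to prove $\chi(\theta_n(f)) = \chi(\widetilde{\omega}^{\mp}_n)\cdot\chi\bigl(L^{\mp}_p(\mathbb{Q}_\infty,f)\bigr)$ for every such $\chi$, the upper/lower sign being taken according as $n$ is even/odd. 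The key input is the distribution (Hecke-at-$p$) relation: writing $\pi_{n,n-1}\colon \Lambda_n \twoheadrightarrow \Lambda_{n-1}$ for the natural projection, one has
\[
\pi_{n,n-1}\bigl(\theta_n(f)\bigr) = a_p(E)\cdot\theta_{n-1}(f) - \nu_{n-2,n-1}\bigl(\theta_{n-2}(f)\bigr) \qquad (n \geq 2),
\]
obtained by applying $T_p$ to the modular symbol $\{a/p^{n+1},\infty\}$ as in \cite{mazur-tate}. With $a_p(E)=0$ this reads $\pi_{n,n-1}(\theta_n(f)) = -\nu_{n-2,n-1}(\theta_{n-2}(f))$.

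The first step is to deduce the divisibility $\widetilde{\omega}^{\mp}_n \mid \theta_n(f)$ in $\Lambda_n$. Evaluate the relation at $\chi$ of order $p^m$ with $m \leq n-1$, so that $\chi$ factors through $\Lambda_{n-1}$. Since $\nu_{n-2,n-1}$ is a lift followed by multiplication by $\Phi_{n-1}(1+X) = \omega_{n-1}/\omega_{n-2}$, an elementary root-of-unity computation gives $\chi(\Phi_{n-1}(1+X)) = 0$ if $m = n-1$ and $\chi(\Phi_{n-1}(1+X)) = p$ if $m \leq n-2$. Hence $\chi(\theta_n(f)) = 0$ when $m = n-1$, while $\chi(\theta_n(f)) = -p\cdot\chi(\theta_{n-2}(f))$ when $m \leq n-2$; iterating (the relation being available for indices $\geq 2$, with the cases $n \leq 2$ handled directly) shows that $\theta_n(f)$ vanishes at every character of order $p^m$ with $1 \leq m \leq n-1$ and $m \not\equiv n \pmod 2$. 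As each $\Phi_m(1+X)$ is a distinguished polynomial, lifting $\theta_n(f)$ to a polynomial of degree $< \deg\omega_n$ and dividing shows $\Phi_m(1+X) \mid \theta_n(f)$ in $\mathbb{Z}_p[X]$ for all such $m$, and since these factors are pairwise coprime we obtain $\widetilde{\omega}^{\mp}_n \mid \theta_n(f)$ in $\Lambda_n$; write $\theta_n(f) = \widetilde{\omega}^{\mp}_n \cdot c_n$.

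It then remains to verify $\chi(\theta_n(f)) = \chi(\widetilde{\omega}^{\mp}_n)\cdot\chi(L^{\mp}_p(\mathbb{Q}_\infty,f))$ at the characters $\chi$ of order $p^m$ with $m \equiv n \pmod 2$ (including $m = 0$), for which $\chi(\widetilde{\omega}^{\mp}_n) \neq 0$. Here both sides are governed by explicit interpolation: $\chi(\theta_n(f))$ is the usual Mazur--Tate special value --- a normalized $\tau(\chi)\,L(E,\chi^{-1},1)/\Omega_E$ with its Euler factor at $p$, and $L(E,1)/\Omega_E$ for $\chi$ trivial --- while $\chi(L^{\mp}_p(\mathbb{Q}_\infty,f))$ is prescribed by the characterization of the $\pm$-$p$-adic $L$-functions recalled in $\S$\ref{subsec:basic_objects}; computing $\chi(\widetilde{\omega}^{\mp}_n)$ as a product of values $\Phi_{p^{m'}}(\chi(\gamma))$ then reconciles the powers of $p$ appearing on the two sides and finishes the argument. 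I expect the real obstacle to be not this bookkeeping but the very existence of $L^{\mp}_p(\mathbb{Q}_\infty,f)$ in $\Lambda$ with the asserted interpolation: the system $(c_n)$ above is a priori only compatible in $\varprojlim_n \Lambda_n/(\omega^{\pm}_n)$, and its convergence to an element of $\Lambda$ is exactly where the supersingular hypothesis enters, requiring the ``order $1/2$'' (half-integral) growth estimate on the period integrals $[a/p^{n+1}]^+$, equivalently Kato's bound on $\theta_n(f)$. I would import this estimate rather than reprove it; equivalently, following Pollack, one organizes the roots $\alpha,\beta=-\alpha$ of $X^2 - a_p(E)X + p = X^2+p$ and uses $\alpha^2 = -p$ to split the (non-integral) analytic $p$-adic $L$-function $L_p(f,\alpha)$ into even and odd ``half-logarithm'' parts, which are the $L^{\pm}_p(\mathbb{Q}_\infty,f)$, whereupon the congruence drops out of the same distribution relation.
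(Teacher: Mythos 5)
The paper itself gives no proof of this proposition --- it is quoted from Pollack's thesis --- so you are reconstructing an argument, and your outline is a reasonable one: since $\omega_n$ is separable and $\Lambda_n$ is $\mathbb{Z}_p$-free, it does suffice to compare values at all characters of $\mathrm{Gal}(\mathbb{Q}_n/\mathbb{Q})$; your use of the three-term relation with $a_p(E)=0$ to kill $\theta_n(f)$ at the characters of order $p^m$ with $m\not\equiv n \ (\mathrm{mod}\ 2)$, and hence to get $\widetilde{\omega}^{\mp}_n \mid \theta_n(f)$ in $\Lambda_n$, is correct and complete. This is a different route from the one underlying the cited result (which derives the congruence from the construction of $L^{\pm}_p$ by splitting $L_p(f,\alpha)$ with half-logarithms and from the expression of $\theta_n$ through $\vartheta_n(f_\alpha)$, $\vartheta_n(f_\beta)$ modulo $\omega_n$ --- essentially the alternative you sketch in your last paragraph), and importing the existence and interpolation property of $L^{\pm}_p$ is legitimate, since the paper does the same.

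The gap is in the step you dismiss as bookkeeping: it cannot ``reconcile'' to the statement as literally written, and carrying it out is where the content lies. Concretely, your own ingredients give, for $a_p(E)=0$, $\pi_{n+2,n}(\theta_{n+2}(f)) = -p\,\theta_n(f)$, while $\widetilde{\omega}^{\mp}_{n+2} = \widetilde{\omega}^{\mp}_n\cdot\Phi_{n+1}(1+X) \equiv p\,\widetilde{\omega}^{\mp}_n \Mod{\omega_n}$. If the displayed congruence held with the same sign at two consecutive levels $n$ and $n+2$ of the same parity, projecting the level-$(n+2)$ congruence to $\Lambda_n$ would force $-p\,\theta_n(f) \equiv p\,\theta_n(f) \Mod{\omega_n}$, i.e.\ $\theta_n(f)=0$, which is false in general. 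So when you compare $\chi(\theta_n(f)) = (-p)^{(n-m)/2}\chi(\theta_m(f))$ with $\chi(\widetilde{\omega}^{\mp}_n)\chi(L^{\mp}_p(\mathbb{Q}_\infty,f)) = p^{(n-m)/2}\cdot(\text{interpolation constant})$, an $n$-dependent unit (a sign of the shape $(-1)^{\lfloor n/2\rfloor}$ in these conventions) necessarily survives; you must either track it explicitly, fixing the exact normalizations of the modular-symbol values and of the recalled interpolation formulas, or weaken the conclusion to a congruence up to $\Lambda_n^\times$ --- which is in fact all the paper uses, since the proposition only enters through the ideal identity $\left( \omega_n, \theta_n(f), \nu_{n-1,n}(\theta_{n-1}(f)) \right) = \left( \omega_n, \widetilde{\omega}^+_n L^+_p(\mathbb{Q}_\infty,f), \widetilde{\omega}^-_n L^-_p(\mathbb{Q}_\infty,f) \right)$. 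Relatedly, the trivial character is not covered by your recursion alone: there you need the boundary relations expressing $\mathbf{1}(\theta_0(f))$ and $\mathbf{1}(\theta_1(f))$ in terms of $L(E,1)/\Omega_E$ (this is exactly where the constants $2$ and $p-1$ in the interpolation formulas come from), and your sketch does not address them.
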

Then, as ideals of $\Lambda$, the following equality holds
$$\left( \omega_n, \theta_n(f), \nu_{n-1,n} \left( \theta_{n-1}(f)\right) \right) =\left( \omega_n, \widetilde{\omega}^+_n \cdot L^+_p(\mathbb{Q}_\infty,f), \widetilde{\omega}^-_n \cdot L^-_p(\mathbb{Q}_\infty,f) \right) .$$
Thus, assuming $a_p(E) = 0$, Conjecture \ref{conj:kurihara} is equivalent to the following conjecture.
\begin{conj}[{\cite[Conjecture 6.19]{pollack-thesis}}] \label{conj:kurihara-pm}
We assume $a_p(E) = 0$ as well as the conditions in Conjecture \ref{conj:kurihara}.
Then 
$$\left( \widetilde{\omega}^+_n \cdot L^+_p(\mathbb{Q}_\infty,f) \Mod{\omega_n} , \widetilde{\omega}^-_n \cdot L^-_p(\mathbb{Q}_\infty,f) \Mod{\omega_n} \right) = \mathrm{Fitt}_{\Lambda_n} \left( \mathrm{Sel}(\mathbb{Q}_n, E[p^\infty])^\vee \right) .$$
\end{conj}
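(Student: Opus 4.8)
The plan is to prove the equivalent statement, Conjecture~\ref{conj:kurihara}, that $\bigl(\theta_n(f),\,\nu_{n-1,n}(\theta_{n-1}(f))\bigr)=\mathrm{Fitt}_{\Lambda_n}\bigl(\mathrm{Sel}(\mathbb{Q}_n, E[p^\infty])^\vee\bigr)$. The passage between the two formulations is Proposition~\ref{prop:pollack_theta_pm} together with the fact that, in the variable $X$, the trace map $\nu_{n-1,n}\colon\Lambda_{n-1}\to\Lambda_n$ is multiplication by $\Phi_n(1+X)=\omega_n/\omega_{n-1}$, so that, say for $n$ even, $\theta_n(f)\equiv\widetilde{\omega}^{-}_n L^{-}_p(\mathbb{Q}_\infty,f)$ and $\nu_{n-1,n}(\theta_{n-1}(f))\equiv\Phi_n(1+X)\cdot\widetilde{\omega}^{+}_{n-1}L^{+}_p(\mathbb{Q}_\infty,f)=\widetilde{\omega}^{+}_n L^{+}_p(\mathbb{Q}_\infty,f)$ modulo $\omega_n$ (and symmetrically for $n$ odd). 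Write $X_n:=\mathrm{Sel}(\mathbb{Q}_n, E[p^\infty])^\vee$. One inclusion, $\bigl(\theta_n(f),\,\nu_{n-1,n}(\theta_{n-1}(f))\bigr)\subseteq\mathrm{Fitt}_{\Lambda_n}(X_n)$, is the weak main conjecture: $\theta_n(f)\in\mathrm{Fitt}_{\Lambda_n}(X_n)$ is Theorem~\ref{thm:main_theorem} (proved in the supersingular case by an explicit comparison of $\mathrm{Sel}(\mathbb{Q}_n, E[p^\infty])$ with the Kobayashi--Pollack $\pm$-Selmer groups at the $n$-th layer), while $\nu_{n-1,n}(\theta_{n-1}(f))\in\mathrm{Fitt}_{\Lambda_n}(X_n)$ follows from the weak main conjecture at level $n-1$ and the behaviour of Fitting ideals under the trace maps of the cyclotomic tower developed in Appendix~\ref{sec:fitting}.

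The substantial point is the reverse inclusion $\mathrm{Fitt}_{\Lambda_n}(X_n)\subseteq\bigl(\widetilde{\omega}^{+}_n L^{+}_p(\mathbb{Q}_\infty,f),\ \widetilde{\omega}^{-}_n L^{-}_p(\mathbb{Q}_\infty,f)\bigr)\bmod\omega_n$, which amounts to a \emph{lower bound} for $\mathrm{Sel}(\mathbb{Q}_n, E[p^\infty])$ extracted from the $\pm$-main conjecture. Here the idea is the explicit finite-layer comparison among the ``finite layer'', ``$\pm$'', and ``fine'' Selmer structures. Since $p$ is supersingular and $\mathbb{Q}_{n,p}/\mathbb{Q}_p$ is totally ramified with residue field $\mathbb{F}_p$, the local condition at $p$ defining $\mathrm{Sel}(\mathbb{Q}_n, E[p^\infty])$ is $\widehat{E}(\mathfrak{m}_n)\otimes\mathbb{Q}_p/\mathbb{Z}_p$, where $\mathfrak{m}_n$ denotes the maximal ideal of the ring of integers of $\mathbb{Q}_{n,p}$, and Kobayashi's local theory supplies the exact sequence of $\Lambda_n$-modules
\[
0\longrightarrow\widehat{E}(\mathfrak{m}_0)\otimes\mathbb{Q}_p/\mathbb{Z}_p\longrightarrow\bigl(\widehat{E}^{+}(\mathfrak{m}_n)\oplus\widehat{E}^{-}(\mathfrak{m}_n)\bigr)\otimes\mathbb{Q}_p/\mathbb{Z}_p\longrightarrow\widehat{E}(\mathfrak{m}_n)\otimes\mathbb{Q}_p/\mathbb{Z}_p\longrightarrow0 .
\]
Inserting this into the Poitou--Tate exact sequences for the relevant Selmer groups yields a comparison — a finite-layer incarnation of Perrin-Riou's construction of algebraic $p$-adic $L$-functions — between $X_n$, the duals $\mathrm{Sel}^{\pm}(\mathbb{Q}_n, E[p^\infty])^\vee$, and the dual of the fine Selmer group at the $n$-th layer, the polynomials $\widetilde{\omega}^{\pm}_n$ arising precisely from the passage between the $E^{\pm}$- and the $E$-local conditions at $p$.

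It then remains to convert this comparison into the desired Fitting-ideal equality. One uses the control theorem for $\pm$-Selmer groups, which holds without an error term, relating $\mathrm{Sel}^{\pm}(\mathbb{Q}_n, E[p^\infty])^\vee$ to $X^{\pm}:=\mathrm{Sel}^{\pm}(\mathbb{Q}_\infty, E[p^\infty])^\vee$; the $\pm$-main conjecture $\mathrm{char}_\Lambda(X^{\pm})=(L^{\pm}_p(\mathbb{Q}_\infty,f))$; and Kobayashi's structural result that $X^{\pm}$ has no nonzero finite $\Lambda$-submodule, so that $X^{\pm}$ admits a square presentation, $\mathrm{Fitt}_\Lambda(X^{\pm})=\mathrm{char}_\Lambda(X^{\pm})$, and the Fitting ideals of its $\Lambda_n$-specializations are computed by reduction modulo $\omega_n$. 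Combining these computations with the comparison sequence by means of the Fitting-ideal lemmas of Appendix~\ref{sec:fitting} expresses $\mathrm{Fitt}_{\Lambda_n}(X_n)$ in terms of $\widetilde{\omega}^{\pm}_n$, $L^{\pm}_p(\mathbb{Q}_\infty,f)$, and the size of the cokernel of a certain finite-layer comparison map $g_n$.

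The main obstacle is exactly the cokernel of $g_n$: it measures the failure of the finite-layer Perrin-Riou comparison to be surjective, a genuinely finite-layer phenomenon with no counterpart over $\mathbb{Q}_\infty$, and it is the \emph{error term}. Granting the $\pm$-main conjecture alone, the argument yields the conjectured equality only up to this cokernel, which is Theorem~\ref{thm:main_theorem_2}. To obtain the exact statement one must prove $\mathrm{coker}(g_n)=0$, and the hypotheses imposed in Theorem~\ref{thm:main_theorem_3} — $E(\mathbb{Q})[p]=0$, $p\nmid\mathrm{Tam}(E)$, the $\pm$-main conjecture, and a condition forcing surjectivity of the relevant local (Tate) cohomology map, realized in the examples of Example~\ref{exam:strong_main_conjecture} by the fine Selmer group over $\mathbb{Q}_\infty$ being ``all Mordell--Weil'' — are chosen so that this vanishing holds; this is carried out in $\S$\ref{sec:vanishing_coker_g_n}. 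I expect the verification of $\mathrm{coker}(g_n)=0$ to be the delicate step, since it requires controlling precisely the finite-layer discrepancy that the passage to the Iwasawa algebra conveniently hides.
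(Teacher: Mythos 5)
Your proposal follows the paper's architecture quite closely: the translation between $\left( \theta_n(f), \nu_{n-1,n}(\theta_{n-1}(f)) \right)$ and $\left( \widetilde{\omega}^+_n L^+_p(\mathbb{Q}_\infty,f), \widetilde{\omega}^-_n L^-_p(\mathbb{Q}_\infty,f) \right)$ modulo $\omega_n$ via Proposition \ref{prop:pollack_theta_pm}; the upper-bound inclusion via the finite-layer comparison of local conditions with the $\pm$-Selmer groups (Theorem \ref{thm:main_theorem}, $\S$\ref{sec:putting}); the lower bound with an error term (Theorem \ref{thm:main_theorem_2}, $\S$\ref{sec:non-CM}); and the vanishing of $\mathrm{coker}\ g_n$ under extra hypotheses (Theorem \ref{thm:main_theorem_3}, $\S$\ref{sec:vanishing_coker_g_n}). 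The essential caveat is that this is not a proof of the statement as written: Conjecture \ref{conj:kurihara-pm} assumes only $a_p(E)=0$, $E(\mathbb{Q})[p]=0$ and $p \nmid \mathrm{Tam}(E)$, whereas your argument (like the paper's) additionally requires the surjectivity of $\overline{\rho}$, the $\pm$-main conjectures (Conjecture \ref{conj:pm-main-conj}), and the hypotheses (fineNF) and (\textrm{{\cyr SH}}) forcing $\mathrm{coker}\ g_n = 0$. You acknowledge this at the end, but it should be said plainly that what the strategy delivers is the conditional Theorem \ref{thm:main_theorem_3}, not the conjecture itself; the paper leaves the statement as a conjecture for exactly this reason.

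Beyond that, one step is wrong as stated and one attribution is off. You claim $\nu_{n-1,n}(\theta_{n-1}(f)) \in \mathrm{Fitt}_{\Lambda_n}\left( \mathrm{Sel}(\mathbb{Q}_n, E[p^\infty])^\vee \right)$ follows from the weak main conjecture at level $n-1$ together with ``the behaviour of Fitting ideals under the trace maps'' from Appendix \ref{sec:fitting}; the appendix contains no such lemma, and Fitting ideals do not transfer along $\nu_{n-1,n}$ in this way (Lemma \ref{lem:fitting-ideals-base-change} concerns the quotient map, not the trace). The correct mechanism --- which your own congruence computation already supplies --- is that modulo $\omega_n$ the two generators are, up to units, $\widetilde{\omega}^-_n L^-_p$ and $\widetilde{\omega}^+_n L^+_p$, and \emph{both} memberships come from the layer-$n$ comparison of $\S$\ref{sec:putting} combined with Corollary \ref{cor:the-key-lemma} applied to each sign separately. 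Relatedly, the lower bound is not driven by the $\pm$-Selmer control theorem and the structure of $\mathrm{Sel}^{\pm}(\mathbb{Q}_\infty, E[p^\infty])^\vee$ (those are the upper-bound inputs); in the paper it rests on Kato's theorem that $\mathbb{H}^1_{\mathrm{glob}}(T)$ is free of rank one, the presentation of $\mathcal{Y}'_n$ through the $\pm$-Coleman maps yielding $\mathrm{Fitt}_{\Lambda_n}(\mathcal{Y}'_n) = ( \widetilde{\omega}^+_n b_1, \widetilde{\omega}^-_n b_2 )$, Kato's main conjecture (Conjecture \ref{conj:kato-main-conjecture}, equivalent to the $\pm$-main conjectures when $a_p(E)=0$), the control theorem for fine Selmer groups, and the Fitting-ideal lemmas --- so the fine Selmer group and $\mathbf{z}_{\mathrm{Kato}}$ play the central role there, not the $\pm$-Selmer groups.
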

\begin{rem}
It is known that Kato's zeta elements exist integrally when $\overline{\rho}$ is surjective. In this case, Proposition \ref{prop:pollack_theta_pm} can be interpreted as a comparison between the $P_n$-pairing  made by the second named author and the $\pm$-Coleman maps made by Kobayashi (modulo $\omega_n$) as in the following diagram. See also \cite[$\S$1]{kurihara-pollack}.
\[
\xymatrix{
\textrm{Kato's zeta elements} \ar@{|->}[rrr]^-{P_n\textrm{-pairing}}_-{\textrm{ \cite[Lemma 7.2]{kurihara-invent} }} \ar@{|->}[d]_-{ \substack{\textrm{$\pm$-Coleman maps} \\  \textrm{\cite[Theorem 6.3]{kobayashi-thesis} }} } & & & \textrm{Mazur-Tate elements} \ar@{<-->}[llld]^-{ \quad \textrm{Propsition \ref{prop:pollack_theta_pm}}} \\
 \textrm{$\pm$-$p$-adic $L$-functions}
}
\]
\end{rem}
\subsection{Main theorems}
We state three main theorems (mainly for elliptic curves with good supersingular reduction).
\begin{thm}[Main Theorem I] \label{thm:main_theorem}
Let $E$ be an elliptic curve over $\mathbb{Q}$ with good reduction at an odd prime $p$.
Assume that $\overline{\rho}$ is surjective if $E$ is non-CM.
Assume one of the followings:
\begin{enumerate}
\item[(ord)] If $p \nmid a_p(E)$, then $a_p(E) \not\equiv 1 \Mod{p}$, or 
\item[(ss)]  If $p \mid a_p(E)$, then $a_p(E) = 0$.
\end{enumerate}
Then
$$\left( \theta_n(f), \nu_{n-1,n} \left(  \theta_{n-1}(f) \right) \right) \subseteq \mathrm{Fitt}_{\Lambda_n}\left( \mathrm{Sel}(\mathbb{Q}_n, E[p^\infty])^\vee \right).$$
Thus, Mazur-Tate's weak main conjecture (Conjecture \ref{conj:mazur-tate-weak-main-conj}) for $E$ over $\mathbb{Q}_n$ follows.

In Case (ord), if we further assume $p \nmid \mathrm{Tam}(E)$ and the Iwasawa main conjecture (Conjecture \ref{conj:main-conj})
 holds, then the inclusion becomes an equality, so the strong main conjecture (Conjecture \ref{conj:kurihara}) holds.
\end{thm}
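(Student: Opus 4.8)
The plan is to handle both cases with a common skeleton — pass to the cyclotomic $\mathbb{Z}_p$-extension, invoke a one-sided divisibility toward a main conjecture over $\Lambda$, descend to $\mathbb{Q}_n$ via a control theorem, and then match the outcome with $\theta_n(f)$ through a finite-layer interpolation formula — feeding in different Iwasawa-theoretic input in the two cases.

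\emph{Case (ord).} Put $X := \mathrm{Sel}(\mathbb{Q}_\infty, E[p^\infty])^\vee$. First I would recall the three standard inputs: $X$ is $\Lambda$-torsion (Kato); under the irreducibility of $\overline{\rho}$ and $a_p(E) \not\equiv 1 \Mod{p}$, $X$ has no nonzero finite $\Lambda$-submodule (Greenberg), hence $\mathrm{pd}_\Lambda X \leq 1$ and so $\mathrm{Fitt}_\Lambda(X) = \mathrm{char}_\Lambda(X)$; and Kato's divisibility $(L_p(\mathbb{Q}_\infty, f)) \subseteq \mathrm{char}_\Lambda(X)$ for the $p$-adic $L$-function attached to the unit root. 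Since $E(\mathbb{Q}_\infty)[p^\infty] = 0$ (from surjectivity of $\overline{\rho}$, with the analogous local input in the CM case), the control theorem exhibits $\mathrm{Sel}(\mathbb{Q}_n, E[p^\infty])^\vee$ as a quotient of $X/\omega_n X$, so, since Fitting ideals commute with base change,
\[
\mathrm{Fitt}_{\Lambda_n}\!\left( \mathrm{Sel}(\mathbb{Q}_n, E[p^\infty])^\vee \right)\ \supseteq\ \mathrm{Fitt}_\Lambda(X)\Mod{\omega_n}\ \supseteq\ \left( L_p(\mathbb{Q}_\infty, f) \right)\Mod{\omega_n}.
\]
Finally, the good-ordinary counterpart of Proposition \ref{prop:pollack_theta_pm} together with the ideal identity recorded after it — here taking the form $\left( \omega_n, \theta_n(f), \nu_{n-1,n}(\theta_{n-1}(f)) \right) = \left( \omega_n, L_p(\mathbb{Q}_\infty, f) \right)$ in $\Lambda$, with the non-anomalous hypothesis re-entering through the behavior of the Euler factor at $p$ — identifies $\left( \theta_n(f), \nu_{n-1,n}(\theta_{n-1}(f)) \right)\Mod{\omega_n}$ with $\left( L_p(\mathbb{Q}_\infty, f) \right)\Mod{\omega_n}$, yielding the asserted inclusion and hence Conjecture \ref{conj:mazur-tate-weak-main-conj}. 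If in addition $p \nmid \mathrm{Tam}(E)$, the local terms at the bad primes vanish, so the control map is an isomorphism and $\mathrm{Sel}(\mathbb{Q}_n, E[p^\infty])^\vee = X/\omega_n X$; combining this with the Iwasawa main conjecture (Conjecture \ref{conj:main-conj}) $\mathrm{char}_\Lambda(X) = (L_p(\mathbb{Q}_\infty, f))$ upgrades the inclusion to the equality of Conjecture \ref{conj:kurihara}. This is the content of $\S$\ref{sec:ordinary}.

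\emph{Case (ss).} Here I would run the same play with Kobayashi's $\pm$-objects. Set $X^{\pm} := \mathrm{Sel}^{\pm}(\mathbb{Q}_\infty, E[p^\infty])^\vee$; by Kobayashi these are $\Lambda$-torsion with no nonzero finite $\Lambda$-submodule, so $\mathrm{Fitt}_\Lambda(X^{\pm}) = \mathrm{char}_\Lambda(X^{\pm})$, and Kato's divisibility (via Kobayashi's $\pm$-Coleman maps) gives $(L^{\pm}_p(\mathbb{Q}_\infty, f)) \subseteq \mathrm{char}_\Lambda(X^{\pm})$ — note that only this one-sided divisibility, not the full $\pm$-main conjecture, is needed. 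The new ingredient is an explicit comparison, in each finite layer, of $\mathrm{Sel}(\mathbb{Q}_n, E[p^\infty])$ with the pair $\mathrm{Sel}^{+}(\mathbb{Q}_n, E[p^\infty]), \mathrm{Sel}^{-}(\mathbb{Q}_n, E[p^\infty])$: the only difference is the local condition at $p$, and Kobayashi's description of the norm subgroups $E^{\pm}(\mathbb{Q}_{n,p}) \subseteq E(\mathbb{Q}_{n,p})$ — in particular $E^{+}(\mathbb{Q}_{n,p}) + E^{-}(\mathbb{Q}_{n,p})$ has finite index in $E(\mathbb{Q}_{n,p})$, $E^{+}(\mathbb{Q}_{n,p}) \cap E^{-}(\mathbb{Q}_{n,p}) = E(\mathbb{Q}_p)$, and the relevant quotients are pinned down by $\omega^{\pm}_n$ and $\widetilde{\omega}^{\pm}_n$ — produces an exact sequence of $\Lambda_n$-modules relating the three (dual) Selmer groups, with explicit error terms. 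Dualizing, feeding in Kobayashi's $\pm$-control theorem, and pushing everything through the Fitting-ideal inequalities of Appendix \ref{sec:fitting}, I expect to get
\[
\mathrm{Fitt}_{\Lambda_n}\!\left( \mathrm{Sel}(\mathbb{Q}_n, E[p^\infty])^\vee \right)\ \supseteq\ \left( \widetilde{\omega}^{+}_n \cdot L^{+}_p(\mathbb{Q}_\infty, f),\ \widetilde{\omega}^{-}_n \cdot L^{-}_p(\mathbb{Q}_\infty, f) \right)\Mod{\omega_n}.
\]
By the ideal identity recorded just after Proposition \ref{prop:pollack_theta_pm}, the right-hand side equals $\left( \theta_n(f), \nu_{n-1,n}(\theta_{n-1}(f)) \right)\Mod{\omega_n}$, which gives the inclusion in Case (ss), hence Conjecture \ref{conj:mazur-tate-weak-main-conj}. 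This is carried out in $\S$\ref{sec:putting}, after the $\pm$-theory is recalled in $\S$\ref{sec:tools}.

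The step I expect to be the main obstacle is the supersingular finite-layer comparison. Over $\Lambda$ one may discard pseudo-null modules, use that Fitting and characteristic ideals agree in projective dimension $\leq 1$, and work with principal ideals throughout; none of this survives over the non-regular ring $\Lambda_n$, where $\mathrm{Fitt}_{\Lambda_n}(\mathrm{Sel}(\mathbb{Q}_n, E[p^\infty])^\vee)$ is genuinely non-principal and the ``finite errors'' coming from the local comparison at $p$ are honest $\Lambda_n$-modules affecting the Fitting ideal directly. Thus the argument rests on (i) a local description at $p$ exact enough to isolate the $\widetilde{\omega}^{\pm}_n$-contributions, and (ii) the Fitting-ideal manipulations of Appendix \ref{sec:fitting} that propagate an inclusion of Fitting ideals along a (non-split) exact sequence of $\Lambda_n$-modules. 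It is precisely the limited precision of the comparison between the usual and the $\pm$-Selmer conditions that yields only the inclusion, not the equality, in the supersingular case; recovering the equality needs instead a comparison with fine Selmer groups and the vanishing of an error term, which is why the strong main conjecture for supersingular $E$ is postponed to Theorems \ref{thm:main_theorem_2} and \ref{thm:main_theorem_3}.
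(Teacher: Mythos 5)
Your proposal is correct and follows essentially the same route as the paper: in Case (ord) the Kato divisibility, the absence of finite $\Lambda$-submodules (so characteristic and Fitting ideals agree), the control theorem, and the identification of $(\theta_n(f),\nu_{n-1,n}(\theta_{n-1}(f)))$ with the descended $p$-adic $L$-function; in Case (ss) the one-sided $\pm$-divisibility plus $\pm$-control, the finite-layer comparison of the local conditions at $p$ (whose Fitting ideal is exactly $\widetilde{\omega}^{\mp}_n\Lambda_n$), the Appendix lemmas on Fitting ideals, and Pollack's congruence $\theta_n(f)\equiv\widetilde{\omega}^{\mp}_n L^{\mp}_p \Mod{\omega_n}$. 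The only cosmetic difference is that in the ordinary case the paper phrases the last step via the $p$-stabilized Mazur--Tate element $\vartheta_n(f_\alpha)$ being a unit multiple of $\theta_n(f)$, which is the same content as your ideal identity modulo $\omega_n$.
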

See $\S$\ref{sec:ordinary} for proof of Case (ord) and $\S$\ref{sec:putting} for proof of Case (ss). See Theorem \ref{thm:main-conj} and Theorem \ref{thm:main-conj-2} for the current status of the Iwasawa main conjecture.

Let $\chi : \mathrm{Gal}(\mathbb{Q}_n/\mathbb{Q}) \to \overline{\mathbb{Q}}^\times_p$ be a character and $\mathbb{Z}_p[\chi]$ the ring generated by the image of $\chi$ over $\mathbb{Z}_p$.
The map $\chi$ naturally extends to an algebra homomorphism 
$\mathbb{Z}_p[\mathrm{Gal}(\mathbb{Q}_n/\mathbb{Q})] \to \mathbb{Z}_p[\chi]$ defined by
$\sigma \mapsto \chi(\sigma)$
where $\sigma \in \mathrm{Gal}(\mathbb{Q}_n/\mathbb{Q})$ and also denote it by $\chi$.
Then we also define the \textbf{augmentation ideal at $\chi$} by
$$I_\chi := \mathrm{ker} \left( \chi: \Lambda_n \to \mathbb{Z}_p[\chi] \right). $$

Let $L \in \Lambda_n$. We say $L$ \textbf{vanishes to infinite order at $\chi$} if $L$ is contained in all powers of $I_\chi$. We say $L$ \textbf{vanishes to order $r$ at $\chi$} if $L \in I^r_\chi \setminus I^{r+1}_\chi$. See \cite[(1.5)]{mazur-tate} for detail.
\begin{conj}[{\cite[Conjecture 1, ``weak vanishing conjecture"]{mazur-tate}}] \label{conj:mazur-tate-weak-vanishing}
The order of vanishing of $\theta_n(f)$ at $\chi$ is greater than or equal to the dimension of the $\chi$-part of the Mordell-Weil group of $E(\mathbb{Q}_n)$.
\end{conj}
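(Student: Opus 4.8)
The plan is to obtain Conjecture~\ref{conj:mazur-tate-weak-vanishing} as a formal consequence of the weak main conjecture established in Theorem~\ref{thm:main_theorem}, following the implication ``Conjecture 3 $\Rightarrow$ Conjecture 1'' indicated by Mazur and Tate. Under the running hypotheses of Theorem~\ref{thm:main_theorem} (surjectivity of $\overline\rho$ when $E$ is non-CM, and either (ord) or (ss)) all the needed input is available; note in particular that these force $E(\mathbb{Q}_n)$ to be $p$-torsion free. Throughout, $d_\chi$ denotes the dimension of the $\chi$-part of the Mordell--Weil group $E(\mathbb{Q}_n)$, equivalently the multiplicity of $\chi$ in $E(\mathbb{Q}_n)\otimes_{\Z}\overline{\Qp}$.

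First I would record the standard descent input. The Kummer map gives a $\Lambda_n$-linear injection $E(\mathbb{Q}_n)\otimes\Qp/\Zp \hookrightarrow \Sel(\mathbb{Q}_n, E[p^\infty])$, and since $E(\mathbb{Q}_n)$ has finite torsion, dualizing identifies the dual quotient of the target with the $\Lambda_n$-lattice $A_n := \mathrm{Hom}_{\Zp}(E(\mathbb{Q}_n)\otimes\Zp, \Zp)$, which is $\Zp$-free of rank $\mathrm{rk}_{\Z}E(\mathbb{Q}_n)$ and in which $\chi$ occurs with multiplicity $d_\chi$ in $A_n\otimes_{\Zp}\overline{\Qp}$ (using that $E(\mathbb{Q}_n)\otimes\mathbb{Q}$ is a rational, hence self-dual, representation of $\Gal(\mathbb{Q}_n/\mathbb{Q})$, so $\chi$ and $\chi^{-1}$ contribute equally). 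Thus there is a $\Lambda_n$-surjection $\Sel(\mathbb{Q}_n, E[p^\infty])^\vee \twoheadrightarrow A_n$, and monotonicity of Fitting ideals together with Theorem~\ref{thm:main_theorem} gives $\theta_n(f) \in \Fitt_{\Lambda_n}\big(\Sel(\mathbb{Q}_n, E[p^\infty])^\vee\big) \subseteq \Fitt_{\Lambda_n}(A_n)$.

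The crux is then a purely algebraic lemma, fitting among the refined Fitting ideal techniques of Appendix~\ref{sec:fitting}: if $A$ is a finitely generated $\Lambda_n$-module which is $\Zp$-free and $\chi$ occurs with multiplicity $m$ in $A\otimes_{\Zp}\overline{\Qp}$, then $\Fitt_{\Lambda_n}(A)\subseteq I_\chi^m$. To prove it, fix any finite presentation $\Lambda_n^s\xrightarrow{h}\Lambda_n^t\to A\to 0$, so $\Fitt_{\Lambda_n}(A)$ is generated by the $t\times t$ minors of $h$; reducing modulo $I_\chi$ via $\Lambda_n\twoheadrightarrow\Lambda_n/I_\chi\cong\Zp[\chi]$, the cokernel of $\overline h$ has $\mathrm{Frac}(\Zp[\chi])$-dimension $m$, so $\overline h$ has rank $t-m$ and every $t\times t$ submatrix $\overline h_J$ has rank $\le t-m$ over this field. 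Since $\Zp[\chi]$ is a discrete valuation ring, such a submatrix can be written as $\sum_{i=1}^{t-m}u_i w_i^{\mathrm t}$ with $u_i, w_i\in\Zp[\chi]^t$ (take a $\Zp[\chi]$-basis of the saturation of the column span and pad with zeros). Lifting to $\Lambda_n$ and writing the corresponding $h_J=\sum_{i=1}^{t-m}\widetilde u_i\widetilde w_i^{\mathrm t}+N$ with all entries of $N$ in $I_\chi$, one expands $\det h_J$ multilinearly in its $t$ columns: any term that uses some $\widetilde u_i$ for two different columns has two proportional columns and vanishes, so each surviving term uses at least $m$ columns of $N$; expanding such a term by Leibniz shows it lies in $I_\chi^m$, whence $\Fitt_{\Lambda_n}(A)\subseteq I_\chi^m$.

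Applying the lemma to $A = A_n$ and $m = d_\chi$ gives $\theta_n(f)\in I_\chi^{d_\chi}$, i.e. the order of vanishing of $\theta_n(f)$ at $\chi$ is at least $d_\chi$, which is Conjecture~\ref{conj:mazur-tate-weak-vanishing}. The only genuine obstacle here is the deep input, namely Theorem~\ref{thm:main_theorem} itself (equivalently Conjecture~\ref{conj:mazur-tate-weak-main-conj}); the descent step is routine Selmer-group and duality formalism, and the Fitting ideal lemma is an elementary determinant estimate. One should still verify that the hypotheses required for Theorem~\ref{thm:main_theorem} cover the situations in which Conjecture~\ref{conj:mazur-tate-weak-vanishing} is asserted, and dispose of the degenerate case $d_\chi=0$ (which is vacuous); note also that for Conjecture~\ref{conj:mazur-tate-weak-vanishing} alone it suffices to use the single inclusion $\theta_n(f)\in\Fitt_{\Lambda_n}\big(\Sel(\mathbb{Q}_n,E[p^\infty])^\vee\big)$ rather than the full two-term statement of Theorem~\ref{thm:main_theorem}.
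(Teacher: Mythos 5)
Your proposal is correct and follows essentially the same route as the paper: Corollary \ref{cor:mazur-tate-weak-vanishing} is deduced from the inclusion $\theta_n(f)\in\mathrm{Fitt}_{\Lambda_n}\bigl(\mathrm{Sel}(\mathbb{Q}_n,E[p^\infty])^\vee\bigr)$ of Theorem \ref{thm:main_theorem}, the paper handling the implication ``weak main conjecture $\Rightarrow$ weak vanishing conjecture'' simply by citing \cite[Proposition 3]{mazur-tate}. Your descent step (surjection onto $\mathrm{Hom}_{\mathbb{Z}_p}(E(\mathbb{Q}_n)\otimes\mathbb{Z}_p,\mathbb{Z}_p)$, with the $\chi$ versus $\chi^{-1}$ issue settled by rationality) and the determinant estimate $\mathrm{Fitt}_{\Lambda_n}(A)\subseteq I_\chi^{m}$ are a correct explicit rendering of that cited implication.
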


\begin{cor} \label{cor:mazur-tate-weak-vanishing}
Under the same assumptions of Theorem \ref{thm:main_theorem}, Conjecture \ref{conj:mazur-tate-weak-vanishing} holds.
\end{cor}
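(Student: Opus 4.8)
The plan is to deduce Corollary~\ref{cor:mazur-tate-weak-vanishing} from Theorem~\ref{thm:main_theorem} by a purely formal argument about Fitting ideals and augmentation ideals, together with the comparison between the $\chi$-part of the Mordell--Weil group and the $\chi$-part of the dual Selmer group. First I would recall the standard fact from \cite[(1.5)]{mazur-tate} that if $\theta \in \Fitt_{\Lambda_n}(M)$ for a finitely generated $\Z_p$-module $M$ with $\Lambda_n$-action, and if $M \otimes_{\Lambda_n, \chi} \Z_p[\chi]$ has free rank $\geq r$ over $\Z_p[\chi]$, then $\chi(\Fitt_{\Lambda_n}(M)) \subseteq \Fitt_{\Z_p[\chi]}(M \otimes_{\Lambda_n,\chi} \Z_p[\chi])$ lies in $I^r_{\chi}$-worth of vanishing; more precisely, applying a presentation of $M$ and specializing along $\chi$, the order of vanishing of any element of $\Fitt_{\Lambda_n}(M)$ at $\chi$ is bounded below by the $\Z_p[\chi]$-corank (equivalently, the $\overline{\Q}_p$-dimension of the $\chi$-eigenspace) of $M$. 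This is the Fitting-ideal analogue of the elementary observation that $\det$ of a matrix with a large kernel after base change vanishes to high order.

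Next I would apply this with $M = \Sel(\Q_n, E[p^\infty])^\vee$ and $\theta = \theta_n(f)$, which lies in $\Fitt_{\Lambda_n}(M)$ by Theorem~\ref{thm:main_theorem} (note that $\theta_n(f)$ itself, not just the pair with the trace term, is contained in the Fitting ideal, since the pair generates an ideal containing $\theta_n(f)$). It then remains to identify the $\chi$-corank of $\Sel(\Q_n, E[p^\infty])^\vee$ with $\dim_{\overline{\Q}_p} \big( E(\Q_n) \otimes \overline{\Q}_p \big)^\chi$. This is the familiar consequence of the fact that the $p$-primary part of the Tate--Shafarevich group is conjecturally finite, but unconditionally one has $\Sel(\Q_n, E[p^\infty]) \supseteq E(\Q_n) \otimes \Q_p/\Z_p$, so the $\Z_p$-corank of $\Sel(\Q_n, E[p^\infty])$ is at least $\mathrm{rk}_{\Z} E(\Q_n)$, and this inclusion is $\Gal(\Q_n/\Q)$-equivariant, hence respects $\chi$-components after tensoring with $\overline{\Q}_p$. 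Therefore the $\chi$-corank of $M^\vee{}^\vee = \Sel(\Q_n, E[p^\infty])$, i.e. the $\Z_p[\chi]$-corank of $M \otimes_{\Lambda_n,\chi}\Z_p[\chi]$, is at least $\dim_{\overline{\Q}_p}(E(\Q_n)\otimes\overline{\Q}_p)^\chi$, which is exactly the dimension of the $\chi$-part of the Mordell--Weil group appearing in Conjecture~\ref{conj:mazur-tate-weak-vanishing}.

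Stitching these together: the order of vanishing of $\theta_n(f)$ at $\chi$ is at least the $\Z_p[\chi]$-corank of $\Sel(\Q_n,E[p^\infty])^\chi$, which is at least $\dim$ of the $\chi$-part of $E(\Q_n)$, giving Conjecture~\ref{conj:mazur-tate-weak-vanishing}. I would carry out the steps in the order: (i) state and justify the Fitting-ideal-versus-augmentation-ideal lemma for specialization at $\chi$ (citing \cite[(1.5)]{mazur-tate} and the Fitting-ideal machinery of Appendix~\ref{sec:fitting}); (ii) invoke Theorem~\ref{thm:main_theorem} to put $\theta_n(f)$ in the Fitting ideal of $\Sel(\Q_n,E[p^\infty])^\vee$; (iii) compare coranks via the Kummer-map inclusion $E(\Q_n)\otimes\Q_p/\Z_p \hookrightarrow \Sel(\Q_n,E[p^\infty])$, taking $\chi$-eigenspaces after $\otimes\,\overline{\Q}_p$; (iv) conclude. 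The only mild subtlety — and the step I would be most careful about — is (i): making sure that the passage from ``$\theta$ is in the Fitting ideal of $M$ over $\Lambda_n$'' to ``$\theta$ vanishes at $\chi$ to order $\geq$ the $\chi$-corank'' is clean when $\Z_p[\chi]$ is a (possibly ramified) extension of $\Z_p$ and $M$ is not necessarily $\Z_p$-free; but this is handled by choosing a presentation $\Lambda_n^s \to \Lambda_n^r \to M \to 0$, specializing, and observing that a generator of $\Fitt$ is an $r\times r$ minor whose $\chi$-specialization is the determinant of a matrix whose cokernel has $\Z_p[\chi]$-corank at least the $\chi$-corank of $M$, forcing the required power of $I_\chi$. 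No genuinely new obstacle arises here; this corollary is essentially a formal unwinding of the main theorem.
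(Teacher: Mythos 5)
Your argument is correct, and it is essentially the paper's own route: the paper's entire proof is the citation of \cite[Proposition 3]{mazur-tate} (combined with Theorem \ref{thm:main_theorem}), and what you write out is precisely the content of that proposition. The only step to make airtight is the passage from $\theta_n(f)\in\Fitt_{\Lambda_n}\left(\Sel(\Q_n,E[p^\infty])^\vee\right)$ to vanishing at $\chi$ to order $\geq r$: rather than arguing directly with minors of the specialized matrix (which by itself only gives order $\geq 1$), use that the dual Selmer group surjects onto $\left(\Lambda_n/I_\chi\right)^{\oplus r}$ via the Kummer-map inclusion, so that Lemma \ref{lem:fitting_quotient} together with $\Fitt_{\Lambda_n}\left(\left(\Lambda_n/I_\chi\right)^{\oplus r}\right)=I_\chi^{r}$ forces $\theta_n(f)\in I_\chi^{r}$.
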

\begin{proof}
\cite[Proposition 3]{mazur-tate}.
\end{proof}
\begin{rem} $ $
\begin{enumerate}
\item In both conditions (ord) and (ss) in Theorem \ref{thm:main_theorem}, $a_p(E) \not\equiv 1 \Mod{p}$ or $a_p(E) = 0$ ensures that $E(\mathbb{Q})[p]$ is trivial.
\item An anticyclotomic analogue of Theorem \ref{thm:main_theorem} is investigated in \cite{kim-mazur-tate}.
\item See \cite{ota-thesis} for progress towards Conjecture \ref{conj:mazur-tate-weak-vanishing}.
\end{enumerate}
\end{rem}

In the case of elliptic curves with good supersingular reduction (under the surjectivity of $\overline{\rho}$), we can also obtain a \emph{lower} bound of the Selmer groups as follows.
Let ${\displaystyle T = \varprojlim_{n}E[p^n] }$ be the $p$-adic Tate module of $E$, and
$\mathbb{H}^1_{\mathrm{glob}}(T)$ the global Iwasawa cohomology (defined in $\S$\ref{subsec:kato_main_conj_n_fine_selmer}).
We define the ``error term" by
$$\mathrm{Err}_n := \mathrm{coker} \left( \frac{ \mathrm{H}^1(\mathbb{Q}_{\Sigma}/\mathbb{Q}_{n}, T) }{ \mathrm{im} \ \mathbb{H}^1_{\mathrm{glob}}(T)}
\to \frac{ \mathrm{H}^1(\mathbb{Q}_{n,p}, T) }{ E( \mathbb{Q}_{n,p} ) \otimes \mathbb{Z}_p + \mathrm{im} \ \mathbb{H}^1_{\mathrm{glob}}(T)}  \right) $$
where $\mathrm{im} \ \mathbb{H}^1_{\mathrm{glob}}(T)$ is the image of $\mathbb{H}^1_{\mathrm{glob}}(T)$ in $A$ in the notation $\frac{A}{\mathrm{im} \ \mathbb{H}^1_{\mathrm{glob}}(T)}$.
The error term $\mathrm{Err}_n$ also naturally appears as the cokernel of a certain map $g_n$ (Remark \ref{rem:coker_g_n}), which is explicitly defined in (\ref{eqn:coker_g_n}) (cf. \cite[(10.36) and Proposition 10.6]{kobayashi-thesis}).
\begin{thm}[Main Theorem II] \label{thm:main_theorem_2}
Let $E$ be an elliptic curve over $\mathbb{Q}$ with good supersingular reduction at an odd prime $p$.
Assume that $a_p(E) = 0$, $\overline{\rho}$ is surjective, and $p \nmid \mathrm{Tam}(E)$.
If the $\pm$-main conjectures (Conjecture \ref{conj:pm-main-conj}) hold, then
$$\mathrm{Fitt}_{\Lambda_n}\left( \mathrm{Err}_n \right) \cdot \mathrm{Fitt}_{\Lambda_n}\left( \mathrm{Sel}(\mathbb{Q}_n, E[p^\infty])^\vee \right) \subseteq \left( \theta_n(f), \nu_{n-1,n} \left(  \theta_{n-1}(f) \right) \right) \subseteq \mathrm{Fitt}_{\Lambda_n}\left( \mathrm{Sel}(\mathbb{Q}_n, E[p^\infty])^\vee \right).$$
\end{thm}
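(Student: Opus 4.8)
The right-hand inclusion is nothing but Theorem \ref{thm:main_theorem} in Case (ss), whose hypotheses ($a_p(E)=0$ and surjectivity of $\overline{\rho}$) are among the present assumptions; so the real content is the left-hand ``\emph{lower} bound"
\[
\mathrm{Fitt}_{\Lambda_n}\left( \mathrm{Err}_n \right) \cdot \mathrm{Fitt}_{\Lambda_n}\left( \mathrm{Sel}(\mathbb{Q}_n, E[p^\infty])^\vee \right) \subseteq \left( \theta_n(f),\ \nu_{n-1,n}\left( \theta_{n-1}(f) \right) \right).
\]
The plan is to deduce it from the $\pm$-main conjectures (Conjecture \ref{conj:pm-main-conj}) by transporting the clean $\Lambda$-level picture of Kobayashi--Pollack down to the finite layer $\mathbb{Q}_n$, while keeping careful track of the loss of exactness in that descent---which is precisely what the error term $\mathrm{Err}_n = \mathrm{coker}(g_n)$ records.

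First I would work over $\Lambda$. Under the surjectivity of $\overline{\rho}$ the global Iwasawa cohomology $\mathbb{H}^1_{\mathrm{glob}}(T)$ is free of rank one over $\Lambda$, the weak Leopoldt conjecture $\mathrm{H}^2(\mathbb{Q}_\Sigma/\mathbb{Q}_\infty, T) = 0$ holds, and Kato's zeta element lies in $\mathbb{H}^1_{\mathrm{glob}}(T)$ integrally; localising at $p$ and composing with Kobayashi's $\pm$-Coleman maps $\mathrm{Col}^\pm$ (which are surjective onto $\Lambda$) yields exact sequences relating $\mathrm{Sel}^\pm(\mathbb{Q}_\infty, E[p^\infty])^\vee$, the fine Selmer group over $\mathbb{Q}_\infty$, and $\mathrm{coker}(\mathrm{Col}^\pm \circ \mathrm{loc}_p)$; Kobayashi's explicit reciprocity law computes the image of Kato's zeta element as $L^\pm_p(\mathbb{Q}_\infty,f)$, and the $\pm$-main conjectures say Kato's element generates $\mathbb{H}^1_{\mathrm{glob}}(T)$, so $\mathrm{coker}(\mathrm{Col}^\pm \circ \mathrm{loc}_p) \cong \Lambda/(L^\pm_p(\mathbb{Q}_\infty,f))$. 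Next I would descend to $\mathbb{Q}_n$ by Poitou--Tate global duality---the finite-layer analogue of Perrin-Riou's construction of the algebraic $p$-adic $L$-function. Since $p \nmid \mathrm{Tam}(E)$ kills the local terms at $\ell \mid N$, the cokernel of the localisation-at-$p$ map on $\mathrm{H}^1(\mathbb{Q}_\Sigma/\mathbb{Q}_n, T)$, taken modulo the Bloch--Kato (Kummer) submodule $E(\mathbb{Q}_{n,p}) \otimes \mathbb{Z}_p$, is assembled from $\mathrm{Sel}(\mathbb{Q}_n, E[p^\infty])^\vee$ and from $\mathrm{Err}_n$ by explicit short exact sequences; and, combining Pollack's Proposition \ref{prop:pollack_theta_pm} with the identification of the image of the localisation of Kato's zeta element with $\widetilde{\omega}^\pm_n \cdot L^\pm_p(\mathbb{Q}_\infty,f)$ modulo $\omega_n$, the Fitting ideal over $\Lambda_n$ of this cokernel equals $\left( \theta_n(f), \nu_{n-1,n}(\theta_{n-1}(f)) \right)$.

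Finally I would invoke the Fitting-ideal techniques of Appendix \ref{sec:fitting}: from a short exact sequence $0 \to A \to B \to C \to 0$ one has $\mathrm{Fitt}(A) \cdot \mathrm{Fitt}(C) \subseteq \mathrm{Fitt}(B)$, and feeding the short exact sequences of the previous paragraph into this (together with the appendix's refinements for cokernels of maps between modules with controlled presentations) yields $\mathrm{Fitt}_{\Lambda_n}(\mathrm{Err}_n) \cdot \mathrm{Fitt}_{\Lambda_n}\left( \mathrm{Sel}(\mathbb{Q}_n, E[p^\infty])^\vee \right)$ inside the Fitting ideal of the cokernel, i.e. inside $\left( \theta_n(f), \nu_{n-1,n}(\theta_{n-1}(f)) \right)$. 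The hard part will be the finite-layer descent itself: the $\Lambda$-level picture is no longer exact after $\otimes_\Lambda \Lambda_n$ (equivalently, after reduction modulo $\omega_n$, $\omega^\pm_n$, and $\widetilde{\omega}^\pm_n$), and since $\Lambda_n$ is neither a domain nor regular one cannot argue with characteristic ideals but must carry the obstruction $\mathrm{Err}_n$ explicitly through every step of the Fitting-ideal computation; a subsidiary difficulty is making the finite-layer Poitou--Tate sequence precise enough to locate exactly where $\mathrm{Err}_n$ sits relative to $\mathrm{Sel}(\mathbb{Q}_n, E[p^\infty])^\vee$ and the $p$-adic-$L$-function cokernel.
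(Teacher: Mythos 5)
Your reduction of the theorem to the left-hand inclusion is correct (the right-hand one is indeed Theorem \ref{thm:main_theorem}, Case (ss)), but the core of your plan rests on a false step: the $\pm$-main conjectures do \emph{not} say that Kato's zeta element generates $\mathbb{H}^1_{\mathrm{glob}}(T)$. Via Kobayashi's equivalence they amount to Kato's main conjecture (Conjecture \ref{conj:kato-main-conjecture}), i.e. $\mathrm{char}_\Lambda\bigl( (\mathbb{H}^1_{\mathrm{glob}}(T)/\Lambda\mathbf{z}_{\mathrm{Kato}})_{\mathrm{tors}} \bigr) = \mathrm{char}_\Lambda\bigl( \mathbb{H}^2_{\mathrm{glob}}(T) \bigr)$, and $\mathbb{H}^2_{\mathrm{glob}}(T)$ is pseudo-isomorphic to $\mathrm{Sel}_0(\mathbb{Q}_\infty, E[p^\infty])^\vee$, which is in general nontrivial (e.g.\ whenever $\mathrm{rk}_{\mathbb{Z}}E(\mathbb{Q})\geq 2$, or in the ``all Mordell--Weil'' situations of Example \ref{exam:strong_main_conjecture}). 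Writing $\mathbf{z}_{\mathrm{Kato}} = c\cdot b$ with $b$ a $\Lambda$-generator of $\mathbb{H}^1_{\mathrm{glob}}(T)$ (Theorem \ref{thm:kato_iwasawa_cohomologies}), one has $L^\pm_p(\mathbb{Q}_\infty,f) = c\cdot\mathrm{Col}^\pm(\mathrm{loc}\, b)$, so $\mathrm{coker}(\mathrm{Col}^\pm\circ\mathrm{loc})$ is $\Lambda/(b_\pm)$, not $\Lambda/(L^\pm_p(\mathbb{Q}_\infty,f))$, and consequently your descended assertion that the finite-layer cokernel $\mathcal{Y}_n$ (or $\mathcal{Y}'_n$) has Fitting ideal equal to $\bigl(\theta_n(f),\nu_{n-1,n}(\theta_{n-1}(f))\bigr)$ is off by exactly the factor $(c)$. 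In the paper's argument this factor is recovered not through the local module at $p$ but through the fine Selmer quotient: one computes $\mathrm{Fitt}_{\Lambda_n}(\mathcal{Y}'_n) = (\widetilde{\omega}^+_n b_1, \widetilde{\omega}^-_n b_2)$ by an explicit $2\times 3$ presentation matrix built from the $\pm$-Coleman maps (Proposition \ref{prop:fitting_difference}), and multiplies this by $\mathrm{Fitt}_{\Lambda_n}(\mathcal{S}_{\Gamma_n})=(c)\bmod\omega_n$, where $\mathcal{S}$ is $\mathrm{Sel}_0(\mathbb{Q}_\infty,E[p^\infty])^\vee$ modulo its maximal finite submodule; the product is then $(\widetilde{\omega}^+_n L^+_p, \widetilde{\omega}^-_n L^-_p)$, which Proposition \ref{prop:pollack_theta_pm} identifies with the Mazur--Tate ideal.

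There is a second, structural, problem with your Fitting-ideal bookkeeping: the submultiplicativity $\mathrm{Fitt}(A)\cdot\mathrm{Fitt}(C)\subseteq\mathrm{Fitt}(B)$ (Lemma \ref{lem:fitting_exact}) produces inclusions pointing the \emph{wrong} way for a lower bound on the Selmer group. To bound $\mathrm{Fitt}_{\Lambda_n}(\mathrm{Sel}(\mathbb{Q}_n,E[p^\infty])^\vee)$ from above (so that, after multiplying by $\mathrm{Fitt}_{\Lambda_n}(\mathrm{Err}_n)$, it lands inside the theta ideal) the paper needs an \emph{equality} $\mathrm{Fitt}_{\Lambda_n}(\mathrm{Sel}(\mathbb{Q}_n)^\vee) = \mathrm{Fitt}_{\Lambda_n}(\mathcal{A}_n)\cdot\mathrm{Fitt}_{\Lambda_n}(\mathcal{S}_{\Gamma_n})$, available only because $\mathrm{pd}_\Lambda\mathcal{S}\leq 1$ forces $\mathcal{S}_{\Gamma_n}$ to admit a square presentation (Lemma \ref{lem:fitting_exact_refined_square}, via Lemma \ref{lem:char-to-fitt}); it then needs the reversal lemma $\mathrm{Fitt}_{\Lambda_n}(B)\subseteq\mathrm{Fitt}_{\Lambda_n}(A)$ for submodules $A\subseteq B$ over the non-regular ring $\Lambda_n$ (Lemma \ref{lem:fitting_sub}) to pass from $\mathcal{A}_n$ to $\mathcal{Y}_n$ and from $\mathcal{Y}_n$ to $\mathcal{Y}'_n/\mathrm{Err}_n$, and only at the last step uses Lemma \ref{lem:fitting_exact} to absorb $\mathrm{Fitt}_{\Lambda_n}(\mathrm{Err}_n)$ into $\mathrm{Fitt}_{\Lambda_n}(\mathcal{Y}'_n)$. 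Your sketch names only the submultiplicative inequality and a vague appeal to ``refinements for cokernels'', and it omits both the decomposition of $\mathrm{Sel}(\mathbb{Q}_n,E[p^\infty])^\vee$ through the fine Selmer quotient and the explicit presentation computation of $\mathcal{Y}'_n$; as written, the argument would not close even after correcting the first error.
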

See $\S$\ref{sec:non-CM} for proof.
Note that the surjectivity of $\overline{\rho}$ implies $E$ has no CM.
Also, see Theorem \ref{thm:pm-main-conj} and Remark \ref{rem:pm-main-conj} for the current status of the $\pm$-Iwasawa main conjecture.
Although $\mathrm{Err}_n$ might not be zero in general, if it vanishes, then Conjecture \ref{conj:kurihara} holds.
\begin{rem}
As a $\mathbb{Z}_p$-module, $\mathrm{Err}_n$ is finitely generated.
Also, $\mathrm{Err}_n$ stabilizes as $n >>0$.
See \cite[(Proof of) Proposition 10.6]{kobayashi-thesis}.
\end{rem}
We define the \textbf{fine Selmer groups of $E$ over $\mathbb{Q}_n$} by
 $$\mathrm{Sel}_0(\mathbb{Q}_n, E[p^\infty]) := \mathrm{ker}\left( \mathrm{Sel}(\mathbb{Q}_n, E[p^\infty]) \to \mathrm{H}^1(\mathbb{Q}_{n,p},  E[p^\infty]) \right)$$
 and ${\displaystyle \mathrm{Sel}_0(\mathbb{Q}_\infty, E[p^\infty]) := \varinjlim_n \mathrm{Sel}_0(\mathbb{Q}_n, E[p^\infty]) }$ as in 
\cite[Definition 4.1]{kurihara-invent} and \cite[$\S$0.3]{kurihara-pollack}.
\begin{thm}[Main Theorem III] \label{thm:main_theorem_3}
Under the assumptions of Theorem \ref{thm:main_theorem_2}, we further assume
\begin{enumerate}
\item[(fineNF)] $\mathrm{Sel}_{0}(\mathbb{Q}_{\infty}, E[p^{\infty}])^\vee$ has no nontrivial finite 
$\Lambda$-submodule, and
\item[(\textrm{{\cyr SH}})] if $\Phi_{n}(1+X)$ divides a generator of $\mathrm{char}_{\Lambda} \left( \mathrm{Sel}_{0}(\mathbb{Q}_{\infty}, E[p^{\infty}])^\vee \right)$, then $\mathrm{rk}_{\mathbb{Z}} E(\mathbb{Q}_{n})>\mathrm{rk}_{\mathbb{Z}} E(\mathbb{Q}_{n-1})$
(if $n=0$, then $\Phi_{0}(1+X)=X$ and this inequality means $\mathrm{rk}_{\mathbb{Z}} E(\mathbb{Q})>0$).
\end{enumerate}
Then $\mathrm{Err}_n$ vanishes. Therefore, the strong main conjecture (Conjecture \ref{conj:kurihara})
$$\left( \theta_n(f), \nu_{n-1,n} \left(  \theta_{n-1}(f) \right) \right) = \mathrm{Fitt}_{\Lambda_n}\left( \mathrm{Sel}(\mathbb{Q}_n, E[p^\infty])^\vee \right)$$
holds.
\end{thm}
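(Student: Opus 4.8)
The heart of the matter is the vanishing of the error term: the plan is to deduce Theorem \ref{thm:main_theorem_3} from Theorem \ref{thm:main_theorem_2} by showing that the two extra hypotheses force $\mathrm{Err}_n = 0$ for every $n$. Granting this, $\mathrm{Fitt}_{\Lambda_n}(\mathrm{Err}_n) = \Lambda_n$, so the left inclusion of Theorem \ref{thm:main_theorem_2} becomes an equality and the strong main conjecture follows. Thus the whole task is the vanishing of $\mathrm{Err}_n$.

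The first step is to make $\mathrm{Err}_n = \mathrm{coker}(g_n)$ (Remark \ref{rem:coker_g_n}, (\ref{eqn:coker_g_n})) concrete. Using the Iwasawa descent exact sequence $0 \to \mathbb{H}^1_{\mathrm{glob}}(T)/\omega_n \to \mathrm{H}^1(\mathbb{Q}_{\Sigma}/\mathbb{Q}_n, T) \to \mathbb{H}^2_{\mathrm{glob}}(T)[\omega_n] \to 0$, the freeness $\mathbb{H}^1_{\mathrm{glob}}(T) \cong \Lambda$ (valid since $\overline{\rho}$ is surjective), the vanishing $\mathrm{H}^2(\mathbb{Q}_{n,p}, T) = 0$ (supersingularity, $a_p(E)=0$), and the Poitou--Tate identification $\mathbb{H}^2_{\mathrm{glob}}(T) \cong \mathrm{Sel}_0(\mathbb{Q}_\infty, E[p^\infty])^\vee =: X_0$ over the tower, one rewrites $\mathrm{Err}_n$ purely in terms of $X_0$, its $\omega_n$-torsion and $\omega_n$-coinvariants, and the local subgroup $E(\mathbb{Q}_{n,p}) \otimes \mathbb{Z}_p$ at $p$; the hypothesis $p \nmid \mathrm{Tam}(E)$ is used exactly to see that the places away from $p$ contribute nothing. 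This is the finite-layer shadow of the Perrin-Riou construction of the algebraic $p$-adic $L$-function (cf. \cite[$\S$3.1]{perrin-riou-supersingular}, \cite{perrin-riou-book}, \cite[Proposition 10.6]{kobayashi-thesis}), and $\mathrm{Err}_n$ measures the gap between it and the genuine finite-layer object.

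Next I would split $\mathrm{Err}_n = 0$ into an integral part and a rank part. For the integral part, the hypothesis (fineNF) that $X_0$ has no nonzero finite $\Lambda$-submodule makes the control theorem for the fine Selmer group sharp, so that the specialization maps between $X_0$ and $\mathrm{Sel}_0(\mathbb{Q}_n, E[p^\infty])^\vee$, and their companions on $\omega_n$-torsion, carry no spurious finite error; fed into the identification of the previous step together with the Fitting-ideal lemmas of Appendix \ref{sec:fitting}, this reduces the vanishing of $\mathrm{Err}_n$ (already known to be finitely generated over $\mathbb{Z}_p$ and to stabilize) to the statement $\mathrm{Err}_n \otimes \mathbb{Q}_p = 0$. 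For the rank part, one computes $\mathrm{Err}_n \otimes \mathbb{Q}_p$ via the global and local Euler characteristic formulas and Poitou--Tate duality, reducing it to whether the image at $p$ of the global Iwasawa cohomology is transverse to the Bloch--Kato local subspace $E(\mathbb{Q}_{n,p}) \otimes \mathbb{Q}_p$ inside $\mathrm{H}^1(\mathbb{Q}_{n,p}, V)$ — equivalently, to an identity relating the corank of $\mathrm{Sel}_0(\mathbb{Q}_n, E[p^\infty])$ to the part of the Selmer corank accounted for by Mordell--Weil and {\cyr SH}.

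This transversality is where (\textrm{{\cyr SH}}) enters, and it is the main obstacle. A layer $n$ can obstruct transversality only when $\mathrm{Sel}_0(\mathbb{Q}_n, E[p^\infty])$ gains corank over $\mathrm{Sel}_0(\mathbb{Q}_{n-1}, E[p^\infty])$, and — using (fineNF) so that a generator of $\mathrm{char}_\Lambda(X_0)$ genuinely governs these coranks — this happens precisely when $\Phi_n(1+X)$ divides that generator; (\textrm{{\cyr SH}}) then guarantees $\mathrm{rk}_{\mathbb{Z}} E(\mathbb{Q}_n) > \mathrm{rk}_{\mathbb{Z}} E(\mathbb{Q}_{n-1})$, i.e.\ the jump is of Mordell--Weil origin. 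Classes of Mordell--Weil origin localize at $p$ into $E(\mathbb{Q}_{n,p}) \otimes \mathbb{Z}_p$ and already arise from $\mathbb{H}^1_{\mathrm{glob}}(T)$ — through Kato's zeta element and Kobayashi's description of $E(\mathbb{Q}_{n,p}) \otimes \mathbb{Z}_p$ via the $\pm$-Coleman maps (cf.\ the diagram after Conjecture \ref{conj:kurihara-pm}, \cite[$\S$10]{kobayashi-thesis}) — so they are killed in the quotient defining $\mathrm{Err}_n$ and contribute nothing. Turning this heuristic into a proof, namely that the Mordell--Weil growth forced by (\textrm{{\cyr SH}}) cancels the potential defect exactly rather than merely bounding it, is the delicate point; I expect it is cleanest to first establish $\mathrm{Err}_n = 0$ for $n \gg 0$, where the $\pm$-main conjecture (Conjecture \ref{conj:pm-main-conj}) pins down the relevant characteristic ideals and the stabilized $\mathrm{Err}_n$ simplifies, and then to descend to all $n$ using the stabilization together with the Fitting-ideal machinery of Appendix \ref{sec:fitting}. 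With $\mathrm{Err}_n = 0$ in hand for every $n$, Theorem \ref{thm:main_theorem_2} yields the asserted equality, which is the strong main conjecture.
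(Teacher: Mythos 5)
Your reduction of the theorem to the vanishing of $\mathrm{Err}_n$ and your appeal to Theorem \ref{thm:main_theorem_2} are correct, and your heuristic — that a layer can only be problematic when $\Phi_n(1+X)$ divides a generator of $\mathrm{char}_\Lambda\left(\mathrm{Sel}_0(\mathbb{Q}_\infty,E[p^\infty])^\vee\right)$, and that Assumption (\textrm{{\cyr SH}}) then supplies Mordell--Weil classes whose local images account for the defect — is exactly the right picture. But the proposal stops precisely where the proof begins: you yourself flag ``turning this heuristic into a proof'' as the delicate point, and the two devices you offer in its place do not close it. The paper's argument (Proposition \ref{prop:vanishing_of_coker_g_n}) is an \emph{upward} induction on $n$: one writes $\mathrm{Err}_n=\mathrm{coker}\,g_n$ as a quotient of $C_n=\mathrm{coker}\big(\mathrm{Sel}(\mathbb{Q}_\infty,E[p^\infty])^{\vee,\Gamma_n}\to\mathrm{Sel}_0(\mathbb{Q}_\infty,E[p^\infty])^{\vee,\Gamma_n}\big)$ via the sequence $0\to C'_n\to C_n\to\mathrm{coker}\,g_n\to 0$, and then shows every class of $C_n$ lands in the image of $E(\mathbb{Q}_{n,p})\otimes\mathbb{Z}_p$. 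In the case $\Phi_n\nmid\mathrm{char}$ this uses (fineNF) through a structural lemma ($M^{\Gamma_{n-1}}=M^{\Gamma_n}$ for $M$ torsion with no finite submodule and $\Phi_n\nmid\mathrm{char}_\Lambda(M)$) to get surjectivity of $C_{n-1}\to C_n$ and conclude from the induction hypothesis; in the case $\Phi_n\mid\mathrm{char}$, a class $[f]\in C_n$ is represented by an element $P_n\in \mathrm{H}^1(\mathbb{Q}_{n,p},T)$ via $\omega_n\widetilde f$ and local Tate duality, and one proves the \emph{integral} statement $\langle P_n, E(\mathbb{Q}_{n,p})\otimes\mathbb{Q}_p/\mathbb{Z}_p\rangle=0$ by splitting $E(\mathbb{Q}_{n,p})\otimes\mathbb{Q}_p/\mathbb{Z}_p$ along its $\Phi_n$-quotient: the $E(\mathbb{Q}_{n-1,p})$-part is handled by the induction hypothesis $\mathrm{coker}\,g_{n-1}=0$, and the $\Phi_n$-quotient by the surjectivity of $\left(E(\mathbb{Q}_n)\otimes\mathbb{Q}_p/\mathbb{Z}_p\right)_{\Phi_n}\to\left(E(\mathbb{Q}_{n,p})\otimes\mathbb{Q}_p/\mathbb{Z}_p\right)_{\Phi_n}$, which is what (\textrm{{\cyr SH}}) buys. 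None of this duality-plus-induction mechanism appears in your plan.

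Concretely, two of your proposed shortcuts fail. First, the claim that (fineNF) reduces the problem to $\mathrm{Err}_n\otimes\mathbb{Q}_p=0$ is unjustified: $\mathrm{Err}_n$ may a priori be a nonzero \emph{finite} group, so a rank/corank computation via Euler characteristics and transversality of $\mathbb{Q}_p$-vector spaces can at best prove finiteness, never vanishing; (fineNF) concerns finite $\Lambda$-submodules of the fine Selmer dual and does not control $\mathbb{Z}_p$-torsion in $\mathrm{Err}_n$ (in the paper it is used only to compare $\Gamma_{n-1}$- and $\Gamma_n$-invariants in the non-jump case). Second, the strategy ``prove $\mathrm{Err}_n=0$ for $n\gg 0$ using stabilization, then descend to all $n$'' runs in the wrong direction: stabilization says nothing about small layers, and there is no map forcing $\mathrm{Err}_m=0$ from $\mathrm{Err}_n=0$ with $n>m$; the actual argument must proceed from $n=0$ upward because the vanishing at level $n-1$ is an essential input (it is exactly what kills the pairing of $P_n$ against $E(\mathbb{Q}_{n-1,p})\otimes\mathbb{Q}_p/\mathbb{Z}_p$). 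So the proposal, while correctly framed and well motivated, has a genuine gap at its core step.
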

See $\S$\ref{sec:vanishing_coker_g_n} for proof.
\begin{exam} \label{exam:strong_main_conjecture}
There are many examples satisfying Assumptions (fineNF) and (\textrm{{\cyr SH}}) in Theorem \ref{thm:main_theorem_3}.
\begin{enumerate}
\item
We note that Assumption (\textrm{{\cyr SH}}) is satisfied if at least one of the following conditions is satisfied:
\begin{enumerate}
\item  The characteristic ideal of $\mathrm{Sel}_{0}(\mathbb{Q}_\infty, E[p^\infty])^{\vee}$ is prime to $\omega_n$ for all $n$.
\item If $\Phi_n(1+X)$ divides a generator of the characteristic ideal of $\mathrm{Sel}_{0}(\mathbb{Q}_\infty, E[p^\infty])^{\vee}$, then $\textrm{{\cyr SH}}(E/\mathbb{Q}_n)[p^\infty]$ is finite for all $n$.
\end{enumerate}
In fact, the implication of Assumption (\textrm{{\cyr SH}}) from (a) is trivial, and that from (b) can be proved by the control theorem for fine Selmer groups. (cf. \cite[Lemma 4.2, Remark 4.4]{kurihara-invent}) By (b), we expect that Assumption (\textrm{{\cyr SH}}) always holds.
Therefore, the only essential condition in Theorem \ref{thm:main_theorem_3} is Assumption (fineNF).
\item
 If one of the following conditions occurs, then both Assumptions (fineNF) and (\textrm{{\cyr SH}}) follow:
\begin{itemize}
\item $\mathrm{Sel}_0(\mathbb{Q}, E[p^\infty])$ is trivial. (See \cite[Lemma 4.3]{kurihara-invent} and \cite{pollack-algebraic}.)
\item $\mathrm{Sel}_0(\mathbb{Q}_{\infty}, E[p^\infty])_{\Gamma_n}$ is trivial. (It is a weaker condition than the one above.)
\item $\mathrm{Sel}_0(\mathbb{Q}_{\infty}, E[p^\infty]) = \mathrm{ker} \left( E(\mathbb{Q}_{\infty}) \otimes \mathbb{Q}_p/\mathbb{Z}_p \to E(\mathbb{Q}_{\infty,p}) \otimes \mathbb{Q}_p/\mathbb{Z}_p \right)$.
\end{itemize}
Note that the first two cases never occur if $\mathrm{rk}_{\mathbb{Z}}E(\mathbb{Q})>1$. The fine Selmer group is said to be \textbf{all Mordell-Weil} if the last case holds. In the all Mordell-Weil case, $\mathrm{Sel}_0(\mathbb{Q}_{n}, E[p^\infty])^\vee$ is free over $\mathbb{Z}_p$, so Assumptions (fineNF) and (\textrm{{\cyr SH}}) follow.
\item
It is conjectured by Greenberg that the roots of a generator of $\mathrm{char}_{\Lambda}(\mathrm{Sel}_{0}(\mathbb{Q}_\infty, E[p^\infty])^\vee)$ are all of the form $\zeta-1$ where $\zeta$ is a $p$-power root of unity. See \cite[Problem 0.7]{kurihara-pollack} for detail.
\end{enumerate}
\end{exam}

\section{Review of the case for elliptic curves with good ordinary reduction} \label{sec:ordinary}
In this section, we prove Theorem \ref{thm:main_theorem} for elliptic curves with good ordinary reduction.
\subsection{Tools from Iwasawa theory}
Let $E$ be an elliptic curve over $\mathbb{Q}$ with good ordinary reduction at $p$.
We first recall the $\Lambda$-cotorsion property of Selmer groups.
See \cite[Theorem 4.4]{rubin-main-conj-img-quad-fields}, \cite[Theorem 1.5]{greenberg-lnm}, and \cite[Theorem 17.4.(1)]{kato-euler-systems} for detail.
\begin{thm} \label{thm:torsion-ordinary}
The Selmer group $\mathrm{Sel}(\mathbb{Q}_\infty, E[p^\infty] )$ is $\Lambda$-cotorsion.
\end{thm}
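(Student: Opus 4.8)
The statement is classical, and the three cited references give three rather different routes to it; I would follow Kato's Euler system argument. To set the stage, recall first that $\mathrm{Sel}(\mathbb{Q}_\infty, E[p^\infty])$ is automatically a cofinitely generated $\Lambda$-module, being a $\Lambda$-submodule of $\mathrm{H}^1(\mathbb{Q}_\Sigma/\mathbb{Q}_\infty, E[p^\infty])$, which is cofinitely generated (Galois cohomology over the cyclotomic tower with $\Sigma$ finite). So $X_\infty := \mathrm{Sel}(\mathbb{Q}_\infty, E[p^\infty])^\vee$ is a finitely generated $\Lambda$-module, and by Mazur's control theorem --- applicable here because $p$ is non-anomalous, so the local conditions at $p$ are controlled --- the maps $\mathrm{Sel}(\mathbb{Q}_n, E[p^\infty]) \to \mathrm{Sel}(\mathbb{Q}_\infty, E[p^\infty])^{\Gamma_n}$ have finite kernel and cokernel. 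Hence $X_\infty$ is $\Lambda$-torsion if and only if $\mathrm{corank}_{\mathbb{Z}_p}\mathrm{Sel}(\mathbb{Q}_n, E[p^\infty])$ stays bounded as $n \to \infty$; since this corank is at least $\mathrm{rk}_{\mathbb{Z}} E(\mathbb{Q}_n)$, what is really at stake is a uniform Euler-system-type bound on the arithmetic in the tower.

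To produce that bound I would invoke Kato's zeta elements: the norm-compatible classes $z_n \in \mathrm{H}^1(\mathbb{Q}_\Sigma/\mathbb{Q}_n, T)$ forming an Euler system for the Tate module $T$, whose image under the (ordinary) Coleman map is, by Kato's explicit reciprocity law, the $p$-adic $L$-function $L_p(E) \in \Lambda \otimes_{\mathbb{Z}_p} \mathbb{Q}_p$, characterized by interpolating the twisted $L$-values $L(E,\chi,1)/\Omega_E$ up to explicit nonzero factors as $\chi$ runs over finite-order characters of $\Gamma$. By Rohrlich's nonvanishing theorem, $L(E,\chi,1) \neq 0$ for all but finitely many Dirichlet characters $\chi$ of $p$-power conductor, so $L_p(E) \neq 0$ in $\Lambda \otimes \mathbb{Q}_p$, whence the Iwasawa-theoretic zeta element is not $\Lambda$-torsion. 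Feeding this non-torsion Euler system into the Euler system machinery --- Kato's version of Kolyvagin's method, or Rubin's axiomatization --- bounds $X_\infty$: it shows $X_\infty$ is $\Lambda$-torsion, and in fact that $\mathrm{char}_{\Lambda}(X_\infty)$ divides $L_p(E)$ in $\Lambda \otimes \mathbb{Q}_p$. Via the reduction above this yields the desired boundedness, hence the theorem.

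The main obstacle, and the reason this is cited rather than proved here, is the input rather than the bookkeeping: the construction of Kato's Euler system together with the explicit reciprocity law identifying its image with the $p$-adic $L$-function (through Beilinson elements in $K_2$ of modular curves) and the Euler system argument itself are all deep. For $E$ with complex multiplication one can instead run Rubin's argument with the Euler system of elliptic units, and Greenberg's cohomological approach reduces $\Lambda$-cotorsion to the vanishing of $\mathrm{H}^2(\mathbb{Q}_\Sigma/\mathbb{Q}_\infty, E[p^\infty])$ (a form of the weak Leopoldt conjecture) together with a local surjectivity statement; but proving that vanishing unconditionally again rests on an analytic nonvanishing input, so in the end every route passes through either Kato or Rubin.
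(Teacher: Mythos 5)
Your outline is correct and is essentially the same route the paper takes: the paper offers no proof of its own but simply cites Rubin, Greenberg, and Kato, and your sketch (control theorem reduction, Kato's zeta elements with the explicit reciprocity law, Rohrlich's nonvanishing, and the Euler system bound, with Rubin's elliptic units in the CM case) is an accurate summary of exactly those cited arguments. One tiny caveat: finiteness of the kernel and cokernel in Mazur's control theorem holds for good ordinary reduction without the non-anomalous hypothesis, which is only needed for the stronger exactness statements.
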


The following statement is the Iwasawa main conjecture for elliptic curves with ordinary reduction.
See \cite[$\S$1.(c)]{mazur-IMC-for-E}, \cite[Conjecture 1.13]{greenberg-lnm}, and \cite[Conjecture 17.6]{kato-euler-systems} for detail.
\begin{conj}[Iwasawa main conjecture] \label{conj:main-conj}
Let $p$ be an odd prime and $E$ an elliptic curve over $\mathbb{Q}$ with $p \nmid a_p(E)$.
Then 
$$ \left( L_p(\mathbb{Q}_\infty, f_\alpha) \right)  =  \mathrm{char}_\Lambda \left( \mathrm{Sel}(\mathbb{Q}_\infty, E[p^\infty])^\vee \right) $$
where $L_p(\mathbb{Q}_\infty, f_\alpha)$ is the $p$-adic $L$-function of the $p$-stabilized form $f_\alpha$ with the unit root $\alpha$.
\end{conj}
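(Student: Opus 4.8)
The plan is to deduce Conjecture \ref{conj:main-conj} from two opposite divisibilities between the analytic object $L_p(\mathbb{Q}_\infty, f_\alpha)$ and the algebraic object $\mathrm{char}_\Lambda(\mathrm{Sel}(\mathbb{Q}_\infty, E[p^\infty])^\vee)$, each proved by a distinct deep input, and then to reconcile them. Throughout one works in $\Lambda \simeq \mathbb{Z}_p\llbracket X \rrbracket$, a regular (hence factorial) local ring, so that the characteristic ideal of a finitely generated torsion $\Lambda$-module is principal and well defined, and a divisibility between two such ideals is equivalent to the corresponding divisibility at every height-one prime together with a comparison of $\mu$-invariants.

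First one records that $\mathrm{Sel}(\mathbb{Q}_\infty, E[p^\infty])^\vee$ is $\Lambda$-torsion (Theorem \ref{thm:torsion-ordinary}), so the characteristic ideal on the right makes sense, and that $L_p(\mathbb{Q}_\infty, f_\alpha) \neq 0$ (Rohrlich's nonvanishing theorem, or a byproduct of the argument below). Then, from Kato's Euler system attached to $f$ one obtains
\[
\mathrm{char}_\Lambda\left( \mathrm{Sel}(\mathbb{Q}_\infty, E[p^\infty])^\vee \right) \ \big|\ \left( L_p(\mathbb{Q}_\infty, f_\alpha) \right)
\]
in $\Lambda$. The mechanism is the Kolyvagin--Rubin Euler-system argument, which converts the norm-compatible family of Kato's zeta elements in the global Iwasawa cohomology $\mathbb{H}^1_{\mathrm{glob}}(T)$ into an upper bound for the Pontryagin dual of $\mathrm{Sel}(\mathbb{Q}_\infty, E[p^\infty])$; the identification of the $p$-local image of the zeta element with $L_p(\mathbb{Q}_\infty, f_\alpha)$ is Kato's explicit reciprocity law (the computation of the dual exponential map, equivalently of the ordinary Coleman map), and the surjectivity of $\overline{\rho}$ guarantees an ample supply of Kolyvagin primes to run the descent. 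This is \cite[Theorem 17.4]{kato-euler-systems}. (In the CM case this divisibility, and in fact the full equality, is instead Rubin's main conjecture over the relevant imaginary quadratic field, via elliptic units.)

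For the reverse divisibility $\left( L_p(\mathbb{Q}_\infty, f_\alpha) \right) \mid \mathrm{char}_\Lambda\left( \mathrm{Sel}(\mathbb{Q}_\infty, E[p^\infty])^\vee \right)$ --- equivalently, a lower bound for the Selmer group forced by the $L$-function --- one invokes the Eisenstein congruence method of Skinner and Urban on the unitary group $\mathrm{U}(2,2)$: one realizes a suitable factor of $L_p(\mathbb{Q}_\infty, f_\alpha)$ as essentially the constant term of a $p$-adic family of Eisenstein series, detects its congruences with cuspidal Hida families, and extracts from the attached Galois pseudo-representations enough ramified Galois extensions to produce the required $\Lambda$-submodule of the Selmer group. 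Ordinarity at $p$, irreducibility of $\overline{\rho}$, and the existence of a prime $q \parallel N$ at which $\overline{\rho}$ is ramified (or an analogous local hypothesis) are what make the Eisenstein/cuspidal dichotomy and the lattice construction go through; the removal or weakening of several of these hypotheses in subsequent work is recorded in Theorem \ref{thm:main-conj} and Theorem \ref{thm:main-conj-2}.

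Combining the two divisibilities in the factorial ring $\Lambda$ gives the desired equality. The only remaining subtlety is that Kato's divisibility is in some cases available only up to a bounded power of $p$, so to obtain the equality on the nose one appeals either to the vanishing of the relevant $\mu$-invariant or to a standard argument excluding nontrivial finite $\Lambda$-submodules of $\mathrm{Sel}(\mathbb{Q}_\infty, E[p^\infty])^\vee$, which pins down the $p$-parts. I expect the Skinner--Urban half to be the main obstacle: it is by far the deepest ingredient, and it is precisely the source of the hypotheses under which Conjecture \ref{conj:main-conj} is currently known; the Euler-system half, by contrast, is unconditional in the generality needed here once $\overline{\rho}$ is surjective.
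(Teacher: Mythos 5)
You have set out to prove something that the paper itself does not prove and does not claim to prove: Conjecture \ref{conj:main-conj} is stated as a \emph{conjecture}, and the paper's role for it is purely to record its current status in Theorem \ref{thm:main-conj} (Rubin's equality in the CM case; Kato's single divisibility $\left( L_p(\mathbb{Q}_\infty, f_\alpha) \right) \subseteq \mathrm{char}_\Lambda \left( \mathrm{Sel}(\mathbb{Q}_\infty, E[p^\infty])^\vee \right)$ in the non-CM case under surjectivity of $\overline{\rho}$) and Theorem \ref{thm:main-conj-2} (the full equality under various supplementary hypotheses: a prime $q \Vert N$ at which $\overline{\rho}$ is ramified, or the real-quadratic-field conditions of Wan, or the nonvanishing of some $\widetilde{\delta}_n$ together with conditions on $\mathrm{Tam}(E)$ and the multiplicative primes). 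Your outline is a faithful and accurate summary of exactly these inputs --- Kato's Euler system and explicit reciprocity law for the upper bound on the Selmer group, the Skinner--Urban Eisenstein congruence method on $\mathrm{U}(2,2)$ for the lower bound, Rubin's elliptic units for CM --- and to your credit you explicitly flag that the second divisibility needs the local hypothesis at some $q \Vert N$.

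But that flag is precisely the gap: what you have sketched is a proof of Theorem \ref{thm:main-conj-2}, not of Conjecture \ref{conj:main-conj} as stated, which imposes no hypothesis beyond $p$ odd and $p \nmid a_p(E)$. In that generality the reverse divisibility is not known, and even Kato's divisibility requires an image hypothesis on $\overline{\rho}$ (surjectivity, or at least a suitable largeness condition) that the conjecture's statement does not grant you; the residually reducible and small-image cases are not covered by the Euler system machinery you invoke. There is also no way to ``reconcile'' the two divisibilities into an equality when one of them is simply unavailable. So the correct disposition is: your write-up is a good survey of why the conjecture is believed and of the cases in which it is a theorem, and it matches what the paper records, but it is not a proof of the statement --- the statement remains open, which is why the paper presents it as a conjecture and carefully threads the known partial results (Theorem \ref{thm:main-conj} for the inclusion it actually needs, Theorem \ref{thm:main-conj-2} for the cases where the strong main conjecture can be upgraded to an equality) through its main theorems.
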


The following theorem is due to Rubin \cite[Theorem 12.3]{rubin-main-conj-cm} for the CM case and Kato \cite[Theorem 17.4.(3)]{kato-euler-systems} for the non-CM case.
\begin{thm} \label{thm:main-conj}
Let $p$ be an odd prime and $E$ be an elliptic curve over $\mathbb{Q}$ with $p \nmid a_p(E)$.
\begin{enumerate}
\item If $E$ has CM, then Conjecture \ref{conj:main-conj} holds.
\item If $E$ has no CM, then we assume $\overline{\rho}$ is surjective.
Then 
$$ \left( L_p(\mathbb{Q}_\infty, f_\alpha) \right)  \subseteq \mathrm{char}_\Lambda \left( \mathrm{Sel}(\mathbb{Q}_\infty, E[p^\infty])^\vee \right) .$$
\end{enumerate}
\end{thm}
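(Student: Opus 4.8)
The plan is to treat the two cases by the Euler system method, with different inputs. In both cases the strategy is the same four steps: (i) produce an Euler system for $T = \varprojlim_n E[p^n]$ over the abelian extensions $\mathbb{Q}(\mu_m)$; (ii) identify, via an explicit reciprocity law, the image under localization-at-$p$ of the bottom class of the Euler system with (a multiple of) the relevant $p$-adic $L$-function; (iii) feed the Euler system into the Kolyvagin--Rubin descent machinery to bound a ``strict'' Selmer group in terms of the index of the Euler system class; and (iv) use Poitou--Tate global duality to translate that bound into the asserted inclusion $\left( L_p(\mathbb{Q}_\infty, f_\alpha) \right) \subseteq \mathrm{char}_\Lambda\left( \mathrm{Sel}(\mathbb{Q}_\infty, E[p^\infty])^\vee \right)$.

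For part (2) (the non-CM case), the Euler system to use is Kato's system of zeta elements $z_m \in \mathrm{H}^1(\mathbb{Q}(\mu_m), T)$, whose norm-compatibility along the cyclotomic tower produces a class $\mathbf{z} \in \mathbb{H}^1_{\mathrm{glob}}(T)$. The reciprocity law in step (ii) is Kato's explicit reciprocity law, which matches the image of $\mathrm{loc}_p(\mathbf{z})$ under the Perrin-Riou/Coleman map with $L_p(\mathbb{Q}_\infty, f_\alpha)$, the $p$-stabilization at the unit root $\alpha$; this is where the interpolation property of $L_p$ actually enters. The surjectivity of $\overline{\rho}$ is needed in two places: to guarantee that $\mathbf{z}$ is not $\Lambda$-torsion, so that the bound is non-vacuous, and to control the finite error terms in the descent, so that $\mathrm{char}_\Lambda(\mathrm{Sel}(\mathbb{Q}_\infty,E[p^\infty])^\vee)$ contains the characteristic ideal of $\mathbb{H}^1_{\mathrm{glob}}(T)/\Lambda\cdot\mathbf{z}$, which the reciprocity law identifies with $(L_p(\mathbb{Q}_\infty, f_\alpha))$. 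Combining these gives the stated inclusion; this is \cite[Theorem 17.4.(3)]{kato-euler-systems}.

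For part (1) (the CM case), one has the stronger tool of the two-variable Iwasawa main conjecture for the imaginary quadratic field $K$ by which $E$ has complex multiplication, proved by Rubin using the Euler system of elliptic units. There the elliptic units provide the analytic input through their connection to $L$-values via Kronecker's limit formula and the evaluation of the associated Coleman maps; the Euler system machinery gives one divisibility, and the reverse divisibility follows from the analytic class number formula together with the fact that elliptic units generate a subgroup of the global units of controlled index. Restricting the two-variable statement to the cyclotomic line, or quoting Rubin's one-variable result \cite[Theorem 12.3]{rubin-main-conj-cm} directly, yields the full equality of Conjecture \ref{conj:main-conj}; no hypothesis on $\overline{\rho}$ is needed, because the CM structure makes the relevant Iwasawa modules amenable without it.

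The main obstacle, and the technical heart in both cases, is the explicit reciprocity law of step (ii): translating a cohomological or motivic construction into the $p$-adic $L$-function requires either Kato's computation via syntomic cohomology and the dual exponential map, or Rubin's computation of Coleman power series of elliptic units. Once this is in place, steps (iii) and (iv) are by now formal (rank-one Euler systems plus Poitou--Tate duality), except that in the non-CM case one must be vigilant about the finite error terms, which is exactly what the surjectivity of $\overline{\rho}$ is there to kill.
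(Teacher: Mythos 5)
Your proposal is correct and matches the paper's treatment: the paper gives no independent argument for this theorem but simply quotes Rubin \cite[Theorem 12.3]{rubin-main-conj-cm} for the CM case and Kato \cite[Theorem 17.4.(3)]{kato-euler-systems} for the non-CM divisibility, which are exactly the results (and the Euler-system/explicit-reciprocity proofs) you sketch. Nothing further is needed.
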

For the non-CM case, we have the following theorem is due to Skinner-Urban \cite[Theorem 3.29]{skinner-urban}, X. Wan \cite[Theorem 4]{wan_hilbert}, and K-Kim-Sun \cite[Theorem 1.1]{kks}.
\begin{thm} \label{thm:main-conj-2}
Keep all the assumptions in Theorem \ref{thm:main-conj}.
\begin{itemize}
\item[\cite{skinner-urban}] If there exists a prime $q \Vert N$ such that $\overline{\rho}$ is ramified at $q$, then Conjecture \ref{conj:main-conj} holds.
\item[\cite{wan_hilbert}] If there exists a real quadratic field $F/\mathbb{Q}$ such that
\begin{itemize}
\item $p$ is unramified in $F$,
\item any prime $q$ dividing $N$ such that $q \equiv -1 \Mod{p}$ is inert in $F/\mathbb{Q}$, and any other prime dividing $N$ splits in $F/\mathbb{Q}$,
\item the canonical period of $f$ over $F$ is the square of its canonical period over $\mathbb{Q}$ up to a $p$-adic unit,
\end{itemize}
then Conjecture \ref{conj:main-conj} holds.
\item[\cite{kks}] If $\widetilde{\delta}_n \neq 0$ for some $n$ and ${ \displaystyle p \nmid \mathrm{Tam}(E) \cdot  \prod_{q \vert N_{\mathrm{sp}}} (q-1) \cdot \prod_{q' \vert N_{\mathrm{ns}}} (q' +1)  }$ 
where $N_{\mathrm{sp}}$ is the product of split multiplicative reduction primes of $E$ and
$N_{\mathrm{ns}}$ is the product of non-split multiplicative reduction primes of $E$, then Conjecture \ref{conj:main-conj} holds.
\end{itemize}
\end{thm}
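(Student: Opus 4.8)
The plan is to combine, in each of the three cases, the ``easy'' divisibility supplied by Kato's Euler system with an independent ``analytic upper bound'' for the Selmer group. Concretely, Theorem \ref{thm:main-conj}(2) already gives
$$\left( L_p(\mathbb{Q}_\infty, f_\alpha) \right) \subseteq \mathrm{char}_\Lambda \left( \mathrm{Sel}(\mathbb{Q}_\infty, E[p^\infty])^\vee \right)$$
under the surjectivity of $\overline{\rho}$, so it suffices to prove the reverse inclusion $\mathrm{char}_\Lambda \left( \mathrm{Sel}(\mathbb{Q}_\infty, E[p^\infty])^\vee \right) \subseteq \left( L_p(\mathbb{Q}_\infty, f_\alpha) \right)$; since both are principal ideals of the UFD $\Lambda$, the two divisibilities force equality, which is Conjecture \ref{conj:main-conj}. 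All the work is in the reverse inclusion, and this is exactly the point at which each of the three cited inputs enters.

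For the Skinner--Urban case, I would invoke their Eisenstein-ideal method on the quasi-split unitary group $\mathrm{GU}(2,2)$: one constructs a Hida family of Eisenstein series whose constant term $p$-adically interpolates a unit multiple of $L_p(\mathbb{Q}_\infty, f_\alpha)$, shows the Eisenstein ideal detects cuspidal Hida families congruent to it, and then extracts, from the ($4$-dimensional, residually reducible) Galois representations attached to those cusp forms, enough classes in the dual Selmer group to force divisibility by the constant term. The hypothesis that some $q \Vert N$ has $\overline{\rho}$ ramified at $q$ is what guarantees the local Eisenstein sections have the required shape and the congruence module is nondegenerate; as in their paper one must also keep track of the precise normalization of the main conjecture and of a possible $\mu$-invariant discrepancy, which is why one ultimately pairs the resulting divisibility with Kato rather than reading off equality directly.

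For the Wan case, the difference is that the Skinner--Urban ramification hypothesis may fail over $\mathbb{Q}$; I would then pick an auxiliary real quadratic field $F$ with the listed splitting behaviour (inert at the problematic primes $q \equiv -1 \Mod p$, split at the other bad primes, $p$ unramified) so that the base-changed form satisfies the analogue of the Skinner--Urban hypothesis over $F$, run the Eisenstein-congruence argument for Hilbert modular forms on $\mathrm{GU}(2,2)$ over $F$ to obtain the divisibility for the main conjecture over $F$, and descend to $\mathbb{Q}$. The hypothesis that the canonical period over $F$ is, up to a $p$-adic unit, the square of the canonical period over $\mathbb{Q}$ is precisely what makes this descent $p$-integral and lossless, after which Kato over $\mathbb{Q}$ closes the argument.

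For the K-Kim-Sun case, the upper bound is obtained not from automorphic congruences but from refined (finite-level) Euler/Kolyvagin-system techniques: the invariants $\widetilde{\delta}_n$ are mod $p$ derivative pairings built from the Mazur--Tate elements $\theta_n(f)$, and a single non-vanishing $\widetilde{\delta}_n \neq 0$ propagates, via the Kolyvagin-system bounding argument together with the control theorems relating the finite layers to $\mathbb{Q}_\infty$, to the conclusion that $\mathrm{Sel}(\mathbb{Q}_\infty, E[p^\infty])^\vee$ is no larger than $L_p(\mathbb{Q}_\infty, f_\alpha)$ predicts, i.e.\ the divisibility $\mathrm{char}_\Lambda \subseteq (L_p)$; the hypothesis $p \nmid \mathrm{Tam}(E)\cdot\prod_{q \mid N_{\mathrm{sp}}}(q-1)\cdot\prod_{q' \mid N_{\mathrm{ns}}}(q'+1)$ is used to keep the local-at-$N$ cohomology and the relevant period comparison integral, so that the finite-level computation is not corrupted by spurious powers of $p$. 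In all three cases the genuine obstacle is the same --- producing the hard half of the main conjecture, since Kato's Euler system only yields the easy half --- and each cited work surmounts it by a substantial and essentially independent method; the role of the present excerpt is simply to record and combine these three inputs.
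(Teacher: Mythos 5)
Your proposal is correct and matches the paper's treatment: Theorem \ref{thm:main-conj-2} is not proved in the text but is simply a record of the theorems of \cite{skinner-urban}, \cite{wan_hilbert}, and \cite{kks} (each of which, combined with Kato's divisibility in Theorem \ref{thm:main-conj}, yields the equality of Conjecture \ref{conj:main-conj}), and your sketches of those external arguments are accurate at the level of detail attempted. One small verbal slip in the \cite{kks} paragraph: the nonvanishing of $\widetilde{\delta}_n$ supplies the \emph{lower} bound for the Selmer group, i.e.\ the hard inclusion $\mathrm{char}_\Lambda\left(\mathrm{Sel}(\mathbb{Q}_\infty,E[p^\infty])^\vee\right) \subseteq \left(L_p(\mathbb{Q}_\infty,f_\alpha)\right)$ that you correctly display (it shows the Selmer group is at least as large as the $p$-adic $L$-function predicts, not ``no larger than''), the upper bound being Kato's.
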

The following theorem is due to Greenberg and Hachimori-Matsuno (\cite[Proposition 4.14]{greenberg-lnm}, \cite[Corollary]{hachimori-matsuno}).
\begin{thm} \label{thm:no-finite-ordinary}
The Selmer group
$\mathrm{Sel}(\mathbb{Q}_\infty, E[p^\infty])$ has no proper $\Lambda$-submodule of finite index.
\end{thm}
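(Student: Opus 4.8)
The plan is to dualise and exploit global (Poitou--Tate) duality along the cyclotomic tower, following Greenberg. By Pontryagin duality, the assertion that $\mathrm{Sel}(\mathbb{Q}_\infty, E[p^\infty])$ has no proper $\Lambda$-submodule of finite index is equivalent to the assertion that $X := \mathrm{Sel}(\mathbb{Q}_\infty, E[p^\infty])^\vee$ has no nonzero \emph{finite} $\Lambda$-submodule; since $\Lambda$ is a regular local ring of Krull dimension $2$, by the Auslander--Buchsbaum formula this is in turn equivalent to $\mathrm{pd}_\Lambda X \leq 1$, i.e.\ $\mathrm{depth}_\Lambda X \geq 1$. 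So it suffices to exhibit a two-term $\Lambda$-free resolution of $X$, and the input for this is the defining exact sequence
\[
0 \to \mathrm{Sel}(\mathbb{Q}_\infty, E[p^\infty]) \to \mathrm{H}^1(\mathbb{Q}_\Sigma/\mathbb{Q}_\infty, E[p^\infty]) \xrightarrow{\lambda_\infty} \bigoplus_{v \in \Sigma} \mathcal{H}_v(\mathbb{Q}_\infty),
\]
where $\mathcal{H}_v(\mathbb{Q}_\infty) := \varinjlim_n \prod_{w \mid v} \mathrm{H}^1(\mathbb{Q}_{n,w}, E[p^\infty]) / \big( E(\mathbb{Q}_{n,w}) \otimes \mathbb{Q}_p/\mathbb{Z}_p \big)$.

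First I would check that $\lambda_\infty$ is \emph{surjective}; this is exactly where the cotorsion property (Theorem~\ref{thm:torsion-ordinary}) is used. A $\Lambda$-corank count via the global Euler characteristic gives $\mathrm{corank}_\Lambda \mathrm{H}^1(\mathbb{Q}_\Sigma/\mathbb{Q}_\infty, E[p^\infty]) = 1$, matched by $\mathrm{corank}_\Lambda \bigoplus_v \mathcal{H}_v(\mathbb{Q}_\infty) = 1$ (the only contribution coming from $v = p$); since $\mathrm{Sel}(\mathbb{Q}_\infty, E[p^\infty])$ is cotorsion, this forces $\mathrm{coker}(\lambda_\infty)$ to be $\Lambda$-cotorsion. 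On the other hand, by the Poitou--Tate sequence for $E[p^\infty]$ over $\mathbb{Q}_\infty$ together with the vanishing $\mathrm{H}^2(\mathbb{Q}_\Sigma/\mathbb{Q}_\infty, E[p^\infty]) = 0$ (the weak Leopoldt conjecture for $E$ along the cyclotomic tower), the Pontryagin dual of $\mathrm{coker}(\lambda_\infty)$ is a $\Lambda$-submodule of the global Iwasawa cohomology $\mathbb{H}^1_{\mathrm{glob}}(T)$ of $T = T_pE$, hence $\Lambda$-torsion-free. A module that is simultaneously the dual of a cotorsion module and torsion-free must vanish, so $\mathrm{coker}(\lambda_\infty) = 0$.

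Next I would analyse the two outer terms. For the global term one uses that $\mathrm{H}^0(\mathbb{Q}_\Sigma/\mathbb{Q}_\infty, E[p^\infty]) = E(\mathbb{Q}_\infty)[p^\infty]$ is finite and $\mathrm{H}^2(\mathbb{Q}_\Sigma/\mathbb{Q}_\infty, E[p^\infty]) = 0$; a standard descent argument then shows $\mathrm{H}^1(\mathbb{Q}_\Sigma/\mathbb{Q}_\infty, E[p^\infty])^\vee$ has projective dimension $\leq 1$ over $\Lambda$. For the local term at $p$ one invokes the good-ordinary local theory of Coates--Greenberg: $E(\mathbb{Q}_{\infty,p}) \otimes \mathbb{Q}_p/\mathbb{Z}_p$ is the image of the cohomology of the formal group, from which $\mathcal{H}_p(\mathbb{Q}_\infty)^\vee$ is $\Lambda$-\emph{free} of rank one; the local terms at $v \nmid p$ are duals of local Iwasawa cohomology groups over the unramified $\mathbb{Z}_p$-extensions of $\mathbb{Q}_\ell$, hence finitely generated torsion $\Lambda$-modules of projective dimension $\leq 1$. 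Dualising the short exact sequence coming from the surjectivity of $\lambda_\infty$,
\[
0 \to \Big( \bigoplus_{v \in \Sigma} \mathcal{H}_v(\mathbb{Q}_\infty) \Big)^\vee \to \mathrm{H}^1(\mathbb{Q}_\Sigma/\mathbb{Q}_\infty, E[p^\infty])^\vee \to X \to 0,
\]
one then hopes to read off $\mathrm{pd}_\Lambda X \leq 1$.

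The hard part will be precisely this last deduction: quotienting a module of projective dimension $\leq 1$ by a submodule of projective dimension $\leq 1$ can in general raise the projective dimension, so one must show that the inclusion above is suitably ``saturated'' at the bad primes --- equivalently, that reintroducing the local conditions at $v \nmid p$ does not create a finite $\Lambda$-submodule of $X$. The device I would use is to first prove the statement for the \emph{imprimitive} Selmer group $\mathrm{Sel}^{\Sigma_0}(\mathbb{Q}_\infty, E[p^\infty])$ relaxed at the bad primes $\Sigma_0 := \Sigma \setminus \{p,\infty\}$: there the kernel of the localisation map is just $\mathcal{H}_p(\mathbb{Q}_\infty)^\vee$, which is genuinely $\Lambda$-free, so the argument closes at once and $\mathrm{Sel}^{\Sigma_0}(\mathbb{Q}_\infty, E[p^\infty])^\vee$ has $\mathrm{pd}_\Lambda \leq 1$. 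One then descends to $\mathrm{Sel}(\mathbb{Q}_\infty, E[p^\infty])$ by reintroducing the finitely many extra local factors at $\Sigma_0$ one at a time and checking at each step that the kernel and cokernel of the localisation map stay under control. This last bookkeeping is exactly where Greenberg's original auxiliary hypotheses enter, and where the refinement of Hachimori--Matsuno is what is needed to remove them; everything else is a formal manipulation of the Poitou--Tate sequence together with the cotorsion input of Theorem~\ref{thm:torsion-ordinary} and the weak Leopoldt vanishing.
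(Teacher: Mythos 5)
The paper gives no argument of its own for this statement: it simply cites Greenberg (\emph{Iwasawa theory for elliptic curves}, Proposition 4.14) and Hachimori--Matsuno, and your outline reproduces the strategy of exactly those references --- dualize, convert ``no proper finite-index $\Lambda$-submodule'' into $\mathrm{depth}_{\Lambda}X\geq 1$ (equivalently $\mathrm{pd}_{\Lambda}X\leq 1$) via Auslander--Buchsbaum, deduce surjectivity of the localization map from the $\Lambda$-cotorsionness of the Selmer group (Theorem \ref{thm:torsion-ordinary}) together with Cassels--Poitou--Tate and the vanishing of $\mathrm{H}^2(\mathbb{Q}_\Sigma/\mathbb{Q}_\infty,E[p^\infty])$, and use that the dual of the local term at $p$ is $\Lambda$-free of rank one (standard for good ordinary reduction; the standing hypotheses $a_p(E)\not\equiv 1 \Mod{p}$, hence $E(\mathbb{Q}_\infty)[p]=0$, are what make this and the global step with $\mathrm{H}^1(\mathbb{Q}_\Sigma/\mathbb{Q}_\infty,E[p^\infty])^\vee$ clean). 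Up to that point your reductions are sound and are the same route as the cited proofs.

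The genuine gap is the step you dismiss as bookkeeping. Granting the statement for the $\Sigma_0$-imprimitive Selmer group, dualizing $0\to \mathrm{Sel}(\mathbb{Q}_\infty,E[p^\infty])\to \mathrm{Sel}^{\Sigma_0}(\mathbb{Q}_\infty,E[p^\infty])\to \bigoplus_{v\in\Sigma_0}\mathcal{H}_v(\mathbb{Q}_\infty)\to 0$ presents $X=\mathrm{Sel}(\mathbb{Q}_\infty,E[p^\infty])^\vee$ as the \emph{quotient} of $(\mathrm{Sel}^{\Sigma_0})^\vee$ by the torsion submodule $\bigoplus_{v\in\Sigma_0}\mathcal{H}_v(\mathbb{Q}_\infty)^\vee$; the property ``no nonzero finite $\Lambda$-submodule'' does not pass to quotients, and the depth inequality for a short exact sequence gives nothing here because the submodule being divided out is torsion, hence of depth at most one (this is the same obstruction you already identified for the direct argument). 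So ``reintroducing the local factors one at a time and keeping kernel and cokernel under control'' is precisely where a new idea is required --- in Greenberg's treatment this is where the structure of the local terms at $w\nmid p$ (divisibility of $\mathrm{H}^1(\mathbb{Q}_{\infty,w},E[p^\infty])$, coming from $\mathrm{cd}_p\leq 1$ over $\mathbb{Q}_{\infty,w}$) is brought to bear together with the surjectivity of the imprimitive Selmer group onto these local terms --- and your sketch defers it rather than supplying it, so as written it is not a complete proof. A minor correction of attribution: Greenberg's Proposition 4.14 already covers the present case ($E/\mathbb{Q}$, good ordinary reduction, cotorsion Selmer group) without auxiliary hypotheses at the bad primes; Hachimori--Matsuno is cited for the extension of this result (more general base fields and reduction types), not for removing hypotheses from the descent at $\Sigma_0$.
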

We recall the control theorem for $E$ over the cyclotomic $\mathbb{Z}_p$-extension of $\mathbb{Q}$.
\begin{thm} \label{thm:control-ordinary}
The restriction map
$$\mathrm{Sel}(\mathbb{Q}_n, E[p^\infty]) \to \mathrm{Sel}(\mathbb{Q}_\infty, E[p^\infty])[\omega_n]$$
is injective with the finite cokernel whose size is bounded independently of $n$.
If we further assume that $a_p(E) \not\equiv 1 \Mod{p}$ and $p \nmid \mathrm{Tam}(E)$,
then the restriction map is an isomorphism.
\end{thm}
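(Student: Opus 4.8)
The plan is to run Mazur's descent, comparing $\Sel(\Q_n, E[p^\infty])$ with $\Sel(\Q_\infty, E[p^\infty])[\omega_n]$ through the fundamental commutative diagram
\[
\xymatrix{
0 \ar[r] & \Sel(\Q_n, E[p^\infty]) \ar[r] \ar[d]^-{s_n} & \mathrm{H}^1(\Q_\Sigma/\Q_n, E[p^\infty]) \ar[r] \ar[d]^-{h_n} & \bigoplus_{v} \mathcal{H}_v(\Q_n) \ar[d]^-{g_n} \\
0 \ar[r] & \Sel(\Q_\infty, E[p^\infty])[\omega_n] \ar[r] & \mathrm{H}^1(\Q_\Sigma/\Q_\infty, E[p^\infty])[\omega_n] \ar[r] & \bigoplus_{w} \mathcal{H}_w(\Q_\infty)[\omega_n]
}
\]
whose rows are exact except possibly at the rightmost terms, where $\mathcal{H}_v(\Q_n) := \mathrm{H}^1(\Q_{n,v}, E[p^\infty])/(E(\Q_{n,v}) \otimes \Qp/\Zp)$, the sums run over the places above $\Sigma$, and $s_n, h_n, g_n$ are the restriction maps; since the Selmer groups are independent of $\Sigma$ we may take $\Sigma = \{p, \infty\} \cup \{\ell : \ell \mid N\}$, so the right-hand sums are finite. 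Replacing the rightmost terms by the images of the horizontal maps and applying the snake lemma reduces the statement to controlling (i) $\ker h_n$ and $\mathrm{coker}\, h_n$, and (ii) the kernels $\ker g_v$ for $v \mid Np$ (the archimedean places contribute nothing as $p$ is odd).

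First I would dispose of $h_n$ via inflation--restriction: $\ker h_n = \mathrm{H}^1(\Gamma_n, E(\Q_\infty)[p^\infty])$ and $\mathrm{coker}\, h_n \hookrightarrow \mathrm{H}^2(\Gamma_n, E(\Q_\infty)[p^\infty]) = 0$, the latter because $\Gamma_n \cong \Zp$ has $p$-cohomological dimension $1$; thus $h_n$ is surjective. Moreover $E(\Q_\infty)[p^\infty] = 0$, since the standing hypothesis $a_p(E) \not\equiv 1 \Mod{p}$ forces $E(\Q)[p] = 0$ and a nonzero $\mathbb{F}_p[\Gamma]$-submodule of $E[p^\infty]$ would have a nonzero $\Gamma$-fixed vector ($\Gamma$ being pro-$p$). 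Hence $h_n$ is an isomorphism; in particular $\ker s_n = 0$, which gives the asserted injectivity, and the snake lemma then identifies $\mathrm{coker}\, s_n$ with a submodule of $\bigoplus_{v} \ker g_v$.

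It remains to bound $\ker g_v$. For $v \mid \ell$ with $\ell \mid N$ the extension $\Q_{\infty,w}/\Q_{n,v}$ is unramified, and the standard local computation shows $\ker g_v$ is finite of order bounded independently of $n$ by the $p$-part of $c_\ell$, hence vanishing when $p \nmid \mathrm{Tam}(E)$. The main obstacle is the place $v \mid p$, where good ordinary reduction is essential: there the local condition $E(\Q_{n,p}) \otimes \Qp/\Zp$ inside $\mathrm{H}^1(\Q_{n,p}, E[p^\infty])$ agrees, up to a finite error bounded independently of $n$, with the image of $\mathrm{H}^1(\Q_{n,p}, \widehat{E}[p^\infty])$ arising from the connected--\'etale filtration $0 \to \widehat{E}[p^\infty] \to E[p^\infty] \to \widetilde{E}[p^\infty] \to 0$; and since $\Q_{\infty,p}/\Qp$ is totally ramified while $\widetilde{E}[p^\infty]$ is \'etale, the residue field does not grow in the tower, so the invariants of $\widetilde{E}(\overline{\mathbb{F}}_p)[p^\infty]$ are constant. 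Carrying out the resulting computation shows $\ker g_p$ is finite, bounded independently of $n$, and trivial precisely when $\widetilde{E}(\mathbb{F}_p)[p] = 0$, i.e. when $p \nmid \#\widetilde{E}(\mathbb{F}_p) = p + 1 - a_p(E)$, i.e. when $a_p(E) \not\equiv 1 \Mod{p}$. Combining the three steps: $s_n$ is always injective with finite cokernel bounded independently of $n$ (a divisor of the product of the finitely many $|\ker g_v|$), and the cokernel vanishes once $a_p(E) \not\equiv 1 \Mod{p}$ and $p \nmid \mathrm{Tam}(E)$, as claimed.
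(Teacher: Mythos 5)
Your proposal is correct and is essentially the argument the paper relies on: the paper's proof is just a citation of Greenberg's Propositions 3.7--3.9, which carry out exactly this Mazur-style descent (inflation--restriction for $h_n$ using $E(\mathbb{Q}_\infty)[p^\infty]=0$, Tamagawa-number control of the local kernels at $v \mid N$, and the ordinary/non-anomalous analysis at $p$ via the connected--\'etale filtration). The only quibble is that ``trivial precisely when $\widetilde{E}(\mathbb{F}_p)[p]=0$'' claims more than is needed or proved; the single implication suffices for the theorem.
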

\begin{proof}
See \cite[Proposition 3.7, Proposition 3.8 and Proposition 3.9]{greenberg-lnm}.
\end{proof}
\subsection{Proof of Theorem \ref{thm:main_theorem} for the case of good ordinary reduction}
This is basically obtained in \cite[Corollary 10.3]{kurihara-fitting}.
From Theorem \ref{thm:main-conj}, we have
$$\left( L_p(\mathbb{Q}_\infty, f_\alpha) \right)  \subseteq \mathrm{char}_\Lambda \left( \mathrm{Sel}(\mathbb{Q}_\infty, E[p^\infty])^\vee \right) .$$
By Theorem \ref{thm:torsion-ordinary} and Theorem \ref{thm:no-finite-ordinary}, characteristic ideals are equal to Fitting ideals via Lemma \ref{lem:char-to-fitt}; thus, we have
$$\left( L_p(\mathbb{Q}_\infty, f_\alpha) \right)  \subseteq \mathrm{Fitt}_\Lambda \left( \mathrm{Sel}(\mathbb{Q}_\infty, E[p^\infty])^\vee \right) .$$
Taking the quotient by $\omega_n$, we have
$$\left( \vartheta_n( f_\alpha) \right)  \subseteq \mathrm{Fitt}_{\Lambda_n} \left( \left(\mathrm{Sel}(\mathbb{Q}_\infty, E[p^\infty])[\omega_n]\right)^\vee \right) $$
where
$$\vartheta_n( f_\alpha) = \frac{1}{\alpha^n} \cdot \left(  \theta_n( f) - \frac{1}{\alpha} \cdot \nu_{n-1,n} \left( \theta_{n-1}( f) \right)  \right)$$
is the $p$-stabilized Mazur-Tate element with the unit root $\alpha$.
Using Theorem \ref{thm:control-ordinary} and Lemma \ref{lem:fitting_quotient}, we have
$$\left( \vartheta_n( f_\alpha) \right)  \subseteq \mathrm{Fitt}_{\Lambda_n} \left( \mathrm{Sel}(\mathbb{Q}_n, E[p^\infty])^\vee \right) .$$
Since $a_p(E) \not\equiv 1 \Mod{p}$, it is not difficult to observe 
$$\vartheta_n( f_\alpha) = u \cdot \theta_n( f)$$
for some $u \in \Lambda^\times_n$.
It shows that
$$\left( \theta_n(f), \nu_{n-1,n} \left(  \theta_{n-1}(f) \right) \right) \subseteq \mathrm{Fitt}_{\Lambda_n}\left( \mathrm{Sel}(\mathbb{Q}_n, E[p^\infty])^\vee \right).$$
We also note that $\nu_{n-1,n}(\theta_{n-1}(f))$ is a multiple of $\vartheta_n(f)$, so the ideal $(\theta_n(f),\nu_{n-1,n}(\theta_{n-1}(f))$ is the principal ideal generated by $\vartheta_n(f)$, equivalently by $\theta_n(f)$.

If we further assume $p \nmid \mathrm{Tam}(E)$ and the Iwasawa main conjecture (Conjecture \ref{conj:main-conj}, Theorem \ref{thm:main-conj}, and Theorem \ref{thm:main-conj-2}), then all the inclusions in the proof become equalities, so we have
$$\left( \theta_n(f) \right) = \mathrm{Fitt}_{\Lambda_n}\left( \mathrm{Sel}(\mathbb{Q}_n, E[p^\infty])^\vee \right).$$

\section{Tools from {$\pm$}-Iwasawa theory} \label{sec:tools}

\subsection{Basic objects of $\pm$-Iwasawa theory} \label{subsec:basic_objects}
We quickly recall the basic objects of $\pm$-Iwasawa theory. For a more detailed description, we refer to \cite{kobayashi-thesis} for the algebraic side and to \cite{pollack-thesis} for the analytic side.

\begin{rem}[Sign convention]
We fix the sign convention of $\pm$-Iwasawa theory as follows:
\begin{enumerate}
\item Selmer groups: \cite{kobayashi-thesis}
\item $p$-adic $L$-functions: \cite{pollack-thesis} = $-$\cite{kobayashi-thesis}
\item Coleman maps: \cite{kurihara-pollack} = $-$\cite{kobayashi-thesis}
\end{enumerate}
\end{rem}

\subsubsection{Local conditions at $p$} \label{subsubsec:local_condition_at_p}
Let $E$ be an elliptic curve over $\mathbb{Q}$ with $a_p(E) = 0$.
Then we define 
\begin{align*}
E^+(\mathbb{Q}_{n,p}) & := \lbrace P \in E(\mathbb{Q}_{n,p}) : \mathrm{Tr}_{n/m+1} (P) \in E(\mathbb{Q}_{m,p}) \textrm{ for even } m \ (0 \leq m < n)\rbrace \\
E^-(\mathbb{Q}_{n,p}) & := \lbrace P \in E(\mathbb{Q}_{n,p}) : \mathrm{Tr}_{n/m+1} (P) \in E(\mathbb{Q}_{m,p}) \textrm{ for odd } m \ (0 \leq m < n)\rbrace
\end{align*}
where $\mathbb{Q}_{n,p}$ is the completion of $\mathbb{Q}_{n}$ at $p$ and $\mathrm{Tr}_{n/m+1} : E(\mathbb{Q}_{n,p}) \to E(\mathbb{Q}_{m+1,p})$ is the trace map.
\subsubsection{The norm subgroups}
Let $\widehat{E}$ be the formal group associated to $E$ and $\mathfrak{m}_n$ be the maximal ideal of $\mathbb{Q}_{n,p}$.
We define
\begin{align*}
\widehat{E}^+(\mathfrak{m}_{n}) & := \lbrace P \in \widehat{E}(\mathfrak{m}_{n}) : \mathrm{Tr}_{n/m+1} (P) \in \widehat{E}(\mathfrak{m}_{m}) \textrm{ for even } m \ (0 \leq m < n)\rbrace \\
\widehat{E}^-(\mathfrak{m}_{n}) & := \lbrace P \in \widehat{E}(\mathfrak{m}_{n}) : \mathrm{Tr}_{n/m+1} (P) \in \widehat{E}(\mathfrak{m}_{m}) \textrm{ for odd } m \ (0 \leq m < n)\rbrace
\end{align*}
where $\mathrm{Tr}_{n/m+1} : \widehat{E}(\mathfrak{m}_{n}) \to \widehat{E}(\mathfrak{m}_{m+1})$ is the trace map.
\subsubsection{$\pm$-Selmer groups}
Following \cite[Definition 1.1]{kobayashi-thesis} and \cite[Definition 3.1]{bdkim-supersingular-selmer}, we define the \textbf{$\pm$-Selmer groups of $E$ over $\mathbb{Q}_n$} by
\begin{align*}
& \mathrm{Sel}^{\pm}(\mathbb{Q}_n, E[p^\infty]) \\
: = & \mathrm{ker} \left( 
\mathrm{Sel}(\mathbb{Q}_n, E[p^\infty]) \to 
\dfrac{\mathrm{H}^1(\mathbb{Q}_{n,p}, E[p^\infty])}{E^{\pm}(\mathbb{Q}_{n,p}) \otimes \mathbb{Q}_p/\mathbb{Z}_p}
\right)  \\
 = &\mathrm{ker} \left( 
\mathrm{H}^1(\mathbb{Q}_{\Sigma}/\mathbb{Q}_n, E[p^\infty]) \to 
\dfrac{\mathrm{H}^1(\mathbb{Q}_{n,p}, E[p^\infty])}{E^{\pm}(\mathbb{Q}_{n,p}) \otimes \mathbb{Q}_p/\mathbb{Z}_p} \times 
\prod_{w \vert \ell, \ell \in \Sigma, \ell \neq p} \dfrac{\mathrm{H}^1(\mathbb{Q}_{n,w}, E[p^\infty])}{E(\mathbb{Q}_{n,w}) \otimes \mathbb{Q}_p/\mathbb{Z}_p} 
\right)
\end{align*}
and the \textbf{$\pm$-Selmer groups of $E$ over $\mathbb{Q}_\infty$} by
$$\mathrm{Sel}^{\pm}(\mathbb{Q}_\infty, E[p^\infty]) : = \varinjlim_n \mathrm{Sel}^{\pm}(\mathbb{Q}_n, E[p^\infty]) ,$$
respectively.
Note that $\pm$-Selmer groups are also independent of the choice of $\Sigma$ as usual Selmer groups are since the local conditions at the places above $p$ are only changed.
Also, it is easy to see 
$$\mathrm{Sel}_0(\mathbb{Q}_n, E[p^\infty]) \subseteq \mathrm{Sel}^{\pm}(\mathbb{Q}_n, E[p^\infty]) \subseteq \mathrm{Sel}(\mathbb{Q}_n, E[p^\infty]),$$
respectively.

\subsubsection{$\pm$-$p$-adic $L$-functions and $\pm$-Coleman maps} \label{subsubsec:pm-p-adic-L-Coleman}
We recall the characterization of \textbf{$\pm$-$p$-adic $L$-functions} $L^{\pm}_p(\mathbb{Q}_\infty,f) \in \Lambda$ by their interpolation property \cite[(10), (11), and (12)]{pollack-rubin}:
\begin{align*}
\chi \left( L^+_p(\mathbb{Q}_\infty,f) \right) & = (-1)^{(n+1)/2} \cdot \dfrac{\tau(\chi)}{\chi(\widetilde{\omega}^+_n)} \cdot \dfrac{L(E, \chi^{-1}, 1)}{\Omega_E} & \textrm{ if $\chi$ has order $p^n$ with $n$ odd } \\
\chi \left( L^-_p(\mathbb{Q}_\infty,f) \right) & = (-1)^{(n/2) + 1} \cdot \dfrac{\tau(\chi)}{\chi(\widetilde{\omega}^-_n)} \cdot \dfrac{L(E, \chi^{-1}, 1)}{\Omega_E} & \textrm{ if $\chi$ has order $p^n>1$ with $n$ even } \\
\mathbf{1} \left( L^+_p(\mathbb{Q}_\infty,f) \right) & = (p-1) \cdot \dfrac{L(E, 1)}{\Omega_E} \\
\mathbf{1} \left( L^-_p(\mathbb{Q}_\infty,f) \right) & = 2 \cdot \dfrac{L(E, 1)}{\Omega_E}
\end{align*}
where $\chi$ is a character of $\Gamma$, $\mathbf{1}$ is the trivial character, and $\tau(\chi)$ is the Gauss sum of $\chi$.

We also recall \textbf{$\pm$-Coleman maps}. Our sign convention follows that of \cite{kurihara-pollack}.
\begin{thm}[{\cite[Theorem 6.2, Theorem 6.3, and $\S$8]{kobayashi-thesis}}, {\cite[$\S$1.1]{kurihara-pollack}}] \label{thm:pm-coleman}
There exist maps 
$$\mathrm{Col}^{\pm}_n: \mathrm{H}^1(\mathbb{Q}_{n,p}, T) \to  \Lambda_n / \omega^{\mp}_n$$
such that
\begin{enumerate}
\item $\mathrm{Col}^{\pm}_n: \mathrm{H}^1(\mathbb{Q}_{n,p}, T) / \mathrm{ker} \ \mathrm{Col}^{\pm}_n \simeq \Lambda_n / \omega^{\mp}_n$ and
\item $\mathrm{Col}^{\pm}_n(\mathrm{loc} \ \mathbf{z}_{\mathrm{Kato},n}) =  L^{\pm}_p(\mathbb{Q}_n,f)$
\end{enumerate}
where 
$\mathbf{z}_{\mathrm{Kato},n} \in \mathrm{H}^1(\mathbb{Q}_n, T)$ is Kato's zeta element at $\mathbb{Q}_n$ and 
$L^{\pm}_p(\mathbb{Q}_n,f) := L^{\pm}_p(\mathbb{Q}_\infty,f) \Mod{\omega^{\mp}_n} $.
By taking the inverse limit with respect to $n$, we have maps
$$\mathrm{Col}^{\pm}: \mathbb{H}^1_{\mathrm{loc}}(T)  \to  \Lambda$$
such that
\begin{enumerate}
\item $\mathrm{Col}^{\pm}$ are surjective and
\item $\mathrm{Col}^{\pm}(\mathrm{loc} \ \mathbf{z}_{\mathrm{Kato}}) =  L^{\pm}_p(\mathbb{Q}_\infty,f)$
\end{enumerate}
where 
$$\mathbb{H}^1_{\mathrm{loc}}(T) := \varprojlim_n \mathrm{H}^1(\mathbb{Q}_{n,p}, T)$$
is the local Iwasawa cohomology group and
$\mathbf{z}_{\mathrm{Kato}} \in \mathrm{H}^1(\mathbb{Q}_\infty, T)$ is Kato's zeta element at $\mathbb{Q}_\infty$.
\end{thm}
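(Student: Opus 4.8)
The plan is to recall Kobayashi's construction: the statement is \cite[Theorem 6.2, Theorem 6.3, and $\S$8]{kobayashi-thesis}, renormalized to the present sign convention following \cite[$\S$1.1]{kurihara-pollack}, and I would organize it in three steps of which only the second is substantial. Since $E$ has good supersingular reduction at $p$ with $a_p(E) = 0$, the crystalline Dieudonn\'e module $D := D_{\mathrm{cris}}(V_pE)$ is two-dimensional over $\mathbb{Q}_p$ and its Frobenius $\varphi$ satisfies $\varphi^2 = -p^{-1}$. Fixing a $\mathbb{Z}_p$-basis of the Dieudonn\'e lattice adapted to the formal group $\widehat{E}$ via Honda theory, Perrin-Riou's big logarithm --- equivalently, Kobayashi's explicit description through trace-compatible systems of local points --- produces a $\Lambda$-linear map $\mathcal{L}\colon \mathbb{H}^1_{\mathrm{loc}}(T) \to \mathcal{H} \otimes_{\mathbb{Z}_p} D$, where $\mathcal{H} \supset \Lambda \otimes \mathbb{Q}_p$ is the ring of tempered distributions of order one. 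The recursion $\varphi^2 = -p^{-1}$ splits $\mathcal{L}$ into two coordinates whose denominators at characters of $p$-power conductor are exactly Pollack's logarithmic factors $\mathrm{log}^{\pm}_p$, whose truncations at level $n$ are, up to units and powers of $p$, the polynomials $\omega^{\pm}_n$; dividing them out yields candidate maps $\mathrm{Col}^{\pm}_n$ and $\mathrm{Col}^{\pm}$.

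The crux --- entirely Kobayashi's theorem --- is the integrality: one must show $\mathrm{Col}^{\pm}$ actually takes values in the bounded ring $\Lambda$, not merely in $\mathcal{H}$, and that $\mathrm{Col}^{\pm}_n$ has image exactly $\Lambda_n/\omega^{\mp}_n$. Kobayashi proves this by identifying $\ker \mathrm{Col}^{\pm}_n$ with the image under the Kummer map of the $\pm$-norm subgroup $\widehat{E}^{\pm}(\mathfrak{m}_n)$ --- i.e. the local condition cutting out $\mathrm{Sel}^{\pm}(\mathbb{Q}_n, E[p^\infty])$ --- computing the $\mathbb{Z}_p$-ranks of these norm subgroups, and deducing from a rank count, together with the non-degeneracy of the local Tate pairing (under which $\widehat{E}^+(\mathfrak{m}_n)$ and $\widehat{E}^-(\mathfrak{m}_n)$ are mutual annihilators), that the induced map on $\mathrm{H}^1(\mathbb{Q}_{n,p}, T)/\ker \mathrm{Col}^{\pm}_n$ is an isomorphism onto $\Lambda_n / \omega^{\mp}_n$. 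Passing to the inverse limit over $n$ then gives the surjectivity of $\mathrm{Col}^{\pm}\colon \mathbb{H}^1_{\mathrm{loc}}(T) \to \Lambda$.

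Finally, Kato's explicit reciprocity law computes the image under Perrin-Riou's map of $\mathrm{loc}\,\mathbf{z}_{\mathrm{Kato},n}$ in terms of the complex $L$-values $L(E,\chi^{-1},1)/\Omega_E$ for $\chi$ of $p$-power conductor; combining this with the decomposition of $\mathcal{L}$ from the first step shows that $\mathrm{Col}^{\pm}_n(\mathrm{loc}\,\mathbf{z}_{\mathrm{Kato},n})$ satisfies exactly the interpolation formulas recalled in $\S$\ref{subsubsec:pm-p-adic-L-Coleman} that characterize $L^{\pm}_p(\mathbb{Q}_n,f)$. Since $\Lambda$ is an integral domain and the characters of $p$-power order are dense in $\mathrm{Spec}\,\Lambda$, this forces $\mathrm{Col}^{\pm}(\mathrm{loc}\,\mathbf{z}_{\mathrm{Kato}}) = L^{\pm}_p(\mathbb{Q}_\infty,f)$, and reduction modulo $\omega^{\mp}_n$ yields the finite-layer identity. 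The main obstacle is the integrality step: the first and third steps are essentially formal given the Perrin-Riou formalism and Kato's reciprocity law, whereas the integrality and surjectivity of $\mathrm{Col}^{\pm}$ rest on Kobayashi's delicate analysis of the $\pm$-norm subgroups of $\widehat{E}$ and admit no real shortcut. Accordingly, I would present the proof simply as a citation to \cite[Theorem 6.2, Theorem 6.3]{kobayashi-thesis} and \cite[$\S$1.1]{kurihara-pollack}, with the above as orienting commentary.
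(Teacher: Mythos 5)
Your approach coincides with the paper's: Theorem \ref{thm:pm-coleman} is a quoted result, and the paper gives no argument beyond the citation to Kobayashi and Kurihara--Pollack (plus a remark that Kobayashi constructs $\mathrm{Col}^{\pm}_n$ from Honda-theory local points while Kurihara--Pollack use the $P_n$-pairing), which is exactly what you propose. Your orienting commentary is broadly right, though note two small slips in it: Kobayashi's duality statement is that the image of $\widehat{E}^{\pm}(\mathfrak{m}_n)\otimes\mathbb{Z}_p$ is the exact annihilator of $\widehat{E}^{\pm}(\mathfrak{m}_n)\otimes\mathbb{Q}_p/\mathbb{Z}_p$ (same sign, not $+$ against $-$), and the level-$n$ truncations of Pollack's $\log^{\pm}_p$ are, up to $p$-powers, the polynomials $\widetilde{\omega}^{\pm}_n$ rather than $\omega^{\pm}_n$.
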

\begin{rem}
The construction of $\mathrm{Col}^{\pm}_n$ in \cite{kobayashi-thesis} uses certain local points of formal groups of elliptic curves via Honda theory, and that in \cite{kurihara-pollack} uses the $P_n$-paring defined by the second named author in \cite{kurihara-invent} and Proposition \ref{prop:pollack_theta_pm}.
\end{rem}
\subsection{$\pm$-main conjectures}
We recall the $\Lambda$-cotorsion property of $\pm$-Selmer groups (\cite[Theorem 7.3.ii)]{kobayashi-thesis}, \cite[Theorem 6.3]{pollack-rubin}).
\begin{thm} \label{thm:pm-torsion}
Let $p$ be an odd prime and $E$ an elliptic curve over $\mathbb{Q}$ with $a_p(E) = 0$.
Then both $\mathrm{Sel}^{+}(\mathbb{Q}_\infty, E[p^\infty])$ and $\mathrm{Sel}^{-}(\mathbb{Q}_\infty, E[p^\infty])$ are $\Lambda$-cotorsion.
\end{thm}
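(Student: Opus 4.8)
The plan is to establish the stronger statement that the Pontryagin dual $\mathrm{Sel}^{\pm}(\mathbb{Q}_\infty, E[p^\infty])^\vee$ is a torsion $\Lambda$-module, by feeding Kato's zeta element through the $\pm$-Coleman maps of Theorem \ref{thm:pm-coleman}. Two inputs are needed. The analytic input is that $L^{\pm}_p(\mathbb{Q}_\infty,f)$ is a nonzero element of $\Lambda$: by the interpolation formulas of $\S$\ref{subsubsec:pm-p-adic-L-Coleman}, for every character $\chi$ of $\Gamma$ of order $p^n$ with $n$ odd (resp. $n>1$ even) the value $\chi(L^{+}_p(\mathbb{Q}_\infty,f))$ (resp. $\chi(L^{-}_p(\mathbb{Q}_\infty,f))$) equals $L(E,\chi^{-1},1)/\Omega_E$ up to an explicit nonzero factor, and Rohrlich's non-vanishing theorem forces $L(E,\chi^{-1},1)\neq 0$ for all but finitely many such $\chi$; since there are infinitely many characters of each parity, neither $L^{+}_p$ nor $L^{-}_p$ vanishes. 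The arithmetic input, from Kato's Euler system \cite{kato-euler-systems}, is that $\mathbb{H}^1_{\mathrm{glob}}(T)$ is a torsion-free $\Lambda$-module of rank one and that $\mathrm{H}^2(\mathbb{Q}_\Sigma/\mathbb{Q}_\infty, E[p^\infty])$ is $\Lambda$-cotorsion (weak Leopoldt for $E$).

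I would first use Poitou--Tate global duality to reduce the cotorsion statement to an assertion purely about global Iwasawa cohomology. Writing out the defining exact sequence of $\mathrm{Sel}^{\pm}(\mathbb{Q}_\infty, E[p^\infty])$ inside $\mathrm{H}^1(\mathbb{Q}_\Sigma/\mathbb{Q}_\infty, E[p^\infty])$ and dualizing it via Poitou--Tate and the Iwasawa-theoretic duality between discrete and compact cohomology, and invoking Kobayashi's local analysis at $p$ — which shows the relevant local cohomology quotients are finitely generated over $\Lambda$ and identifies the Tate-orthogonal complement of the local condition $E^{\pm}(\mathbb{Q}_{n,p})\otimes\mathbb{Q}_p/\mathbb{Z}_p$ with $\ker(\mathrm{Col}^{\pm}_n)$, up to finite groups — one finds that the $\Lambda$-corank of $\mathrm{Sel}^{\pm}(\mathbb{Q}_\infty, E[p^\infty])$ equals the $\Lambda$-rank of
$$X^{\pm}_{\mathrm{str}} := \ker\left( \mathrm{Col}^{\pm}\circ \mathrm{loc}_p : \mathbb{H}^1_{\mathrm{glob}}(T) \to \Lambda \right),$$
all the correction terms in the Euler-characteristic formula vanishing by weak Leopoldt and the finiteness of the relevant global and local $\mathrm{H}^0$-groups with coefficients in $E[p^\infty]$ (here $p$ odd is used). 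It then remains to show $X^{\pm}_{\mathrm{str}}$ is $\Lambda$-torsion. But $\mathrm{Col}^{\pm}\circ\mathrm{loc}_p$ is $\Lambda$-linear and, by Theorem \ref{thm:pm-coleman}(2), carries Kato's zeta element $\mathbf{z}_{\mathrm{Kato}}$ to $L^{\pm}_p(\mathbb{Q}_\infty,f)$, which is nonzero by the first step; hence its image is a nonzero ideal of $\Lambda$, of $\Lambda$-rank one, equal to the rank of the source $\mathbb{H}^1_{\mathrm{glob}}(T)$, so $X^{\pm}_{\mathrm{str}}$ has $\Lambda$-rank zero, i.e. is $\Lambda$-torsion. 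This gives that $\mathrm{Sel}^{\pm}(\mathbb{Q}_\infty, E[p^\infty])$ is $\Lambda$-cotorsion.

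The main obstacle is the Poitou--Tate bookkeeping of the reduction step: one must set up the Iwasawa-cohomology duality and Kobayashi's local computations at $p$ correctly, track the several finite error terms (they do not affect cotorsion but must be controlled), and match the indexing of the local condition $\pm$, its Tate-orthogonal complement, and the Coleman map $\mathrm{Col}^{\pm}$ exactly — this is precisely the place where the sign conventions of $\S$\ref{subsec:basic_objects} enter. Everything else is formal, and no hypothesis beyond $a_p(E)=0$ is required, since Kato's rank-one statement, the weak Leopoldt vanishing, and Rohrlich's theorem all hold in this generality. This recovers \cite[Theorem 7.3.ii)]{kobayashi-thesis} and \cite[Theorem 6.3]{pollack-rubin}.
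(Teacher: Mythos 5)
Your proposal is correct and is essentially the argument the paper relies on: the paper gives no proof of its own here, simply recalling the result from \cite[Theorem 7.3.ii)]{kobayashi-thesis} and \cite[Theorem 6.3]{pollack-rubin}, and your route (non-vanishing of $L^{\pm}_p(\mathbb{Q}_\infty,f)$ via the interpolation formulas and Rohrlich, Kato's rank-one and weak Leopoldt statements, the identification of the local condition with $\ker \mathrm{Col}^{\pm}$, and Poitou--Tate) is precisely Kobayashi's original proof. The only minor imprecision is that the local contributions at primes $\ell \neq p$ in the Poitou--Tate bookkeeping are $\Lambda$-torsion rather than finite, but this does not affect the corank computation.
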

The following statement is the pair of the Iwasawa main conjectures for elliptic curves with supersingular reduction (\cite[Even,odd main conjectures, $\S$4]{kobayashi-thesis}).
\begin{conj}[$\pm$-main conjectures]
\label{conj:pm-main-conj}
Let $p$ be an odd prime and $E$ an elliptic curve over $\mathbb{Q}$ with $a_p(E) = 0$.
Then 
$$ \left( L^{\mp}_p(\mathbb{Q}_\infty, f) \right) = \mathrm{char}_\Lambda \left( \mathrm{Sel}^{\pm}(\mathbb{Q}_\infty, E[p^\infty])^\vee \right) .$$
\end{conj}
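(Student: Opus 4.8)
The plan is to prove the two inclusions of the stated equality separately, following the Euler-system-plus-Eisenstein-congruence paradigm for supersingular main conjectures: the $\subseteq$-direction from Kato's zeta elements through Kobayashi's $\pm$-Coleman maps, and the $\supseteq$-direction from Rubin's CM main conjecture (in the CM case) or from the unitary Eisenstein-congruence machinery of Skinner--Urban and X.~Wan (in the non-CM case).

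For $\left( L^{\mp}_p(\mathbb{Q}_\infty, f) \right) \subseteq \mathrm{char}_\Lambda \left( \mathrm{Sel}^{\pm}(\mathbb{Q}_\infty, E[p^\infty])^\vee \right)$ I would start from Kato's zeta element $\mathbf{z}_{\mathrm{Kato}} \in \mathbb{H}^1_{\mathrm{glob}}(T)$, the top layer of a norm-compatible Euler system for $T$, and feed it into Kato's Euler-system machinery (equivalently, the Kolyvagin-system formalism). This bounds from above the characteristic ideal of the $\Lambda$-Selmer module with the strict local condition at $p$ given by $\ker \mathrm{Col}^{\pm}$, in terms of $\mathrm{Col}^{\pm}(\mathrm{loc}_p\, \mathbf{z}_{\mathrm{Kato}})$, which equals $L^{\pm}_p(\mathbb{Q}_\infty, f)$ by Theorem \ref{thm:pm-coleman}. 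Since $\mathrm{Col}^{\pm}_n$ has target $\Lambda_n/\omega^{\mp}_n$, its kernel is, under local Tate duality, the annihilator of $E^{\pm}(\mathbb{Q}_{n,p}) \otimes \mathbb{Q}_p/\mathbb{Z}_p$; hence a Poitou--Tate global-duality computation identifies the Pontryagin dual of the bounded Selmer module with $\mathrm{Sel}^{\pm}(\mathbb{Q}_\infty, E[p^\infty])^\vee$ (this is where the $\pm$ and $\mp$ get swapped). Here I use the surjectivity of $\mathrm{Col}^{\pm}$ onto $\Lambda$, so the local condition at $p$ is free of corank one, together with $\Lambda$-cotorsionness from Theorem \ref{thm:pm-torsion}; the bad-prime local terms are controlled by $p \nmid \mathrm{Tam}(E)$ and the surjectivity of $\overline{\rho}$, exactly as in $\S$\ref{sec:ordinary}.

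For the reverse inclusion $\mathrm{char}_\Lambda \left( \mathrm{Sel}^{\pm}(\mathbb{Q}_\infty, E[p^\infty])^\vee \right) \subseteq \left( L^{\mp}_p(\mathbb{Q}_\infty, f) \right)$ I would split into cases. If $E$ has CM by an imaginary quadratic field $K$, I would descend Rubin's two-variable main conjecture over $K$ (the analogue of Theorem \ref{thm:main-conj}(1)) to the cyclotomic line following Pollack--Rubin, matching the $\pm$-decomposition of the local points at $p$ to the two CM variables. If $E$ is non-CM (the relevant case for Theorem \ref{thm:main_theorem_2}, where $\overline{\rho}$ is surjective), I would invoke the supersingular Eisenstein-congruence argument of X.~Wan: construct a Klingen--Eisenstein family on a quasi-split unitary group whose constant term carries $L^{\mp}_p(\mathbb{Q}_\infty, f)$, show it is congruent modulo $L^{\mp}_p$ to a genuine cusp form, and read off the divisibility of characteristic ideals from the associated Galois lattice via Urban's lattice construction.

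The easy half is the Euler-system inclusion, which—modulo the already-recorded formal properties of the $\pm$-Coleman maps (Theorem \ref{thm:pm-coleman})—is essentially Kobayashi's theorem. The hard part is the reverse inclusion in the non-CM case: Wan's automorphic input (construction and $p$-adic interpolation of the unitary Eisenstein family, nonvanishing of the relevant local doubling integrals and root numbers, and the period comparison matching automorphic and motivic normalizations) is technically heavy and comes with its own hypotheses, so the delicate bookkeeping is to check that those hypotheses are all implied by ``$\overline{\rho}$ surjective and $p \nmid \mathrm{Tam}(E)$''. A persistent secondary subtlety is to run the whole comparison with imprimitive ($\Sigma$-depleted) Selmer groups and $p$-adic $L$-functions so that the two sides are literally equal, and only then strip off the extra Euler factors at primes dividing $N$.
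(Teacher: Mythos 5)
The statement you were asked about is Conjecture \ref{conj:pm-main-conj}, and the paper does not prove it: it is an open conjecture (Kobayashi's even/odd main conjectures), which the paper only records together with its current status. What is actually known, and what the paper uses, is Theorem \ref{thm:pm-main-conj}: the full equality in the CM case (Pollack--Rubin), and only the single inclusion $\left( L^{\mp}_p(\mathbb{Q}_\infty,f) \right) \subseteq \mathrm{char}_\Lambda \left( \mathrm{Sel}^{\pm}(\mathbb{Q}_\infty,E[p^\infty])^\vee \right)$ in the non-CM case under surjectivity of $\overline{\rho}$. This is precisely why the $\pm$-main conjecture appears as an explicit \emph{hypothesis} in Theorems \ref{thm:main_theorem_2} and \ref{thm:main_theorem_3}, while the unconditional results (Theorem \ref{thm:main_theorem}) only ever use the known inclusion.

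Your proposal therefore has a genuine gap in its hard half. The Euler-system direction is fine in spirit (it is Kobayashi's theorem, via Kato's zeta element and the $\pm$-Coleman maps of Theorem \ref{thm:pm-coleman}), but note that Conjecture \ref{conj:pm-main-conj} as stated assumes neither surjectivity of $\overline{\rho}$ nor $p \nmid \mathrm{Tam}(E)$, so even this half is not available in the stated generality for non-CM curves with non-surjective $\overline{\rho}$, and the Tamagawa condition you invoke plays no role in the infinite-level Euler-system bound anyway. More seriously, the reverse inclusion in the non-CM case is not a proof but an appeal to X.~Wan's unitary Eisenstein-congruence work, which (as Remark \ref{rem:pm-main-conj} records) is unpublished and requires additional tame-level hypotheses --- e.g.\ the existence of a prime $q \Vert N$ at which $\overline{\rho}$ is ramified, or $N$ square-free --- that are genuinely extra and are certainly not consequences of ``$\overline{\rho}$ surjective and $p \nmid \mathrm{Tam}(E)$,'' contrary to the bookkeeping step you propose. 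So your plan yields at best the partial results already recorded in Theorem \ref{thm:pm-main-conj} together with a conditional statement under unverifiable (and unpublished) inputs; it does not prove Conjecture \ref{conj:pm-main-conj}, and the paper deliberately keeps that conjecture as an assumption rather than claiming a proof.
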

As in the ordinary case, the Euler system argument yields the following statement (\cite[Theorem 4.1]{kobayashi-thesis}, \cite[Theorem in Introduction]{pollack-rubin}).
\begin{thm}
\label{thm:pm-main-conj}
Let $p$ be an odd prime and $E$ an elliptic curve over $\mathbb{Q}$ with $a_p(E) = 0$.
\begin{enumerate}
\item If $E$ has CM, then Conjecture \ref{conj:pm-main-conj} holds.
\item If $E$ has no CM, then we assume $\overline{\rho}$ is surjective.
Then 
$$ \left( L^{\mp}_p(\mathbb{Q}_\infty, f) \right)  \subseteq \mathrm{char}_\Lambda \left( \mathrm{Sel}^{\pm}(\mathbb{Q}_\infty, E[p^\infty])^\vee \right) .$$
\end{enumerate}
\end{thm}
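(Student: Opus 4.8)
The plan is to mirror the ordinary-reduction situation (Theorem~\ref{thm:main-conj}): deduce the CM case from Rubin's machinery over the imaginary quadratic field, and the non-CM case from Kato's Euler system, in each case passing through the dictionary between $\pm$-objects and zeta elements supplied by Theorem~\ref{thm:pm-coleman}.

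For the CM case~(1), I would appeal to \cite{pollack-rubin}, which establishes both divisibilities of Conjecture~\ref{conj:pm-main-conj} for CM curves at supersingular primes by descending Rubin's two-variable Iwasawa Main Conjecture for the Hecke character attached to $E$ over the imaginary quadratic field $K$ by which $E$ acquires complex multiplication to the cyclotomic line. The only point to reconcile is that the normalization of $L^{\pm}_p(\mathbb{Q}_\infty, f)$ fixed by the interpolation formulas in \S\ref{subsubsec:pm-p-adic-L-Coleman} matches the one used there; this is routine, and yields the equality $(L^{\mp}_p(\mathbb{Q}_\infty, f)) = \mathrm{char}_\Lambda(\mathrm{Sel}^{\pm}(\mathbb{Q}_\infty, E[p^\infty])^\vee)$.

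For the non-CM case~(2), I would run Kato's Euler system argument in Kobayashi's packaging \cite{kobayashi-thesis}. Kato's zeta elements $\mathbf{z}_{\mathrm{Kato},n} \in \mathrm{H}^1(\mathbb{Q}_n, T)$ \cite{kato-euler-systems} form a norm-compatible Euler system for $T$, integrally because $\overline{\rho}$ is surjective, and by Theorem~\ref{thm:pm-coleman} their $p$-localizations satisfy $\mathrm{Col}^{\pm}(\mathrm{loc}\ \mathbf{z}_{\mathrm{Kato}}) = L^{\pm}_p(\mathbb{Q}_\infty, f)$ with $\mathrm{Col}^{\pm}$ surjective onto $\Lambda$. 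Because the $\pm$-norm subgroups are their own orthogonal complements under local Tate duality (Kobayashi's local analysis at $p$), the $\pm$-Selmer condition is precisely the one cut out by $\ker\mathrm{Col}^{\mp}$, and Poitou--Tate duality expresses $\mathrm{char}_\Lambda(\mathrm{Sel}^{\pm}(\mathbb{Q}_\infty, E[p^\infty])^\vee)$ in terms of the $\mathrm{Col}^{\mp}$-image of the global Iwasawa cohomology. Feeding $\{\mathbf{z}_{\mathrm{Kato},n}\}$ into the Euler system machine over $\Lambda$ then bounds the dual $\pm$-Selmer group: its characteristic ideal divides the ideal generated by $\mathrm{Col}^{\mp}(\mathrm{loc}\ \mathbf{z}_{\mathrm{Kato}}) = L^{\mp}_p(\mathbb{Q}_\infty, f)$, which is the desired inclusion. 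Theorem~\ref{thm:pm-torsion} makes both sides nonzero ideals so that the divisibility is meaningful, and the non-triviality of the Euler system --- equivalently, $L^{\mp}_p(\mathbb{Q}_\infty, f) \neq 0$ --- follows from Rohrlich's non-vanishing theorem for twists of $L(E,s)$ by characters of $p$-power order.

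The hard part, were one to build this from the ground up rather than import Theorem~\ref{thm:pm-coleman}, is entirely local at $p$: constructing the $\pm$-Coleman maps with the stated surjectivity, identifying $\ker\mathrm{Col}^{\pm}$ with the image of the $\pm$-norm subgroups, and proving their self-orthogonality under local Tate duality. This local input is exactly what repairs the failure of the naive Selmer group to be $\Lambda$-cotorsion in the supersingular regime and makes Kato's global Euler system machine applicable. Granting it, the residual obstacles are bookkeeping: running the Euler system argument over $\Lambda$ (which uses surjectivity of $\overline{\rho}$, non-vanishing of the zeta element, and the absence of pseudo-null obstructions) and reconciling the $\pm$- and sign normalizations of Kobayashi, Pollack, and Kurihara--Pollack.
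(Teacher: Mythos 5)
Your proposal is correct and takes essentially the same route as the paper, which states this theorem purely as an import from the literature: Pollack--Rubin for the CM case (descent of Rubin's main conjecture over the CM field to the cyclotomic line) and Kobayashi's packaging of Kato's Euler system divisibility through the $\pm$-Coleman maps and Poitou--Tate duality for the non-CM case. Your sketch accurately reproduces the structure of those cited arguments, including the role of the surjectivity of $\overline{\rho}$, the non-vanishing of $L^{\mp}_p(\mathbb{Q}_\infty,f)$, and the local input identifying $\ker \mathrm{Col}^{\pm}$ with the $\pm$-norm subgroups.
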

\begin{rem} \label{rem:pm-main-conj}
Even in the non-CM case, there are several approaches to establish Conjecture \ref{conj:pm-main-conj} under certain tame level assumptions. Since none of them is published yet, we just record the assumptions they made. 
More precisely, it is announced that the $\pm$-main conjectures hold if the conditions in Theorem \ref{thm:pm-main-conj} and one of the following tame level conditions hold:
\begin{itemize}
\item[\cite{wan-main-conj-nonord}] there exists a prime $q \Vert N$ such that $\overline{\rho}$ is ramifed at $q$,
\item[\cite{wan-main-conj-nonord}] $N$ is square-free and there exist two primes $q \Vert N$ such that $\overline{\rho}$ is ramified at $q$, or
\item[\cite{wan-main-conj-ss-ec}] $N$ is square-free (only assuming the absolute irreducibility of $\overline{\rho}$).
\end{itemize}
In addition, the numerical criterion of K-Kim-Sun described in Theorem \ref{thm:main-conj-2} still works to verify the $\pm$-main conjectures (without these tame level assumptions) in the exactly same way.
If the validity of the results in these preprints is confirmed, then the $\pm$-main conjecture assumption in Theorem \ref{thm:main_theorem_2} could be removed.
Note that any of these results is used in this article.
Especially, Theorem \ref{thm:pm-main-conj}.(2) is strong enough to prove the Mazur-Tate conjecture (Theorem \ref{thm:main_theorem}).
\end{rem}

\subsection{Nonexistence of proper $\Lambda$-submodules of finite index}
We recall B.D. Kim's result \cite[Theorem 1.1]{bdkim-supersingular-selmer} on the analogue of Theorem \ref{thm:no-finite-ordinary} for the supersingular setting. For the further developments along this direction, see \cite{kitajima-otsuki}.
\begin{thm} \label{thm:no-finite}
The Selmer groups $\mathrm{Sel}^{+}(\mathbb{Q}_\infty, E[p^\infty])$ and $\mathrm{Sel}^{-}(\mathbb{Q}_\infty, E[p^\infty])$ have no proper $\Lambda$-submodule of finite index.
\end{thm}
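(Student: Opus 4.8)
The assertion is equivalent to the statement that $\mathrm{Sel}^{\pm}(\mathbb{Q}_{\infty}, E[p^{\infty}])^{\vee}$ has no nonzero finite $\Lambda$-submodule --- equivalently, since $\Lambda \cong \mathbb{Z}_{p}\llbracket X \rrbracket$ is a two-dimensional regular local ring, no nonzero pseudo-null $\Lambda$-submodule. The plan is to adapt the proof of the ordinary analogue, Theorem \ref{thm:no-finite-ordinary} (Greenberg's \cite[Proposition 4.14]{greenberg-lnm}), replacing the ordinary local condition at $p$ by Kobayashi's $\pm$-condition; this is what B.D. Kim carries out in \cite{bdkim-supersingular-selmer}. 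The genuinely new input, compared with the ordinary case, is the $\Lambda$-module structure of the local cohomology at $p$ attached to $E^{\pm}(\mathbb{Q}_{\infty,p}) \otimes \mathbb{Q}_{p}/\mathbb{Z}_{p}$.

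First I would record the defining four-term exact sequence over $\mathbb{Q}_{\infty}$,
\[
0 \to \mathrm{Sel}^{\pm}(\mathbb{Q}_{\infty}, E[p^{\infty}]) \to \mathrm{H}^{1}(\mathbb{Q}_{\Sigma}/\mathbb{Q}_{\infty}, E[p^{\infty}]) \xrightarrow{\ \lambda^{\pm}\ } \frac{\mathrm{H}^{1}(\mathbb{Q}_{\infty,p}, E[p^{\infty}])}{E^{\pm}(\mathbb{Q}_{\infty,p}) \otimes \mathbb{Q}_{p}/\mathbb{Z}_{p}} \oplus \bigoplus_{w \mid \ell,\, \ell \in \Sigma,\, \ell \ne p} \mathrm{H}^{1}(\mathbb{Q}_{\infty,w}, E[p^{\infty}]) =: \mathcal{L}^{\pm},
\]
using $E(\mathbb{Q}_{\infty,w}) \otimes \mathbb{Q}_{p}/\mathbb{Z}_{p} = 0$ for $w \nmid p$, and show that $\lambda^{\pm}$ is surjective. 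By the global Poitou--Tate exact sequence together with the weak Leopoldt vanishing $\mathrm{H}^{2}(\mathbb{Q}_{\Sigma}/\mathbb{Q}_{\infty}, E[p^{\infty}]) = 0$, the cokernel of $\lambda^{\pm}$ is Pontryagin dual to a compact Selmer group of $T$ cut out by the orthogonal local conditions --- here one invokes Kobayashi's self-duality of the $\pm$-norm subgroups under the local Tate pairing --- and this compact Selmer group is a $\Lambda$-submodule of the global Iwasawa cohomology $\mathbb{H}^{1}_{\mathrm{glob}}(T)$, which is free of rank one over $\Lambda$ by Kato (here one uses that $E[p]$ is an irreducible $G_{\mathbb{Q}}$-module, which follows from $a_{p}(E) = 0$). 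Since $\mathrm{Sel}^{\pm}(\mathbb{Q}_{\infty}, E[p^{\infty}])$ is $\Lambda$-cotorsion (Theorem \ref{thm:pm-torsion}), a standard computation of $\Lambda$-coranks forces this compact Selmer group to have $\Lambda$-rank zero; being a submodule of a torsion-free module, it vanishes, so $\lambda^{\pm}$ is onto.

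Next I would dualize the resulting short exact sequence to obtain
\[
0 \to \mathcal{L}^{\pm,\vee} \to \mathrm{H}^{1}(\mathbb{Q}_{\Sigma}/\mathbb{Q}_{\infty}, E[p^{\infty}])^{\vee} \to \mathrm{Sel}^{\pm}(\mathbb{Q}_{\infty}, E[p^{\infty}])^{\vee} \to 0 .
\]
The middle term has no nonzero finite $\Lambda$-submodule --- a standard consequence of $\mathrm{H}^{2} = 0$ (see \cite{greenberg-lnm}) --- so the long exact $\mathrm{Ext}_{\Lambda}(\mathbb{F}_{p}, -)$-sequence reduces the theorem to showing $\mathrm{Ext}^{1}_{\Lambda}(\mathbb{F}_{p}, \mathcal{L}^{\pm,\vee}) = 0$. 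For the summand at $p$, Kobayashi's computation via the $\pm$-Coleman maps (Theorem \ref{thm:pm-coleman}) shows $\big( \mathrm{H}^{1}(\mathbb{Q}_{\infty,p}, E[p^{\infty}]) / ( E^{\pm}(\mathbb{Q}_{\infty,p}) \otimes \mathbb{Q}_{p}/\mathbb{Z}_{p}) \big)^{\vee}$ is $\Lambda$-free: the local Iwasawa cohomology $\mathbb{H}^{1}_{\mathrm{loc}}(T)$ is free of rank two, and this dual is the kernel of the surjection $\mathrm{Col}^{\pm} \colon \mathbb{H}^{1}_{\mathrm{loc}}(T) \twoheadrightarrow \Lambda$, hence a free direct summand, so its $\mathrm{Ext}^{1}_{\Lambda}(\mathbb{F}_{p}, -)$ vanishes. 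For the summands $\mathrm{H}^{1}(\mathbb{Q}_{\infty,w}, E[p^{\infty}])$ with $w \mid \ell$, $\ell \ne p$ --- which, taking $\Sigma$ minimal, are the places above the bad primes of $E$ --- one reduces to the imprimitive $\pm$-Selmer group retaining only the condition at $p$ and then descends, using the local weak Leopoldt statement $\mathrm{H}^{2}(\mathbb{Q}_{\infty,w}, E[p^{\infty}]) = 0$ together with the structural facts about these cotorsion local modules recorded by Greenberg; this step is formally identical to the ordinary case.

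The crux --- and the only place the supersingular theory is essential --- is the local analysis at $p$: one must prove that $E^{\pm}(\mathbb{Q}_{\infty,p}) \otimes \mathbb{Q}_{p}/\mathbb{Z}_{p}$ is $\Lambda$-cofree of corank one and that its complement inside $\mathrm{H}^{1}(\mathbb{Q}_{\infty,p}, E[p^{\infty}])$ has $\Lambda$-free Pontryagin dual --- equivalently, that $\ker \mathrm{Col}^{\pm}$ is a free $\Lambda$-module. This rests on the Honda-theoretic construction and explicit description of the $\pm$-norm subgroups $\widehat{E}^{\pm}(\mathfrak{m}_{n})$ from \cite{kobayashi-thesis}, which takes the place of the comparatively elementary analysis of the ordinary local condition via the formal multiplicative group in Greenberg's proof.
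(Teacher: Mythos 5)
The paper itself offers no proof of Theorem \ref{thm:no-finite}: it is imported wholesale from B.D.\ Kim \cite[Theorem 1.1]{bdkim-supersingular-selmer} (with \cite{kitajima-otsuki} for later refinements), so your sketch has to be measured against that cited argument, which, as you correctly say, adapts Greenberg's proof of the ordinary case (Theorem \ref{thm:no-finite-ordinary}). The genuinely supersingular portion of your outline is right and is indeed the crux of Kim's proof: surjectivity of $\lambda^{\pm}$ from $\Lambda$-cotorsionness (Theorem \ref{thm:pm-torsion}), weak Leopoldt, Kobayashi's self-orthogonality of the $\pm$-norm subgroups and the $\Lambda$-freeness of $\mathbb{H}^1_{\mathrm{glob}}(T)$; and the identification of $\bigl( \mathrm{H}^1(\mathbb{Q}_{\infty,p},E[p^\infty])/(E^{\pm}(\mathbb{Q}_{\infty,p})\otimes \mathbb{Q}_p/\mathbb{Z}_p)\bigr)^{\vee}$ with $\ker \mathrm{Col}^{\pm}$, which is free of rank one because $\mathrm{Col}^{\pm}$ maps the free rank-two module $\mathbb{H}^1_{\mathrm{loc}}(T)$ onto $\Lambda$ (Theorem \ref{thm:pm-coleman}). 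Combined with the standard fact that $\mathrm{H}^1(\mathbb{Q}_\Sigma/\mathbb{Q}_\infty, E[p^\infty])^{\vee}$ has no nonzero finite submodule, this does settle the statement for the imprimitive Selmer group cut out by the condition at $p$ alone.

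The gap is at the places $w \mid \ell \neq p$. Your stated reduction of the theorem to $\mathrm{Ext}^1_{\Lambda}(\mathbb{F}_p, \mathcal{L}^{\pm,\vee}) = 0$ cannot be completed for the full local module: whenever $\mathrm{H}^1(\mathbb{Q}_{\infty,w}, E[p^\infty]) \neq 0$ --- which the theorem does not exclude, and which does occur, e.g.\ when $p$ divides the relevant local Tamagawa/Euler factor --- its Pontryagin dual is a nonzero finitely generated torsion $\Lambda$-module with no finite submodule (the local $\mathrm{H}^1$ is divisible), hence of depth exactly one, and such a module always has $\mathrm{Ext}^1_{\Lambda}(\mathbb{F}_p,-) \neq 0$. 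So the Ext criterion only treats the imprimitive group, and the ``then descends'' you appeal to is exactly the missing content: from $0 \to \mathrm{Sel}^{\pm}(\mathbb{Q}_\infty,E[p^\infty]) \to \mathrm{Sel}^{\pm,\Sigma_0}(\mathbb{Q}_\infty,E[p^\infty]) \to \bigoplus_{w \nmid p} \mathrm{H}^1(\mathbb{Q}_{\infty,w}, E[p^\infty]) \to 0$ (where $\Sigma_0$ is the set of bad primes), knowing that the middle term has no proper finite-index submodule and that the right-hand term is divisible does \emph{not} formally imply the same for the kernel --- multiplication by $p$ on $\mathbb{Q}_p/\mathbb{Z}_p$ is a surjection of divisible modules with kernel $\mathbb{Z}/p$. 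Greenberg, and Kim after him, close precisely this step by a finer, quantitative argument (roughly: choose an irreducible $\theta$ prime to $p$, to the characteristic ideal of the dual Selmer group and to those of the local terms, and prove $\theta$-divisibility of the Selmer group by a corank/order count exploiting the surjectivity of the full global-to-local map), and your proposal offers no substitute for it. The missing step is not supersingular-specific, but as written your argument would fail there, so it needs to be supplied rather than waved through as ``formally identical to the ordinary case.''
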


\subsection{$\pm$-exact control theorems}
We recall the $\pm$-version of the control theorem (\cite[Theorem 9.3]{kobayashi-thesis}, \cite[Theorem 6.8]{iovita-pollack}).
\begin{thm}[$\pm$-control theorems] \label{thm:pm-control}
The restriction map
$$\mathrm{Sel}^{\pm}(\mathbb{Q}_n, E[p^\infty])[\omega^{\pm}_n] \to \mathrm{Sel}^{\pm}(\mathbb{Q}_\infty, E[p^\infty])[\omega^{\pm}_n] $$
is injective with the finite cokernel whose size is bounded independently of $n$. If we further assume that $p \nmid \mathrm{Tam}(E)$, then the restriction map
is an isomorphism.
\end{thm}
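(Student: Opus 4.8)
This is \cite[Theorem~9.3]{kobayashi-thesis} (see also \cite[Theorem~6.8]{iovita-pollack}); I indicate the argument. The plan is to run the standard control-theorem descent: form the commutative diagram comparing the defining exact sequences of the $\pm$-Selmer groups at level $n$ and at level $\infty$, with vertical restriction maps, pass to $\omega^{\pm}_n$-torsion everywhere, and apply the snake lemma. Since $\omega^{\pm}_n \mid \omega_n$ and $\mathrm{H}^1(\mathbb{Q}_\Sigma/\mathbb{Q}_\infty, E[p^\infty])^{\Gamma_n} = \mathrm{H}^1(\mathbb{Q}_\Sigma/\mathbb{Q}_\infty, E[p^\infty])[\omega_n]$, the $\omega^{\pm}_n$-torsion of the $\mathbb{Q}_\infty$-cohomology already lies in the $\Gamma_n$-invariants, so the vertical maps are defined; the snake lemma then shows the restriction map on $\pm$-Selmer groups is injective and its cokernel embeds into a product, over the bad primes $\ell \neq p$, of finite groups — provided one has (i) the global restriction map is an isomorphism, and (ii) the local condition at $p$ over $\mathbb{Q}_n$ is exactly the $\omega^{\pm}_n$-torsion of the one over $\mathbb{Q}_\infty$.

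For (i): since $a_p(E) = 0$ and $p$ is odd we have $E(\mathbb{Q})[p] = 0$, and as $\mathbb{Q}_\infty/\mathbb{Q}$ is pro-$p$ this forces $E(\mathbb{Q}_\infty)[p^\infty] = 0$; inflation--restriction then makes $\mathrm{H}^1(\mathbb{Q}_\Sigma/\mathbb{Q}_n, E[p^\infty]) \to \mathrm{H}^1(\mathbb{Q}_\Sigma/\mathbb{Q}_\infty, E[p^\infty])^{\Gamma_n}$ an isomorphism, and it stays one after taking $\omega^{\pm}_n$-torsion. The same vanishing at $p$ (valid because $\widetilde{E}$ supersingular gives $E(\mathbb{Q}_{\infty,p})[p^\infty] = 0$) makes the local restriction $\mathrm{H}^1(\mathbb{Q}_{n,p}, E[p^\infty]) \to \mathrm{H}^1(\mathbb{Q}_{\infty,p}, E[p^\infty])^{\Gamma_n}$ an isomorphism. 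At the bad places $\ell \neq p$ the local conditions are identical to those of the usual Selmer group, so the computations of Greenberg and Hachimori--Matsuno (\cite[$\S$3]{greenberg-lnm}, \cite{hachimori-matsuno}) apply verbatim: the relevant local error terms are finite, bounded independently of $n$, and controlled by the Tamagawa numbers, hence trivial when $p \nmid \mathrm{Tam}(E)$.

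Input (ii) is the supersingular heart of the matter and accounts for why the statement uses $\omega^{\pm}_n$-torsion rather than $\Gamma_n$-invariants. By Kobayashi's study of the trace-compatible local points via Honda theory (\cite[$\S$8 and $\S$9]{kobayashi-thesis}), $E^{\pm}(\mathbb{Q}_{\infty,p}) \otimes \mathbb{Q}_p/\mathbb{Z}_p$ is $\Lambda$-cofree of corank one, with a compatible system of isomorphisms $E^{\pm}(\mathbb{Q}_{n,p}) \otimes \mathbb{Q}_p/\mathbb{Z}_p \simeq (\Lambda/\omega^{\pm}_n)^{\vee}$; hence, inside $\mathrm{H}^1(\mathbb{Q}_{\infty,p}, E[p^\infty])$,
$$E^{\pm}(\mathbb{Q}_{n,p}) \otimes \mathbb{Q}_p/\mathbb{Z}_p = \bigl( E^{\pm}(\mathbb{Q}_{\infty,p}) \otimes \mathbb{Q}_p/\mathbb{Z}_p \bigr)[\omega^{\pm}_n].$$
By contrast $\bigl( E^{\pm}(\mathbb{Q}_{\infty,p}) \otimes \mathbb{Q}_p/\mathbb{Z}_p \bigr)^{\Gamma_n} \simeq (\Lambda_n)^{\vee}$ is strictly larger, which is precisely why a control theorem phrased with $\Gamma_n$-invariants would fail and why cutting to $\omega^{\pm}_n$-torsion repairs it. Running the diagram chase: injectivity is immediate from $E(\mathbb{Q}_\infty)[p^\infty] = 0$; for the cokernel, lift $\bar c \in \mathrm{Sel}^{\pm}(\mathbb{Q}_\infty, E[p^\infty])[\omega^{\pm}_n]$ via (i) to an $\omega^{\pm}_n$-torsion class $c \in \mathrm{H}^1(\mathbb{Q}_\Sigma/\mathbb{Q}_n, E[p^\infty])$. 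Since $\bar c$ lies in $\mathrm{Sel}^{\pm}$ over $\mathbb{Q}_\infty$ and restriction commutes with localization, $\mathrm{loc}_p(c)$ restricts into $E^{\pm}(\mathbb{Q}_{\infty,p}) \otimes \mathbb{Q}_p/\mathbb{Z}_p$; being also $\omega^{\pm}_n$-torsion it lands in $\bigl( E^{\pm}(\mathbb{Q}_{\infty,p}) \otimes \mathbb{Q}_p/\mathbb{Z}_p \bigr)[\omega^{\pm}_n] = E^{\pm}(\mathbb{Q}_{n,p}) \otimes \mathbb{Q}_p/\mathbb{Z}_p$, so by injectivity of the local restriction at $p$ the local condition at $p$ holds for $c$. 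The only residual obstruction to $c \in \mathrm{Sel}^{\pm}(\mathbb{Q}_n, E[p^\infty])$ sits at the bad places $\ell \neq p$ and is the finite bounded piece from (i), which is trivial when $p \nmid \mathrm{Tam}(E)$.

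The genuine obstacle is input (ii): the assertion that $E^{\pm}(\mathbb{Q}_{\infty,p}) \otimes \mathbb{Q}_p/\mathbb{Z}_p$ is $\Lambda$-cofree of corank one with its finite layers identified with $(\Lambda/\omega^{\pm}_n)^{\vee}$. This rests on Kobayashi's explicit Honda-theoretic description of the formal-group points and the construction of the $\pm$-norm subgroups $\widehat{E}^{\pm}(\mathfrak{m}_n)$, which is the technical core of supersingular Iwasawa theory; granting it, the remainder is a formal descent of exactly the same shape as the ordinary control theorem recalled in Theorem~\ref{thm:control-ordinary}.
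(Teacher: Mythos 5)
Your proposal is correct and follows essentially the same route as the paper, whose proof simply delegates to Kobayashi's Lemma 9.1/Theorem 9.3 and Greenberg's Proposition 3.8: injectivity from the vanishing of $p$-torsion forced by $a_p(E)=0$, exactness of the local condition at $p$ via Kobayashi's Honda-theoretic identification $E^{\pm}(\mathbb{Q}_{n,p})\otimes\mathbb{Q}_p/\mathbb{Z}_p = \bigl(E^{\pm}(\mathbb{Q}_{\infty,p})\otimes\mathbb{Q}_p/\mathbb{Z}_p\bigr)[\omega^{\pm}_n]$, and the only failure of surjectivity coming from the prime-to-$p$ local terms, which are bounded and vanish when $p \nmid \mathrm{Tam}(E)$. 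You have merely written out in detail the descent argument the paper cites.
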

\begin{proof}
The $a_p(E)=0$ condition ensures that $E(\mathbb{Q})[p]$ is trivial, and it implies that the restriction map is injective as in \cite[Lemma 9.1]{kobayashi-thesis}. The failure of the surjectivity comes only from prime-to-$p$ local conditions; thus, the situation coincides with the ordinary case. The $p \nmid \mathrm{Tam}(E)$ condition ensures that the failure vanishes. See \cite[Theorem 9.3]{kobayashi-thesis} and \cite[Proposition 3.8]{greenberg-lnm} for detail.
\end{proof}

\subsection{The consequence}
\begin{cor} \label{cor:the-key-lemma}
Let $p$ be an odd prime and $E$ an elliptic curve over $\mathbb{Q}$ with  $a_p(E) = 0$.
Assume $\overline{\rho}$ is surjective if $E$ has no CM.
Then we have
$$ \left( \widetilde{\omega}^{\mp}_n \cdot L^{\mp}_p(\mathbb{Q}_\infty,f) \Mod{\omega_n} \right)
\subseteq
\widetilde{\omega}^{\mp}_n \cdot \mathrm{Fitt}_{\Lambda_n} \left(  \mathrm{Sel}^{\pm}(\mathbb{Q}_n, E[p^\infty])^\vee \right) $$
 in $\Lambda_n$, respectively.
\end{cor}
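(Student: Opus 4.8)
The plan is to transplant the three-step argument of the good ordinary case in $\S$\ref{sec:ordinary} to the $\pm$-setting, replacing the ordinary Selmer group, the main conjecture and the control theorem by their $\pm$-counterparts, while keeping careful track of the factorization $\omega_n=\omega^{\pm}_n\cdot\widetilde{\omega}^{\mp}_n$. First I would convert the $\pm$-main conjecture divisibility into a Fitting-ideal inclusion over $\Lambda$: by Theorem \ref{thm:pm-main-conj} (equality in the CM case, divisibility in the non-CM case, where surjectivity of $\overline{\rho}$ is used)
\[
\left( L^{\mp}_p(\mathbb{Q}_\infty,f) \right) \subseteq \mathrm{char}_\Lambda\left( \mathrm{Sel}^{\pm}(\mathbb{Q}_\infty, E[p^\infty])^\vee \right),
\]
and since $\mathrm{Sel}^{\pm}(\mathbb{Q}_\infty, E[p^\infty])$ is $\Lambda$-cotorsion (Theorem \ref{thm:pm-torsion}) and has no proper $\Lambda$-submodule of finite index (Theorem \ref{thm:no-finite}), its Pontryagin dual is a torsion $\Lambda$-module with no nonzero finite $\Lambda$-submodule, so Lemma \ref{lem:char-to-fitt} identifies its characteristic ideal with its Fitting ideal, giving $\left( L^{\mp}_p(\mathbb{Q}_\infty,f) \right) \subseteq \mathrm{Fitt}_\Lambda\left( \mathrm{Sel}^{\pm}(\mathbb{Q}_\infty, E[p^\infty])^\vee \right)$.

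Next I would descend to the $n$-th layer modulo $\omega^{\pm}_n$ (note $\omega^{\pm}_n\mid\omega_n$, so $\Lambda_n/\omega^{\pm}_n=\Lambda/\omega^{\pm}_n$). Reducing the previous inclusion modulo $\omega^{\pm}_n$ and invoking base change for Fitting ideals (Lemma \ref{lem:fitting_quotient}) yields
\[
\left( L^{\mp}_p(\mathbb{Q}_\infty,f) \bmod \omega^{\pm}_n \right) \subseteq \mathrm{Fitt}_{\Lambda_n/\omega^{\pm}_n}\left( \mathrm{Sel}^{\pm}(\mathbb{Q}_\infty, E[p^\infty])^\vee / \omega^{\pm}_n \right).
\]
The $\pm$-control theorem (Theorem \ref{thm:pm-control}) provides an injection $\mathrm{Sel}^{\pm}(\mathbb{Q}_n, E[p^\infty])[\omega^{\pm}_n]\hookrightarrow \mathrm{Sel}^{\pm}(\mathbb{Q}_\infty, E[p^\infty])[\omega^{\pm}_n]$; taking Pontryagin duals turns this into a surjection $\mathrm{Sel}^{\pm}(\mathbb{Q}_\infty, E[p^\infty])^\vee/\omega^{\pm}_n \twoheadrightarrow \mathrm{Sel}^{\pm}(\mathbb{Q}_n, E[p^\infty])^\vee/\omega^{\pm}_n$, the nonzero-but-bounded cokernel of the control map being harmless since a surjection only enlarges Fitting ideals. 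Hence the displayed ideal is contained in $\mathrm{Fitt}_{\Lambda_n}\left( \mathrm{Sel}^{\pm}(\mathbb{Q}_n, E[p^\infty])^\vee \right)\cdot(\Lambda_n/\omega^{\pm}_n)$, again by base change.

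Finally I would lift back to $\Lambda_n$ by multiplying by $\widetilde{\omega}^{\mp}_n$. The previous step produces some $F \in \mathrm{Fitt}_{\Lambda_n}\left( \mathrm{Sel}^{\pm}(\mathbb{Q}_n, E[p^\infty])^\vee \right)$ with $L^{\mp}_p(\mathbb{Q}_\infty,f) \equiv F \bmod \omega^{\pm}_n$; multiplying by $\widetilde{\omega}^{\mp}_n$ and using $\omega_n = \omega^{\pm}_n\widetilde{\omega}^{\mp}_n$ gives $\widetilde{\omega}^{\mp}_n\cdot L^{\mp}_p(\mathbb{Q}_\infty,f) \equiv \widetilde{\omega}^{\mp}_n F \bmod \omega_n$. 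Since $\widetilde{\omega}^{\mp}_n\cdot\mathrm{Fitt}_{\Lambda_n}\left( \mathrm{Sel}^{\pm}(\mathbb{Q}_n, E[p^\infty])^\vee \right)$ is an ideal of $\Lambda_n$ containing $\widetilde{\omega}^{\mp}_n F \bmod \omega_n$, it contains the principal ideal $\left( \widetilde{\omega}^{\mp}_n\cdot L^{\mp}_p(\mathbb{Q}_\infty,f) \bmod \omega_n \right)$, which is exactly the claimed inclusion.

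Everything here is formal once Theorems \ref{thm:pm-main-conj}, \ref{thm:pm-torsion}, \ref{thm:no-finite}, \ref{thm:pm-control} and the Fitting-ideal lemmas of Appendix \ref{sec:fitting} are available, so I do not expect a serious obstacle; the one point needing genuine attention is the bookkeeping of the two distinct reductions (modulo $\omega^{\pm}_n$ versus modulo $\omega_n$) and the corresponding appearance of the auxiliary factor $\widetilde{\omega}^{\mp}_n$ on both sides — which is precisely the reason one cannot read off a clean statement about $\mathrm{Fitt}_{\Lambda_n}\left( \mathrm{Sel}^{\pm}(\mathbb{Q}_n, E[p^\infty])^\vee \right)$ alone, and why the error term $\mathrm{Err}_n$ must later be introduced to control the reverse inclusion.
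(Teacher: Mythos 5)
Your argument is correct and follows essentially the same route as the paper's proof: pass from the $\pm$-main conjecture to a Fitting-ideal inclusion over $\Lambda$ via Theorems \ref{thm:pm-torsion}, \ref{thm:pm-main-conj}, \ref{thm:no-finite} and Lemma \ref{lem:char-to-fitt}, reduce modulo $\omega^{\pm}_n$, descend to the finite layer with the $\pm$-control theorem and Lemma \ref{lem:fitting_quotient}, apply base change (Lemma \ref{lem:fitting-ideals-base-change}) to return to $\mathrm{Fitt}_{\Lambda_n}\left(\mathrm{Sel}^{\pm}(\mathbb{Q}_n, E[p^\infty])^\vee\right)$, and kill the $\omega^{\pm}_n$-ambiguity by multiplying by $\widetilde{\omega}^{\mp}_n$. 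The only quibble is a mis-citation: your first ``base change'' step should reference Lemma \ref{lem:fitting-ideals-base-change} rather than Lemma \ref{lem:fitting_quotient}, though the mathematical content you describe is the right one.
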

\begin{proof}
By Theorem \ref{thm:pm-torsion}, Theorem \ref{thm:pm-main-conj}, Theorem \ref{thm:no-finite}, and Lemma \ref{lem:char-to-fitt}, we have
$$ \left( L^{\mp}_p(\mathbb{Q}_\infty, f) \right)  \subseteq \mathrm{Fitt}_\Lambda \left( \mathrm{Sel}^{\pm}(\mathbb{Q}_\infty, E[p^\infty])^\vee \right) $$
under the conditions of Theorem \ref{thm:pm-main-conj}.
Taking the quotient by $\omega^{\pm}_n$, we obtain
$$
\left( L^{\mp}_p(\mathbb{Q}_\infty, f) \Mod{\omega^{\pm}_n} \right) \subseteq \mathrm{Fitt}_{\Lambda_n / \omega^{\pm}_n} \left( \left( \mathrm{Sel}^{\pm}(\mathbb{Q}_\infty, E[p^\infty])[\omega^{\pm}_n] \right)^\vee \right)
$$
in $\Lambda_n/\omega^{\pm}_n$, respectively.
By Theorem \ref{thm:pm-control} and Lemma \ref{lem:fitting_quotient}, we obtain
$$\left( L^{\mp}_p(\mathbb{Q}_\infty,f) \Mod{\omega^{\pm}_n} \right) \subseteq \mathrm{Fitt}_{\Lambda_n / \omega^{\pm}_n} \left( \left( \mathrm{Sel}^{\pm}(\mathbb{Q}_n, E[p^\infty])[\omega^{\pm}_n] \right)^\vee \right)$$
in $\Lambda_n/\omega^{\pm}_n$, respectively.
Since we have the following equality
\begin{align*}
\mathrm{Fitt}_{\Lambda_n / \omega^{\pm}_n} \left( \left( \mathrm{Sel}^{\pm}(\mathbb{Q}_n, E[p^\infty])[\omega^{\pm}_n] \right)^\vee \right) & = \dfrac{ \mathrm{Fitt}_{\Lambda_n} \left(  \mathrm{Sel}^{\pm}(\mathbb{Q}_n, E[p^\infty])^\vee \right) + (\omega^{\pm}_n) }{ (\omega^{\pm}_n) }
\end{align*}
in $\Lambda_n / \omega^{\pm}_n$ by Lemma \ref{lem:fitting-ideals-base-change}, we have inclusions
$$\left( L^{\mp}_p(\mathbb{Q}_\infty,f) \Mod{\omega_n} \right)  + (\omega^{\pm}_n) \subseteq \mathrm{Fitt}_{\Lambda_n} \left(  \mathrm{Sel}^{\pm}(\mathbb{Q}_n, E[p^\infty])^\vee \right) + (\omega^{\pm}_n)$$
in $\Lambda_n$, respectively.
Multiplying $\widetilde{\omega}^{\mp}_n$, the conclusion immediately follows.
\end{proof}
\begin{rem}
If we further assume $p \nmid \mathrm{Tam}(E)$ and the $\pm$-main conjectures, then the inclusion in Corollary \ref{cor:the-key-lemma} becomes an equality.
\end{rem}

\section{Comparison of local conditions at $p$} \label{sec:putting}
Consider the exact sequence of $\Lambda_n$-modules (cf. \cite[(4.2)]{kitajima-otsuki})
\[
\xymatrix{
\left( \dfrac{E(\mathbb{Q}_{n,p}) \otimes \mathbb{Q}_p/\mathbb{Z}_p}{E^{\pm}(\mathbb{Q}_{n,p})  \otimes \mathbb{Q}_p/\mathbb{Z}_p} \right)^\vee \ar[r]^-{\iota^{\pm}} & \mathrm{Sel}(\mathbb{Q}_n, E[p^\infty])^\vee  \ar[r] & \mathrm{Sel}^\pm(\mathbb{Q}_n, E[p^\infty])^\vee  \ar[r] & 0 .
}
\]
Then we have
$$\mathrm{Fitt}_{\Lambda_n} \left( \left( \dfrac{E(\mathbb{Q}_{n,p}) \otimes \mathbb{Q}_p/\mathbb{Z}_p}{E^{\pm}(\mathbb{Q}_{n,p})  \otimes \mathbb{Q}_p/\mathbb{Z}_p} \right)^\vee / \mathrm{ker} (\iota^{\pm}) \right) \cdot \mathrm{Fitt}_{\Lambda_n} \left( \mathrm{Sel}^\pm(\mathbb{Q}_n, E[p^\infty])^\vee \right) 
 \subseteq
\mathrm{Fitt}_{\Lambda_n} \left( \mathrm{Sel}(\mathbb{Q}_n, E[p^\infty])^\vee \right) $$
by Lemma \ref{lem:fitting_exact}.
By Lemma \ref{lem:fitting_quotient}, we also have
$$
\mathrm{Fitt}_{\Lambda_n} \left( \left( \dfrac{E(\mathbb{Q}_{n,p}) \otimes \mathbb{Q}_p/\mathbb{Z}_p}{E^{\pm}(\mathbb{Q}_{n,p})  \otimes \mathbb{Q}_p/\mathbb{Z}_p} \right)^\vee \right) \subseteq
\mathrm{Fitt}_{\Lambda_n} \left( \left( \dfrac{E(\mathbb{Q}_{n,p}) \otimes \mathbb{Q}_p/\mathbb{Z}_p}{E^{\pm}(\mathbb{Q}_{n,p})  \otimes \mathbb{Q}_p/\mathbb{Z}_p} \right)^\vee / \mathrm{ker} (\iota^{\pm}) \right) .
$$
We observe that
\begin{align*}
\left( \dfrac{ E(\mathbb{Q}_{n,p}) \otimes \mathbb{Q}_p/\mathbb{Z}_p }{E^{\pm}(\mathbb{Q}_{n,p}) \otimes \mathbb{Q}_p/\mathbb{Z}_p} \right)^\vee & \simeq \left( \dfrac{ \widehat{E}(\mathfrak{m}_{n}) \otimes \mathbb{Q}_p/\mathbb{Z}_p }{\widehat{E}^{\pm}(\mathfrak{m}_{n}) \otimes \mathbb{Q}_p/\mathbb{Z}_p} \right)^\vee  & \textrm{ \cite[Lemma 3.14]{kitajima-otsuki} }\\
& \simeq \left( \dfrac{ \widehat{E}(\mathfrak{m}_{n})  }{\widehat{E}^{\pm}(\mathfrak{m}_{n}) } \otimes \mathbb{Q}_p/\mathbb{Z}_p \right)^\vee .
\end{align*}
Due to \cite[Proposition 4.11]{iovita-pollack}, we have the following exact sequence
\[
\xymatrix{
0 \ar[r] & \widehat{E}(p\mathbb{Z}_p) \ar[r]^-{f} \ar[d]_-{\simeq} & \widehat{E}^+(\mathfrak{m}_n) \oplus \widehat{E}^-(\mathfrak{m}_n) \ar[r]^-{g} \ar[d]_-{\simeq} & \widehat{E}(\mathfrak{m}_n) \ar[r] \ar[d]_-{\simeq}^-{\textrm{\cite[Proposition 5.8]{iovita-pollack}}} & 0  \\
0 \ar[r] &  \widetilde{\omega}^{+}_n \widetilde{\omega}^{-}_n  \Lambda_n \ar[r] & \widetilde{\omega}^-_n\Lambda_n \oplus \widetilde{\omega}^+_n\Lambda_n \ar[r] & \left( \widetilde{\omega}^{+}_n, \widetilde{\omega}^{-}_n \right) \Lambda_n \ar[r] & 0
}
\]
where $f$ is the diagonal embedding and $g: (a,b) \mapsto a-b$.
Note that $\widetilde{\omega}^{+}_n \widetilde{\omega}^{-}_n  \Lambda_n \simeq \Lambda_n / X\Lambda_n  \simeq \mathbb{Z}_p  $.
This implies that
$$\widehat{E}(\mathfrak{m}_n) / \widehat{E}^{\pm}(\mathfrak{m}_n) \simeq  \left( \widetilde{\omega}^{+}_n, \widetilde{\omega}^{-}_n \right) \Lambda_n / \widetilde{\omega}^{\mp}_n\Lambda_n .$$
Then
$$\mathrm{Fitt}_{\Lambda_n} \left( \left( \frac{ \left( \widetilde{\omega}^{+}_n, \widetilde{\omega}^{-}_n \right) \Lambda_n }{ \widetilde{\omega}^{\mp}_n\Lambda_n } \otimes \mathbb{Q}_p/\mathbb{Z}_p \right)^\vee  \right)
=
\mathrm{Fitt}_{\Lambda_n} \left( \left( \left( \widehat{E}(\mathfrak{m}_n) / \widehat{E}^{\pm}(\mathfrak{m}_n) \right) \otimes \mathbb{Q}_p/\mathbb{Z}_p \right)^\vee  \right)  .$$

The following proposition is due to Robert Pollack.
\begin{prop}
$$\mathrm{Fitt}_{\Lambda_n} \left( \left( \frac{ \left( \widetilde{\omega}^{+}_n, \widetilde{\omega}^{-}_n \right) \Lambda_n }{ \widetilde{\omega}^{\mp}_n\Lambda_n } \otimes \mathbb{Q}_p/\mathbb{Z}_p \right)^\vee  \right)= \widetilde{\omega}^{\mp}_n\Lambda_n,$$ respectively.
\end{prop}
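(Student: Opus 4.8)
The plan is to identify the module under consideration, up to $\Lambda_n$-isomorphism, with the cyclic module $\Lambda_n/\widetilde{\omega}^{\mp}_n\Lambda_n$, and then to show that applying $\left(-\otimes\mathbb{Q}_p/\mathbb{Z}_p\right)^\vee$ returns a module isomorphic to this same cyclic module, whose Fitting ideal is then immediate from an obvious presentation.

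\emph{Step 1: identifying the module.} First I would prove
$$\frac{\left(\widetilde{\omega}^{+}_n,\widetilde{\omega}^{-}_n\right)\Lambda_n}{\widetilde{\omega}^{\mp}_n\Lambda_n}\simeq\Lambda_n/\widetilde{\omega}^{\mp}_n\Lambda_n$$
as $\Lambda_n$-modules. Multiplication by $\widetilde{\omega}^{\pm}_n$ gives a $\Lambda_n$-linear surjection $\Lambda_n\twoheadrightarrow\left(\widetilde{\omega}^{+}_n,\widetilde{\omega}^{-}_n\right)\Lambda_n/\widetilde{\omega}^{\mp}_n\Lambda_n$, since $\left(\widetilde{\omega}^{+}_n,\widetilde{\omega}^{-}_n\right)\Lambda_n=\widetilde{\omega}^{\pm}_n\Lambda_n+\widetilde{\omega}^{\mp}_n\Lambda_n$. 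An element $r$ lies in the kernel precisely when $r\widetilde{\omega}^{\pm}_n\in\widetilde{\omega}^{\mp}_n\Lambda_n$; lifting to $\mathbb{Z}_p[X]$ and using $\omega_n=X\widetilde{\omega}^{+}_n\widetilde{\omega}^{-}_n$ this forces $\widetilde{\omega}^{\mp}_n\mid r\widetilde{\omega}^{\pm}_n$ in $\mathbb{Z}_p[X]$. Since $\mathbb{Z}_p[X]$ is a UFD and $\widetilde{\omega}^{+}_n,\widetilde{\omega}^{-}_n$ are coprime there — being products of pairwise distinct monic polynomials $\Phi_m(1+X)$, each of which is the minimal polynomial of the uniformizer $\zeta_{p^m}-1$ of the totally ramified extension $\mathbb{Q}_p(\mu_{p^m})/\mathbb{Q}_p$, hence irreducible over $\mathbb{Q}_p$ and so over $\mathbb{Z}_p$ by Gauss's lemma — it follows that $\widetilde{\omega}^{\mp}_n\mid r$, i.e. $r\in\widetilde{\omega}^{\mp}_n\Lambda_n$. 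The reverse inclusion is clear, which yields the isomorphism.

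\emph{Step 2: passing to the dual and reading off the Fitting ideal.} Set $P:=\Lambda_n/\widetilde{\omega}^{\mp}_n\Lambda_n$. Since $\widetilde{\omega}^{\mp}_n$ is monic, $P\simeq\mathbb{Z}_p[X]/(\widetilde{\omega}^{\mp}_n)$ is free over $\mathbb{Z}_p$, so the tensor--Hom adjunction gives a $\Lambda_n$-isomorphism $\left(P\otimes_{\mathbb{Z}_p}\mathbb{Q}_p/\mathbb{Z}_p\right)^\vee\simeq\mathrm{Hom}_{\mathbb{Z}_p}(P,\mathbb{Z}_p)$. It remains to see $\mathrm{Hom}_{\mathbb{Z}_p}(P,\mathbb{Z}_p)\simeq P$ as $\Lambda_n$-modules. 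This holds because $P$, being a monogenic finite flat $\mathbb{Z}_p$-algebra that is étale over $\mathbb{Q}_p$ (as $\widetilde{\omega}^{\mp}_n$ is squarefree), is Gorenstein with different generated by the non-zero-divisor $(\widetilde{\omega}^{\mp}_n)'(\bar X)$, so its $\mathbb{Z}_p$-linear dual is free of rank one over $P$. (Equivalently one may dualize the presentation $\Lambda_n\xrightarrow{\widetilde{\omega}^{\mp}_n}\Lambda_n\to P\to0$ using the Frobenius-algebra identification $\mathrm{Hom}_{\mathbb{Z}_p}(\Lambda_n,\mathbb{Z}_p)\simeq\Lambda_n$ of the group ring; this identifies $\mathrm{Hom}_{\mathbb{Z}_p}(P,\mathbb{Z}_p)$ with $\mathrm{ann}_{\Lambda_n}(\widetilde{\omega}^{\mp}_n)$, and a one-line computation in $\Lambda_n=\mathbb{Z}_p[X]/(X\widetilde{\omega}^{+}_n\widetilde{\omega}^{-}_n)$ gives $\mathrm{ann}_{\Lambda_n}(\widetilde{\omega}^{\mp}_n)=X\widetilde{\omega}^{\pm}_n\Lambda_n\simeq\Lambda_n/\widetilde{\omega}^{\mp}_n\Lambda_n$.) Either way $\left(P\otimes\mathbb{Q}_p/\mathbb{Z}_p\right)^\vee\simeq\Lambda_n/\widetilde{\omega}^{\mp}_n\Lambda_n$, whose Fitting ideal over $\Lambda_n$ — read off the presentation $\Lambda_n\xrightarrow{\widetilde{\omega}^{\mp}_n}\Lambda_n\to\Lambda_n/\widetilde{\omega}^{\mp}_n\Lambda_n\to0$ — is the principal ideal $\widetilde{\omega}^{\mp}_n\Lambda_n$, as claimed.

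\emph{Main obstacle.} The one delicate point is keeping track of the canonical involution $\iota\colon\sigma\mapsto\sigma^{-1}$ of $\Lambda_n$ when passing to $\mathbb{Z}_p$-duals: the $\mathbb{Z}_p$-dual of multiplication by $\widetilde{\omega}^{\mp}_n$ is multiplication by $\iota(\widetilde{\omega}^{\mp}_n)$, so one needs $\iota$ to fix $\widetilde{\omega}^{\mp}_n$ up to a unit of $\Lambda_n$. This is true because cyclotomic polynomials are palindromic, giving $\Phi_m((1+X)^{-1})=(1+X)^{-\varphi(p^m)}\Phi_m(1+X)$ and hence $\iota(\widetilde{\omega}^{\mp}_n)=(1+X)^{-\deg\widetilde{\omega}^{\mp}_n}\cdot\widetilde{\omega}^{\mp}_n$ with $(1+X)\in\Lambda_n^\times$; in particular Fitting ideals are unaffected. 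In the Gorenstein-algebra formulation this subtlety is precisely the statement that the codifferent of $P$ over $\mathbb{Z}_p$ is a principal $P$-module. Beyond this, everything reduces to bookkeeping with the factorizations $\omega_n=X\widetilde{\omega}^{+}_n\widetilde{\omega}^{-}_n=\omega^{\pm}_n\widetilde{\omega}^{\mp}_n$ in the UFD $\mathbb{Z}_p[X]$.
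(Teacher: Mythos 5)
Your proposal is correct and takes essentially the same route as the paper: identify the module with $\Lambda_n/\widetilde{\omega}^{\mp}_n\Lambda_n$, use $\mathbb{Z}_p$-freeness to replace the Pontryagin dual of the divisible tensor product by $\mathrm{Hom}_{\mathbb{Z}_p}(\Lambda_n/\widetilde{\omega}^{\mp}_n,\mathbb{Z}_p)$, and control the involution through the palindromic identity $\iota(\widetilde{\omega}^{\mp}_n)=c^{\mp}\cdot\widetilde{\omega}^{\mp}_n$ with $c^{\mp}\in\Lambda_n^{\times}$, which is exactly the paper's computation. Your primary phrasing via Gorenstein duality of the monogenic algebra $\mathbb{Z}_p[X]/(\widetilde{\omega}^{\mp}_n)$ is only a repackaging of the paper's perfect-pairing identification $\mathrm{Hom}_{\mathbb{Z}_p}(\Lambda_n,\mathbb{Z}_p)\simeq\Lambda_n$, which you in any case reproduce in your parenthetical annihilator computation $\Lambda_n[\widetilde{\omega}^{\mp}_n]\simeq\omega^{\pm}_n\Lambda_n\simeq\Lambda_n/\widetilde{\omega}^{\mp}_n\Lambda_n$.
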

\begin{proof}
Since the multiplication by $\omega^{\pm}_n$ induces an isomorphism $\Lambda_n / \widetilde{\omega}^{\mp}_n\Lambda_n \simeq \omega^{\pm}_n \Lambda_n$,
we have
$$  \Lambda_n / \widetilde{\omega}^{\mp}_n \simeq \frac{ \left( \widetilde{\omega}^{+}_n, \widetilde{\omega}^{-}_n \right) \Lambda_n }{ \widetilde{\omega}^{\mp}_n\Lambda_n } .$$
We compute
\begin{align*}
\left( \frac{ \left( \widetilde{\omega}^{+}_n, \widetilde{\omega}^{-}_n \right) \Lambda_n }{ \widetilde{\omega}^{\mp}_n\Lambda_n } \otimes \mathbb{Q}_p/\mathbb{Z}_p \right)^\vee & \simeq
 \mathrm{Hom}_{\mathbb{Z}_p}\left( \frac{ \left( \widetilde{\omega}^{+}_n, \widetilde{\omega}^{-}_n \right) \Lambda_n }{ \widetilde{\omega}^{\mp}_n\Lambda_n } \otimes \mathbb{Q}_p/\mathbb{Z}_p, \mathbb{Q}_p/\mathbb{Z}_p \right) \\
& \simeq
 \mathrm{Hom}_{\mathbb{Z}_p}\left( \Lambda_n / \widetilde{\omega}^{\mp}_n , \mathbb{Z}_p \right).
\end{align*}
A direct calculation shows the following identities:
\begin{align*}
 \widetilde{\omega}^{+, \iota}_n & := \prod_{2 \leq m \leq n, m: \textrm{ even}}\Phi_m \left(\frac{1}{1+X} \right) \\ 
 & = \prod_{2 \leq m \leq n, m: \textrm{ even}} \left( \Phi_m (1+X) \cdot (1+X)^{-p^{m-1}(p-1)} \right) \\  
 & = \left( \prod_{2 \leq m \leq n, m: \textrm{ even}} (1+X)^{-p^{m-1}(p-1)} \right)  \cdot  \widetilde{\omega}^{+}_n ,
\end{align*}
and
\begin{align*}
 \widetilde{\omega}^{-, \iota}_n & := \prod_{1 \leq m \leq n, m: \textrm{ odd}}\Phi_m \left(\frac{1}{1+X} \right) \\ 
 & = \prod_{1 \leq m \leq n, m: \textrm{ odd}} \left( \Phi_m (1+X) \cdot (1+X)^{-p^{m-1}(p-1)} \right) \\  
 & = \left( \prod_{1 \leq m \leq n, m: \textrm{ odd}} (1+X)^{-p^{m-1}(p-1)} \right)  \cdot  \widetilde{\omega}^{-}_n .
\end{align*}
We write 
\[
\xymatrix@R=0em
{
 { \displaystyle
c^+ = \left( \prod_{2 \leq m \leq n, m: \textrm{ even}} (1+X)^{-p^{m-1}(p-1)} \right) } ,&
{ \displaystyle
c^- = \left( \prod_{1 \leq m \leq n, m: \textrm{ odd}} (1+X)^{-p^{m-1}(p-1)} \right) 
} 
}
\]
and note that they are invertible in $\Lambda_n$.

We consider a perfect pairing
$\Lambda_n \times \Lambda_n \to \mathbb{Z}_p$ defined by $(\sigma, \tau) = 1$ if $\tau = \sigma^{-1}$ and
 $(\sigma, \tau) = 0$ otherwise where $\sigma, \tau \in \mathrm{Gal}(\mathbb{Q}_{n}/\mathbb{Q})$. Then the pairing induces an isomorphism
$$\mathrm{Hom}_{\mathbb{Z}_p}(\Lambda_n, \mathbb{Z}_p) \simeq \Lambda_n $$
with the reversed $\Lambda_n$-action.
Then we have
\begin{align*}
\mathrm{Hom}_{\mathbb{Z}_p}(\Lambda_n/\widetilde{\omega}^{\mp}_n, \mathbb{Z}_p)
& \simeq \mathrm{Hom}_{\mathbb{Z}_p}(\Lambda_n, \mathbb{Z}_p)[\widetilde{\omega}^{\mp}_n] \\
& \simeq \Lambda_n[\widetilde{\omega}^{\mp, \iota}_n] \\
& \simeq \Lambda_n[\widetilde{\omega}^{\mp}_n] & (c^{\mp}_n \in \Lambda^\times_n)\\
& \simeq  \omega^{\pm}_n \Lambda_n .
\end{align*}
\end{proof}

To sum up, we have
\begin{align*}
\widetilde{\omega}^{\mp}_n \cdot \mathrm{Fitt}_{\Lambda_n} \left( \mathrm{Sel}^\pm(\mathbb{Q}_n, E[p^\infty])^\vee \right)
& =
\mathrm{Fitt}_{\Lambda_n} \left( \left( \frac{ \left( \widetilde{\omega}^{+}_n, \widetilde{\omega}^{-}_n \right) \Lambda_n }{ \widetilde{\omega}^{\mp}_n\Lambda_n } \otimes \mathbb{Q}_p/\mathbb{Z}_p \right)^\vee  \right) \cdot \mathrm{Fitt}_{\Lambda_n} \left( \mathrm{Sel}^\pm(\mathbb{Q}_n, E[p^\infty])^\vee \right) \\
& =
\mathrm{Fitt}_{\Lambda_n} \left( \left( \widehat{E}(\mathfrak{m}_n) / \widehat{E}^{\pm}(\mathfrak{m}_n) \otimes \mathbb{Q}_p/\mathbb{Z}_p \right)^\vee  \right) \cdot \mathrm{Fitt}_{\Lambda_n} \left( \mathrm{Sel}^\pm(\mathbb{Q}_n, E[p^\infty])^\vee \right) \\
& \subseteq
\mathrm{Fitt}_{\Lambda_n} \left( \left( \dfrac{E(\mathbb{Q}_{n,p}) \otimes \mathbb{Q}_p/\mathbb{Z}_p}{E^{\pm}(\mathbb{Q}_{n,p})  \otimes \mathbb{Q}_p/\mathbb{Z}_p} \right)^\vee / \mathrm{ker} (\iota^{\pm}) \right) \cdot \mathrm{Fitt}_{\Lambda_n} \left( \mathrm{Sel}^\pm(\mathbb{Q}_n, E[p^\infty])^\vee \right) \\
& \subseteq
\mathrm{Fitt}_{\Lambda_n} \left( \mathrm{Sel}(\mathbb{Q}_n, E[p^\infty])^\vee \right) .
\end{align*}
By Corollary \ref{cor:the-key-lemma}, we have
$$\left( \widetilde{\omega}^{\mp}_n \cdot L^{\mp}_p(\mathbb{Q}_\infty,f) \Mod{\omega_n} \right) \subseteq \mathrm{Fitt}_{\Lambda_n} \left( \mathrm{Sel}(\mathbb{Q}_n, E[p^\infty])^\vee \right) .$$
Then Theorem \ref{thm:main_theorem} immediately follows from Proposition \ref{prop:pollack_theta_pm}.
Notably, the weak main conjecture of Mazur-Tate holds.

\section{Towards the strong main conjecture} \label{sec:non-CM}
The goal of this section is to prove the inclusion
$$\mathrm{Fitt}_{\Lambda_n}\left( \mathrm{Err}_n \right) \cdot \mathrm{Fitt}_{\Lambda_n}\left( \mathrm{Sel}(\mathbb{Q}_n, E[p^\infty])^\vee \right) \subseteq \left( \theta_n(f), \nu_{n-1,n} \left(  \theta_{n-1}(f) \right) \right)$$
in Theorem \ref{thm:main_theorem_2}. Note that the inclusion gives us a \emph{lower} bound of Selmer groups (up to some error).
Throughout this section, we assume
\begin{enumerate}
\item $\overline{\rho}$ is surjective ($\Rightarrow$ $E$ is automatically non-CM),
\item $p$ does not divide $\mathrm{Tam}(E)$, and
\item the $\pm$-main conjectures (Conjecture \ref{conj:pm-main-conj}).
\end{enumerate}

\subsection{Kato's main conjecture and fine Selmer groups} \label{subsec:kato_main_conj_n_fine_selmer}
Let $j : \mathrm{Spec}(\mathbb{Q}_n) \to \mathrm{Spec}(\mathcal{O}_{\mathbb{Q}_n}[1/p])$ be the natural map.
Let 
\[
\xymatrix{
\mathbb{H}^i_{\mathrm{glob}}(T) := \varprojlim_{n} \mathrm{H}^i_{\mathrm{\acute{e}t}}( \mathrm{Spec}(\mathcal{O}_{\mathbb{Q}_n}[1/p]), j_*T) , & \mathbb{H}^i_{\mathrm{glob}}(V) := \mathbb{H}^i_{\mathrm{glob}}(T) \otimes \mathbb{Q}_p
}
\]
where $\mathrm{H}^i_{\mathrm{\acute{e}t}}( \mathrm{Spec}(\mathcal{O}_{\mathbb{Q}_n}[1/p]), j_*T)$ is the \'{e}tale cohomology group. 
It is well known that $\mathbb{H}^1_{\mathrm{glob}}(T) \simeq \varprojlim_n \mathrm{H}^1(\mathbb{Q}_\Sigma/\mathbb{Q}_n, T)$.
See \cite[$\S$6]{kurihara-invent} and \cite[Proposition 7.1.(i)]{kobayashi-thesis} for detail.

The following theorem is due to Kato (\cite[Theorem 12.4.(1) and (3)]{kato-euler-systems}).
\begin{thm} \label{thm:kato_iwasawa_cohomologies}
Assume that $\overline{\rho}$ is surjective. Then:
\begin{enumerate}
\item $\mathbb{H}^2_{\mathrm{glob}}(T)$ is a finitely generated torsion module over $\Lambda$.
\item $\mathbb{H}^1_{\mathrm{glob}}(T)$ is free of rank one over $\Lambda$.
\end{enumerate}
\end{thm}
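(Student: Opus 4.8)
The plan is to realize both statements as consequences of the perfectness of the Iwasawa cohomology complex together with one genuinely deep input, Kato's weak Leopoldt conjecture. Write $R\Gamma_{\mathrm{Iw}} := R\varprojlim_n R\Gamma(\mathbb{Q}_\Sigma/\mathbb{Q}_n, T)$, so that $\mathrm{H}^i(R\Gamma_{\mathrm{Iw}}) = \mathbb{H}^i_{\mathrm{glob}}(T)$; this is a perfect complex of $\Lambda$-modules with cohomology in degrees $[0,2]$ (standard; see \cite[$\S$12]{kato-euler-systems}). Since the coefficient module is $\Lambda$-free, $R\Gamma_{\mathrm{Iw}} \otimes^{L}_\Lambda \mathbb{F}_p \simeq R\Gamma(\mathbb{Q}_\Sigma/\mathbb{Q}, E[p])$, whose degree-zero cohomology $E[p]^{G_\mathbb{Q}}$ vanishes because $\overline{\rho}$ is surjective. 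Hence the minimal free resolution of $R\Gamma_{\mathrm{Iw}}$ --- whose $i$-th term has $\Lambda$-rank $\dim_{\mathbb{F}_p}\mathrm{H}^i(R\Gamma_{\mathrm{Iw}} \otimes^{L}_\Lambda \mathbb{F}_p)$ --- is a two-term complex $[\Lambda^{a} \xrightarrow{\, d \,} \Lambda^{b}]$ concentrated in degrees $1$ and $2$. It follows at once that $\mathbb{H}^0_{\mathrm{glob}}(T) = 0$, $\mathbb{H}^1_{\mathrm{glob}}(T) = \ker d$, and $\mathbb{H}^2_{\mathrm{glob}}(T) = \mathrm{coker}\, d$; in particular the latter two are finitely generated over $\Lambda$.

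To finish (1) it remains to show $\mathbb{H}^2_{\mathrm{glob}}(T)$ is $\Lambda$-torsion, which is the weak Leopoldt conjecture for $E$ over $\mathbb{Q}_\infty$; I would invoke this from Kato (\cite[Theorem 12.4]{kato-euler-systems}). This is the step I regard as the real obstacle: its proof rests on the existence of the zeta element together with Rohrlich's non-vanishing theorem for $L(E,\chi,1)$ over characters $\chi$ of $p$-power conductor. (In any situation where the Euler-system divisibility of Theorem \ref{thm:pm-main-conj}.(2), or of Theorem \ref{thm:main-conj} in the ordinary case, is available, torsionness of $\mathbb{H}^2_{\mathrm{glob}}(T)$ can instead be extracted from the resulting finiteness bounds, but Kato's statement already holds under surjectivity of $\overline{\rho}$ alone.)

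For (2) I would first pin down the rank. By Tate's global Euler characteristic formula, applied over each $\mathbb{Q}_n$ and passed to the limit,
$$\mathrm{rank}_\Lambda \mathbb{H}^1_{\mathrm{glob}}(T) - \mathrm{rank}_\Lambda \mathbb{H}^2_{\mathrm{glob}}(T) - \mathrm{rank}_\Lambda \mathbb{H}^0_{\mathrm{glob}}(T) = \mathrm{rank}_{\mathbb{Z}_p} T^{c=-1} = 1,$$
where $c$ denotes complex conjugation, acting on $T = T_pE$ with eigenvalues $\pm 1$; since the last two ranks vanish, $\mathbb{H}^1_{\mathrm{glob}}(T)$ has $\Lambda$-rank one, equivalently $a - b = 1$. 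Freeness then follows from a depth count over the regular local ring $\Lambda$: in $0 \to \ker d \to \Lambda^{a} \to \mathrm{im}\, d \to 0$ the module $\mathrm{im}\, d \subseteq \Lambda^{b}$ is torsion-free and so has depth $\geq 1$, whence $\ker d$ has depth $\geq 2 = \dim \Lambda$; thus $\mathbb{H}^1_{\mathrm{glob}}(T) = \ker d$ is a maximal Cohen--Macaulay $\Lambda$-module, hence free by the Auslander--Buchsbaum formula, and being of rank one it is isomorphic to $\Lambda$. One can also conclude freeness more classically: the descent sequence $0 \to \mathbb{H}^1_{\mathrm{glob}}(T)_\Gamma \to \mathrm{H}^1(\mathbb{Q}_\Sigma/\mathbb{Q}, T) \to \mathbb{H}^2_{\mathrm{glob}}(T)[\gamma - 1] \to 0$ exhibits $\mathbb{H}^1_{\mathrm{glob}}(T)_\Gamma$ as a $\mathbb{Z}_p$-submodule of finite index in $\mathrm{H}^1(\mathbb{Q}_\Sigma/\mathbb{Q}, T) \cong \mathbb{Z}_p$ (using $E(\mathbb{Q})[p] = 0$ and the Euler characteristic formula over $\mathbb{Q}$), so topological Nakayama makes $\mathbb{H}^1_{\mathrm{glob}}(T)$ cyclic over $\Lambda$, and a cyclic torsion-free module of rank one over the domain $\Lambda$ is free. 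The only place real arithmetic enters is the torsionness of $\mathbb{H}^2_{\mathrm{glob}}(T)$; the rest is formal homological algebra plus Tate's formula.
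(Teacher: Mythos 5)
Your main line of argument is sound, but note that the paper does not actually prove this statement: it is quoted verbatim from Kato, \cite[Theorem 12.4.(1) and (3)]{kato-euler-systems}. What you have written is in effect a reconstruction of Kato's own proof of those two items: perfectness of the Iwasawa cohomology complex in degrees $[0,2]$ and derived base change to $\mathbb{F}_p$ (which is Shapiro's lemma for the induced module $T\otimes_{\mathbb{Z}_p}\Lambda$, rather than literally ``the coefficient module is $\Lambda$-free''), vanishing of $\mathrm{H}^0(\mathbb{Q}_\Sigma/\mathbb{Q},E[p])$ from surjectivity (irreducibility already suffices) of $\overline{\rho}$, a two-term minimal representative $[\Lambda^a\to\Lambda^b]$ in degrees $1,2$, the rank count $a-b=1$ from Tate's Euler characteristic formula (cleanest when read off from the mod $p$ fibre rather than ``passing to the limit'' of the formulas over $\mathbb{Q}_n$), and freeness of $\mathbb{H}^1_{\mathrm{glob}}(T)$ by the depth and Auslander--Buchsbaum argument over the two-dimensional regular local ring $\Lambda$. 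The one genuinely arithmetic input, torsionness of $\mathbb{H}^2_{\mathrm{glob}}(T)$ (weak Leopoldt), you rightly isolate and import from Kato; it cannot be obtained by soft arguments, and the paper's citation is doing exactly that work. One small point of translation: the paper's $\mathbb{H}^i_{\mathrm{glob}}(T)$ is defined via \'etale cohomology of $\mathrm{Spec}(\mathcal{O}_{\mathbb{Q}_n}[1/p])$; this agrees with your Galois-cohomological inverse limit in degree one and differs in degree two only by torsion contributions from the primes of $\Sigma\setminus\{p\}$, so statements (1) and (2) transfer, but a sentence acknowledging this would be appropriate.

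One genuine flaw, though only in your optional ``more classical'' route to freeness: $\mathrm{H}^1(\mathbb{Q}_\Sigma/\mathbb{Q},T)$ is not in general isomorphic to $\mathbb{Z}_p$. The Euler characteristic formula over $\mathbb{Q}$ gives $\mathrm{rank}_{\mathbb{Z}_p}\mathrm{H}^1(\mathbb{Q}_\Sigma/\mathbb{Q},T)=1+\mathrm{rank}_{\mathbb{Z}_p}\mathrm{H}^2(\mathbb{Q}_\Sigma/\mathbb{Q},T)$, and since the Kummer map embeds $E(\mathbb{Q})\otimes\mathbb{Z}_p$ into $\mathrm{H}^1(\mathbb{Q}_\Sigma/\mathbb{Q},T)$, the rank is at least $\mathrm{rk}_{\mathbb{Z}}E(\mathbb{Q})$; for curves of rank at least two your claimed identification fails, the quotient $\mathbb{H}^2_{\mathrm{glob}}(T)[\gamma-1]$ in the descent sequence need not be finite, and the ``finite index plus topological Nakayama'' conclusion does not follow as stated. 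This does not damage the theorem, since your primary depth argument is complete without it, but the aside should be deleted or repaired.
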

We recall the Iwasawa main conjecture without $p$-adic zeta functions \`{a} la Kato and Perrin-Riou (\cite[Conjecture 12.10]{kato-euler-systems}).
\begin{conj}[Kato's main conjecture] \label{conj:kato-main-conjecture}
$$\mathrm{char}_{\Lambda} \left( \left( \mathbb{H}^1_{\mathrm{glob}}(T) / \Lambda \mathbf{z}_{\mathrm{Kato}} \right)_{\mathrm{tors}} \right)
= \mathrm{char}_{\Lambda} \left( \mathbb{H}^2_{\mathrm{glob}}(T) \right)$$
where $M_{\mathrm{tors}}$ is the $\Lambda$-torsion submodule of $M$.
\end{conj}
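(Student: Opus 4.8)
The plan is to deduce Kato's main conjecture (Conjecture~\ref{conj:kato-main-conjecture}) from the $\pm$-main conjectures (Conjecture~\ref{conj:pm-main-conj}, which are in force throughout this section) by combining global duality with the $\pm$-Coleman maps. The structural inputs are all available: $\mathbb{H}^1_{\mathrm{glob}}(T)$ is free of rank one over $\Lambda$ and $\mathbb{H}^2_{\mathrm{glob}}(T)$ is $\Lambda$-torsion (Theorem~\ref{thm:kato_iwasawa_cohomologies}), the $\pm$-Selmer groups are $\Lambda$-cotorsion (Theorem~\ref{thm:pm-torsion}), the Coleman maps $\mathrm{Col}^{\pm}\colon\mathbb{H}^1_{\mathrm{loc}}(T)\to\Lambda$ are surjective with $\mathrm{Col}^{\pm}(\mathrm{loc}\,\mathbf{z}_{\mathrm{Kato}})=L^{\pm}_p(\mathbb{Q}_\infty,f)$ (Theorem~\ref{thm:pm-coleman}), and $p\nmid\mathrm{Tam}(E)$.

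First I would write down the Poitou--Tate exact sequence attached to the $\pm$-Selmer structure on $T$ over $\mathbb{Q}_\infty$. Since $\ker(\mathrm{Col}^{\pm})$ is exactly the $\pm$-norm submodule of $\mathbb{H}^1_{\mathrm{loc}}(T)$, the map $\mathrm{Col}^{\pm}$ identifies the local term at $p$ with $\Lambda$; the local $H^2$ at $p$ vanishes by supersingularity, and $p\nmid\mathrm{Tam}(E)$ forces the local terms at the bad primes $\ell\neq p$ to vanish as well --- the same mechanism that makes the control theorems of Theorem~\ref{thm:pm-control} exact. The sequence thus collapses to
\[
0 \to \mathrm{Sel}^{\pm}_{\mathrm{str}}(\mathbb{Q}_\infty,T) \to \mathbb{H}^1_{\mathrm{glob}}(T) \xrightarrow{\mathrm{Col}^{\pm}\circ\mathrm{loc}} \Lambda \to \mathrm{Sel}^{\mp}(\mathbb{Q}_\infty,E[p^\infty])^\vee \to \mathbb{H}^2_{\mathrm{glob}}(T) \to 0,
\]
where $\mathrm{Sel}^{\pm}_{\mathrm{str}}$ is the strict $\pm$-Selmer group of $T$ and the opposite sign on $\mathrm{Sel}^{\mp}$ reflects Kobayashi's computation that the exact annihilator of the $\pm$-norm submodule under local Tate duality is $E^{\mp}(\mathbb{Q}_{\infty,p})\otimes\mathbb{Q}_p/\mathbb{Z}_p$. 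Because $\mathrm{Sel}^{\mp}(\mathbb{Q}_\infty,E[p^\infty])^\vee$ and $\mathbb{H}^2_{\mathrm{glob}}(T)$ are $\Lambda$-torsion while $\mathbb{H}^1_{\mathrm{glob}}(T)\simeq\Lambda$ is torsion-free, $\mathrm{Col}^{\pm}\circ\mathrm{loc}$ has torsion cokernel, hence nonzero image, so $\mathrm{Sel}^{\pm}_{\mathrm{str}}(\mathbb{Q}_\infty,T)=0$. Writing $\delta$ for a $\Lambda$-generator of $\mathbb{H}^1_{\mathrm{glob}}(T)$ and taking characteristic ideals in the resulting short exact sequence yields
\[
\mathrm{char}_\Lambda\big(\mathrm{Sel}^{\mp}(\mathbb{Q}_\infty,E[p^\infty])^\vee\big) = \big(\mathrm{Col}^{\pm}(\mathrm{loc}\,\delta)\big)\cdot\mathrm{char}_\Lambda\big(\mathbb{H}^2_{\mathrm{glob}}(T)\big).
\]

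Next I would invoke the $\pm$-main conjecture: relabeling the sign in Conjecture~\ref{conj:pm-main-conj} gives $\mathrm{char}_\Lambda(\mathrm{Sel}^{\mp}(\mathbb{Q}_\infty,E[p^\infty])^\vee)=(L^{\pm}_p(\mathbb{Q}_\infty,f))$, so in particular $L^{\pm}_p\neq 0$; since $L^{\pm}_p=\mathrm{Col}^{\pm}(\mathrm{loc}\,\mathbf{z}_{\mathrm{Kato}})$, the class $\mathbf{z}_{\mathrm{Kato}}$ is nonzero and $\mathbb{H}^1_{\mathrm{glob}}(T)/\Lambda\mathbf{z}_{\mathrm{Kato}}$ coincides with its own torsion submodule. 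Writing $\mathbf{z}_{\mathrm{Kato}}=h\,\delta$ with $(h)=\mathrm{char}_\Lambda\big(\mathbb{H}^1_{\mathrm{glob}}(T)/\Lambda\mathbf{z}_{\mathrm{Kato}}\big)$, the relation $L^{\pm}_p=\mathrm{Col}^{\pm}(\mathrm{loc}\,\mathbf{z}_{\mathrm{Kato}})=h\cdot\mathrm{Col}^{\pm}(\mathrm{loc}\,\delta)$ gives $(L^{\pm}_p)=(h)\cdot(\mathrm{Col}^{\pm}(\mathrm{loc}\,\delta))$. Comparing with the displayed equality and cancelling the non-zero-divisor $\mathrm{Col}^{\pm}(\mathrm{loc}\,\delta)$ in the domain $\Lambda$ leaves $(h)=\mathrm{char}_\Lambda(\mathbb{H}^2_{\mathrm{glob}}(T))$, which is precisely Conjecture~\ref{conj:kato-main-conjecture}. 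Only one of the two signs is needed, and the precise sign conventions above are immaterial since both $\pm$-main conjectures are assumed.

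The main obstacle is the first step: producing the Poitou--Tate sequence in the clean four-term form, i.e.\ checking that every local error term --- the $H^2$-contributions, the discrepancy at $p$ between the $\pm$-norm condition on $T$ and the $E^{\mp}\otimes\mathbb{Q}_p/\mathbb{Z}_p$ condition on $E[p^\infty]$, and the bad-prime terms --- is annihilated by supersingularity and by $p\nmid\mathrm{Tam}(E)$. These facts are contained in Kobayashi's analysis (the exact sequences of \cite[\S7, \S10]{kobayashi-thesis} together with the local computations behind Theorems~\ref{thm:pm-coleman} and~\ref{thm:pm-control}); granting them, the rest is the formal manipulation of characteristic ideals above. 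Dropping the Tamagawa hypothesis here would only cost a bounded correction factor coming from the bad-prime cokernels --- the same kind of correction packaged by the terms $\mathrm{Err}_n$ elsewhere in the paper.
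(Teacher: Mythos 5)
The statement you were asked to prove is labelled a \emph{conjecture} in the paper: the authors never prove it, they assume it (it is invoked in $\S$\ref{sec:non-CM} under the standing hypothesis that the $\pm$-main conjectures hold), and their justification is the remark immediately following it, namely that for $a_p(E)=0$ Kato's main conjecture and the $\pm$-main conjectures are \emph{equivalent} by Kobayashi's Theorem 7.4. Your proposal is essentially a write-up of the relevant direction of that equivalence: the Poitou--Tate sequence over $\mathbb{Q}_\infty$ with the $\pm$-local condition at $p$, the identification of the local term with $\Lambda$ via $\mathrm{Col}^{\pm}$, the resulting identity $\mathrm{char}_\Lambda(\mathrm{Sel}^{\mp}{}^\vee)=(\mathrm{Col}^{\pm}(\mathrm{loc}\,\delta))\cdot\mathrm{char}_\Lambda(\mathbb{H}^2_{\mathrm{glob}}(T))$, and cancellation against $L^{\pm}_p=\mathrm{Col}^{\pm}(\mathrm{loc}\,\mathbf{z}_{\mathrm{Kato}})$. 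So you have taken the same route the paper relies on, only spelled out rather than cited; in particular your argument is conditional on Conjecture \ref{conj:pm-main-conj}, exactly as the paper's use of Conjecture \ref{conj:kato-main-conjecture} is, and it does not (and cannot) give an unconditional proof of the displayed statement.

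Two small corrections to the sketch. First, the vanishing of the local terms at bad primes $\ell\neq p$ in the infinite-level Poitou--Tate sequence has nothing to do with $p\nmid\mathrm{Tam}(E)$: it holds because $E(\mathbb{Q}_{n,\ell})\otimes\mathbb{Q}_p/\mathbb{Z}_p=0$ for $\ell\neq p$, so the dual local condition on $T$ there is everything; the Tamagawa hypothesis only enters finite-level control statements such as Theorem \ref{thm:pm-control}. Second, in Kobayashi's normalization the $\pm$-norm subgroups are their own exact annihilators under local Tate duality (same sign), so whether the Selmer group appearing opposite $\mathrm{Col}^{\pm}$ carries the same or the opposite sign is purely a matter of the labelling conventions this paper fixes; your closing hedge that both signed main conjectures are assumed indeed makes this immaterial, but the duality claim as you stated it should not be attributed to Kobayashi without adjusting conventions.
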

Note that we crucially use Conjecture \ref{conj:kato-main-conjecture} in the argument.
\begin{rem} $ $
\begin{enumerate}
\item 
If $a_p(E) = 0$, then Kato's main conjecture (Conjecture \ref{conj:kato-main-conjecture}) and the $\pm$-main conjectures (Conjecture \ref{conj:pm-main-conj}) are equivalent due to \cite[Theorem 7.4]{kobayashi-thesis}.
\item
Also, $\mathbb{H}^2_{\mathrm{glob}}(T)$ and $\mathrm{Sel}_0(\mathbb{Q}_\infty, E[p^\infty])^\vee$ are pseudo-isomorphic as $\Lambda$-modules. See  \cite[$\S$6]{kurihara-invent}, \cite[Theorem 7.1.ii)]{kobayashi-thesis} for detail.
\end{enumerate}
\end{rem}
\subsection{Selmer groups and fine Selmer groups in finite layers} \label{subsec:selmer_fine_selmer_finite_layers}
Let 
$$\mathcal{Y}'_n := \mathrm{coker} \left( \mathbb{H}^1_{\mathrm{glob}}(T)_{\Gamma_n} \to \frac{ \mathrm{H}^1(\mathbb{Q}_{n,p}, T) }{ E( \mathbb{Q}_{n,p} ) \otimes \mathbb{Z}_p }  \right),$$
$$\mathcal{Y}_n := \mathrm{coker} \left( \mathrm{H}^1(\mathbb{Q}_{\Sigma}/\mathbb{Q}_{n}, T) \to \frac{ \mathrm{H}^1(\mathbb{Q}_{n,p}, T) }{ E( \mathbb{Q}_{n,p} ) \otimes \mathbb{Z}_p }  \right),$$
and
$$\mathcal{Z}_n := \mathrm{im} \left( \mathrm{H}^1(\mathbb{Q}_{\Sigma}/\mathbb{Q}_{n}, T) \to \frac{ \mathrm{H}^1(\mathbb{Q}_{n,p}, T) }{ E( \mathbb{Q}_{n,p} ) \otimes \mathbb{Z}_p }  \right).$$
Consider the following commutative diagram
\begin{equation}\label{eqn:coker_g_n}
\begin{gathered}
\xymatrix{
& \mathrm{ker} \ g_n \ar[r] \ar[d] & 0 \ar[r] \ar[d] &  \mathrm{ker} \ f_n  \ar[d] \\
& \left( \mathbb{H}^1_{\mathrm{glob}}(T) \right)_{\Gamma_n} \ar[r] \ar[d]_-{g_n} & \frac{ \mathrm{H}^1(\mathbb{Q}_{n,p}, T) }{ E( \mathbb{Q}_{n,p} ) \otimes \mathbb{Z}_p } \ar[r] \ar[d]^-{\simeq} & \mathcal{Y}'_n \ar[r] \ar[d]^-{f_n} & 0 \\
0 \ar[r] & \mathcal{Z}_n \ar[r] \ar[d] & \frac{ \mathrm{H}^1(\mathbb{Q}_{n,p}, T) }{ E( \mathbb{Q}_{n,p} ) \otimes \mathbb{Z}_p } \ar[r] \ar[d] & \mathcal{Y}_n \ar[r] \ar[d] & 0 \\
&   \mathrm{coker} \ g_n \ar[r] & 0 \ar[r] & 0
}
\end{gathered}
\end{equation}
with $$\mathrm{ker} \ f_n \simeq \mathrm{coker} \ g_n$$ by snake lemma.
\begin{rem} \label{rem:coker_g_n}
Here, $\mathrm{coker} \ g_n$ is exactly $\mathrm{Err}_n$. We use the notation $\mathrm{coker} \ g_n$ in this and the next sections.
\end{rem}
Let $\mathcal{Y}'_n / \mathrm{coker} \ g_n := \mathcal{Y}'_n / \mathrm{ker} \ f_n \subseteq \mathcal{Y}_n$.
Then we have
$$\mathrm{Fitt}_{\Lambda_n} \left( \mathcal{Y}_n \right)  \subseteq
\mathrm{Fitt}_{\Lambda_n} \left( \mathcal{Y}'_n / \mathrm{coker} \ g_n \right) $$
by Lemma \ref{lem:fitting_sub}. Using the Poitou-Tate sequence (\cite[A.3.2.Proposition]{perrin-riou-book}, \cite[(7.18)]{kobayashi-thesis}), we have the following exact sequence with splitting
\begin{equation} \label{eqn:splitting_sequence}
\xymatrix@R=0.3em{
\mathrm{H}^1(\mathbb{Q}_{\Sigma}/\mathbb{Q}_n, T ) \ar[r] & \frac{\mathrm{H}^1(\mathbb{Q}_{n,p}, T )}{E(\mathbb{Q}_{n,p}) \otimes \mathbb{Z}_p} \ar[rr] \ar[rd] &  & \mathrm{Sel}(\mathbb{Q}_n, E[p^\infty])^\vee  \ar[r] & \mathrm{Sel}_0(\mathbb{Q}_n, E[p^\infty])^\vee 
\ar[r] & 0 \\
& & \mathcal{Y}_n \ar[ru] \ar[rd] \\
& 0 \ar[ru] & & 0 .
}
\end{equation}

\subsection{A presentation of the difference between Selmer groups and fine Selmer groups}
It would be desirable to compute a presentation matrix of $\mathcal{Y}_n$ from the following exact sequence
\[\xymatrix{
\mathrm{H}^1(\mathbb{Q}_{\Sigma}/\mathbb{Q}_n, T ) \ar[r] & \frac{\mathrm{H}^1(\mathbb{Q}_{n,p}, T )}{E(\mathbb{Q}_{n,p}) \otimes \mathbb{Z}_p} \ar[r] &  \mathcal{Y}_n \ar[r] & 0 .
}
\]
Unfortunately, it seems out of reach with current techniques; instead, we compute a slightly easier version, a presentation matrix of $\mathcal{Y}'_n$ from the following exact sequence
\[\xymatrix{
\mathbb{H}^1_{\mathrm{glob}}(T )_{\Gamma_n} \ar[r] & \frac{\mathrm{H}^1(\mathbb{Q}_{n,p}, T )}{E(\mathbb{Q}_{n,p}) \otimes \mathbb{Z}_p} \ar[r] &  \mathcal{Y}'_n \ar[r] & 0 .
}
\]
We regard $\mathcal{Y}'_n$ as the quotient of $\mathrm{H}^1(\mathbb{Q}_{n,p}, T )$ by local constraint $E(\mathbb{Q}_{n,p}) \otimes \mathbb{Z}_p$ and global constraint $\mathbb{H}^1_{\mathrm{glob}}(T )_{\Gamma_n}$.

\subsubsection{The generators}
Let $\mathbb{H}^1_{\mathrm{loc}}(T)$ be the local Iwasawa cohomology group (defined in Theorem \ref{thm:pm-coleman}).
Since $E$ is supersingular at $p$, $E[p]$ is irreducible as a $\mathrm{Gal}(\overline{\mathbb{Q}}_p/\mathbb{Q}_p)$-module.
Then $\mathbb{H}^1_{\mathrm{loc}}(T)$ is free of rank 2 over $\Lambda$ since $\mathrm{H}^1(\mathbb{Q}_{p}, T)$ is free of rank 2 over $\mathbb{Z}_p$.
\begin{prop}[{\cite[Proposition 1.2]{kurihara-pollack}}] \label{prop:coker_coleman}
Let $\mathrm{Col} := \mathrm{Col}^+ \oplus \mathrm{Col}^-$.
The following sequence 
\[
\xymatrix{
0 \ar[r] & \mathbb{H}^1_{\mathrm{loc}}(T) \ar[rr]^-{\mathrm{Col}} & & \Lambda \oplus \Lambda \ar[r]^-{r} & \mathbb{Z}_p \ar[r] & 0 
}
\]
 is exact where $r (h(X), k(X)) := h(0) - \frac{p-1}{2} \cdot k(0)$.
\end{prop}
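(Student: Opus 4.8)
The plan is to identify the three modules in the exact sequence $0 \to \mathbb{H}^1_{\mathrm{loc}}(T) \xrightarrow{\mathrm{Col}} \Lambda \oplus \Lambda \xrightarrow{r} \mathbb{Z}_p \to 0$ explicitly, and then verify exactness position by position. First, I would recall from the excerpt that $\mathrm{H}^1(\mathbb{Q}_p, T)$ is free of rank $2$ over $\mathbb{Z}_p$ (since $E$ is supersingular, so $E[p]$ is an irreducible $\mathrm{Gal}(\overline{\mathbb{Q}}_p/\mathbb{Q}_p)$-module, which forces $\mathrm{H}^0(\mathbb{Q}_p, E[p]) = 0$ and hence torsion-freeness, while the local Euler characteristic formula pins down the rank), whence $\mathbb{H}^1_{\mathrm{loc}}(T) = \varprojlim_n \mathrm{H}^1(\mathbb{Q}_{n,p}, T)$ is free of rank $2$ over $\Lambda$ by the standard limit argument (e.g. Nekov\'a\v{r} or Kato). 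So both $\mathbb{H}^1_{\mathrm{loc}}(T)$ and $\Lambda \oplus \Lambda$ are free of rank $2$ over $\Lambda$, and $\mathrm{Col} = \mathrm{Col}^+ \oplus \mathrm{Col}^-$ is the map whose two components are the $\pm$-Coleman maps of Theorem \ref{thm:pm-coleman}.

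The heart of the argument is to compute the image of $\mathrm{Col}$. Each $\mathrm{Col}^{\pm} : \mathbb{H}^1_{\mathrm{loc}}(T) \to \Lambda$ is surjective by Theorem \ref{thm:pm-coleman}, so the issue is the interaction between the two components. I would argue that $\mathrm{Col}(\mathbb{H}^1_{\mathrm{loc}}(T)) = \ker(r)$ where $r(h,k) = h(0) - \tfrac{p-1}{2} k(0)$. The inclusion $\mathrm{im}(\mathrm{Col}) \subseteq \ker(r)$ amounts to the single numerical identity relating the ``bottom-layer'' values of $\mathrm{Col}^+$ and $\mathrm{Col}^-$ on any local Iwasawa class: evaluating at the trivial character, $\mathrm{Col}^{\pm}$ modulo the augmentation ideal computes (up to the explicit constants $p-1$ and $2$ appearing in the interpolation formulae for $L^{\pm}_p$ in $\S$\ref{subsubsec:pm-p-adic-L-Coleman}) the image of a class in $\mathrm{H}^1(\mathbb{Q}_p, T)/(E(\mathbb{Q}_p) \otimes \mathbb{Z}_p) \cong \mathbb{Z}_p$, and the ratio of these normalizations is exactly $\tfrac{p-1}{2}$. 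Concretely one tests this on a class pairing nontrivially with Kato's zeta element, using part (2) of Theorem \ref{thm:pm-coleman} together with the interpolation values $\mathbf{1}(L^+_p) = (p-1) L(E,1)/\Omega_E$ and $\mathbf{1}(L^-_p) = 2 L(E,1)/\Omega_E$; the factor $(p-1)/2$ in $r$ is forced precisely so that $r(\mathrm{Col}^+(x), \mathrm{Col}^-(x)) = 0$. For the reverse inclusion $\ker(r) \subseteq \mathrm{im}(\mathrm{Col})$, I would compare ranks and torsion: $\mathrm{Col}$ is injective (its components generate, and $\mathbb{H}^1_{\mathrm{loc}}(T)$ has no $\Lambda$-torsion since $\mathrm{H}^1(\mathbb{Q}_{n,p}, T)$ is $\mathbb{Z}_p$-free), so $\mathrm{im}(\mathrm{Col})$ is a rank-$2$ free submodule of $\Lambda \oplus \Lambda$ contained in $\ker(r)$; since $\Lambda \oplus \Lambda / \ker(r) \cong \mathbb{Z}_p$ has no nonzero pseudo-null submodule and $\ker(r)$ is itself reflexive of rank $2$, it suffices to check the two modules agree after localizing at every height-one prime, which reduces again to the surjectivity of each $\mathrm{Col}^{\pm}$ and the single congruence above. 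Finally, the sequence ends with $\mathbb{Z}_p$ because $r$ is visibly surjective (e.g. $r(1,0) = 1$).

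The main obstacle I anticipate is pinning down the constant $\tfrac{p-1}{2}$ rigorously rather than up to a $p$-adic unit: it requires tracking the precise normalizations in Kobayashi's construction of $\mathrm{Col}^{\pm}$ via Honda theory against Pollack's interpolation formulae, and reconciling the three sign/normalization conventions listed in the Sign convention remark in $\S$\ref{subsec:basic_objects}. Everything else — freeness, injectivity, the rank count, surjectivity of $r$ — is formal once the local cohomology is known to be $\Lambda$-free of rank $2$. I would organize the write-up so that the conventions are fixed first, then the identity $r \circ \mathrm{Col} = 0$ is verified on a single well-chosen class, and then exactness is deduced by the reflexivity/localization argument, citing \cite[Proposition 1.2]{kurihara-pollack} for the statement and filling in the normalization bookkeeping.
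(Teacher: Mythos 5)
The paper itself does not prove this proposition: it is quoted from \cite[Proposition 1.2]{kurihara-pollack}, where it is extracted from the explicit finite-level description of Kobayashi's Coleman maps (their kernels $\widehat{E}^{\pm}(\mathfrak{m}_n)\otimes\mathbb{Z}_p$ and images). Measured against that, your outline has the right skeleton (freeness of $\mathbb{H}^1_{\mathrm{loc}}(T)$, injectivity, $\mathrm{im}\,\mathrm{Col}\subseteq\ker r$, reverse inclusion), but the two substantive steps are not actually carried by the ingredients you invoke. For $\mathrm{im}\,\mathrm{Col}\subseteq\ker r$ you propose to verify $h(0)=\tfrac{p-1}{2}k(0)$ on the single class $\mathrm{loc}\,\mathbf{z}_{\mathrm{Kato}}$ via the interpolation values of $L^{\pm}_p$ at $\mathbf{1}$. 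That class spans at most a rank-one $\Lambda$-submodule of the rank-two module $\mathbb{H}^1_{\mathrm{loc}}(T)$, so vanishing of $r\circ\mathrm{Col}$ on it says nothing about the rest of the module, and the test is vacuous whenever $L(E,1)=0$, which is not excluded. The identity $\mathrm{Col}^+\equiv\tfrac{p-1}{2}\,\mathrm{Col}^-\pmod{X}$ is a purely local fact: at level $0$ both kernels equal $E(\mathbb{Q}_p)\otimes\mathbb{Z}_p$, so the two maps mod $X$ differ by a unit, and that unit is computed from the normalization of the local points in Kobayashi's construction (equivalently the $P_n$-pairing description); the constants $(p-1)$ and $2$ in the interpolation formulas are downstream of this local normalization, not a substitute for it. Likewise, injectivity does not follow from torsion-freeness of the source: you need $\ker\mathrm{Col}^+\cap\ker\mathrm{Col}^-=\varprojlim_n \widehat{E}(p\mathbb{Z}_p)=0$, i.e.\ the vanishing of universal norms (the transition maps are multiplication by $p$).

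The reverse inclusion is the more serious gap: it does not ``reduce to the surjectivity of each $\mathrm{Col}^{\pm}$ and the single congruence.'' Those facts only show that $(\Lambda\oplus\Lambda)/\mathrm{im}\,\mathrm{Col}$ is a cyclic torsion module surjecting onto $\mathbb{Z}_p$; they do not force it to equal $\mathbb{Z}_p$. Concretely, $M:=\lbrace (h,k): h\equiv\tfrac{p-1}{2}k \pmod{X\,\Phi_1(1+X)}\rbrace$ is a free rank-two submodule of $\Lambda\oplus\Lambda$ contained in $\ker r$, with both projections surjective, and it is the image of an injective map from a free rank-two module with both components surjective; yet its cokernel is $\Lambda/(X\,\Phi_1(1+X))\neq\mathbb{Z}_p$. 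So your reflexivity/localization step is fine as a closing move, but the input it needs --- that $\mathrm{im}(\mathrm{Col})_{\mathfrak{q}}=\ker(r)_{\mathfrak{q}}$ at every height-one prime, equivalently that the cokernel of $\mathrm{Col}$ has characteristic ideal $(X)$ --- is precisely the finer finite-level computation (the exact sequences $0\to\widehat{E}^{\pm}(\mathfrak{m}_n)\otimes\mathbb{Z}_p\to \mathrm{H}^1(\mathbb{Q}_{n,p},T)\to\Lambda_n/\omega^{\mp}_n\to\cdots$ and the images of one Coleman map on the kernel of the other) that Kurihara--Pollack and Kobayashi actually establish, and it cannot be recovered from the global interpolation data you rely on.
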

We pick a $\Lambda$-basis $(e_1, e_2)$ of $\mathbb{H}^1_{\mathrm{loc}} (T) = \mathrm{ker} (r)$ by
\[
\xymatrix{
\mathrm{Col}(e_1) = (\frac{p-1}{2}, 1), & \mathrm{Col}(e_2)  = (X, 0) .
}
\]
Then 
\begin{align*}
\mathbb{H}^1_{\mathrm{loc}} (T) & = \Lambda e_1  \oplus \Lambda e_2 \\
& \simeq \Lambda \mathrm{Col}(e_1) \oplus \Lambda \mathrm{Col}(e_2)  \\
& \subseteq \Lambda \oplus \Lambda .
\end{align*}
By the irreducibility of $E[p]$ as a $\mathrm{Gal}(\overline{\mathbb{Q}}_p/\mathbb{Q}_p)$-module, we have
\begin{align*}
 \mathbb{H}^1_{\mathrm{loc}} (T)_{\Gamma_n} & = \mathrm{H}^1(\mathbb{Q}_{n,p}, T) \\
& = \Lambda_n e_1 \oplus \Lambda_n e_2 .
\end{align*}
\subsubsection{The local constraint}
Consider the exact sequence
\[
\xymatrix{
 0 \ar[r] & \dfrac{\mathrm{H}^1(\mathbb{Q}_{n,p}, T )}{E(\mathbb{Q}_{n,p}) \otimes \mathbb{Z}_p} \ar[r] \ar@{^{(}->}[rd] & \dfrac{\mathrm{H}^1(\mathbb{Q}_{n,p}, T )}{\mathrm{ker}(\mathrm{Col}^+_n)} \oplus \dfrac{\mathrm{H}^1(\mathbb{Q}_{n,p}, T )}{\mathrm{ker}(\mathrm{Col}^-_n)} \ar[r] \ar[d]^-{\mathrm{Col}_n := \mathrm{Col}^+_n \oplus \mathrm{Col}^-_n}_-{\simeq} & \dfrac{\mathrm{H}^1(\mathbb{Q}_{p}, T )}{E(\mathbb{Q}_{p}) \otimes \mathbb{Z}_p} \ar[r] & 0 \\
& & \Lambda_n/ \omega^-_n \oplus \Lambda_n/ \omega^+_n .
}
\]
We investigate the image of $e_1$ and $e_2$ in $\Lambda_n/ \omega^-_n \oplus \Lambda_n/ \omega^+_n$ under $\mathrm{Col}_n$.
Then we naturally obtain the following relations of $\frac{\mathrm{H}^1(\mathbb{Q}_{n,p},T)}{E(\mathbb{Q}_{n,p}) \otimes \mathbb{Z}_p}$:
\begin{align*}
\widetilde{\omega}^-_n \cdot e_2 & = (\widetilde{\omega}^-_n X, 0) \\
& = ( \omega^-_n, 0) \\
& = (0,0) \in \Lambda_n/ \omega^-_n \oplus \Lambda_n/ \omega^+_n
\end{align*}
\begin{align*}
\omega^+_n \cdot e_1  - \frac{p-1}{2} \cdot \widetilde{\omega}^+_n \cdot e_2 
& = ( \omega^+_n \cdot \frac{p-1}{2} , \omega^+_n) - (\frac{p-1}{2} \cdot \widetilde{\omega}^+_n \cdot X, 0)\\
& = ( 0 , \omega^+_n) \\
& = (0,0) \in \Lambda_n/ \omega^-_n \oplus \Lambda_n/ \omega^+_n .
\end{align*}
Also, since $E(\mathbb{Q}_{n,p}) \otimes \mathbb{Z}_p$ is generated by two elements over $\Lambda_n$ via a formal group argument as in \cite[Proposition 4.11]{iovita-pollack}, 
we know that $\frac{\mathrm{H}^1(\mathbb{Q}_{n,p},T)}{E(\mathbb{Q}_{n,p}) \otimes \mathbb{Z}_p}$ is the module with 
two generators $e_1$, $e_2$ and the above two relations (these relations 
are all).

\subsubsection{The global constraint} \label{subsubsec:global_constraint}
Due to Theorem \ref{thm:kato_iwasawa_cohomologies}.(2), we have
$$\mathbb{H}^1_{\mathrm{glob}} (T) \simeq \Lambda$$
and let $b$ be a $\Lambda$-generator of $\mathbb{H}^1_{\mathrm{glob}} (T)$.
Then $b$ is also a $\Lambda_n$-generator of $\mathbb{H}^1_{\mathrm{glob}}(T)_{\Gamma_n} \simeq \Lambda_n $.
We write the image of $b$ by $(b_1,b_2)$ under the map
\[
\xymatrix@R=0em{
\mathbb{H}^1_{\mathrm{glob}}(T) \ar[r]^-{\mathrm{loc}} & \mathbb{H}^1_{\mathrm{loc}}(T) \ar[rr]^-{\mathrm{Col}^{+} \oplus \mathrm{Col}^-} & & \Lambda\oplus \Lambda \\
b \ar@{|->}[rrr] & & & (b_1,b_2)
}
\]

Since 
$$\frac{\mathrm{H}^1(\mathbb{Q}_{n,p}, T )}{E(\mathbb{Q}_{n,p}) \otimes \mathbb{Z}_p} \hookrightarrow \Lambda_n/ \omega^-_n \oplus \Lambda_n/ \omega^+_n ,$$
we have
$$ \mathcal{Y}'_n = \frac{\mathrm{H}^1(\mathbb{Q}_{n,p}, T )}{E(\mathbb{Q}_{n,p}) \otimes \mathbb{Z}_p + \mathrm{im} \  \mathbb{H}^1_{\mathrm{glob}}(T)} \hookrightarrow \frac{\Lambda_n/ \omega^-_n \oplus \Lambda_n/ \omega^+_n}{(b_1, b_2)}.$$
where $\mathrm{im} \ \mathbb{H}^1_{\mathrm{glob}}(T)$ is the image of $\mathbb{H}^1_{\mathrm{glob}}(T)$
in $\mathrm{H}^1(\mathbb{Q}_{n,p}, T )$ and it is a quotient of $\mathbb{H}^1_{\mathrm{glob}}(T)_{\Gamma_n}$.
Then
\begin{align*}
b_2 e_1 - \frac{b_1 - \frac{p-1}{2} b_2}{X} e_2 & = b_2 (\frac{p-1}{2}, 1) + \frac{b_1 - \frac{p-1}{2} b_2}{X} (X,0) \\
& = (b_1, b_2) \\
& = (0,0) \in \mathcal{Y}'_n 
\end{align*} 

\subsubsection{A presentation matrix}
Using all the above discussion on generators and relations arising from
\[\xymatrix{
\mathbb{H}^1_{\mathrm{glob}}(T )_{\Gamma_n} \ar[r] & \frac{\mathrm{H}^1(\mathbb{Q}_{n,p}, T )}{E(\mathbb{Q}_{n,p}) \otimes \mathbb{Z}_p} \ar[r] &  \mathcal{Y}'_n \ar[r] & 0 ,
}
\]
we know there are 2 generators and 3 relations. Now we describe a presentation matrix $A$ of $\mathcal{Y}'_n$ over $\Lambda_n$
\[
\xymatrix{
\left( \Lambda_n \right)^{\oplus 3} \ar[r]^-{A} & \Lambda \mathrm{Col}(e_1) \oplus \Lambda \mathrm{Col}(e_2) \ar[r] & \mathcal{Y}'_n \ar[r] & 0
}
\]
by
$$A = \left( 
\begin{matrix}
0 & \omega^+_n & b_2\\
\widetilde{\omega}^-_n  & - \frac{p-1}{2}\widetilde{\omega}^+_n  &  \frac{  b_1 - \frac{p-1}{2}b_2}{X}
\end{matrix}
\right) .$$
A direct computation of minors of the above matrix $A$ yields the following 
statement.
\begin{prop} \label{prop:fitting_difference}
$$\mathrm{Fitt}_{\Lambda_n}(\mathcal{Y}'_n) = ( \widetilde{\omega}^+_n b_1,  \widetilde{\omega}^-_n b_2) .$$
\end{prop}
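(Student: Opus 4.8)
The plan is to read off $\mathrm{Fitt}_{\Lambda_n}(\mathcal{Y}'_n)$ directly from the $2\times 3$ presentation matrix $A$ produced in $\S$\ref{subsubsec:global_constraint}. By the definition of Fitting ideals recalled in $\S$\ref{subsubsec:setting}, a module presented by a matrix with target $\Lambda_n^{\oplus 2}$ has Fitting ideal generated by the $2\times 2$ minors of that matrix, so it suffices to compute the three $2\times 2$ minors of $A$. Before doing so I would record that $A$ is genuinely a matrix over $\Lambda_n$: the entry $\frac{b_1-\frac{p-1}{2}b_2}{X}$ makes sense because $(b_1,b_2)=\mathrm{Col}(\mathrm{loc}(b))$ lies in $\mathrm{ker}(r)$ by Proposition \ref{prop:coker_coleman}, whence $b_1(0)=\frac{p-1}{2}b_2(0)$ and therefore $X\mid b_1-\frac{p-1}{2}b_2$ in $\Lambda$.

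Then I would compute the minors one at a time. The minor on the first two columns is $0\cdot(-\frac{p-1}{2}\widetilde{\omega}^+_n)-\omega^+_n\cdot\widetilde{\omega}^-_n=-\omega^+_n\widetilde{\omega}^-_n=-\omega_n$, which vanishes in $\Lambda_n\simeq\Lambda/(\omega_n)$ and so contributes nothing. The minor on the first and third columns is $0\cdot\frac{b_1-\frac{p-1}{2}b_2}{X}-b_2\cdot\widetilde{\omega}^-_n=-\widetilde{\omega}^-_n b_2$. The minor on the second and third columns is $\omega^+_n\cdot\frac{b_1-\frac{p-1}{2}b_2}{X}+\frac{p-1}{2}\widetilde{\omega}^+_n b_2$; here I would invoke the factorization $\omega^+_n=X\widetilde{\omega}^+_n$ from $\S$\ref{subsubsec:setting} to rewrite $\omega^+_n/X=\widetilde{\omega}^+_n$, so that this minor equals $\widetilde{\omega}^+_n(b_1-\frac{p-1}{2}b_2)+\frac{p-1}{2}\widetilde{\omega}^+_n b_2=\widetilde{\omega}^+_n b_1$. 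Collecting the three minors gives $\mathrm{Fitt}_{\Lambda_n}(\mathcal{Y}'_n)=(\widetilde{\omega}^+_n b_1,\ \widetilde{\omega}^-_n b_2)$, as claimed.

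The hard part is not in this step at all: once $A$ is in hand the computation is entirely routine. The two points that do require care are that the first minor is annihilated only because we work in $\Lambda_n$ rather than in $\Lambda$, and that the cancellation producing $\widetilde{\omega}^+_n b_1$ in the last minor rests precisely on the identities $\omega_n=\omega^+_n\widetilde{\omega}^-_n=\omega^-_n\widetilde{\omega}^+_n$. The genuine content sits upstream, in the identification of the two generators $e_1,e_2$, of the two local relations, and of the global relation coming from $\mathrm{loc}(b)=(b_1,b_2)$, all of which are assembled into $A$ in the preceding subsections; granted that description, the proposition follows immediately.
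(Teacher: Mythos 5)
Your proposal is correct and follows exactly the route the paper intends: the paper's proof consists of the single sentence that a direct computation of the $2\times 2$ minors of $A$ yields the statement, and your explicit evaluation of the three minors (with the first killed by $\omega^+_n\widetilde{\omega}^-_n=\omega_n\equiv 0$ in $\Lambda_n$ and the last simplified via $\omega^+_n=X\widetilde{\omega}^+_n$) is precisely that computation. Your preliminary remark that $X\mid b_1-\frac{p-1}{2}b_2$ because $(b_1,b_2)\in\mathrm{ker}(r)$ is a welcome clarification of a point the paper uses implicitly when it writes the entry $\frac{b_1-\frac{p-1}{2}b_2}{X}$.
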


\subsection{Putting it all together}
Consider the exact sequence
\[
\xymatrix{
0 \ar[r] & \left( \mathrm{Sel}_0(\mathbb{Q}_\infty, E[p^\infty])^\vee \right)_{\mathrm{mft}}
\ar[r] &  \mathrm{Sel}_0(\mathbb{Q}_\infty, E[p^\infty])^\vee
\ar[r] & \mathcal{S}
\ar[r] & 0
}
\]
where $M_{\mathrm{mft}}$ is the maximal finite torsion $\Lambda$-submodule of $M$.
Then we have
$$\mathrm{char}_{\Lambda} \left( \left( \mathrm{Sel}_0(\mathbb{Q}_\infty, E[p^\infty])^\vee \right)_{\mathrm{mft}} \right)
\cdot \mathrm{char}_{\Lambda} \left( \mathcal{S} \right) = 
\mathrm{char}_{\Lambda} \left( \mathrm{Sel}_0(\mathbb{Q}_\infty, E[p^\infty])^\vee \right) .$$

Since $\mathrm{char}_{\Lambda} \left( \left( \mathrm{Sel}_0(\mathbb{Q}_\infty, E[p^\infty])^\vee \right)_{\mathrm{mft}} \right)$ is trivial and the projective dimension of $\mathcal{S}$ over $\Lambda$ is $\leq 1$ (i.e. $\mathrm{pd}_\Lambda \mathcal{S} \leq 1$), we have
\begin{align*}
\mathrm{Fitt}_{\Lambda} \left( \mathcal{S} \right) & = \mathrm{char}_{\Lambda} \left( \mathcal{S} \right) \\
& = \mathrm{char}_{\Lambda} \left( \mathrm{Sel}_0(\mathbb{Q}_\infty, E[p^\infty])^\vee \right) \\
& = \mathrm{char}_{\Lambda} \left( \mathbb{H}^1_{\mathrm{glob}}(T) / \Lambda \mathbf{z}_{\mathrm{Kato}}  \right) 
\end{align*}
where Lemma \ref{lem:char-to-fitt} and Conjecture \ref{conj:kato-main-conjecture} are used to obtain the first and the third equalities, respectively. Then, by Kato's main conjecture (Conjecture \ref{conj:kato-main-conjecture}) again, we have
$$\mathrm{Fitt}_{\Lambda} \left( \mathcal{S} \right) = ( c ) \subseteq \Lambda$$
where $\mathbf{z}_{\mathrm{Kato}} = c \cdot b$ in $\mathbb{H}^1_{\mathrm{glob}} (T)$ with $b$ the chosen $\Lambda$-generator of $\mathbb{H}^1_{\mathrm{glob}} (T)$ in $\S$\ref{subsubsec:global_constraint}.

By the control theorem for fine Selmer groups (\cite[Lemma 4.2 and Remark 4.4]{kurihara-invent}), we have
$$\mathrm{Sel}_0(\mathbb{Q}_n, E[p^\infty])^\vee \simeq \left( \mathrm{Sel}_0(\mathbb{Q}_\infty, E[p^\infty])^\vee \right)_{\Gamma_n}.$$
Consider two exact sequences with compatibility
\[
\xymatrix{
0 \ar[r] & \mathcal{Y}_n \ar[r] \ar@{_{(}->}[d] & \mathrm{Sel}(\mathbb{Q}_n, E[p^\infty])^\vee  \ar[r] \ar@{=}[d] & \left( \mathrm{Sel}_0(\mathbb{Q}_\infty, E[p^\infty])^\vee \right)_{\Gamma_n} \ar[r] \ar@{->>}[d] & 0 \\
0 \ar[r] & \mathcal{A}_n \ar[r] & \mathrm{Sel}(\mathbb{Q}_n, E[p^\infty])^\vee \ar[r] &  \mathcal{S}_{\Gamma_n} \ar[r] & 0 
}
\]
where $\mathcal{A}_n$ is defined to be the kernel of the map $\mathrm{Sel}(\mathbb{Q}_n, E[p^\infty])^\vee \to  \mathcal{S}_{\Gamma_n}$.

Since $\mathrm{pd}_{\Lambda} \mathcal{S} \leq 1$, we have $\mathcal{S}$ admits a presentation by a square matrix over $\Lambda$. Thus, $\mathcal{S}_{\Gamma_n}$ also admits a presentation by a square matrix over $\Lambda_n$.
Then we have
\begin{align*}
\mathrm{Fitt}_{\Lambda_n} \left( \mathrm{Sel}(\mathbb{Q}_n, E[p^\infty])^\vee \right) & =
 \mathrm{Fitt}_{\Lambda_n} \left( \mathcal{A}_n \right) \cdot \mathrm{Fitt}_{\Lambda_n} \left( \mathcal{S}_{\Gamma_n} \right)  \\
& \subseteq 
\mathrm{Fitt}_{\Lambda_n} \left( \mathcal{Y}_n \right) \cdot \mathrm{Fitt}_{\Lambda_n} \left( \mathcal{S}_{\Gamma_n} \right) 
\end{align*}
where Lemma \ref{lem:fitting_exact_refined_square} and Lemma \ref{lem:fitting_sub} are used to obtain the first equality and the second inclusion, respectively.
Multiplying $\mathrm{Fitt}_{\Lambda_n} \left( \mathrm{coker} \ g_n \right)$, we have
\begin{align*}
 \mathrm{Fitt}_{\Lambda_n} \left( \mathrm{coker} \ g_n \right) \cdot \mathrm{Fitt}_{\Lambda_n} \left( \mathrm{Sel}(\mathbb{Q}_n, E[p^\infty])^\vee \right) 
& \subseteq \mathrm{Fitt}_{\Lambda_n} \left( \mathrm{coker} \ g_n \right) \cdot \mathrm{Fitt}_{\Lambda_n} \left( \mathcal{Y}_n \right) \cdot \mathrm{Fitt}_{\Lambda_n} \left( \mathcal{S}_{\Gamma_n} \right) \\
&  \subseteq \mathrm{Fitt}_{\Lambda_n} \left( \mathrm{coker} \ g_n \right) \cdot  \mathrm{Fitt}_{\Lambda_n} \left( \mathcal{Y}'_n / \mathrm{coker} \ g_n \right) \cdot \mathrm{Fitt}_{\Lambda_n} \left( \mathcal{S}_{\Gamma_n} \right) \\
&  \subseteq  \mathrm{Fitt}_{\Lambda_n} \left( \mathcal{Y}'_n \right) \cdot \mathrm{Fitt}_{\Lambda_n} \left( \mathcal{S}_{\Gamma_n} \right) \\
&  =  \left( \widetilde{\omega}^+_n b_1,  \widetilde{\omega}^-_n b_2 \right) \cdot \left( c \right)\\
&  =  \left( \widetilde{\omega}^+_n \mathrm{Col}^+( \mathrm{loc} \ b ),  \widetilde{\omega}^-_n \mathrm{Col}^-( \mathrm{loc} \ b ) \right) \cdot \left( c \right)\\
&  =  \left( \widetilde{\omega}^+_n \mathrm{Col}^+(c \cdot \mathrm{loc} \ b),  \widetilde{\omega}^-_n \mathrm{Col}^-(c \cdot \mathrm{loc} \ b) \right) \\
&  =  \left( \widetilde{\omega}^+_n \mathrm{Col}^+(\mathrm{loc} \ \mathbf{z}_{\mathrm{Kato}}),  \widetilde{\omega}^-_n \mathrm{Col}^-(\mathrm{loc} \ \mathbf{z}_{\mathrm{Kato}}) \right) \\
&  =  \left( \widetilde{\omega}^+_n L^+_p(\mathbb{Q}_{\infty}, f), \widetilde{\omega}^-_n L^-_p(\mathbb{Q}_{\infty}, f)\right) .
\end{align*}

\section{Vanishing of $\mathrm{Err}_n$} \label{sec:vanishing_coker_g_n}
This section is entirely devoted to prove the following proposition in order to obtain Theorem \ref{thm:main_theorem_3}.
We keep all the assumptions in $\S$\ref{sec:non-CM} in this section.
\begin{prop} \label{prop:vanishing_of_coker_g_n}
If
\begin{enumerate}
\item[(fineNF)] $\mathrm{Sel}_{0}(\mathbb{Q}_{\infty}, E[p^{\infty}])^\vee$ has no nontrivial finite 
$\Lambda$-submodule, and
\item[(\textrm{{\cyr SH}})] if $\mathrm{char}_{\Lambda} \left( \mathrm{Sel}_{0}(\mathbb{Q}_{\infty}, E[p^{\infty}])^\vee \right) \subseteq \left( \Phi_{n}(1+X) \right)$, then $\mathrm{rk}_{\mathbb{Z}} E(\mathbb{Q}_{n})>\mathrm{rk}_{\mathbb{Z}} E(\mathbb{Q}_{n-1})$
(if $n=0$, then $\Phi_{0}(1+X)=X$ and this inequality means $\mathrm{rk}_{\mathbb{Z}} E(\mathbb{Q})>0$),
\end{enumerate}
then $\mathrm{coker} \ g_n = 0$.
\end{prop}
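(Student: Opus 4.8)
The plan is to fit $\mathrm{coker}\ g_n$ into a cohomological descent and then trap it between a $\Zp$-freeness statement provided by (fineNF) and a rank statement provided by (\textrm{{\cyr SH}}). Since $\mathbb{H}^0_{\mathrm{glob}}(T)=0$ (as $E(\mathbb{Q}_\infty)[p^\infty]$ is finite) and $\mathbb{H}^1_{\mathrm{glob}}(T)$ is free of rank one over $\Lambda$ by Theorem \ref{thm:kato_iwasawa_cohomologies}, applying $-\otimes^{\mathbf{L}}_\Lambda\Lambda_n$ to $\mathbb{H}^\bullet_{\mathrm{glob}}(T)$ gives the exact sequence
\[
0\longrightarrow \mathbb{H}^1_{\mathrm{glob}}(T)_{\Gamma_n}\longrightarrow \mathrm{H}^1(\mathbb{Q}_\Sigma/\mathbb{Q}_n,T)\longrightarrow \mathbb{H}^2_{\mathrm{glob}}(T)[\omega_n]\longrightarrow 0 .
\]
Writing $\mathrm{H}^1_{\mathcal{F}}(\mathbb{Q}_n,T):=\ker\big(\mathrm{H}^1(\mathbb{Q}_\Sigma/\mathbb{Q}_n,T)\to \tfrac{\mathrm{H}^1(\mathbb{Q}_{n,p},T)}{E(\mathbb{Q}_{n,p})\otimes\Zp}\big)$ and composing with the surjection $\mathrm{H}^1(\mathbb{Q}_\Sigma/\mathbb{Q}_n,T)\twoheadrightarrow \mathcal{Z}_n$, one gets a canonical isomorphism
\[
\mathrm{coker}\ g_n\;\simeq\;\mathbb{H}^2_{\mathrm{glob}}(T)[\omega_n]\,\big/\,\mathrm{im}\big(\mathrm{H}^1_{\mathcal{F}}(\mathbb{Q}_n,T)\to \mathbb{H}^2_{\mathrm{glob}}(T)[\omega_n]\big).
\]
I would also record that $\mathbb{H}^2_{\mathrm{glob}}(T)\simeq \mathrm{Sel}_0(\mathbb{Q}_\infty,E[p^\infty])^\vee$, and that $\mathrm{H}^1(\mathbb{Q}_\Sigma/\mathbb{Q}_n,T)$ and $\tfrac{\mathrm{H}^1(\mathbb{Q}_{n,p},T)}{E(\mathbb{Q}_{n,p})\otimes\Zp}\simeq \mathrm{Hom}_{\Zp}(E(\mathbb{Q}_{n,p})\otimes\Zp,\Zp)$ are $\Zp$-free (using that $E[p]$ is irreducible over $\mathbb{Q}$ and over $\mathbb{Q}_p$); hence $\mathrm{H}^1_{\mathcal{F}}(\mathbb{Q}_n,T)$ is $\Zp$-saturated in $\mathrm{H}^1(\mathbb{Q}_\Sigma/\mathbb{Q}_n,T)$.

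Next I would feed in (fineNF). By hypothesis $\mathrm{Sel}_0(\mathbb{Q}_\infty,E[p^\infty])^\vee$, hence $\mathbb{H}^2_{\mathrm{glob}}(T)$, has no nonzero finite $\Lambda$-submodule, so it embeds into its elementary quotient $\bigoplus_i\Lambda/(g_i)$ with the $g_i$ powers of distinguished irreducibles; since $\omega_n$ is squarefree this induces an embedding $\mathbb{H}^2_{\mathrm{glob}}(T)[\omega_n]\hookrightarrow \bigoplus_i\Lambda/(\gcd(g_i,\omega_n))$, whose target is $\Zp$-free of rank $\deg\gcd\!\big(\mathrm{char}_\Lambda(\mathrm{Sel}_0(\mathbb{Q}_\infty,E[p^\infty])^\vee),\omega_n\big)$. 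So $\mathbb{H}^2_{\mathrm{glob}}(T)[\omega_n]$ is $\Zp$-free of that rank, and $\mathbb{H}^1_{\mathrm{glob}}(T)_{\Gamma_n}$ is $\Zp$-saturated in $\mathrm{H}^1(\mathbb{Q}_\Sigma/\mathbb{Q}_n,T)$; in particular, when $\mathrm{char}_\Lambda(\mathrm{Sel}_0(\mathbb{Q}_\infty,E[p^\infty])^\vee)$ has no cyclotomic factor dividing $\omega_n$ (Example \ref{exam:strong_main_conjecture}(1)(a)) it vanishes and $\mathrm{coker}\ g_n=0$ immediately.

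Then I would feed in (\textrm{{\cyr SH}}). For every $m\le n$ with $\Phi_m(1+X)\mid\mathrm{char}_\Lambda(\mathrm{Sel}_0(\mathbb{Q}_\infty,E[p^\infty])^\vee)$, (\textrm{{\cyr SH}}) gives $\mathrm{rk}_\Z E(\mathbb{Q}_m)>\mathrm{rk}_\Z E(\mathbb{Q}_{m-1})$; decomposing $E(\mathbb{Q}_m)\otimes\mathbb{Q}$ over $\mathbb{Q}[\Gal(\mathbb{Q}_m/\mathbb{Q})]$ shows the isotypic component for characters of exact order $p^m$ is nonzero and of $\mathbb{Q}$-dimension a positive multiple of $\deg\Phi_m(1+X)$, whence $\mathrm{rk}_\Z E(\mathbb{Q}_n)\ge \deg\gcd(\mathrm{char}_\Lambda(\mathrm{Sel}_0(\mathbb{Q}_\infty,E[p^\infty])^\vee),\omega_n)=\mathrm{rk}_{\Zp}\mathbb{H}^2_{\mathrm{glob}}(T)[\omega_n]$. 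The Kummer map carries $E(\mathbb{Q}_n)\otimes\Zp$ into $\mathrm{H}^1_{\mathcal{F}}(\mathbb{Q}_n,T)$ (Kummer classes of global points are locally Kummer everywhere, in particular integral at $p$), and a genuinely new point at layer $m$ maps to a nonzero element of $\mathbb{H}^2_{\mathrm{glob}}(T)[\omega_n]=\mathrm{H}^1(\mathbb{Q}_\Sigma/\mathbb{Q}_n,T)/\mathbb{H}^1_{\mathrm{glob}}(T)_{\Gamma_n}$ because only universal norms of points land in $\mathbb{H}^1_{\mathrm{glob}}(T)$; so the image of $\mathrm{H}^1_{\mathcal{F}}(\mathbb{Q}_n,T)$ in $\mathbb{H}^2_{\mathrm{glob}}(T)[\omega_n]$ has full $\Zp$-rank. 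Finally, $\ker g_n=\mathrm{H}^1_{\mathcal{F}}(\mathbb{Q}_n,T)\cap\mathbb{H}^1_{\mathrm{glob}}(T)_{\Gamma_n}$ is an intersection of two $\Zp$-saturated submodules of the $\Zp$-free module $\mathrm{H}^1(\mathbb{Q}_\Sigma/\mathbb{Q}_n,T)$, hence saturated, so $\mathrm{im}(\mathrm{H}^1_{\mathcal{F}}(\mathbb{Q}_n,T))\simeq \mathrm{H}^1_{\mathcal{F}}(\mathbb{Q}_n,T)/\ker g_n$ is $\Zp$-free; one then checks, using the $\Zp$-freeness of $\tfrac{\mathrm{H}^1(\mathbb{Q}_{n,p},T)}{E(\mathbb{Q}_{n,p})\otimes\Zp}$, that this image is in fact $\Zp$-saturated inside $\mathbb{H}^2_{\mathrm{glob}}(T)[\omega_n]$. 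A $\Zp$-saturated submodule of full rank in a $\Zp$-free module is the whole module, so $\mathrm{coker}\ g_n=0$.

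The hard part is exactly this last step: upgrading the numerical coincidence supplied by (\textrm{{\cyr SH}}) to an honest equality of $\Zp$-modules. It is precisely here that (fineNF) is indispensable — a nonzero finite $\Lambda$-submodule of $\mathrm{Sel}_0(\mathbb{Q}_\infty,E[p^\infty])^\vee$ would destroy the $\Zp$-freeness of $\mathbb{H}^2_{\mathrm{glob}}(T)[\omega_n]$ used above and generically leave a nonzero $\mathrm{Err}_n$. I would organize the details of the saturatedness step by passing to the limit in $n$ (where $\mathrm{Err}_n$ stabilizes) and comparing with the $\infty$-level algebraic Perrin-Riou construction, in which the analogue of $g_n$ is surjective by Proposition \ref{prop:coker_coleman}.
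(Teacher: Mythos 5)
Your reduction is fine as far as it goes: the identification $\mathrm{coker}\ g_n \simeq \mathbb{H}^2_{\mathrm{glob}}(T)[\omega_n]\big/\mathrm{im}\bigl(\mathrm{H}^1_{\mathcal{F}}(\mathbb{Q}_n,T)\bigr)$ follows from the descent sequence exactly as you say. But the two steps that are supposed to finish the proof both have genuine gaps. First, the rank bookkeeping is wrong. Writing $\mathrm{Sel}_0(\mathbb{Q}_\infty,E[p^\infty])^\vee \sim \bigoplus_i\Lambda/(g_i)$ (and granting the transfer of (fineNF) to $\mathbb{H}^2_{\mathrm{glob}}(T)$, which the paper only asserts to be \emph{pseudo}-isomorphic to $\mathrm{Sel}_0^\vee$ --- another unaddressed point), one gets $\mathrm{rk}_{\Zp}\,\mathbb{H}^2_{\mathrm{glob}}(T)[\omega_n]=\sum_i\deg\gcd(g_i,\omega_n)$, which is strictly larger than $\deg\gcd\bigl(\mathrm{char}_\Lambda(\mathrm{Sel}_0^\vee),\omega_n\bigr)$ as soon as some $\Phi_m(1+X)$ (or $X$) divides two distinct elementary factors. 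Assumption (\textrm{{\cyr SH}}) supplies only \emph{one} new isotypic block of rank $\deg\Phi_m$ per such layer, so your inequality $\mathrm{rk}_{\Z}E(\mathbb{Q}_n)\ge\mathrm{rk}_{\Zp}\mathbb{H}^2_{\mathrm{glob}}(T)[\omega_n]$ and the ensuing ``full rank image'' claim fail in general: already $\mathrm{Sel}_0(\mathbb{Q}_\infty,E[p^\infty])^\vee\sim\Lambda/(X)\oplus\Lambda/(X)$, which occurs in the all-Mordell--Weil situation with $\mathrm{rk}_{\Z}E(\mathbb{Q})\ge 3$ that Example \ref{exam:strong_main_conjecture} is explicitly meant to cover, defeats the count. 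Relatedly, the assertion that ``only universal norms of points land in $\mathbb{H}^1_{\mathrm{glob}}(T)_{\Gamma_n}$'' is unjustified: that module is the image of Iwasawa \emph{cohomology}, and the Kummer class of a genuinely new point could a priori lie in it.

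Second, the saturation step --- which you yourself flag as the hard part --- is only asserted. Saturatedness of $\mathrm{im}\bigl(\mathrm{H}^1_{\mathcal{F}}(\mathbb{Q}_n,T)\bigr)$ in $\mathbb{H}^2_{\mathrm{glob}}(T)[\omega_n]$ is equivalent to $\mathrm{coker}\ g_n$ being $\Zp$-torsion free, which is essentially the content of the Proposition (the error term is expected to be finite); images of saturated submodules under quotient maps need not be saturated, and neither Proposition \ref{prop:coker_coleman} (which computes the cokernel of the Coleman map, not of the global-to-local map) nor the stabilization of $\mathrm{Err}_n$ in $n$ gives it. The paper's argument is structured quite differently precisely to avoid any global rank matching or saturation claim: it proceeds by induction on $n$, identifies $\mathrm{coker}\ g_n$ as a quotient of the coinvariance-defect module $C_n$ of the sequence relating $\mathrm{Sel}$ and $\mathrm{Sel}_0$ over $\mathbb{Q}_\infty$, represents a class of $C_n$ by an explicit element $P_n\in \mathrm{H}^1(\mathbb{Q}_{n,p},T)$, and shows $P_n\in E(\mathbb{Q}_{n,p})\otimes\Zp$ by local Tate duality: the pairing against $E(\mathbb{Q}_{n-1,p})\otimes\Qp/\Zp$ vanishes by the induction hypothesis, and the pairing against the $\Phi_n$-quotient vanishes because (\textrm{{\cyr SH}}) makes $\bigl(E(\mathbb{Q}_n)\otimes\Qp/\Zp\bigr)_{\Phi_n}\to\bigl(E(\mathbb{Q}_{n,p})\otimes\Qp/\Zp\bigr)_{\Phi_n}$ surjective. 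If you want to salvage your route, you would have to replace the numerical full-rank argument by such a layer-by-layer duality argument; as written, the proposal does not close.
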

\subsection{Reduction}
We recall the following exact sequence (\cite[(Proof of) Proposition 3.4]{kurihara-pollack})
\begin{equation} \label{eqn:h1_loc_sel_sel_0-sequence}
\xymatrix{
0 \ar[r] & \dfrac{\mathbb{H}^1_{\mathrm{loc}}(T)}{\mathrm{loc} \ \mathbb{H}^1_{\mathrm{glob}}(T)} \ar[r] & \mathrm{Sel}(\mathbb{Q}_{\infty}, E[p^\infty])^\vee \ar[r] & \mathrm{Sel}_0(\mathbb{Q}_{\infty}, E[p^\infty])^\vee \ar[r] & 0 .
}
\end{equation}
By taking the $\Gamma_n$-coinvariant of the above sequence, we have the exact sequence
\begin{equation} \label{eqn:C_n-sequence}
\xymatrix{
0 \ar[r] & C_n \ar[r] &  \dfrac{\mathrm{H}^1(\mathbb{Q}_{n,p}, T)}{\mathrm{im} \ \mathbb{H}^1_{\mathrm{glob}}(T)} \ar[r] & \left( \mathrm{Sel}(\mathbb{Q}_{\infty}, E[p^\infty])^\vee \right)_{\Gamma_n} \ar[r] & \mathrm{Sel}_0(\mathbb{Q}_{n}, E[p^\infty])^\vee \ar[r] & 0 
}
\end{equation}
where 
$$C_n := \mathrm{coker} \left( 
 \mathrm{Sel}(\mathbb{Q}_{\infty}, E[p^\infty])^{\vee ,\Gamma_n} \to \mathrm{Sel}_0(\mathbb{Q}_{\infty}, E[p^\infty])^{\vee ,\Gamma_n}
  \right)$$
 and
$\mathrm{im} \ \mathbb{H}^1_{\mathrm{glob}}(T):=\mathrm{im} \left( \mathbb{H}^1_{\mathrm{glob}}(T) \to \mathrm{H}^1(\mathbb{Q}_{n,p}, T)\right)$.
We also have an exact sequence
\[
\xymatrix{
0 \ar[r] & \mathcal{Z}_n \ar[r] &  \dfrac{\mathrm{H}^1(\mathbb{Q}_{n,p}, T)}{E(\mathbb{Q}_{n,p})\otimes \mathbb{Z}_p} \ar[r] &  \mathrm{Sel}(\mathbb{Q}_{n}, E[p^\infty])^\vee  \ar[r] & \mathrm{Sel}_0(\mathbb{Q}_{n}, E[p^\infty])^\vee \ar[r] & 0 
}
\]
from Sequence (\ref{eqn:splitting_sequence}) and the definition of $\mathcal{Z}_n$.

Let $\pi_{\mathrm{glob},n} : \mathrm{H}^1(\mathbb{Q}_{n,p}, T) \to \dfrac{\mathrm{H}^1(\mathbb{Q}_{n,p}, T)}{\mathrm{im} \ \mathbb{H}^1_{\mathrm{glob}}(T)}$ be the natural projection and
 $$\widetilde{C}_n :=\pi^{-1}_{\mathrm{glob},n} (C_n) \subseteq \mathrm{H}^1(\mathbb{Q}_{n,p}, T)$$ be the inverse image of $C_n$ with respect to $\pi_{\mathrm{glob},n}$, and it obviously contains $\mathrm{im} \ \mathbb{H}^1_{\mathrm{glob}}(T)$.
 
Considering the following commutative diagram
\[
\xymatrix{
0 \ar[r] & \widetilde{C}_n \ar@{->>}[d] \ar[r] & \mathrm{H}^1(\mathbb{Q}_{n,p}, T) \ar@{->>}[d] \ar[rd] & E(\mathbb{Q}_{n,p}) \otimes \mathbb{Z}_p \ar[d] \\
0 \ar[r] & C_n \ar[r] &  \dfrac{\mathrm{H}^1(\mathbb{Q}_{n,p}, T)}{\mathrm{im} \ \mathbb{H}^1_{\mathrm{glob}}(T)} \ar[r] & \left( \mathrm{Sel}(\mathbb{Q}_{\infty}, E[p^\infty])^\vee \right)_{\Gamma_n} \ar@{->>}[d] \ar[r] & \mathrm{Sel}_0(\mathbb{Q}_{n}, E[p^\infty])^\vee \ar[r] \ar@{=}[d] & 0 \\
0 \ar[r] & \mathcal{Z}_n \ar[r] &  \dfrac{\mathrm{H}^1(\mathbb{Q}_{n,p}, T)}{E(\mathbb{Q}_{n,p})\otimes \mathbb{Z}_p} \ar[r] &  \mathrm{Sel}(\mathbb{Q}_{n}, E[p^\infty])^\vee  \ar[r] & \mathrm{Sel}_0(\mathbb{Q}_{n}, E[p^\infty])^\vee \ar[r] & 0 ,
}
\]
it is observed that $\widetilde{C}_n$ surjects $\mathcal{Z}_n$ under the natural quotient map $$\mathrm{H}^1(\mathbb{Q}_{n,p}, T) \twoheadrightarrow \dfrac{\mathrm{H}^1(\mathbb{Q}_{n,p}, T)}{E(\mathbb{Q}_{n,p})\otimes \mathbb{Z}_p} $$
since
$$\left( \dfrac{\mathrm{H}^1(\mathbb{Q}_{n,p}, T)}{\widetilde{C}_n} \right) / \left( E(\mathbb{Q}_{n,p})\otimes \mathbb{Z}_p \right)
=
\left(  \dfrac{\mathrm{H}^1(\mathbb{Q}_{n,p}, T)}{E(\mathbb{Q}_{n,p})\otimes \mathbb{Z}_p} \right) / \mathcal{Z}_n$$
as a subgroup of $\mathrm{Sel}(\mathbb{Q}_{n}, E[p^\infty])^\vee$.

Consider the composition of surjective maps
\[ 
\xymatrix{
\widetilde{C}_n \ar@{->>}[r] & \mathcal{Z}_n \ar@{->>}[r] & \mathrm{coker} \ g_n
}
\]
and then it factors through $C_n$ by definition.
Let $$C'_n := C_n \cap \mathrm{im} \left( E(\mathbb{Q}_{n,p}) \otimes \mathbb{Z}_p \to \dfrac{\mathrm{H}^1(\mathbb{Q}_{n,p}, T)}{\mathrm{im} \ \mathbb{H}^1_{\mathrm{glob}}(T)} \right) \subseteq \dfrac{\mathrm{H}^1(\mathbb{Q}_{n,p}, T)}{\mathrm{im} \ \mathbb{H}^1_{\mathrm{glob}}(T)} .$$
Then Sequence (\ref{eqn:C_n-sequence}) and the following exact sequence
{ \scriptsize
\[
\xymatrix{
0 \ar[r] & \mathrm{coker} \ g_n \ar[r] &  \dfrac{\mathrm{H}^1(\mathbb{Q}_{n,p}, T)}{E(\mathbb{Q}_{n,p})\otimes \mathbb{Z}_p + \mathrm{im} \ \mathbb{H}^1_{\mathrm{glob}}(T)} \ar[r] &  \mathrm{Sel}(\mathbb{Q}_{n}, E[p^\infty])^\vee  \ar[r] & \mathrm{Sel}_0(\mathbb{Q}_{n}, E[p^\infty])^\vee \ar[r] & 0 
}
\]
}
show that
$C'_n = \mathrm{ker} \left( C_n \to \mathrm{coker} \ g_n \right)$.
Thus, we have the exact sequence
\begin{equation} \label{eqn:criterion}
\xymatrix{
0 \ar[r] & C'_n \ar[r]^-{\varphi_n} & C_n  \ar[r] & \mathrm{coker} \ g_n \ar[r] & 0 .
}
\end{equation}
We can easily observe the following statement.
\begin{prop}
The following statements are equivalent:
\begin{enumerate}
\item $\mathrm{coker} \ g_n = 0$.
\item  $\varphi_n : C'_n \to C_n$ is an isomorphism.
\item All the classes in $C_n$ lie in $\overline{E(\mathbb{Q}_{n,p}) \otimes \mathbb{Z}_p} := \mathrm{im} \left( E(\mathbb{Q}_{n,p}) \otimes \mathbb{Z}_p \to \dfrac{\mathrm{H}^1(\mathbb{Q}_{n,p}, T)}{\mathrm{im} \ \mathbb{H}^1_{\mathrm{glob}}(T)} \right)$.
\end{enumerate}
\end{prop}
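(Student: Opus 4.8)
The plan is to read off all three equivalences formally from the short exact sequence (\ref{eqn:criterion}),
\[
0 \longrightarrow C'_n \xrightarrow{\ \varphi_n\ } C_n \longrightarrow \mathrm{coker}\ g_n \longrightarrow 0,
\]
using the fact, established in the construction above, that $\varphi_n$ is nothing but the inclusion $C'_n \hookrightarrow C_n$ corresponding to the identification $C'_n = \mathrm{ker}\left( C_n \to \mathrm{coker}\ g_n \right)$.

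First I would treat (1) $\Leftrightarrow$ (2). Exactness of (\ref{eqn:criterion}) at $C_n$ and at $\mathrm{coker}\ g_n$ yields a canonical isomorphism $\mathrm{coker}\ g_n \simeq C_n / \varphi_n(C'_n)$, while exactness at $C'_n$ says $\varphi_n$ is injective; hence $\varphi_n$ is an isomorphism if and only if $C_n / \varphi_n(C'_n) = 0$, i.e. if and only if $\mathrm{coker}\ g_n = 0$.

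Next, for (2) $\Leftrightarrow$ (3): by definition $C'_n = C_n \cap \overline{E(\mathbb{Q}_{n,p}) \otimes \mathbb{Z}_p}$ as submodules of $\mathrm{H}^1(\mathbb{Q}_{n,p}, T) / \mathrm{im}\ \mathbb{H}^1_{\mathrm{glob}}(T)$, and $\varphi_n$ is the inclusion of this intersection into $C_n$. Thus $\varphi_n$ is an isomorphism exactly when $C_n \cap \overline{E(\mathbb{Q}_{n,p}) \otimes \mathbb{Z}_p} = C_n$, which says precisely that every class of $C_n$ lies in $\overline{E(\mathbb{Q}_{n,p}) \otimes \mathbb{Z}_p}$, namely (3). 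Combining the two equivalences gives the statement.

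I do not expect any genuine obstacle here: the argument is entirely formal diagram-chasing, and neither (fineNF) nor (\textrm{{\cyr SH}}) is used — those hypotheses enter only afterwards, in verifying that condition (3) actually holds. The sole point requiring care is the identification of $\varphi_n$ with the inclusion $C'_n \hookrightarrow C_n$ and of $C'_n$ with $\mathrm{ker}(C_n \to \mathrm{coker}\ g_n)$, but both have already been extracted above from the commutative diagram relating Sequence (\ref{eqn:C_n-sequence}) to the four-term exact sequence defining $\mathrm{coker}\ g_n$.
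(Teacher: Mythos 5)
Your proof is correct and follows essentially the same route as the paper, which states the proposition as an immediate observation from the exact sequence $0 \to C'_n \to C_n \to \mathrm{coker}\ g_n \to 0$ and the definition of $C'_n$ as the intersection $C_n \cap \overline{E(\mathbb{Q}_{n,p}) \otimes \mathbb{Z}_p}$. Your identification of $\varphi_n$ with the inclusion of that intersection, giving (1)$\Leftrightarrow$(2) from exactness and (2)$\Leftrightarrow$(3) from the definition of $C'_n$, is precisely the intended formal argument, and you are right that (fineNF) and (\textrm{{\cyr SH}}) play no role here.
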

In particular, if $C_n = 0$, then $\mathrm{coker} \ g_n = 0$. 

From now on, we prove Proposition \ref{prop:vanishing_of_coker_g_n} using induction on $n$.

\subsection{When the rank does not grow}
In this subsection, we assume that
\begin{assu}
$\Phi_n(1+X)$ does not divide a generator of $\mathrm{char}_{\Lambda}\left( \mathrm{Sel}_0(\mathbb{Q}_\infty, E[p^\infty])^\vee\right)$.
\end{assu}
If $n=0$, then
$$\mathrm{Sel}_0(\mathbb{Q}_{\infty}, E[p^\infty])^{\vee, \Gamma} = 0.$$
Thus by the definition of $C_0$, we have $C_0=0$, which implies that $\mathrm{coker} \ g_0=0$.

Now we suppose $n > 0$.
\begin{lem} \label{lem:rank_does_not_grow_case}
Let $M$ be a finitely generated torsion $\Lambda$-module with no non-trivial finite $\Lambda$-submodule.
Suppose that $\Phi_n(1+X)$ does not divide a generator of $\mathrm{char}_{\Lambda}(M)$.
Then $M^{\Gamma_{n-1}} = M^{\Gamma_{n}}$.
\end{lem}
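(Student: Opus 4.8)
The plan is to reinterpret everything in terms of $\omega_\bullet$-torsion and then reduce to a statement about $\Phi_n(1+X)$-torsion. Under the fixed isomorphism $\Lambda \simeq \Zp\llbracket X \rrbracket$, the subgroup $\Gamma_n$ is topologically generated by $\gamma^{p^n}$, which acts on $M$ as multiplication by $(1+X)^{p^n} = 1 + \omega_n$; hence $M^{\Gamma_n} = M[\omega_n]$ and likewise $M^{\Gamma_{n-1}} = M[\omega_{n-1}]$. Since $\omega_n = \omega_{n-1}\cdot\Phi_n(1+X)$ by definition, the inclusion $M[\omega_{n-1}] \subseteq M[\omega_n]$ is automatic, so the whole content is the reverse inclusion. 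Given $x \in M[\omega_n]$, I would set $y := \omega_{n-1}\cdot x$ and observe $\Phi_n(1+X)\cdot y = \omega_n\cdot x = 0$, so that it suffices to prove $M[\Phi_n(1+X)] = 0$; then $y = 0$, i.e. $x \in M[\omega_{n-1}]$, as wanted.

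To prove $M[\Phi_n(1+X)] = 0$, first note that $\Phi_n(1+X)$ is a distinguished polynomial with constant term $\Phi_n(1) = p$, hence Eisenstein at $p$ and therefore a prime element of the UFD $\Lambda$. By hypothesis it does not divide a generator of $\mathrm{char}_\Lambda(M)$, so it is coprime to $\mathrm{char}_\Lambda(M)$. Choosing a pseudo-isomorphism $M \to \bigoplus_i \Lambda/(g_i)$ afforded by the structure theorem for finitely generated torsion $\Lambda$-modules, with finite kernel $K$, each $g_i$ a power of a prime element of $\Lambda$, and $\prod_i g_i$ generating $\mathrm{char}_\Lambda(M)$: each prime dividing some $g_i$ divides $\mathrm{char}_\Lambda(M)$ and is therefore not associate to $\Phi_n(1+X)$, so $\Phi_n(1+X)$ lies outside the unique associated prime $\sqrt{(g_i)}$ of $\Lambda/(g_i)$ and thus acts injectively on that summand. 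Consequently $M[\Phi_n(1+X)]$ injects into $K$ and is finite; since $M$ has no nontrivial finite $\Lambda$-submodule, $M[\Phi_n(1+X)] = 0$. Feeding this back into the reduction above yields $M^{\Gamma_{n-1}} = M^{\Gamma_n}$.

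I do not expect a serious obstacle here: the argument is essentially bookkeeping with the factorization $\omega_n = \omega_{n-1}\Phi_n(1+X)$ together with two standard facts, namely that $\Phi_n(1+X)$ is a prime of $\Lambda$ and that a pseudo-null $\Lambda$-module is finite. The only point requiring a little care is the passage from ``$\Phi_n(1+X)$ coprime to $\mathrm{char}_\Lambda(M)$'' to ``$M[\Phi_n(1+X)]$ is pseudo-null'', which is exactly where the structure theorem and the no-finite-submodule hypothesis get genuinely used; once that is in hand the conclusion is immediate.
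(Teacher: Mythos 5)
Your proof is correct and takes essentially the same approach as the paper: both arguments rest on the structure theorem for finitely generated torsion $\Lambda$-modules, the coprimality of $\Phi_n(1+X)$ with a generator of $\mathrm{char}_{\Lambda}(M)$, and the no-nontrivial-finite-submodule hypothesis to kill the finite discrepancy coming from the pseudo-isomorphism. The only difference is organizational: the paper first reduces to $M=M[\omega_n]$ and shows each elementary divisor divides $\omega_{n-1}$, whereas you isolate the equivalent statement $M[\Phi_n(1+X)]=0$ and then use $\omega_n=\omega_{n-1}\Phi_n(1+X)$.
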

\begin{proof}
We may assume $M = M^{\Gamma_n}$, i.e. $\omega_n M = 0$.
Using the structure theorem for finitely generated $\Lambda$-modules, $M$ is a submodule of $M'$ of finite index with
$$M' \simeq \bigoplus_{i=1}^m \Lambda / f_i \Lambda .$$ 
Since $\omega_n M = 0$, we also have $\omega_n M' = 0$. It shows that each $f_i$ divides $\omega_n$. 
Since $\Phi_n(1+X)$ does not divide ${ \displaystyle \mathrm{char}_{\Lambda}(M ') = ( \prod_{i=1}^m f_i ) }$, each $f_i$ is prime to $\Phi_n(1+X)$.
Thus, each $f_i$ divides $\omega_{n-1} = \omega_n / \Phi_n(1+X)$ and then $\omega_{n-1} M' = 0$.
Hence, $\omega_{n-1}M = 0$.
\end{proof}
By Lemma \ref{lem:rank_does_not_grow_case}, we have
$$\mathrm{Sel}_0(\mathbb{Q}_\infty, E[p^\infty])^{\vee,\Gamma_{n-1}} \simeq \mathrm{Sel}_0(\mathbb{Q}_\infty, E[p^\infty])^{\vee,\Gamma_{n}} .$$
Note that Assumption (fineNF) is used here.
Thus, the natural map $C_{n-1} \to C_n$ is surjective.
Then we have the following commutative diagram
\[
\xymatrix{
C_n \ar@{->>}[r] & \mathrm{coker} \ g_n \\
C_{n-1} \ar@{->>}[r] \ar@{->>}[u] & \mathrm{coker} \ g_{n-1} \ar[u]
}
\]
and $\mathrm{coker} \ g_{n-1} = 0$ by the induction hypothesis. 
Thus, the lower horizontal map $C_{n-1} \to \mathrm{coker} \ g_{n-1}$ becomes the zero map and then 
$\mathrm{coker} \ g_{n}=0$.
\subsection{When the rank grows}
In this subsection, we assume that
\begin{assu}
$\Phi_n(1+X)$ divides a generator of $\mathrm{char}_{\Lambda}\left( \mathrm{Sel}_0(\mathbb{Q}_\infty, E[p^\infty])^\vee\right)$.
\end{assu}
If $n=0$, then Assumption (\textrm{{\cyr SH}}) implies $\mathrm{rk}_{\mathbb{Z}}E(\mathbb{Q})>0$, so the natural map
\begin{equation} \label{eqn:surjectivity_0}
E(\mathbb{Q}) \otimes \mathbb{Q}_p / \mathbb{Z}_p \to E(\mathbb{Q}_p) \otimes \mathbb{Q}_p / \mathbb{Z}_p
\end{equation}
is surjective.

Let
$$ M_{\Phi_n} := M / \Phi_n(1+X) M $$
where $M$ is a $\Lambda_n$-module and $\Phi_n(1+X) \in \Lambda_n$.

Now we suppose $n > 0$.
By Assumption (\textrm{{\cyr SH}}), we have
$$\mathrm{rk}_{\mathbb{Z}} E(\mathbb{Q}_n) > \mathrm{rk}_{\mathbb{Z}} E(\mathbb{Q}_{n-1}) .$$
Then the map
\begin{equation} \label{eqn:surjectivity_n}
\left( E(\mathbb{Q}_n) \otimes \mathbb{Q}_p / \mathbb{Z}_p \right)_{\Phi_n} \to \left( E(\mathbb{Q}_{n,p}) \otimes \mathbb{Q}_p / \mathbb{Z}_p \right)_{\Phi_n}
\end{equation}
is surjective.

First, we explicitly write down the connecting map
$$C_n  \to \frac{\mathrm{H}^1(\mathbb{Q}_{n,p}, T)}{\mathrm{im} \ \mathbb{H}^1_{\mathrm{glob}}(T)} $$
which is obtained by taking the $\Gamma_n$-coinvariant of Sequence (\ref{eqn:h1_loc_sel_sel_0-sequence}).
Let
$$[f] \in C_n = \mathrm{coker} \left( \mathrm{Sel}(\mathbb{Q}_{\infty}, E[p^\infty])^{\vee, \Gamma_n} \to \mathrm{Sel}_0(\mathbb{Q}_{\infty}, E[p^\infty])^{\vee, \Gamma_n} \right)$$
with a representative $f \in  \mathrm{Sel}_0(\mathbb{Q}_{\infty}, E[p^\infty])^{\vee}[\omega_n] \subseteq \mathrm{Sel}_0(\mathbb{Q}_{\infty}, E[p^\infty])^\vee $.
Via Sequence (\ref{eqn:h1_loc_sel_sel_0-sequence}), we lift $f$ to $\widetilde{f} \in \mathrm{Sel}(\mathbb{Q}_{\infty}, E[p^\infty])^\vee $.
Also, since $\omega_n \widetilde{f}$ maps to $\omega_nf = 0$ in Sequence (\ref{eqn:h1_loc_sel_sel_0-sequence}), we have
$$\omega_n \widetilde{f} \in \frac{\mathbb{H}^1_{\mathrm{loc}}(T)}{ \mathrm{loc} \ \mathbb{H}^1_{\mathrm{glob}}(T)} \subseteq \mathrm{Sel}(\mathbb{Q}_{\infty}, E[p^\infty])^\vee  .$$
This means that there exists an element $P \in \mathbb{H}^1_{\mathrm{loc}}(T)$ such that
$$\omega_n \widetilde{f}(x) = \langle P, j(x)\rangle$$
for any $x \in \mathrm{Sel}(\mathbb{Q}_{\infty}, E[p^\infty])$
where $j : \mathrm{Sel}(\mathbb{Q}_{\infty}, E[p^\infty]) \to \mathrm{H}^1(\mathbb{Q}_{\infty, p}, E[p^\infty])$ is the natural localization map and $\langle -,-\rangle$ is the local Tate pairing between $\mathbb{H}^1_{\mathrm{loc}}(T)$ and $\mathrm{H}^1(\mathbb{Q}_{\infty, p}, E[p^\infty])$.
Putting $P_n := P \Mod{\omega_n} \in \mathrm{H}^1(\mathbb{Q}_{n,p}, T)$, we have the following diagram
\[
\xymatrix{
\mathbb{H}^1_{\mathrm{loc}}(T) \ar@{->>}[r] \ar@{->>}[d] & \mathrm{H}^1(\mathbb{Q}_{n,p},T)\ar@{->>}[d] & P \ar@{|->}[r] \ar@{|->}[d] & P_n \ar@{|->}[d]\\
\frac{\mathbb{H}^1_{\mathrm{loc}}(T)}{ \mathrm{loc} \ \mathbb{H}^1_{\mathrm{glob}}(T)}
 \ar@{->>}[r] & \frac{\mathrm{H}^1(\mathbb{Q}_{n,p},T)}{ \mathrm{im} \ \mathbb{H}^1_{\mathrm{glob}}(T)} & \omega_n \widetilde{f} \ar@{|->}[r] & \overline{P_n} := \omega_n \widetilde{f} \Mod{\omega_n}
}
\]
Note that 
$\overline{P_n} = \omega_n \widetilde{f} \Mod{\omega_n} \in \frac{\mathrm{H}^1(\mathbb{Q}_{n,p},T)}{ \mathrm{im} \ \mathbb{H}^1_{\mathrm{glob}}(T)}$ is not necessarily zero since $\widetilde{f}$ may not be contained in $\frac{\mathbb{H}^1_{\mathrm{loc}}(T)}{ \mathrm{loc} \ \mathbb{H}^1_{\mathrm{glob}}(T)}$.
To sum up, the map $$C_n  \to \frac{\mathrm{H}^1(\mathbb{Q}_{n,p}, T)}{\mathrm{im} \ \mathbb{H}^1_{\mathrm{glob}}(T)} $$ is defined by
$[f] \mapsto \overline{P_n} $.

Now we prove that $P_n \in E(\mathbb{Q}_{n,p}) \otimes \mathbb{Z}_p$ in $\mathrm{H}^1(\mathbb{Q}_{n,p}, T)$.
By the local Tate duality, it suffices to check
\begin{equation} \label{eqn:goal_when_rk_grows}
\langle P_n, E(\mathbb{Q}_{n,p}) \otimes \mathbb{Q}_p/\mathbb{Z}_p \rangle = 0.
\end{equation}

Suppose at first $n=0$. Since 
$$E(\mathbb{Q}) \otimes \mathbb{Q}_p/\mathbb{Z}_p \subseteq \mathrm{Sel}(\mathbb{Q}, E[p^\infty]) \subseteq 
\mathrm{Sel}(\mathbb{Q}_\infty, E[p^\infty])[\omega_0],$$
we know $\langle P_0, E(\mathbb{Q}) \otimes \mathbb{Q}_p/\mathbb{Z}_p \rangle = 0$. Since Map (\ref{eqn:surjectivity_0}) is 
surjective, we also have  
$ \langle P_0, E(\mathbb{Q}_p) \otimes \mathbb{Q}_p/\mathbb{Z}_p \rangle =0$. 

Next we consider the case $n>0$. Consider the exact sequence
\begin{equation} \label{eqn:mw_splitting}
{ \small \xymatrix{
0 \ar[r] & E(\mathbb{Q}_{n-1,p}) \otimes \mathbb{Q}_p/\mathbb{Z}_p
\ar[r] & E(\mathbb{Q}_{n,p}) \otimes \mathbb{Q}_p/\mathbb{Z}_p
\ar[r] & \left( E(\mathbb{Q}_{n,p}) \otimes \mathbb{Q}_p/\mathbb{Z}_p \right)_{\Phi_n}
\ar[r] & 0 .
} }
\end{equation}

By the induction hypothesis, we have
$\mathrm{coker}\ g_{n-1} = 0$.
Thus, the map in Sequence (\ref{eqn:criterion})
$$\varphi_{n-1} : C'_{n-1}  \to C_{n-1}$$
is an isomorphism and also $P_{n-1} := P_n \Mod{\omega_{n-1}}$ is contained in $E(\mathbb{Q}_{n-1,p}) \otimes \mathbb{Z}_p$.
Therefore, 
\begin{equation} \label{eqn:induction_hypothesis}
\left\langle P_{n},  E(\mathbb{Q}_{n-1,p}) \otimes \mathbb{Q}_p/ \mathbb{Z}_p  \right\rangle  = 0 .
\end{equation}
By Sequence (\ref{eqn:mw_splitting}), we only need to show that
\begin{equation} \label{eqn:Phi_n-quotient}
\left\langle P_n, \left( E(\mathbb{Q}_{n,p}) \otimes \mathbb{Q}_p/ \mathbb{Z}_p \right)_{\Phi_n} \right\rangle = 0 
\end{equation}
in order to prove (\ref{eqn:goal_when_rk_grows}).

As in the case $n=0$, $\omega_n \widetilde{f}$ vanishes on $E(\mathbb{Q}_n) \otimes \mathbb{Q}_p/\mathbb{Z}_p$ since 
$$E(\mathbb{Q}_n) \otimes \mathbb{Q}_p/\mathbb{Z}_p \subseteq \mathrm{Sel}(\mathbb{Q}_n, E[p^\infty]) \subseteq 
\mathrm{Sel}(\mathbb{Q}_\infty, E[p^\infty])[\omega_n].$$
Now we know that the homomorphism (\ref{eqn:surjectivity_n}) is surjective, so $\omega_n \widetilde{f}$ vanishes also 
on $(E(\mathbb{Q}_{n,p}) \otimes \mathbb{Q}_p/\mathbb{Z}_p)_{\Phi_n}$, and we get Equation (\ref{eqn:Phi_n-quotient}).

Sequence (\ref{eqn:mw_splitting}), Equation (\ref{eqn:induction_hypothesis}), and Equation (\ref{eqn:Phi_n-quotient}) complete the proof of Equation (\ref{eqn:goal_when_rk_grows}).

\section*{Acknowledgement}
C.K. thanks M.K. for introducing this subject; Robert Pollack for sharing his ideas, helpful discussion, and support; Takenori Kataoka for the suggestion to remove the Tamagawa number assumption; Shinichi Kobayashi for answering questions on $\pm$-Selmer groups; Jaehoon Lee for careful reading of an earlier version;  GilYoung Cheong, B.D. Kim, Myoungil Kim, Takahiro Kitajima, Meng Fai Lim, and Kazuto Ota for helpful discussions.
C.K. is partially supported by ``Overseas Research Program for Young Scientists" through Korea Institute for Advanced Study (KIAS) and by Basic Science Research Program through the National Research Foundation of Korea (NRF-2018R1C1B6007009).

M.K. thanks Ralph Greenberg and Robert Pollack for the discussion with them
on the subject of this paper (held more than 10 years ago). He also thanks C.K. heartily
for letting him come back to this interesting subject through many discussions and
questions.
M.K. is partially supported by JSPS Core-to-core program,
``Foundation of a Global Research Cooperative Center in Mathematics focused on Number
Theory and Geometry".

We both thank to anonymous referees for careful reading and comments, which improve the clarification of the exposition significantly.
\appendix
\section{Lemmas on Fitting ideals} \label{sec:fitting}
All the modules in this section are finitely presented over their base rings.
Let $R$ be a commutative ring with unity.
\begin{lem}[{\cite[1, Appendix]{mazur-wiles-main-conj}}] \label{lem:fitting_quotient}
Let $M \twoheadrightarrow N$ be a surjective map of $R$-modules.
Then $$\mathrm{Fitt}_{R} (M) \subseteq \mathrm{Fitt}_{R} (N).$$
\end{lem}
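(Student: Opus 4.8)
The plan is to compare presentations of $M$ and $N$ directly, exploiting that $N$ is obtained from $M$ merely by imposing additional relations. First I would fix a finite presentation $R^s \xrightarrow{h} R^r \to M \to 0$ with $h \in \mathrm{M}_{r\times s}(R)$, so that by definition $\mathrm{Fitt}_R(M)$ is generated by the determinants of the $r\times r$ minors of $h$. Composing the chosen surjection $R^r \twoheadrightarrow M$ with the given surjection $M \twoheadrightarrow N$ yields a surjection $\pi\colon R^r \twoheadrightarrow N$ whose kernel $K$ contains $\mathrm{im}(h)$.

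The only point requiring a small argument is that $K$ is finitely generated, so that $N$ admits a finite presentation built on the \emph{same} free module $R^r$. This follows from the standing hypothesis that $N$ is finitely presented: for any surjection from a finitely generated free module onto a finitely presented module, the kernel is finitely generated (Schanuel's lemma). Granting this, I would pick generators of $K$ consisting of the columns of $h$ together with finitely many further vectors $v_1,\dots,v_t \in R^r$, producing a presentation $R^{s+t} \xrightarrow{h'} R^r \to N \to 0$ in which the matrix $h'$ is obtained from $h$ by appending the $t$ columns $v_1,\dots,v_t$.

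Finally I would observe that every $r\times r$ minor of $h$ occurs verbatim among the $r\times r$ minors of $h'$ (namely the minor selecting the same $r$ columns, all of which already belong to $h$). Hence the ideal generated by the minors of $h$ is contained in the ideal generated by the minors of $h'$, i.e.\ $\mathrm{Fitt}_R(M) \subseteq \mathrm{Fitt}_R(N)$, invoking the independence of the Fitting ideal from the choice of presentation recalled in $\S$\ref{subsubsec:setting}. There is no real obstacle here beyond the finite generation of $K$; the content of the lemma is exactly the monotonicity of these minor ideals under adjoining columns to a presentation matrix.
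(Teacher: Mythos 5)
Your proof is correct, and it is the standard argument: the paper itself gives no proof of this lemma, simply citing the appendix of Mazur--Wiles, where essentially the same reasoning (append the extra relations defining $N$ as new columns to a presentation matrix of $M$, so every $r\times r$ minor of the old matrix persists) is carried out. Your care about the finite generation of the kernel $K$ via the standing finite-presentation hypothesis and Schanuel's lemma is exactly the right point to address, and nothing further is needed.
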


\begin{lem}[{\cite[9, Appendix]{mazur-wiles-main-conj}}] \label{lem:fitting_exact}
Let $0 \to M_1 \to M_2 \to M_3 \to 0$ be an exact sequence of $R$-modules.
Then
$$\mathrm{Fitt}_R (M_1) \cdot \mathrm{Fitt}_R (M_3) \subseteq \mathrm{Fitt}_R (M_2).$$
\end{lem}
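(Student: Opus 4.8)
The plan is to construct an explicit presentation of $M_2$ out of presentations of $M_1$ and $M_3$ and then read off the relevant minors. Since all three modules are finitely presented, fix presentations
$$R^{s_1} \xrightarrow{h_1} R^{r_1} \xrightarrow{\pi_1} M_1 \to 0, \qquad R^{s_3} \xrightarrow{h_3} R^{r_3} \xrightarrow{\pi_3} M_3 \to 0.$$
First I would build a surjection onto $M_2$: for each standard basis vector of $R^{r_3}$ choose a lift in $M_2$ of its image under $\pi_3$, and combine these lifts with the images under the injection $M_1 \hookrightarrow M_2$ of the chosen generators of $M_1$. This produces $\pi_2 \colon R^{r_1} \oplus R^{r_3} \twoheadrightarrow M_2$.

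Next I would identify the relation module $\ker\pi_2$. The columns of $h_1$, placed into the first summand as $(h_1(-),0)$, are relations. For each standard basis vector $\delta$ of $R^{s_3}$ the element $(0,h_3(\delta))$ maps under $\pi_2$ into $\ker(M_2 \twoheadrightarrow M_3) = \mathrm{im}(M_1 \hookrightarrow M_2)$, hence equals $\pi_2(c,0)$ for a suitable $c\in R^{r_1}$, so that $(-c,h_3(\delta))$ is a relation. A short diagram chase — given $(a,b)\in\ker\pi_2$, first use the relations coming from $h_3$ to kill $b$ (legitimate since $b\in\ker\pi_3=\mathrm{im}\,h_3$), then use $h_1$ to kill the remaining first coordinate — shows these relations exhaust $\ker\pi_2$. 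Hence $M_2$ admits the presentation
$$R^{s_1} \oplus R^{s_3} \xrightarrow{h_2} R^{r_1} \oplus R^{r_3} \xrightarrow{\pi_2} M_2 \to 0, \qquad h_2 = \begin{pmatrix} h_1 & C \\ 0 & h_3 \end{pmatrix},$$
where $C$ is an $r_1\times s_3$ matrix whose columns are the vectors $-c$ above.

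Finally I would compute minors. Put $r := r_1+r_3$. A generator of $\mathrm{Fitt}_R(M_1)$ is the determinant of the $r_1\times r_1$ submatrix of $h_1$ on some column set $J_1$, and a generator of $\mathrm{Fitt}_R(M_3)$ is the determinant of the $r_3\times r_3$ submatrix of $h_3$ on some column set $J_3$. Selecting in $h_2$ the columns $J_1$ from the first block together with the columns $J_3$ from the second block yields an $r\times r$ submatrix of block upper-triangular shape $\left(\begin{smallmatrix} (h_1)_{J_1} & * \\ 0 & (h_3)_{J_3}\end{smallmatrix}\right)$, whose determinant is $\det(h_1)_{J_1}\cdot\det(h_3)_{J_3}$. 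Thus every product of a generator of $\mathrm{Fitt}_R(M_1)$ with a generator of $\mathrm{Fitt}_R(M_3)$ occurs among the generators of $\mathrm{Fitt}_R(M_2)$, giving $\mathrm{Fitt}_R(M_1)\cdot\mathrm{Fitt}_R(M_3)\subseteq\mathrm{Fitt}_R(M_2)$; here one invokes the already-recalled fact that the Fitting ideal is independent of the chosen presentation. There is no genuine obstacle here — the only step requiring a little care is the diagram chase confirming that $h_2$ really is a presentation matrix of $M_2$; everything else is bookkeeping with block matrices and Laplace expansion.
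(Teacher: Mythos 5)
Your proof is correct and is the standard argument: the paper does not prove this lemma itself (it simply cites the appendix of Mazur--Wiles), and the cited proof is exactly your construction of a block upper-triangular presentation $\left(\begin{smallmatrix} h_1 & C \\ 0 & h_3\end{smallmatrix}\right)$ of $M_2$ followed by taking minors. Indeed the same block-matrix device appears in the paper's own proof of Lemma \ref{lem:fitting_exact_refined_square}, so your route matches the intended one; no issues to report.
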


\begin{lem}[{\cite[Theorem 22, Page 80]{northcott}}] \label{lem:fitting_exact_refined_proj_dim}
Let $0 \to M_1 \to M_2 \to M_3 \to 0$ be an exact sequence of $R$-modules.
Assume that $\mathrm{pd}_{R} M_3 \leq 1$.
Then
$$\mathrm{Fitt}_{R} (M_1) \cdot \mathrm{Fitt}_{R} (M_3) = \mathrm{Fitt}_{R} (M_2).$$
\end{lem}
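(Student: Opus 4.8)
The inclusion $\mathrm{Fitt}_R(M_1)\cdot\mathrm{Fitt}_R(M_3)\subseteq\mathrm{Fitt}_R(M_2)$ is Lemma~\ref{lem:fitting_exact}, so the plan is to produce one presentation matrix of $M_2$ adapted to the exact sequence and extract the reverse inclusion from it. First I would rigidify $M_3$: since $\mathrm{pd}_R M_3\le 1$ (and, in the situations of interest, $R$ is local so that finitely generated projective syzygies are free), choose a finite free resolution $0\to R^{b}\xrightarrow{\beta}R^{a}\to M_3\to 0$. McCoy's rank theorem forces $b\le a$; if $b<a$ then $\mathrm{Fitt}_R(M_3)=I_a(\beta)=0$ and the assertion collapses to $\mathrm{Fitt}_R(M_2)=0$, which follows from Lemma~\ref{lem:fitting_quotient} applied to $M_2\twoheadrightarrow M_3$. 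So we may assume $b=a$, whence $\beta$ is square and injective and $\mathrm{Fitt}_R(M_3)=(\det\beta)$. Fix also a finite presentation $R^{d}\xrightarrow{\alpha}R^{c}\to M_1\to 0$, so that $\mathrm{Fitt}_R(M_1)=I_c(\alpha)$.

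Next I would build the presentation of $M_2$. Since $R^{a}$ is free, lift the surjection $R^a\to M_3$ through $M_2\to M_3$ to a map $\tilde\pi\colon R^a\to M_2$; combined with the composite $R^c\to M_1\hookrightarrow M_2$ this yields a surjection $R^{c}\oplus R^{a}\to M_2$. A diagram chase using exactness of $0\to M_1\to M_2\to M_3\to 0$ together with injectivity of $\beta$ shows that a pair $(x,y)\in R^c\oplus R^a$ is a relation exactly when $y=\beta(z)$ for some $z\in R^a$ and $x+\tilde w(z)\in\alpha(R^d)$, where $\tilde w\colon R^a\to R^c$ is a chosen lift through $R^c\to M_1$ of the unique $R$-linear map $R^a\to M_1$ whose composite with $M_1\hookrightarrow M_2$ equals $\tilde\pi\circ\beta$ (such a map exists since $\tilde\pi\beta(z)$ dies in $M_3$). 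Hence
\[
H=\begin{pmatrix}\alpha & -\tilde w\\ 0 & \beta\end{pmatrix}\in\mathrm{M}_{(c+a)\times(d+a)}(R)
\]
is a presentation matrix of $M_2$, so $\mathrm{Fitt}_R(M_2)=I_{c+a}(H)$.

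The remaining step is the computation of $I_{c+a}(H)$. For $\supseteq$, choosing any $c$ columns from the first block together with all $a$ columns of the second block produces a block upper-triangular square submatrix whose diagonal blocks are a $c\times c$ submatrix of $\alpha$ and $\beta$; its determinant is that minor of $\alpha$ times $\det\beta$, so $I_c(\alpha)\cdot(\det\beta)\subseteq I_{c+a}(H)$. For $\subseteq$, a selection of $c+a$ columns uses $b'\le a$ of the last $a$ columns and $d'=c+a-b'\ge c$ of the first $d$ columns. If $b'<a$, Laplace expansion along the bottom $a$ rows annihilates the minor, because among the chosen columns those rows have fewer than $a$ nonzero entries. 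If $b'=a$ then $d'=c$ and the submatrix is again block triangular, with determinant equal to a $c\times c$ minor of $\alpha$ times $\det\beta$, which lies in $I_c(\alpha)\cdot(\det\beta)$. Therefore $I_{c+a}(H)=I_c(\alpha)\cdot(\det\beta)=\mathrm{Fitt}_R(M_1)\cdot\mathrm{Fitt}_R(M_3)$, as claimed. (This is, in effect, Northcott's argument.)

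The step I expect to be the main obstacle is the second one: one must check that $H$ is a genuine presentation matrix, i.e.\ that \emph{every} relation among the chosen $c+a$ generators of $M_2$ is an $R$-combination of the $d+a$ exhibited ones, not merely that its columns are relations. This is precisely where the exactness of $0\to M_1\to M_2\to M_3\to 0$ and the injectivity of $\beta$ furnished by $\mathrm{pd}_R M_3\le 1$ are both used; once $H$ is in hand, its block-triangular shape makes the minor bookkeeping routine.
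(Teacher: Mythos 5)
Your argument is correct and is essentially the proof the paper delegates to Northcott: resolve $M_3$ by $0 \to R^a \xrightarrow{\beta} R^a \to M_3 \to 0$ using $\mathrm{pd}_R M_3 \leq 1$, splice it with a presentation of $M_1$ to get a block upper-triangular presentation of $M_2$ whose lower block is the square injective $\beta$, and read off the maximal minors --- exactly the mechanism the paper itself uses for the square-matrix variant, Lemma~\ref{lem:fitting_exact_refined_square}. The one loose end is your parenthetical restriction to rings where the first syzygy is free: as stated the lemma is over an arbitrary commutative $R$, but this is harmless --- either note that the paper only ever applies it over the local ring $\Lambda$ (followed by base change via Lemma~\ref{lem:fitting-ideals-base-change}), or reduce the general case to the local one by localizing at primes, since $\mathrm{pd}\leq 1$, Fitting ideals, and the inclusion $\mathrm{Fitt}_R(M_2) \subseteq \mathrm{Fitt}_R(M_1)\cdot\mathrm{Fitt}_R(M_3)$ all behave well under localization and equality of ideals is a local property.
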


\begin{lem} \label{lem:fitting_exact_refined_square}
Let $0 \to M_1 \to M_2 \to M_3 \to 0$ be an exact sequence of $R$-modules.
Assume that $M_3$ has presentation by a square matrix.
Then
$$\mathrm{Fitt}_R (M_1) \cdot \mathrm{Fitt}_R (M_3) = \mathrm{Fitt}_R (M_2).$$
\end{lem}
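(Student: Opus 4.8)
The plan is to reduce the claim to an elementary computation with block matrices. The inclusion $\mathrm{Fitt}_R(M_1)\cdot \mathrm{Fitt}_R(M_3)\subseteq \mathrm{Fitt}_R(M_2)$ is already contained in Lemma \ref{lem:fitting_exact}, so only the reverse inclusion must be proved, and it is here that the hypothesis on $M_3$ enters; in fact the computation below yields the equality directly.

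First I would fix convenient presentations. Since $M_1$ is finitely presented, choose $R^{s}\xrightarrow{A}R^{r}\to M_1\to 0$ with $A\in\mathrm{M}_{r\times s}(R)$; by hypothesis choose a presentation $R^{m}\xrightarrow{B}R^{m}\to M_3\to 0$ with $B\in\mathrm{M}_{m\times m}(R)$ a \emph{square} matrix. Lifting the chosen generators of $M_3$ to $M_2$ and adjoining the images of the chosen generators of $M_1$ gives $r+m$ generators of $M_2$. The relations of $M_1$ remain relations of $M_2$; and each relation among the generators of $M_3$, after lifting, yields an element of $M_1\subseteq M_2$, expressed through the generators of $M_1$ by some matrix $C\in\mathrm{M}_{r\times m}(R)$. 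A standard horseshoe-type argument (using that $M_1\hookrightarrow M_2$) shows these $s+m$ relations generate all relations, so
$$
R^{s+m}\xrightarrow{\ H\ }R^{r+m}\to M_2\to 0,\qquad H=\begin{pmatrix} A & C\\ 0 & B\end{pmatrix}.
$$

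Next I would read off $\mathrm{Fitt}_R(M_2)$ from the $(r+m)\times(r+m)$ minors of $H$. Picking $r+m$ of the $s+m$ columns, say $a$ from the first block and $b=r+m-a\le m$ from the block $\left(\begin{smallmatrix} C\\ B\end{smallmatrix}\right)$, the bottom $m$ rows of the chosen square submatrix are supported in only $b$ column positions; if $b<m$, a generalized Laplace expansion along those rows forces the minor to vanish. Hence only $b=m$, $a=r$ contributes, and there the submatrix is $\left(\begin{smallmatrix} A' & C\\ 0 & B\end{smallmatrix}\right)$ with $A'$ an $r\times r$ submatrix of $A$, of determinant $\det(A')\det(B)$. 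Since $\mathrm{Fitt}_R(M_3)=(\det B)$ is principal and $\mathrm{Fitt}_R(M_1)$ is generated by the various $\det(A')$, the ideal generated by all such products equals $\mathrm{Fitt}_R(M_1)\cdot \mathrm{Fitt}_R(M_3)$, which is the assertion.

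I do not anticipate a genuine obstacle. Constructing the block presentation of $M_2$ is routine homological algebra, and the minor analysis is linear algebra over $R$. The only points needing care are the column bookkeeping in the Laplace expansion --- making precise that any minor omitting a column of the right-hand block has its bottom $m$ rows confined to fewer than $m$ coordinates --- and the observation that the principality of $\mathrm{Fitt}_R(M_3)$ is exactly what upgrades the inclusion of Lemma \ref{lem:fitting_exact} to an equality, the same mechanism as in Lemma \ref{lem:fitting_exact_refined_proj_dim}.
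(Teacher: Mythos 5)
Your proof is correct and takes essentially the same route as the paper: both build a block-triangular presentation $\left(\begin{smallmatrix} A & C \\ 0 & B \end{smallmatrix}\right)$ of $M_2$ from a presentation of $M_1$ and the square presentation of $M_3$, and then observe that the only nonvanishing maximal minors are of the form $\det(A')\det(B)$, giving $\mathrm{Fitt}_R(M_2)=\mathrm{Fitt}_R(M_1)\cdot(\det B)=\mathrm{Fitt}_R(M_1)\cdot\mathrm{Fitt}_R(M_3)$. Your horseshoe construction and Laplace-expansion bookkeeping merely spell out the steps the paper leaves implicit.
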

\begin{proof}
Denote a presentation matrix of $M_i$ by $A_i$ for $i = 1,2,3$. In other words,
\[
\xymatrix@R=0em{
R^{\oplus s} \ar[r]^{A_1} & R^{\oplus r} \ar[r] & M_1 \ar[r] & 0 \\
R^{\oplus (s+m)} \ar[r]^{A_2} & R^{\oplus (r+m)} \ar[r] & M_2 \ar[r] & 0 \\
R^{\oplus m} \ar[r]^{A_3} & R^{\oplus m} \ar[r] & M_3 \ar[r] & 0 
}
\]
with $r \leq s$. Then we have
$$
A_2 =\left(
\begin{array}{c | c}
A_1  & * \\ \hline
 0 & A_3
\end{array}\right) \in \mathrm{M}_{(r+m) \times (s +m)}(R).
$$
Considering $(r+m) \times (r +m)$ minors of $A_2$, it is easy to see that the upper-triangular part $(*)$ of $A_2$ does not affect the determinants of the minors. Thus, the conclusion follows.
\end{proof}
\begin{rem}
If $R=\mathbb{Z}[G]$ with a finite abelian group $G$, for example, and $M$ is torsion with $\mathrm{pd}_{R}M \leq 1$, then $\mathrm{Fitt}_{R} (M)$ is a principal ideal generated by a non-zero divisor.
Thus, Lemma \ref{lem:fitting_exact_refined_proj_dim} is slightly stronger than Lemma \ref{lem:fitting_exact_refined_square} for this case.
\end{rem}
\begin{lem}[{\cite[4, Appendix]{mazur-wiles-main-conj}}]  \label{lem:fitting-ideals-base-change}
Let $M$ be a finitely presented $R$-module.
If $I \subset R$ is an ideal, then 
$$\mathrm{Fitt}_{R/I}(M /IM) = \pi \left( \mathrm{Fitt}_R(M) \right)$$
where $\pi :  A \to A/I$ is the natural quotient map.
\end{lem}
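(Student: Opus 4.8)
The plan is to deduce the statement directly from the behaviour of presentation matrices under the base change $\pi\colon R\to R/I$, using only the right-exactness of the functor $-\otimes_R R/I$ together with the fact (recalled above) that the Fitting ideal is independent of the chosen presentation. First I would fix a finite presentation
\[
\xymatrix{
R^{s} \ar[r]^-{h} & R^{r} \ar[r] & M \ar[r] & 0,
}
\]
which exists since $M$ is finitely presented, with $h \in \mathrm{M}_{r \times s}(R)$; by definition $\mathrm{Fitt}_R(M)$ is the ideal generated by the determinants of the $r \times r$ submatrices of $h$.

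Next I would apply $-\otimes_R R/I$ to this presentation. Since the tensor product is right exact, $R^{t} \otimes_R R/I \simeq (R/I)^{t}$, and $M \otimes_R R/I \simeq M/IM$, one obtains an exact sequence
\[
\xymatrix{
(R/I)^{s} \ar[r]^-{\overline{h}} & (R/I)^{r} \ar[r] & M/IM \ar[r] & 0,
}
\]
in which $\overline{h}$ is the matrix obtained by applying $\pi$ to every entry of $h$. This is a finite presentation of $M/IM$ over $R/I$, so $\mathrm{Fitt}_{R/I}(M/IM)$ is generated by the determinants of the $r \times r$ submatrices of $\overline{h}$.

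Finally I would use that a ring homomorphism commutes with the formation of determinants: the determinant of any $r \times r$ submatrix of $\overline{h}$ is the image under $\pi$ of the determinant of the corresponding $r \times r$ submatrix of $h$. Hence the generators of $\mathrm{Fitt}_{R/I}(M/IM)$ are exactly the images under $\pi$ of the generators of $\mathrm{Fitt}_R(M)$, which yields $\mathrm{Fitt}_{R/I}(M/IM) = \pi\left( \mathrm{Fitt}_R(M) \right)$. I do not expect any genuine obstacle here; the only point deserving a word of care is that one must begin with an honest \emph{presentation} of $M$ rather than a mere generating set, so that reduction modulo $I$ still produces a presentation of $M/IM$ — and this is precisely where the right-exactness of $-\otimes_R R/I$ enters.
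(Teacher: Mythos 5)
Your argument is correct and is exactly the standard proof that the paper implicitly invokes by citing the appendix of Mazur--Wiles: reduce a finite presentation modulo $I$ using right-exactness of $-\otimes_R R/I$, and use that $\pi$ commutes with taking $r \times r$ minors, so the generators of $\mathrm{Fitt}_{R/I}(M/IM)$ are the images under $\pi$ of the generators of $\mathrm{Fitt}_R(M)$. No gap; the one point of care you flag (starting from a genuine presentation rather than a generating set) is precisely the right one.
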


The following lemma is the key to replace characteristic ideals by Fitting ideals.
\begin{lem} \label{lem:char-to-fitt}
Let $M$ be a finitely generated torsion $\Lambda$-module.
Assume that $M$ has no nontrivial finite $\Lambda$-submodule.
Then
$$ \mathrm{char}_{\Lambda}(M) = \mathrm{Fitt}_{\Lambda}(M) .$$
\end{lem}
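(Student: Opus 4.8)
The plan is to reduce the equality to the classical fact that a torsion $\Lambda$-module which admits a presentation by a \emph{square} matrix has both its characteristic ideal and its Fitting ideal generated by the determinant of that matrix; the real content is then to produce such a square presentation, and this is precisely where the hypothesis that $M$ has no nontrivial finite $\Lambda$-submodule is used. Recall $\Lambda \cong \Zp\llbracket X\rrbracket$ is a two-dimensional regular local ring, hence a UFD in which every finitely generated module has finite projective dimension.

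\textbf{Step 1: homological reformulation of the hypothesis.} We may assume $M\neq 0$, the case $M=0$ being trivial. Since $M$ is torsion, $\mathrm{Ann}_\Lambda(M)\neq 0$, so $\dim_\Lambda M\leq 1$ and therefore $\mathrm{depth}_\Lambda M\leq 1$. I claim in fact $\mathrm{depth}_\Lambda M = 1$, equivalently that the maximal ideal $\mathfrak{m}=(p,X)$ is not an associated prime of $M$: if $\mathfrak{m}\in\mathrm{Ass}_\Lambda M$ there would be an embedding $\Lambda/\mathfrak{m}\hookrightarrow M$, i.e. a nonzero finite $\Lambda$-submodule, contradicting the hypothesis. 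By the Auslander--Buchsbaum formula $\mathrm{pd}_\Lambda M + \mathrm{depth}_\Lambda M = \mathrm{depth}_\Lambda \Lambda = 2$, we conclude $\mathrm{pd}_\Lambda M = 1$.

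\textbf{Step 2: a square presentation.} Taking a minimal set of generators of $M$ and using $\mathrm{pd}_\Lambda M=1$, we obtain an exact sequence $0\to\Lambda^r\xrightarrow{A}\Lambda^s\to M\to 0$. Tensoring with $\mathrm{Frac}(\Lambda)$ and using that $M$ is torsion forces $r=s$, so $A\in\mathrm{M}_s(\Lambda)$ is a square matrix; it is injective, hence $\det A\neq 0$. By the definition of Fitting ideals recalled in the introduction, $\Fitt_\Lambda(M)=(\det A)$. On the other hand, localizing the sequence at a height-one prime $\mathfrak{p}$ gives $0\to\Lambda_\mathfrak{p}^s\xrightarrow{A}\Lambda_\mathfrak{p}^s\to M_\mathfrak{p}\to 0$ over the discrete valuation ring $\Lambda_\mathfrak{p}$, and the theory of elementary divisors yields $\mathrm{ord}_\mathfrak{p}(\det A) = \mathrm{length}_{\Lambda_\mathfrak{p}}(M_\mathfrak{p})$, which is by definition the multiplicity of $\mathfrak{p}$ in $\mathrm{char}_\Lambda(M)$. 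Thus the principal ideal $(\det A)$ and $\mathrm{char}_\Lambda(M)$ have equal valuations at every height-one prime of the normal domain $\Lambda$, hence coincide. Combining, $\mathrm{char}_\Lambda(M)=(\det A)=\Fitt_\Lambda(M)$.

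\textbf{Expected main obstacle.} The only substantive point is Step 1: converting the absence of nonzero finite $\Lambda$-submodules into $\mathrm{pd}_\Lambda M\leq 1$, via the depth computation and Auslander--Buchsbaum. Once a square presentation matrix is in hand, Step 2 is entirely formal, and the equality of the Fitting and characteristic ideals for a square-presented torsion module is standard.
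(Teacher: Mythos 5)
Your proposal is correct and follows essentially the same route as the paper's own proof: the hypothesis is converted into $\mathrm{depth}_\Lambda(M)=1$, the Auslander--Buchsbaum formula gives $\mathrm{pd}_\Lambda(M)=1$, and torsionness forces a square presentation whose determinant generates both $\mathrm{Fitt}_\Lambda(M)$ and $\mathrm{char}_\Lambda(M)$. You merely supply more detail than the paper (the associated-prime argument for the depth claim and the localization at height-one primes identifying $(\det A)$ with the characteristic ideal), all of which is accurate.
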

\begin{proof}
Though several proofs of this lemma are known, we want to give here a new proof.
If $M$ has no nontrivial finite $\Lambda$-submodule, then $\mathrm{depth}(M) = 1$, i.e. there exists an element $x \in \Lambda$ such that the multiplication by $x$ map on $M$ is injective.
By Auslander-Buchsbaum formula, we have
$$\mathrm{pd}_{\Lambda}(M) + \mathrm{depth}(M) = \mathrm{depth}(\Lambda)$$
with $\mathrm{depth}(M) = 1$, and $\mathrm{depth}(\Lambda) = 2$. Thus, $\mathrm{pd}_{\Lambda}(M) = 1$.
This shows that $M$ is the cokernel of a $\Lambda$-homomorphism 
$f: \Lambda^n \to \Lambda^n$. Then both the characteristic ideal 
and the Fitting ideal of $M$ are generated by $\mathrm{det} ( f )$, and 
we get the conclusion.
See also \cite[Proposition 2.1]{wingberg-1985} and \cite[Lemma 1.3.3 and Proposition 1.3.4]{taleb-thesis}.
\end{proof}

\begin{rem}
This lemma is an enhanced version of \cite[page 327--328, Appendix]{mazur-wiles-main-conj} removing the $\mu=0$ assumption. (cf. \cite[$\S$1.1]{kurihara-fitting}.)
\end{rem}

\begin{lem} \label{lem:fitting_sub_free}
Let $M$ and $N$ be $\Lambda_n$-modules.
We assume that $M$ and $N$ have no finite $\Lambda$-torsion submodule provided that we regard $M$ and $N$ as $\Lambda$-modules.
If $N \subseteq M$, then
$$\mathrm{Fitt}_{\Lambda_n} ( M ) \subseteq  \mathrm{Fitt}_{\Lambda_n} ( N ) .$$
\end{lem}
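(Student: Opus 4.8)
The plan is to reduce the assertion over $\Lambda_n$ to the corresponding assertion over $\Lambda$, where two tools are available that fail over $\Lambda_n$: the multiplicativity of characteristic ideals in short exact sequences of torsion modules, and the coincidence of characteristic and Fitting ideals for modules without finite submodules (Lemma \ref{lem:char-to-fitt}). First I would observe that, since $\Lambda_n = \Lambda/\omega_n\Lambda$, both $M$ and $N$ are finitely generated — hence finitely presented, as $\Lambda$ is Noetherian — $\Lambda$-modules that are annihilated by $\omega_n \neq 0$, so in particular they are $\Lambda$-torsion; and by hypothesis, viewed as $\Lambda$-modules they have no nontrivial finite $\Lambda$-submodule.

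\textbf{The argument over $\Lambda$.} From $N \subseteq M$ I would form the short exact sequence $0 \to N \to M \to M/N \to 0$ of finitely generated torsion $\Lambda$-modules and invoke multiplicativity of characteristic ideals to get $\mathrm{char}_\Lambda(M) = \mathrm{char}_\Lambda(N)\cdot\mathrm{char}_\Lambda(M/N)$, hence $\mathrm{char}_\Lambda(M) \subseteq \mathrm{char}_\Lambda(N)$ as ideals of $\Lambda$ (the factor $\mathrm{char}_\Lambda(M/N)$ being an ideal of $\Lambda$). Applying Lemma \ref{lem:char-to-fitt} to $M$ and to $N$ — this is the step where the ``no finite submodule'' hypothesis is used essentially — gives $\mathrm{char}_\Lambda(M) = \mathrm{Fitt}_\Lambda(M)$ and $\mathrm{char}_\Lambda(N) = \mathrm{Fitt}_\Lambda(N)$, whence $\mathrm{Fitt}_\Lambda(M) \subseteq \mathrm{Fitt}_\Lambda(N)$.

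\textbf{Descent to $\Lambda_n$.} Since $\omega_n M = 0$ and $\omega_n N = 0$, we have $M/\omega_n M = M$ and $N/\omega_n N = N$ as $\Lambda_n$-modules, so Lemma \ref{lem:fitting-ideals-base-change} with $R = \Lambda$ and $I = \omega_n\Lambda$ yields $\mathrm{Fitt}_{\Lambda_n}(M) = \pi(\mathrm{Fitt}_\Lambda(M))$ and $\mathrm{Fitt}_{\Lambda_n}(N) = \pi(\mathrm{Fitt}_\Lambda(N))$, where $\pi \colon \Lambda \twoheadrightarrow \Lambda_n$ is the projection. Applying $\pi$ to the inclusion obtained in the previous step gives $\mathrm{Fitt}_{\Lambda_n}(M) \subseteq \mathrm{Fitt}_{\Lambda_n}(N)$, which is the claim.

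\textbf{Main obstacle.} The genuine point is not a computation but the realization that one must leave $\Lambda_n$: over $\Lambda_n$ one only has sub-multiplicativity $\mathrm{Fitt}_{\Lambda_n}(N)\cdot\mathrm{Fitt}_{\Lambda_n}(M/N) \subseteq \mathrm{Fitt}_{\Lambda_n}(M)$ (Lemma \ref{lem:fitting_exact}), the reverse inclusion requiring $\mathrm{pd}_{\Lambda_n}(M/N) \le 1$ (Lemma \ref{lem:fitting_exact_refined_proj_dim}), which one has no right to expect since $\Lambda_n$ is not regular. Passing to $\Lambda$, where the depth hypothesis forces projective dimension $\le 1$, is exactly what makes Fitting ideals behave like characteristic ideals and the inclusion go through; everything else is bookkeeping with the quoted lemmas, together with the (routine) verifications that the modules are finitely presented over $\Lambda$ and that $\omega_n$ annihilates them.
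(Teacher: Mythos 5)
Your proposal is correct and follows essentially the same route as the paper's own proof: pass to $\Lambda$, use the exact sequence $0 \to N \to M \to M/N \to 0$ and multiplicativity of characteristic ideals together with Lemma \ref{lem:char-to-fitt} to get $\mathrm{Fitt}_\Lambda(M) \subseteq \mathrm{Fitt}_\Lambda(N)$, then descend via Lemma \ref{lem:fitting-ideals-base-change}. Your added remarks (torsionness from $\omega_n$-annihilation, and why one cannot argue directly over $\Lambda_n$) are accurate but the argument itself coincides with the paper's.
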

\begin{proof}
We regard $M$ and $N$ as $\Lambda$-modules. Then we have
\[
\xymatrix{
\mathrm{Fitt}_{\Lambda} ( M ) = \mathrm{char}_{\Lambda} ( M ) & \mathrm{Fitt}_{\Lambda} ( N ) = \mathrm{char}_{\Lambda} ( N ) 
}
\]
by Lemma \ref{lem:char-to-fitt}.
Consider the exact sequence of $\Lambda$-modules
\[
\xymatrix{
0 \ar[r] & N \ar[r] & M \ar[r] & M/N \ar[r] & 0
}
\]
Then we have
\begin{align*}
\mathrm{Fitt}_{\Lambda} ( M ) & = \mathrm{char}_{\Lambda} ( M ) \\ 
 & =  \mathrm{char}_{\Lambda} ( N ) \cdot  \mathrm{char}_{\Lambda} ( M/N ) \\
 & =  \mathrm{Fitt}_{\Lambda} ( N ) \cdot  \mathrm{char}_{\Lambda} ( M/N ) \\
 & \subseteq  \mathrm{Fitt}_{\Lambda} ( N )
\end{align*}
By taking the quotient by $\omega_n$ with Lemma \ref{lem:fitting-ideals-base-change}, we have
$$\mathrm{Fitt}_{\Lambda_n} ( M ) \subseteq  \mathrm{Fitt}_{\Lambda_n} ( N ) .$$
 \end{proof}

\begin{lem} \label{lem:fitting_sub}
If $A \subset B$ as finitely generated $\Lambda_n$-modules, then 
$$\mathrm{Fitt}_{\Lambda_n}(B) \subseteq  \mathrm{Fitt}_{\Lambda_n}(A) .$$ 
\end{lem}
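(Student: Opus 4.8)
The plan is to reduce the statement to the case of \emph{finite} $\Lambda_n$-modules, where Pontryagin duality is available, and then recover the general case by a $p$-adic approximation argument. For the finite case: if $A' \subseteq B'$ are finite $\Lambda_n$-modules, dualizing gives a surjection $(B')^\vee \twoheadrightarrow (A')^\vee$ of finite $\Lambda_n$-modules, so $\mathrm{Fitt}_{\Lambda_n}\bigl((B')^\vee\bigr) \subseteq \mathrm{Fitt}_{\Lambda_n}\bigl((A')^\vee\bigr)$ by Lemma \ref{lem:fitting_quotient}. Since $\Lambda_n = \mathbb{Z}_p[\mathrm{Gal}(\mathbb{Q}_n/\mathbb{Q})]$ is Gorenstein, the Fitting ideal of a finite $\Lambda_n$-module $M$ and that of its dual $M^\vee$ agree up to the ring involution $\iota$ of $\Lambda_n$ induced by $\sigma \mapsto \sigma^{-1}$; as $\iota$ preserves inclusions of ideals, one deduces $\mathrm{Fitt}_{\Lambda_n}(B') \subseteq \mathrm{Fitt}_{\Lambda_n}(A')$.

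For general $A \subseteq B$, I would fix $m \geq 1$ and pass to the finite $\Lambda_n$-modules $\overline{B}_m := B/p^m B$ and its submodule $\overline{A}_m := \mathrm{im}(A \to \overline{B}_m) \cong A/(A \cap p^m B)$. Then $\mathrm{Fitt}_{\Lambda_n}(B) \subseteq \mathrm{Fitt}_{\Lambda_n}(\overline{B}_m)$ by Lemma \ref{lem:fitting_quotient}, and $\mathrm{Fitt}_{\Lambda_n}(\overline{B}_m) \subseteq \mathrm{Fitt}_{\Lambda_n}(\overline{A}_m)$ by the finite case. By the Artin--Rees lemma there is a constant $c$, independent of $m$, with $A \cap p^m B \subseteq p^{m-c}A$ for all $m \geq c$, so $\overline{A}_m$ surjects onto $A/p^{m-c}A$; hence, again by Lemma \ref{lem:fitting_quotient} and then Lemma \ref{lem:fitting-ideals-base-change} (adjoining the relations $p^{m-c}$ to a presentation of $A$), $\mathrm{Fitt}_{\Lambda_n}(\overline{A}_m) \subseteq \mathrm{Fitt}_{\Lambda_n}(A/p^{m-c}A) \subseteq \mathrm{Fitt}_{\Lambda_n}(A) + p^{m-c}\Lambda_n$. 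Chaining these, $\mathrm{Fitt}_{\Lambda_n}(B) \subseteq \mathrm{Fitt}_{\Lambda_n}(A) + p^{m-c}\Lambda_n$ for every $m \geq c$.

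Letting $m \to \infty$ then gives $\mathrm{Fitt}_{\Lambda_n}(B) \subseteq \bigcap_{j \geq 0}\bigl(\mathrm{Fitt}_{\Lambda_n}(A) + p^j\Lambda_n\bigr) = \mathrm{Fitt}_{\Lambda_n}(A)$, the equality holding because $\Lambda_n/\mathrm{Fitt}_{\Lambda_n}(A)$ is a finitely generated $\mathbb{Z}_p$-module and hence $p$-adically separated (Krull's intersection theorem). The only genuinely nontrivial ingredient is the finite case --- concretely, the duality $\mathrm{Fitt}_{\Lambda_n}(M^\vee) = \iota\bigl(\mathrm{Fitt}_{\Lambda_n}(M)\bigr)$ for finite $M$, which is where the Gorenstein property of $\Lambda_n$ enters; everything else is formal manipulation with Fitting ideals together with Artin--Rees and Krull's theorem. (When $A$ and $B$ have no nonzero finite $\Lambda$-submodule, one may alternatively argue directly through characteristic ideals as in Lemma \ref{lem:fitting_sub_free}, avoiding the duality step.)
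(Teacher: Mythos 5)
Your reduction to the finite case is correct, and your overall route is genuinely different from the paper's: the paper splits $A \subset B$ along their maximal finite $\Lambda$-submodules, handles the finite parts by citing the appendix of Mazur--Wiles, treats the quotients (which have no finite submodule) through characteristic ideals as in Lemma \ref{lem:fitting_sub_free}, and recombines multiplicatively via Lemma \ref{lem:fitting_exact_refined_proj_dim} and Lemma \ref{lem:fitting-ideals-base-change}; you instead truncate modulo $p^m$ and recover the general statement by Artin--Rees plus Krull's intersection theorem. That part of your argument is sound: the inclusion $\mathrm{Fitt}_{\Lambda_n}(A/p^jA) \subseteq \mathrm{Fitt}_{\Lambda_n}(A) + p^j\Lambda_n$ and the $p$-adic separatedness of $\Lambda_n/\mathrm{Fitt}_{\Lambda_n}(A)$ are both correct, and the truncation cleanly isolates the finite case as the only real content --- which is also where the paper's proof ultimately leans on an external input.

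The gap is in how you dispose of that finite case. The assertion that $\mathrm{Fitt}_{\Lambda_n}(M^\vee) = \iota\bigl(\mathrm{Fitt}_{\Lambda_n}(M)\bigr)$ for every finite $\Lambda_n$-module $M$ is in fact true here, but it is not a consequence of $\Lambda_n$ being Gorenstein, and no proof of it is given. Gorenstein-ness makes $(-)^\vee$ (up to $\iota$) the Matlis duality, hence exact and preserving length, annihilators, and the socle/generator counts --- but it does not preserve Fitting ideals in general: $\mathbb{Z}_p[G]$ is Gorenstein for \emph{every} finite abelian $G$, while the displayed equality is a special feature of cyclic $G$ and is known to fail for non-cyclic abelian $G$; this is exactly the subtlety behind the work of Greither--Kurihara on Fitting ideals of (duals of) class groups. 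The available proofs for cyclic $G$ use cyclicity essentially, e.g.\ via $\Lambda_n \simeq \mathbb{Z}_p[[X]]/(\omega_n)$: a finite module is perfect of grade $2$ over $\mathbb{Z}_p[[X]]$, one identifies $M^\vee$ with $\mathrm{Ext}^2_{\mathbb{Z}_p[[X]]}(M, \mathbb{Z}_p[[X]])$ up to $\iota$, and one then has to compare the maximal-minor ideals of the first and last matrices of a length-two free resolution --- a genuine argument, not formal duality. So either supply such an argument (or a citation covering the cyclic case), or bypass duality altogether: the monotonicity $\mathrm{Fitt}_{\Lambda_n}(\overline{B}_m) \subseteq \mathrm{Fitt}_{\Lambda_n}(\overline{A}_m)$ for finite modules is precisely what the paper quotes from the appendix of \cite{mazur-wiles-main-conj}, and inserting that into your truncation argument yields a complete proof.
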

\begin{proof}
Consider the following two exact sequences as $\Lambda$ or $\Lambda_n$-modules with compatibility
\[
\xymatrix{
0 \ar[r] & B_{\mathrm{mft}} \ar[r]  & B \ar[r]  & B' \ar[r]  & 0 \\
0 \ar[r] & A_{\mathrm{mft}} \ar[r] \ar@{^{(}->}[u] & A \ar[r] \ar@{^{(}->}[u] & A' \ar[r] \ar@{^{(}->}[u] & 0 
}
\]
where $A_{\mathrm{mft}}$ and $B_{\mathrm{mft}}$ are the maximal finite torsion $\Lambda$-submodules of $A$ and $B$, respectively. Thus, $A'$ and $B'$ have no finite $\Lambda$-submodule and indeed have no finite $\Lambda_n$-submodule.
Then we have
$$ \mathrm{Fitt}_{\Lambda_n} (B') \subseteq \mathrm{Fitt}_{\Lambda_n} (A')$$  
by Lemma \ref{lem:fitting_sub_free}.
Also, by Mazur-Wiles \cite[Corollary to Proposition 3, Appendix, Page 328]{mazur-wiles-main-conj}, we have
$$\mathrm{Fitt}_{\Lambda_n}( B_{\mathrm{mft}} ) \subseteq \mathrm{Fitt}_{\Lambda_n}( A_{\mathrm{mft}} ) .$$
Then Lemma \ref{lem:fitting_exact_refined_proj_dim} and Lemma \ref{lem:fitting-ideals-base-change} show us that 
\begin{align*}
\mathrm{Fitt}_{\Lambda_n}(A) & = \mathrm{Fitt}_{\Lambda_n}( A_{\mathrm{mft}} ) \cdot \mathrm{Fitt}_{\Lambda_n} (A') \\
\mathrm{Fitt}_{\Lambda_n}(B) & = \mathrm{Fitt}_{\Lambda_n}( B_{\mathrm{mft}} ) \cdot \mathrm{Fitt}_{\Lambda_n} (B') .
\end{align*}
Thus, the conclusion follows.
\end{proof}

\bibliographystyle{amsalpha}

\bibliography{library}

\end{document}